\tikzstyle{vertex}=[circle]
\tikzstyle{goto}=[->,shorten >=1pt,>=stealth,semithick]
\newcommand{\Ef}{\widehat E_0}
\newcommand{\Eu}{\widehat E_\infty}
\newcommand{\Et}{\widehat E_{\text{tight}}}
\newcommand\Iso{\operatorname{Iso}}
\newcommand\go{\G^{(0)}}
\newcommand\cs{\ensuremath{C^{*}}}
\newcommand\cls{\overline{\supp}}
\newcommand{\inv}{^{-1}}
\newcommand{\Z}{\mathbb{Z}}
\newcommand{\KK}{\mathbb{K}}
\newcommand{\sgx}{S_{G,X}}
\newcommand{\ggx}{\G_{G,X}}
\newcommand{\ogx}{\mathcal{O}_{G,X}}
\newcommand{\N}{\mathbb{N}}
\newcommand{\ew}{\varnothing}
\newcommand{\fs}{\subseteq_{\text{fin}}}
\newcommand {\OGE }{{\mathcal {O}_{G,E}}}
\newcommand {\SGE }{{S_{G,E}}}
\newcommand {\CG }{{\mathcal {G}_{G,E}}}
\newcommand{\vp}{\varphi}
\newcommand{\gzeA}{\mathcal{G}_{\Z,E_A}}
\newcommand{\intiso}{\operatorname{Iso}(\G)^\circ}
\newcommand{\G}{\mathcal{G}}
\newcommand{\C}{\mathbb{C}}
\DeclareMathOperator{\supp}{supp}
\DeclareMathOperator{\lsp}{span}
\DeclareMathOperator{\clsp}{\overline{\lsp}}
\newcommand{\D}{{\mathcal{D}}}
\newcommand{\s}{{\mathcal{S}}}
\newcommand{\gt}{\G_{\text{tight}}}
\newtheorem{thm}{Theorem}[section]
\newtheorem{lemma}[thm]{Lemma}
\newtheorem{prop}[thm]{Proposition}
\newtheorem{cor}[thm]{Corollary}
\theoremstyle{definition}
\newtheorem{definition}[thm]{Definition}
\theoremstyle{remark}
\newtheorem{remark}[thm]{Remark}
\newtheorem{noname}[equation]{}
\numberwithin{equation}{section}
\newcommand{\CC}{\mathbb{C}}
\newcommand{\Bb}{\mathcal{B}}
\newcommand{\Cc}{\mathcal{C}}
\newcommand{\Hh}{\mathcal{H}}
\begin{document}

\title[Simplicity of groupoid algebras]{Simplicity of algebras associated to non-Hausdorff groupoids}

\author {Lisa Orloff Clark}
\address {Lisa Orloff Clark\\School of Mathematics and Statistics, Victoria University of Wellington, PO Box 600, Wellington 6140, New Zealand}
\email {lisa.clark@vuw.ac.nz}

\author {Ruy Exel}
\address {Ruy Exel\\Departamento de Matem\'atica, Universidade Federal de Santa Catarina, 88040-970 Florian\'opolis SC, Brazil}
\email {exel@mtm.ufsc.br}\urladdr {http://www.mtm.ufsc.br/~exel/}

\author {Enrique Pardo}
\address {Enrique Pardo\\Departamento de Matem\'aticas, Facultad de Ciencias\\ Universidad de C\'adiz, Campus de
Puerto Real\\ 11510 Puerto Real (C\'adiz)\\ Spain.} \email {enrique.pardo@uca.es}\urladdr
{https://sites.google.com/a/gm.uca.es/enrique-pardo-s-home-page/}

\author{Aidan Sims}
\address{Aidan Sims\\School of Mathematics and Applied Statistics, University of Wollongong, Wollongong NSW 2522, Australia.} \email{asims@uow.edu.au}

\author{Charles Starling}
\address{Charles Starling\\Carleton University, School of Mathematics and Statistics, 4302 Herzberg Laboratories, 1125 Colonel By Drive, Ottawa, ON, Canada, K1S 5B6.}
\email{cstar@math.carleton.ca}\urladdr{https://carleton.ca/math/people/charles-starling/}

\dedicatory {To Frederick Noel Starling}

\thanks {The authors thank the anonymous referee for carefully reading our paper and for the helpful comments.
The first named author was partially supported by Marsden grant from the Royal Society of New
Zealand. The second named author was partially supported by CNPq. The third named author was
partially supported by PAI III grant FQM-298 of the Junta de Andaluc\'{\i }a, and by the
DGI-MINECO and European Regional Development Fund, jointly, through grants MTM2014-53644-P and
MTM2017-83487-P. The fourth named author was partially supported by the Australian Research
Council grant DP150101595. The fifth named author was partially supported by a Carleton University
internal research grant.}

\subjclass [2010]{16S99, 16S10, 22A22, 46L05, 46L55}

\keywords {Groupoid $C^*$-algebra, Steinberg algebra, Self-similar graph algebra}

\begin{abstract}

We prove a uniqueness theorem and give a characterization of simplicity for Steinberg algebras
 associated to non-Hausdorff ample groupoids. We also prove a uniqueness theorem and give a
characterization of simplicity for the $\cs$-algebra associated to non-Hausdorff \'etale
groupoids. Then we show how our results apply in the setting of tight representations of inverse
semigroups, groups acting on graphs, and self-similar actions. In particular, we show that the
$\cs$-algebra and the complex Steinberg algebra of the self-similar action of the Grigorchuk
group are simple but the Steinberg algebra with coefficients in $\mathbb{Z}_2$ is not simple.
\end{abstract}

\maketitle

\section{Introduction}\label{sec:intro}
Algebras associated to locally compact groupoids play an important role in both analysis and
algebra. The theory of $\cs$-algebras associated to Hausdorff groupoids, introduced  by Renault in
\cite{Ren},  is fairly well-developed. Connes introduced $\cs$-algebras of non-Hausdorff groupoids
in \cite{Connes82},  but a lot less is known in this setting.  What we do know is  that results
about Hausdorff groupoids often fail when the Hausdorff property is relaxed, see for example
\cite{Exel}.

As much as we might be tempted to treat non-Hausdorff groupoids as pathological outliers, they
appear in crucial examples, see,  for example,  \cite{Connes} and more recently \cite{EP17}. In
this paper we investigate simplicity of algebras associated to non-Hausdorff groupoids.

One important result used to characterize simplicity in the Hausdorff setting is the Cuntz-Krieger
uniqueness theorem.   It gives suitable conditions on the groupoid under which every ideal in the
associated algebra contains a function entirely supported on the unit space.  In this paper we
establish algebraic and analytic Cuntz-Krieger uniqueness theorems for non-Hausdorff groupoid
algebras.

What goes wrong when moving from Hausdorff to non-Hausdorff groupoids? The existence of nonclosed
compact sets wreaks havoc on our understanding of `compactly supported functions'.  We are forced
to consider functions that fail to be continuous and functions whose `open support', that is, the
set of points where the function is nonzero, is not an open set at all.  In fact, there might be
nonzero \emph{singular functions} whose open support has empty interior.

After establishing some preliminaries in Section~\ref{sec:prelim},  we consider the purely
algebraic class of Steinberg algebras in Section~\ref{sec:st}.  Introduced in \cite{St}, Steinberg
algebras
 are built from \emph{ample} groupoids: an ample groupoid is a topological groupoid that has a basis of compact open bisections.
Given an ample groupoid $\G$ with Hausdorff unit space, the complex Steinberg algebra associated
to $\G$ is the convolution algebra consisting of the linear span of characteristic functions (from
$\G$ to $\C$) of  compact open bisections. In the Hausdorff setting, the Steinberg algebra is
simple if and only if the associated groupoid $\G$ is \emph{effective} and \emph{minimal}.  More
generally, we show in Theorem~\ref{thm:SteinbergSimple}  that the Steinberg algebra associated to
$\G$ is simple if and only if $\G$ is minimal, effective, and there are no singular functions.
The proof uses our newly established algebraic Cuntz-Krieger uniqueness theorem,
Theorem~\ref{thm:cku}.

We also present a topological condition on the groupoid that, along with effective and minimal,
implies simplicity.  Given a groupoid $\G$ that is effective and minimal, if every compact open
set of $\G$ is \emph{regular open}, then the associated Steinberg algebra is simple.  Although
this condition is not necessary (see example in subsection~\ref{grig}) it does capture many
examples and is fairly straight forward to check.

We view Steinberg algebras as a laboratory for finding  conditions to characterize
$\cs$-simplicity for groupoid $\cs$-algebras.   We move to the analytic setting in
Section~\ref{sec:an}, where we study  general \emph{\'etale} groupoids, that is, groupoids with a
bases of open bisections (ample groupoids are always \'etale).  Given an \'etale groupoid $\G$, we
can view the elements of the reduced $\cs$-algebra as functions from $\G$ to $\C$ as described in
\cite{KhoshSkand} (see Section~\ref{page_functions}). After establishing a Cuntz-Krieger
uniqueness theorem in Theorem~\ref{thm:reduced uniqueness}, we show in Theorem~\ref{thm:c*simple}
that  the reduced groupoid $\cs$-algebra of $\G$ is simple if and only if $\G$ is minimal,
effective and there are no singular functions in the $\cs$-algebra. We look at the special case of
ample groupoids in Section \ref{csample}.

In Section~\ref{sec:ex} we present broad classes of examples. First, we present a class of
groupoids that are topologically principal and minimal but whose associated algebras are not
simple.   Note that our examples are not effective and demonstrate that, unlike the Hausdorff
situation,  topologically principal does not imply effective.

Next we apply our simplicity results to tight groupoids of inverse semigroup representations, then
specialize to groups acting on  graphs and further specialize to self-similar group actions.
Finally we showcase our results by applying them to the self-similar action of the Grigorchuk
group. We answer the long standing open question of whether or not the algebras associated to this
action are simple.  We show in Theorem~\ref{thm:grigsimple} that both the complex Steinberg
algebra and the $\cs$-algebra are simple. This is a surprise, because \cite[Example~4.5]{Nek16}
(see also our Corollary~\ref{cor:grigZ2notsimple}) shows that the Steinberg algebra with
coefficients in $\mathbb{Z}_2$ is not simple.  Thus the field plays a role in whether or not the
algebra is simple, which is an unexpected non-Hausdorff only phenomenon.

Although we have made significant progress,  here are three open questions that we were unable to
answer:

1.  What are necessary and sufficient conditions on a non-Hausdorff groupoid $\G$ that ensure the
Steinberg algebra is simple?   Although we have necessary and sufficient conditions characterizing
simplicity, one of our conditions is not a groupoid condition, rather, it is a condition about
functions. We still do not know whether minimal and effective alone are necessary and sufficient.

2.  What are necessary and sufficient conditions on a non-Hausdorff groupoid $\G$ that ensure its
reduced $\cs$-algebra algebra is simple?  Again, the best we could do was to impose a condition on
functions.

3. Suppose the complex Steinberg algebra associated to an ample groupoid is simple.  Must the
$\cs$-algebra also be simple?

Regarding~(2) above, a related question is discussed in \cite{ExelPitts}, where the notion of the
gray ideal of a regular inclusion of $\cs$-algebras is defined \cite[Definition 11.8]{ExelPitts}.
Contrasting Lemma~\ref{lem:singiso} and \cite[Proposition 15.3.ii]{ExelPitts}, we see that every
singular element of $\cs_r(\G)$ lies in the gray ideal. So conditions on $\G$ that ensure that the
gray ideal is trivial also rule out the existence of nontrivial singular elements. Under such
conditions, statement~(3) of Theorem~\ref{thm:c*simple} would therefore say that $C^*(\G)$ is
simple whenever $\G$ is minimal and effective.

Interestingly, for twisted groupoid $C^*$-algebras, conditions ensuring that the gray ideal is
trivial must take the twist into account, since there are examples \cite[Section 23]{ExelPitts} in
which the gray ideal vanishes for the twisted groupoid C*-algebra, but not for its untwisted
version.

\section{Preliminaries}
\label{sec:prelim}

\noindent \textbf{Regular open sets}

\medskip

In a topological space we say a subset $B$ is a \emph{regular open} set if $B$ equals the interior
of its closure; that is $B = (\overline{B})^{\circ}.$ The intersection of a collection of regular
open sets is again regular open but the same is not true for unions. See for example
\cite[Chapter~10]{GivHal} for a detailed discussion of regular open sets.

\begin{lemma}
 \label{lem:regopendif}
Let $B$ and $D$ be regular open sets. If $B \setminus D$ is nonempty, then $B \setminus D$ has
nonempty interior.
\end{lemma}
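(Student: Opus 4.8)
The plan is to produce a single interior point of $B \setminus D$ by locating a point of $B$ lying outside the closure $\overline{D}$, rather than merely outside $D$ itself. The key observation is that the hypothesis $B \setminus D \neq \emptyset$, combined with the regular openness of $D$, prevents $B$ from being entirely absorbed into $\overline{D}$, and once a point escapes $\overline{D}$ it automatically carries an open neighbourhood avoiding $D$.

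First I would argue by contradiction that $B \not\subseteq \overline{D}$. Indeed, suppose $B \subseteq \overline{D}$. Since $B$ is open (every regular open set is open), this forces $B \subseteq (\overline{D})^{\circ}$; but $D$ is regular open, so $(\overline{D})^{\circ} = D$, giving $B \subseteq D$ and hence $B \setminus D = \emptyset$, contrary to hypothesis. Thus there exists a point $x \in B \setminus \overline{D}$. It is worth noting that this step uses only that $D$ is regular open and that $B$ is open; the regular openness of $B$ is not actually needed.

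Having secured such an $x$, I would exhibit the set $U = B \cap (X \setminus \overline{D})$ as the witness to nonempty interior, where $X$ is the ambient space. It is open, being the intersection of the open set $B$ with the complement $X \setminus \overline{D}$ of the closed set $\overline{D}$, and it contains $x$, so it is nonempty. Moreover $U \subseteq B$ and $U \subseteq X \setminus \overline{D} \subseteq X \setminus D$, whence $U \subseteq B \setminus D$. Therefore $U$ is a nonempty open subset of $B \setminus D$, which establishes that $B \setminus D$ has nonempty interior.

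There is essentially no serious obstacle once attention is shifted from $D$ to $\overline{D}$; the one subtlety worth flagging is the passage from $B \subseteq \overline{D}$ to $B \subseteq D$, which is precisely the place where regular openness of $D$ enters. A given point of $B \setminus D$ need \emph{not} have a neighbourhood contained in $B \setminus D$ when it lies on the boundary $\overline{D} \setminus D$; the content of the lemma is exactly that not every point of a nonempty $B \setminus D$ can be of this boundary type, and it is the regularity of $D$ that forces at least one point into the exterior of $D$.
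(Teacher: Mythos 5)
Your proof is correct, and it takes a genuinely more economical route than the paper's. The paper argues as follows: from $B \setminus D \neq \emptyset$ it gets $D \cap B \subsetneq B$, then uses regular openness to conclude $\overline{D \cap B} \subsetneq \overline{B}$ (a step that needs both that $B$ is regular open and that $D \cap B$ is regular open, the latter because intersections of regular open sets are regular open), and finally observes that the open complement $O$ of $\overline{D \cap B}$ meets $\overline{B}$, hence meets $B$, so that $\emptyset \neq B \cap O \subseteq B \setminus D$. You instead work with $\overline{D}$ itself: if $B \subseteq \overline{D}$, then openness of $B$ forces $B \subseteq \big(\overline{D}\big)^{\circ} = D$, contradicting $B \setminus D \neq \emptyset$; hence $B \setminus \overline{D}$ is a nonempty open subset of $B \setminus D$. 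The payoff of your version is that it isolates exactly where the hypotheses enter: you use only that $D$ is regular open and that $B$ is open, so you have in fact proved a slightly stronger statement, whereas the paper's closure-comparison argument genuinely invokes the regular openness of $B$ as well. This is not a purely cosmetic difference: in the paper's application (Lemma~\ref{lem:key}), the set playing the role of $B$ is a finite intersection of compact open bisections, and the intersection-of-regular-opens fact is invoked there precisely to verify the hypothesis of Lemma~\ref{lem:regopendif}; under your formulation that verification would be unnecessary, since such an intersection is automatically open. Both arguments are short and valid.
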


\begin{proof}
  Since $B \setminus D$ is nonempty we have $D \cap B \subsetneq B.$
Regular openness then gives
\[\overline{D \cap B} \subsetneq \overline{B}.\]
 Let $O$ be the complement of $\overline{D \cap B}$.
Thus $O$ is an open set that intersects $\overline{B}$ and hence intersects $B$. Also $O \cap (D
\cap B) = \emptyset$. Thus $\emptyset \neq B \cap O \subseteq B \setminus D$ and hence $B
\setminus D$ has nonempty interior.
\end{proof}

\medskip

\noindent \textbf{\'Etale and ample groupoids}

\medskip

We say a topological groupoid $\G$ is \emph{\'etale} if there is a basis for its topology
consisting of \emph{open bisections}. That is, of open sets $B$ such that the source map
(equivalently the range map) restricts to  a homeomorphism onto an open subset of the unit space
$\go$.   We will always assume our groupoids are \'etale, locally compact and that $\go$ is
Hausdorff in the relative topology.  Since $\G$ is \'etale,  $\go$ is open in $\G$, and $\G$ is
Hausdorff if and only if $\go$ is also closed in $\G$ (see e.g. \cite[Proposition 3.10]{EP16}.) We
will make use of the following lemma often.

\begin{lemma}
\label{lem:lchbis} Let $\G$ be a locally compact, \'etale groupoid such that $\go$ is Hausdorff.
Suppose $B$ is an open bisection. Then $B$ is locally compact and Hausdorff in the relative
topology.
\end{lemma}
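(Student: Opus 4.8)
The plan is to transport both properties from the unit space to $B$ along the homeomorphism furnished by the source map. Since $B$ is an open bisection, the restriction $s\restr{B}\colon B \to s(B)$ is a homeomorphism onto $s(B)$, and $s(B)$ is an open subset of $\go$. It therefore suffices to show that every open subset of $\go$ is locally compact and Hausdorff, since both properties are preserved under homeomorphism and hence will pass to $B \cong s(B)$. The Hausdorff half is immediate: $\go$ is Hausdorff by hypothesis, so every subspace, in particular $s(B)$, is Hausdorff, and thus so is $B$.

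For local compactness I would first establish that $\go$ itself is locally compact, and then invoke the standard fact that an open subspace of a locally compact \emph{Hausdorff} space is locally compact to conclude the same for the open set $s(B) \subseteq \go$. To see that $\go$ is locally compact, I use that $\G$ is étale, so $\go$ is open in $\G$, together with local compactness of $\G$: given $x \in \go$, local compactness supplies a compact neighborhood of $x$ that can be taken inside the open set $\go$, so that $x \in K^{\circ} \subseteq K \subseteq \go$ with $K$ compact. This exhibits $K$ as a compact neighborhood of $x$ within $\go$, and hence $\go$ is locally compact (and Hausdorff). Transporting back along $s\restr{B}$ then yields that $B$ is locally compact and Hausdorff.

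The one point requiring care---and the main obstacle---is that $\G$ need not be Hausdorff, so the familiar principle ``an open subspace of a locally compact space is locally compact'' is \emph{not} available directly at the level of $\G$; in the non-Hausdorff setting compact sets may fail to be closed, and intersecting a compact neighborhood with the open set $\go$ need not preserve compactness. The argument sidesteps this by doing all of the work inside $\go$: the openness of $\go$ in $\G$ is exactly what lets the compact neighborhoods provided by local compactness of $\G$ be shrunk to lie within $\go$, and the Hausdorffness of $\go$ is precisely what legitimizes the passage to its open subspace $s(B)$. Once local compactness and the Hausdorff property are secured for $s(B)$, the homeomorphism $s\restr{B}$ delivers them for $B$.
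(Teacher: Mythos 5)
Your route is the same as the paper's: the paper's entire proof is that $B$ is homeomorphic, via $s\restr{B}$, to an open subset of the Hausdorff space $\go$, whence the lemma. The Hausdorff half of your argument and the final transport along $s\restr{B}$ are fine. The gap is in the step you yourself single out as the main obstacle: the claim that, for $x \in \go$, ``local compactness supplies a compact neighborhood of $x$ that can be taken inside the open set $\go$.'' Openness of $\go$ does not legitimize this shrinking. If local compactness of $\G$ means only that every point has \emph{some} compact neighborhood (a common convention; the paper never spells one out), then in a non-Hausdorff space one cannot in general shrink that neighborhood into a prescribed open set; this is exactly the reason the principle ``open subspaces of locally compact spaces are locally compact'' fails without Hausdorffness. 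A concrete counterexample: let $X = \mathbb{Q} \cup \{\infty\}$ be the one-point compactification of $\mathbb{Q}$; then $X$ is compact, hence locally compact, and $\mathbb{Q}$ is open in $X$, yet no point of $\mathbb{Q}$ has a compact neighborhood contained in $\mathbb{Q}$. So ``$K$ can be taken inside $\go$'' is precisely the statement needing proof, and as written your argument is circular at this point.

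The gap is repairable, but only by using the groupoid structure, which your argument never invokes beyond openness of $\go$. Given $x \in \go$, let $K$ be a compact neighborhood of $x$ in $\G$ and choose an open set $U$ with $x \in U \subseteq K^\circ \cap \go$. Since $s$ is continuous, $s(K)$ is compact; it is contained in $\go$; and since $s$ restricts to the identity on $\go$, we have $U = s(U) \subseteq s(K)$. Thus $s(K)$ is a compact neighborhood of $x$ in $\go$, so $\go$ is locally compact, and being Hausdorff it then has a base of compact neighborhoods at each point; your remaining steps (open subspace of a locally compact Hausdorff space, then transport along $s\restr{B}$) go through unchanged. Alternatively, if you intend ``locally compact'' to mean that every point of $\G$ has a neighborhood base of compact sets, your shrinking step is true by definition---but then you should say so explicitly, since under the weaker definition the step is false in general.
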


\begin{proof}
 Since $B$ is an open bisection, $B$ is homeomorphic to an open subset of the Hausdorff space $\go$.  The lemma follows.
\end{proof}

We say $\G$ is \emph{minimal} if for any $u \in \go$, we have $[u]:= s(r^{-1}(u))$ dense in $\go$.
Equivalently, $\G$ is minimal if for every nonempty open $U \subseteq \go$, we have $[U] :=
s(r^{-1}(U)) = \go$.

For $u \in \go$, denote
\[\G_u := \{\gamma\in \G: s(\gamma) = u\} \quad \text{and } \quad
\G_u^u:= \{\gamma \in \G: s(\gamma) = r(\gamma)=u\}.\]
Then $\G_u^u$ is called the \emph{isotropy group} at $u$; the \emph{isotropy group bundle} of $\G$
is then
\[\operatorname{Iso}(\G):= \bigcup_{u \in \go} \G_u^u.\]
Notice that in an \'etale groupoid $\go$ is open subset of $\operatorname{Iso}(\G)$. If the
interior of $\operatorname{Iso}(\G)$ is equal to $\go$, we say that $\G$ is \emph{effective}.
Thus, if $\G$ is not effective, then there exists an open bisection $B \subseteq \G$ such that $B
\nsubseteq \go$ and for every $\gamma \in B$, $s(\gamma)=r(\gamma)$. In our main results, we
require $\G$ to be second countable. In this setting, if $\G$ is effective, then $\G$ is
\emph{topologically principal}, that is, the collection of units with trivial isotropy group dense
in $\go$ see \cite[Proposition~3.6]{Ren2}. If $\G$ is Hausdorff, the converse is true but in the
non-Hausdorff case the converse does not hold, see Example~\ref{aidanex}.

\medskip

\noindent \textbf{Regular open sets in effective \'etale groupoids}

\medskip

In the following, we demonstrate that effective groupoids contain a lot of regular open sets.

\begin{lemma}\label{lem:go reg open}
Suppose that $\G$ is a locally compact \'etale groupoid. If $\G$ is effective, then $\go$ is
regular open in $\G$.
\end{lemma}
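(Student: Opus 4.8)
The plan is to prove the two inclusions that together give $\go = (\overline{\go})^{\circ}$. One direction is immediate: since $\G$ is \'etale, $\go$ is open, and any open set is contained in the interior of its closure, so $\go \subseteq (\overline{\go})^{\circ}$. The whole content of the lemma is therefore the reverse inclusion $(\overline{\go})^{\circ} \subseteq \go$, and I would prove this by contradiction, exploiting effectiveness at the very end.

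Suppose the inclusion fails, so there is some $\gamma \in (\overline{\go})^{\circ}$ with $\gamma \notin \go$. Since $(\overline{\go})^{\circ}$ is open and $\G$ has a basis of open bisections, I can choose an open bisection $B$ with $\gamma \in B \subseteq (\overline{\go})^{\circ} \subseteq \overline{\go}$. The goal is now to show that this $B$ sits entirely inside $\Iso(\G)$: once that is established, $B$ is an open subset of $\Iso(\G)$ containing the point $\gamma \notin \go$, so the interior of $\Iso(\G)$ strictly contains $\go$, contradicting effectiveness.

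To see that $B \subseteq \Iso(\G)$, fix $\eta \in B$ and show $s(\eta) = r(\eta)$. Because $\eta \in \overline{\go}$, there is a net $(v_i)$ in $\go$ with $v_i \to \eta$ in $\G$. Continuity of $s$ and $r$ gives $s(v_i) \to s(\eta)$ and $r(v_i) \to r(\eta)$; but each $v_i$ is a unit, so $s(v_i) = v_i = r(v_i)$, whence the single net $(v_i)$ converges in $\go$ to both $s(\eta)$ and $r(\eta)$. The crucial hypothesis that $\go$ is Hausdorff forces uniqueness of limits there, so $s(\eta) = r(\eta)$ and $\eta \in \Iso(\G)$. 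Since $\eta \in B$ was arbitrary, $B \subseteq \Iso(\G)$, completing the contradiction.

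The main obstacle to be careful about is precisely the non-Hausdorffness of $\G$: limits of nets in $\G$ need not be unique, so I cannot argue directly that $s(\eta) = r(\eta)$ from convergence in $\G$. The point is to push the convergence down into $\go$ via the source and range maps, where the standing assumption that $\go$ is Hausdorff restores uniqueness of limits. Everything else is routine use of the \'etale structure (the existence of a basis of open bisections) and the definition of effective as $\Iso(\G)^{\circ} = \go$.
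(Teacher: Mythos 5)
Your proof is correct and follows essentially the same route as the paper: the key step in both is that continuity of $r$ and $s$ together with Hausdorffness of $\go$ forces $\overline{\go} \subseteq \Iso(\G)$, after which effectiveness ($\Iso(\G)^\circ = \go$) gives $\big(\overline{\go}\big)^\circ \subseteq \go$. Your contradiction framing with an explicit open bisection and the spelled-out net argument is just a more detailed rendering of the paper's two-line direct argument.
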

\begin{proof}
Since $r,s$ are continuous, we have $\overline{\go} \subseteq \Iso(\G)$. Hence
$\big(\overline{\go}\big)^\circ \subseteq \Iso(\G)^\circ$. Since $\G$ is effective, we have
$\Iso(\G)^\circ = \go$, and so $\big(\overline{\go}\big)^\circ \subseteq \go$.
\end{proof}

\begin{cor}\label{cor:closed unit set}
Suppose that $\G$ is a locally compact \'etale groupoid. If $\G$ is effective, and $K \subseteq
\go$ is relatively closed in $\go$, then $K^\circ$ is regular open in $\G$.
\end{cor}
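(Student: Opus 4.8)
The plan is to establish the two inclusions making up the identity $K^\circ = \big(\overline{K^\circ}\big)^\circ$, where throughout the interior and closure are computed in $\G$. First I would record that, because $\G$ is \'etale, $\go$ is open in $\G$, so every open subset of $K$ is automatically an open subset of $\go$; in particular $K^\circ$ is an open set contained in $\go$, and its interior taken in $\G$ agrees with its interior taken relative to $\go$. One of the two inclusions is then free: any open set $U$ satisfies $U \subseteq \overline{U}$, hence $U = U^\circ \subseteq \big(\overline{U}\big)^\circ$, and applying this to $U = K^\circ$ gives $K^\circ \subseteq \big(\overline{K^\circ}\big)^\circ$.

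The substance is the reverse inclusion $\big(\overline{K^\circ}\big)^\circ \subseteq K^\circ$, and here is where I would invoke effectiveness through Lemma~\ref{lem:go reg open}. Since $K^\circ \subseteq \go$, passing to closures gives $\overline{K^\circ} \subseteq \overline{\go}$, and passing to interiors gives $\big(\overline{K^\circ}\big)^\circ \subseteq \big(\overline{\go}\big)^\circ$. Because $\G$ is effective, Lemma~\ref{lem:go reg open} tells us $\go$ is regular open, i.e.\ $\big(\overline{\go}\big)^\circ = \go$; thus $\big(\overline{K^\circ}\big)^\circ \subseteq \go$. This is the crucial step, as it forces the interior of the closure back inside the unit space, where the relative closedness of $K$ can finally be used.

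With the interior now sitting inside $\go$, I would switch from closures in $\G$ to closures in $\go$. For a subset of $\go$, its closure relative to $\go$ is its $\G$-closure intersected with $\go$, so $\big(\overline{K^\circ}\big)^\circ \subseteq \overline{K^\circ} \cap \go$, and the latter is precisely the closure of $K^\circ$ in $\go$. Since $K^\circ \subseteq K$ and $K$ is relatively closed in $\go$, this relative closure is contained in $K$, whence $\big(\overline{K^\circ}\big)^\circ \subseteq K$. But $\big(\overline{K^\circ}\big)^\circ$ is open in $\G$ and contained in $K$, so by maximality of the interior it is contained in $K^\circ$. Combining the two inclusions yields $K^\circ = \big(\overline{K^\circ}\big)^\circ$, i.e.\ $K^\circ$ is regular open in $\G$.

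The main obstacle I anticipate is bookkeeping about where interiors and closures are taken: one must check carefully that the interior of $K$ in $\G$ really coincides with its interior relative to $\go$ (guaranteed by $\go$ being open), and that intersecting the $\G$-closure with $\go$ recovers the relative closure. Once these identifications are in place, the only genuinely nontrivial input is Lemma~\ref{lem:go reg open}, which is exactly what channels the effectiveness hypothesis into the argument.
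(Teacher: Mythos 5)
Your proposal is correct and follows essentially the same route as the paper's proof: the crucial step in both is to use Lemma~\ref{lem:go reg open} (effectiveness) to force $\big(\overline{K^\circ}\big)^\circ \subseteq \go$, and then to use the relative closedness of $K$ in $\go$ to conclude $\big(\overline{K^\circ}\big)^\circ \subseteq K$, hence $\subseteq K^\circ$. The only difference is that you spell out the routine steps (the trivial inclusion $K^\circ \subseteq \big(\overline{K^\circ}\big)^\circ$ and the maximality-of-interior argument) that the paper leaves implicit.
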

\begin{proof}
We have $\overline{K^\circ} \subseteq \overline{\go}$, and so $\big(\overline{K^\circ}\big)^\circ
\subseteq \big(\overline{\go}\big)^\circ = \go$ by Lemma~\ref{lem:go reg open}. Hence
\[
\big(\overline{K^\circ}\big)^\circ \subseteq \overline{K} \cap \go.
\]
Since $K$ is closed in the relative topology in $\go$, we deduce that
$\big(\overline{K^\circ}\big)^\circ \subseteq K$.
\end{proof}

\begin{lemma} \label{lem:bisections reg open}
Suppose that $\G$ is locally compact \'etale groupoid. Suppose that $\G$ is effective. If $B
\subseteq \G$ is an open bisection and $K \subseteq B$ is relatively closed in $B$, then $K^\circ$
is regular open. In particular, if $K \subseteq B$ is compact, then $K^\circ$ is regular open.
\end{lemma}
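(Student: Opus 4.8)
The goal is to show that $V:=K^\circ$, with the interior computed in $\G$, satisfies $V=(\overline V)^\circ$. Since $V\subseteq(\overline V)^\circ$ for every open set, the whole content is the reverse inclusion $(\overline V)^\circ\subseteq V$. I would begin with two routine observations. Because $B$ is open in $\G$, the interior of $K$ computed in $\G$ agrees with the one computed in $B$; in particular $V\subseteq B$, and since $K$ is relatively closed in $B$ we get $\overline V\cap B=\overline{V}^{\,B}\subseteq K$. It follows that any point of $(\overline V)^\circ$ lying in $B$ already lies in $K^\circ=V$ (intersect a neighbourhood contained in $\overline V$ with the open set $B$). Thus the problem reduces to proving $(\overline V)^\circ\subseteq B$.

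Suppose not, and fix $\gamma\in(\overline V)^\circ\setminus B$; since $\G$ is \'etale, choose an open bisection $B'$ with $\gamma\in B'\subseteq(\overline V)^\circ\subseteq\overline V$. I claim effectiveness forces $s(\gamma)\notin s(B)$. Consider the product $W:=B'B\inv$, which is again an open bisection of $\G$. Every element of $W$ has the form $\eta\zeta\inv$ with $\eta\in B'$, $\zeta\in B$ and $s(\eta)=s(\zeta)$, so $\zeta=(s\restr{B})\inv(s(\eta))$. For such an $\eta$, since $\eta\in B'\subseteq\overline V$ there is a net $(v_i)$ in $V\subseteq B$ with $v_i\to\eta$; then $s(v_i)\to s(\eta)=s(\zeta)$, and continuity of $(s\restr{B})\inv$ gives $v_i\to\zeta$ as well. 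As $\go$ is Hausdorff and $r$ is continuous, the two limits force $r(\eta)=r(\zeta)$, so $\eta$ and $\zeta$ are parallel and $\eta\zeta\inv\in\Iso(\G)$. Hence $W\subseteq\Iso(\G)$, and effectiveness ($\Iso(\G)^\circ=\go$) yields $W\subseteq\go$; equivalently $\eta=\zeta\in B$ for every $\eta\in B'$ with $s(\eta)\in s(B)$. Since $\gamma\notin B$, this proves $s(\gamma)\notin s(B)$.

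It remains to exclude this ``boundary escape'', and this is exactly where compactness of $K$ enters. If $K$ is compact then $s(K)$ is a compact, hence closed, subset of the Hausdorff space $\go$; therefore $s(\gamma)\in s(\overline V)\subseteq\overline{s(V)}\subseteq\overline{s(K)}=s(K)\subseteq s(B)$, contradicting $s(\gamma)\notin s(B)$. Hence no such $\gamma$ exists, so $(\overline V)^\circ\subseteq B$, and combined with the first paragraph $(\overline V)^\circ\subseteq V$; that is, $K^\circ$ is regular open. The same argument in fact establishes the conclusion whenever $s(K)$ is relatively closed in $\go$, and by Lemma~\ref{lem:lchbis} the bisection $B$ is Hausdorff, so a compact $K\subseteq B$ is automatically relatively closed in $B$ with $s(K)$ closed; this is precisely the ``in particular'' clause.

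The step I expect to be delicate is the containment $(\overline V)^\circ\subseteq B$, and within it the tension between the second and third paragraphs. Effectiveness by itself only shows that a putative point $\gamma\in(\overline V)^\circ\setminus B$ must sit over the topological boundary of $s(B)$, where it has no partner in $B$ from which to build an isotropy element, so effectiveness alone cannot exclude it. What removes the escape is the closedness of $s(K)$ in $\go$, which compactness supplies; this is the reason the compact hypothesis (the form in which the lemma is applied to compact open bisections) is the natural one, and it is the crux of the whole proof.
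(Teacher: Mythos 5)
Your argument for the compact case is correct, and it takes a genuinely different route from the paper's. The paper transports everything to the unit space: it introduces the homeomorphism $T_B : r(B)\G \to s(B)\G$, $T_B(\gamma)=\alpha_{r(\gamma)}\inv\gamma$, which restricts to $s$ on $B$, argues that $K^\circ$ is regular open if and only if $s(K)^\circ$ is, and then invokes Corollary~\ref{cor:closed unit set} (ultimately Lemma~\ref{lem:go reg open}: effectiveness makes $\go$ regular open). You instead work directly in $\G$: from a putative $\gamma\in\big(\overline{K^\circ}\big)^\circ\setminus B$ you manufacture the open set $B'B\inv\subseteq\Iso(\G)$, let effectiveness collapse it into $\go$ to force $s(\gamma)\notin s(B)$, and then use closedness of $s(K)$ in $\go$ to reach a contradiction. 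Both proofs use effectiveness through the same principle (an open subset of $\Iso(\G)$ must lie in $\go$), but yours is self-contained and isolates exactly what is needed. One step needs polish: in the chain $s(\gamma)\in s(\overline V)\subseteq\overline{s(V)}\subseteq\overline{s(K)}=s(K)$ the closures are taken in $\G$, and there a compact subset of $\go$ need \emph{not} be closed, because in a non-Hausdorff groupoid units can converge to non-units (this happens in the groupoid of Subsection~\ref{aidanex}). Since $s(\gamma)\in\go$, the repair is immediate: intersect with the open set $\go$, so that $s(\gamma)\in\overline{s(K)}\cap\go$, which is the closure of $s(K)$ relative to $\go$ and hence equals $s(K)$.

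Your closing diagnosis---that effectiveness alone cannot exclude the boundary escape, and that closedness of $s(K)$ in $\go$, supplied by compactness, is the crux---is not just commentary; it identifies a real defect in the lemma as stated. The first assertion, for arbitrary relatively closed $K\subseteq B$, is false: take $\G=\go=\mathbb{R}$ (a trivial, hence effective, locally compact \'etale groupoid) and $B=K=\mathbb{R}\setminus\{0\}$; then $K$ is relatively closed in $B$, but $K^\circ=K$ is not regular open, since $\big(\overline{K^\circ}\big)^\circ=\mathbb{R}$. Correspondingly, the paper's proof of the general case contains two unjustified steps, both of which fail in this example: the containment $\overline{K}\subseteq r(B)\G$ is asserted without proof, and the inference that $s(K)$, being relatively closed in the \emph{open} set $s(B)$, is relatively closed in $\go$ is invalid (compare $(0,\tfrac{1}{2}]$ inside $(0,1)\subseteq\mathbb{R}$; that inference would require $s(B)$ closed, not open). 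Both steps do become valid when $K$ is compact, since then $r(K)$ and $s(K)$ are compact and hence closed in the Hausdorff space $\go$---and the compact case is the only one the paper ever uses, namely for compact open bisections. So your proposal proves exactly the part of the lemma that is true and that is needed downstream, and your account of where compactness enters is more accurate than the paper's own treatment.
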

\begin{proof}
For $x \in r(B)$, let $\alpha_x$ be the unique element of $B$ with $r(\alpha_x) = x$, and define
$T_B : r(B) \G \to s(B) \G$ by $T_B(\gamma) = \alpha_{r(\gamma)}^{-1} \gamma$. Since $B$ is a
bisection, $T_B$ is a homeomorphism between the open subsets $r(B) \G$ and $s(B) \G$. Since
$T_B(V) = s(V)$ for all $V \subseteq B$, we have $T_B(K^\circ) = s(K^\circ) = s(K)^\circ$. Since
$T_B$ is a homeomorphism and $\overline{K} \subseteq r(B)\G = \operatorname{dom}(T_B)$, we also
have
\[
T_B\big(\big(\overline{K}\big)^\circ\big)
    = T_B\big(\overline{K}\big)^\circ
    = \big(\overline{T_B(K)}\big)^\circ
    = \big(\overline{s(K)}\big)^\circ.
\]
So $K^\circ$ is regular open if and only if $s(K)^\circ$ is regular open. Since $T_B$ is a
homeomorphism, $s(K)$ is relatively closed in $s(B)$. Since $s(B)$ is open, it follows that $s(K)$
is relatively closed in $\go$. So $s(K)^\circ$ is regular open by Corollary~\ref{cor:closed unit
set}. The final statement follows because $B$ is Hausdorff and so compact subsets of $B$ are
relatively closed in $B$.
\end{proof}

\medskip

\noindent \textbf{Ample groupoids and Steinberg algebras}

\medskip

An \emph{ample groupoid}  is an \'etale groupoid that has a basis of {\em compact} open
bisections. This is the class of groupoids for which there is an associated \emph{Steinberg
algebra} \cite{St}. Throughout this paper, $\KK$ denotes a field. The Steinberg $\KK$-algebra
associated to an ample groupoid $\G$ is
\[A_\KK(\G):= \lsp\{1_B : B \text{ is a compact open bisection }\}\]
where $1_B$ denotes the characteristic function of $B$, and where addition and scalar
multiplication are defined pointwise and multiplication is given by convolution:
\[ f*g(\gamma)=\sum_{r(\eta)=r(\gamma)} f(\eta)g(\eta\inv\gamma).\]

In the Hausdorff setting, the support of every function in $A_\KK(\G)$ is compact open. This is
not true in the non-Hausdorff setting and pinpointing exactly where a function is nonzero can be
tricky.  For $f \in A_\KK(\G)$ we can write  \[f= \sum_{D \in F}a_D1_D\] where $F$ is a finite
collection of compact open bisections and for each $D \in F, a_D \in \KK$. We can ``disjointify''
\footnote{We point out that there is an error in the description of ``disjointification'' in
\cite{CFST}.}
\footnote{Caution:  in a non-Hausdorff space, the intersection of compact sets might not be
compact.  Also, compact sets need not be closed.}
the collection $F$ and write $f$ as a sum of characteristic functions, each of which is nonzero on
a set of the form
\begin{equation}\label{eq:support}\left(\bigcap_{B \in F_1} B \right) \setminus \left(\bigcup_{D \in F_2} D\right)\end{equation}
where $F_1$ and $F_2$ are finite collections of compact open bisections.
Thus, in general, we can say that  $f$ is nonzero precisely on a finite union of
pairwise disjoint sets the form given
in \eqref{eq:support}.  Further, if $k \in f(\G)\setminus \{0\}$, then
$f^{-1}(k)$ is also equal to a finite union of sets of the form given in \eqref{eq:support}.

\medskip

\noindent \textbf{The ``support'' of a function}

\medskip

If $X$ is a topological space, $\KK$ is a (possibly topological) field, and $f:X\to \KK$ is a function, we write
\begin{equation}\label{eq:suppdef}
\supp(f) = \{x \in X: f(x) \neq 0\} \text{ and}
\end{equation}
\begin{equation}\label{eq:closedsuppdef}
\cls(f) = \overline{\{x \in X: f(x) \neq 0\}}.
\end{equation}
and call these the {\em support} and {\em closed support} of $f$, respectively.

\section{Simplicity of Steinberg algebras associated to ample groupoids}
\label{sec:st}

In the following lemma, we introduce the conditions we later connect to simplicity.

\begin{lemma}
\label{lem:key}
Let $\G$ be an ample groupoid such that $\go$ is Hausdorff.
Consider the following statements:
\begin{enumerate}
 \item \label{lemit1:key} Every compact open subset of $\G$ is regular open.
 \item \label{lemit2:key} For every $f \in A_\KK(\G)$ and $k \in f(\G) \setminus \{0\}$,
the set $f^{-1}(k)$ has nonempty interior.
\item \label{lemit3:key}  For every nonzero $f \in A_\KK(\G)$ the set
$\supp(f)$
has nonempty interior.
\end{enumerate}
Then \eqref{lemit1:key} $\implies$ \eqref{lemit2:key} $\implies$ \eqref{lemit3:key}.
Firther if $\KK$ has characteristic 0, then  \eqref{lemit1:key} and \eqref{lemit2:key} are equivalent.
\end{lemma}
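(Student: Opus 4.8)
The plan is to prove the two forward implications directly and to obtain the characteristic-zero equivalence by proving the contrapositive of \eqref{lemit2:key}$\Rightarrow$\eqref{lemit1:key}. The implication \eqref{lemit2:key}$\Rightarrow$\eqref{lemit3:key} is immediate: given a nonzero $f$, pick $\gamma$ with $k:=f(\gamma)\neq 0$, so that $k\in f(\G)\setminus\{0\}$; then \eqref{lemit2:key} gives that $f^{-1}(k)$ has nonempty interior, and since $f^{-1}(k)\subseteq\supp(f)$ we are done. So the work is in the other two steps, and I note in advance that \eqref{lemit1:key}$\Rightarrow$\eqref{lemit2:key} will use nothing about $\KK$, while the reverse will force the characteristic-zero hypothesis.

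For \eqref{lemit1:key}$\Rightarrow$\eqref{lemit2:key} I would write $f=\sum_{D\in F}a_D1_D$ with $F$ a finite set of compact open bisections, and for each $S\subseteq F$ set $C_S=\bigcap_{D\in S}D$ and $W_S=\bigcup_{D\in F\setminus S}D$, so that $f$ takes the constant value $\sum_{D\in S}a_D$ on the cell $A_S:=C_S\setminus W_S$, a set of the form \eqref{eq:support}. Then $f^{-1}(k)$ is the union of those $A_S$ with $\sum_{D\in S}a_D=k$, and since $k\in f(\G)$ at least one such cell $A_{S^*}$ is nonempty. The decisive observation is that $C_{S^*}$ is a finite intersection of compact open bisections, hence regular open because each $D$ is regular open by \eqref{lemit1:key} and intersections of regular open sets are regular open; and that $W_{S^*}$ is a finite union of compact open bisections, hence \emph{itself} compact open, hence regular open by \eqref{lemit1:key}. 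Lemma~\ref{lem:regopendif} applied to the regular open sets $C_{S^*}$ and $W_{S^*}$ then shows that the nonempty set $A_{S^*}=C_{S^*}\setminus W_{S^*}$ has nonempty interior, and this interior lies inside $f^{-1}(k)$. The one thing to be careful about is not to regularize $W_{S^*}$ by replacing it with the interior of its closure (which loses control in the non-Hausdorff setting), but instead to apply \eqref{lemit1:key} directly to the compact open union $W_{S^*}$.

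For the converse under characteristic zero I would argue the contrapositive: assuming \eqref{lemit1:key} fails, I construct $f$ and $k\neq 0$ witnessing the failure of \eqref{lemit2:key}. Choose a compact open set $B$ that is not regular open and a point $y\in(\overline{B})^\circ\setminus B$. By ampleness, write $B=\bigcup_{i=1}^n B_i$ with each $B_i$ a compact open bisection, and choose a compact open bisection $W$ with $y\in W\subseteq(\overline{B})^\circ$. Since $W$ is open and $W\subseteq\overline{B}$, the set $B\cap W$ is dense in $W$, so $W\setminus B$ has empty interior while still containing $y$. The key construction is the weighted counting function
\[
g:=(n+1)\,1_{W}+\sum_{i=1}^{n}1_{B_i}\in A_\KK(\G),
\]
whose value at $\gamma$ is $(n+1)+c(\gamma)$ when $\gamma\in W$ and $c(\gamma)$ otherwise, where $c(\gamma)=\#\{i:\gamma\in B_i\}\in\{0,\dots,n\}$. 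In characteristic zero the integers $0,\dots,2n+1$ are pairwise distinct in $\KK$, so $g(\gamma)=n+1$ holds exactly when $\gamma\in W$ and $c(\gamma)=0$, that is, exactly on $W\setminus B$. Hence $g^{-1}(n+1)=W\setminus B$ is nonempty with empty interior, and $n+1\neq 0$, contradicting \eqref{lemit2:key}.

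I expect this last construction to be the main obstacle, and the place where the hypothesis genuinely bites. The difficulty is to pin down a \emph{single} nonzero value that is attained only on the thin set $W\setminus B$ and nowhere else in $\G$; this is exactly what the large weight $n+1$ (chosen to dominate every possible multiplicity $c(\gamma)$, so that no point outside $W$ can reach the value $n+1$) and the characteristic-zero assumption (keeping $n+1$ distinct from $0,\dots,n$ and nonzero, so that the value $n+1$ records precisely the points of $W$ missing $B$) together achieve. This is the step at which the field enters, consistent with the fact that the implication is expected to fail in positive characteristic.
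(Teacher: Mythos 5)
Your proof is correct and takes essentially the same route as the paper's: the cell decomposition of $f^{-1}(k)$ combined with Lemma~\ref{lem:regopendif} (using that finite unions of compact opens are compact open, hence regular open by \eqref{lemit1:key}, and that intersections of regular opens are regular open) for \eqref{lemit1:key}$\implies$\eqref{lemit2:key}, and the contrapositive witness construction for the characteristic-zero converse. The only difference is cosmetic: the paper's witness function is $1_D - \sum_{B\in F}1_B$ with target value $1$, while yours is $(n+1)1_W+\sum_{i=1}^n 1_{B_i}$ with target value $n+1$; both isolate the same thin set as a level set and use characteristic zero in exactly the same way.
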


\begin{proof}
Suppose item~\eqref{lemit1:key} holds.  Fix $f \in A_\KK(\G)$ and $k \in f(\G) \setminus \{0\}$.
As described in~\eqref{eq:support},  there are finite collections $F_1$ and $F_2$ of
compact open bisections such that the nonempty difference
\[
 \big(\bigcap_{B \in F_1} B \big) \setminus \big(\bigcup_{D \in F_2} D\big) \subseteq f^{-1}(k).
\]
So it suffices to show that, writing
\[B:=\bigcap_{B \in F_1} B \quad \text{and} \quad D:=\bigcup_{D \in F_2} D,\]
the set $B \setminus D$ has nonempty interior.

Since $D$ is compact open, it is regular open by assumption. Since the intersection of
regular open sets is again regular open, $B$ is regular open too. Thus
we can apply Lemma~\ref{lem:regopendif} to see that $B \setminus D$ has nonempty interior giving
\eqref{lemit1:key}$\;\implies\;$\eqref{lemit2:key}.  The implication \eqref{lemit2:key} implies \eqref{lemit3:key} is immediate.

Now suppose $\KK$ has characteristic 0.  To see that \eqref{lemit2:key} implies \eqref{lemit1:key}, we show the contrapositive.  Suppose there exists a compact open
set $V$ that is not regular open.  Because $\G$ is ample, we can write \[V = \bigcup_{B \in F} B\]
where $F$ is a finite collection of compact open bisections.  Since $V$ is not regular open, we can find a compact
open bisection $D$ such that $D \subseteq \overline{V}$ but $D \nsubseteq V$.  Thus $D \setminus V$ is nonempty and has empty interior.  Consider the function \[f = 1_D - \sum_{B \in F} 1_B \in A_\KK(\G).\]
Since $\KK$ has characteristic 0,  $f^{-1}(1) = D \setminus V$ and hence item \eqref{lemit2:key}  is false
\end{proof}

We now record a consequence of $\supp(f)$ having empty interior.

\begin{lemma}\label{lem:SupportElementBoundary}
	Let $\G$ be an ample groupoid such that $\go$ is Hausdorff, and suppose we have $f\in A_\KK(\G)$ such that $\supp(f)$ has empty interior.
	Then for all $\gamma\in\supp(f)$, there exists some compact open bisection $D$ such that $\gamma\in \overline{D}\setminus D$.
\end{lemma}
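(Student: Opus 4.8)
The plan is to work directly from the expression of $f$ as a finite $\KK$-linear combination of characteristic functions of compact open bisections and then argue by contradiction, using the empty-interior hypothesis as the only lever. First I would write $f = \sum_{B \in F} a_B 1_B$, where $F$ is a finite collection of compact open bisections and each $a_B \in \KK$. Fixing $\gamma \in \supp(f)$, I would partition $F$ into the subcollection $F^+ := \set{B \in F : \gamma \in B}$ and its complement $F^- := \set{B \in F : \gamma \notin B}$. Evaluating at $\gamma$ kills exactly the terms indexed by $F^-$, so $f(\gamma) = \sum_{B \in F^+} a_B$, and this is nonzero since $\gamma \in \supp(f)$; in particular $F^+$ is nonempty.

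Next I would set up the governing dichotomy, according to whether $\gamma$ meets the closure of one of the absent bisections. If there exists $B \in F^-$ with $\gamma \in \overline{B}$, then, because $\gamma \notin B$, we immediately have $\gamma \in \overline{B} \setminus B$, and the lemma holds with $D = B$. So it remains to rule out the alternative, namely that $\gamma \notin \overline{B}$ for every $B \in F^-$.

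In that remaining case I would build an open neighbourhood of $\gamma$ on which $f$ is constant and nonzero. For each $B \in F^-$ the set $\G \setminus \overline{B}$ is open and contains $\gamma$, and each $B \in F^+$ is open and contains $\gamma$, so
\[
O := \Big( \bigcap_{B \in F^+} B \Big) \cap \Big( \bigcap_{B \in F^-} (\G \setminus \overline{B}) \Big)
\]
is an open neighbourhood of $\gamma$ (the first intersection being over the nonempty set $F^+$). On $O$ every $1_B$ with $B \in F^+$ is identically $1$ and every $1_B$ with $B \in F^-$ is identically $0$, so $f$ restricts to the constant value $\sum_{B \in F^+} a_B = f(\gamma) \neq 0$ on $O$. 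Hence $O \subseteq \supp(f)$, contradicting the assumption that $\supp(f)$ has empty interior. This contradiction forces the first alternative and completes the proof.

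The one genuinely delicate point—and the place where the non-Hausdorff hypothesis bites—is the bookkeeping in the second case: since a compact open bisection need not be closed, a term $1_B$ with $B \in F^-$ can be nonzero at points arbitrarily close to $\gamma$, so it is essential to excise $\overline{B}$ rather than merely $B$ when isolating a neighbourhood on which the absent terms all vanish. Once that subtlety is respected, verifying that $f$ is locally constant on $O$ is routine, and no further input beyond ampleness and the defining decomposition of elements of $A_\KK(\G)$ is needed.
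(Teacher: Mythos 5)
Your proof is correct and takes essentially the same approach as the paper's: your partition of $F$ into $F^{+}$ and $F^{-}$ plays exactly the role of the paper's disjointification (where $\gamma$ lies in $\big(\bigcap_{B\in F_1}B\big)\setminus\big(\bigcup_{D\in F_2}D\big)$), and your dichotomy---either $\gamma\in\overline{B}\setminus B$ for some absent $B$, or else an open neighbourhood of $\gamma$ lies inside $\supp(f)$, contradicting the empty-interior hypothesis---is precisely the paper's argument. The only cosmetic difference is that the paper derives the contradiction via an open subset of the level set $f^{-1}(k)$ rather than of $\supp(f)$ directly.
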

\begin{proof}
	Suppose that $f\in A_\KK(\G)$ such that $\supp(f)$ has empty interior.
	If $\supp(f) = \emptyset$ we are done, so suppose $\gamma\in \supp(f)$ and write $f(\gamma) = k \neq 0$.
	Then as in the proof of Lemma~\ref{lem:key}, there are finite collections $F_1$ and $F_2$ of
	compact open bisections such that $\gamma\in \big(\bigcap_{B \in F_1} B \big) \setminus \big(\bigcup_{D \in F_2} D\big)\subseteq f^{-1}(k)$.
	We claim that $\gamma$ is in the closure of some element of $F_2$.
	If not, then there must be some open set $O$ around $\gamma$ such that $O\cap D = \emptyset$ for all $D\in F_2$, which would imply that $
	O\cap\big(\bigcap_{B \in F_1} B \big)$ is an open set inside $f^{-1}(k)$, a contradiction.
	Hence there exists $D\in F_2$ such that $\gamma\in \overline D$, and since $\gamma$ is not in $D$ by assumption we are done.
\end{proof}

In order to determine when the groupoid condition
of Lemma~\ref{lem:key}~\eqref{lemit1:key}
holds, it is not enough to show that every
compact open bisection is regular open.
In fact, when $\G$ is effective, every compact open bisection
is regular open by Lemma~\ref{lem:bisections reg open}
yet Lemma~\ref{lem:key}~\eqref{lemit1:key} can still fail to hold.
See for example the Grigorchuk group of Section~\ref{sec:grig} which demonstrates that
 (3) is strictly weaker than (1) and (2).

\subsection{Singular elements}
In this subsection, we identify an important ideal of $A_\KK(\G)$ which does not appear in the non-Hausdorff case.
Recall that $A_\KK(\G)$ consists of all functions of the form
\[
f = \sum_{D\in F}a_D1_D
\]
where $F$ is a finite set of compact open bisections and $a_D\in \KK$.
For any subset $J\subseteq F$, define
\[
M_J:= \left(\bigcap_{B\in J}B\right) \setminus \left( \bigcup_{D\notin J}D \right) = \left(\bigcap_{B\in J}B\right) \cap \left(\bigcap_{D\notin J} (\G\setminus D)\right)
\]
so that the union of the elements of $F$ can be written as the {\em disjoint} union of the $M_J$.
Moreover, $f$ is constant on $M_J$, so we can rewrite $f$ as
\begin{equation}\label{eq:MJ}
f = \sum_{\emptyset \neq J\subseteq F}c_J1_{M_J}
\end{equation}
where the sum ranges over all nonempty subsets $J\subseteq F$, and $c_J\in \KK$. One notices that
\[
O_J : = \bigcap_{B\in J}B
\]
is an open bisection (which is not necessarily compact) while
\[
C_J:=\bigcap_{D\notin J}\G\setminus D
\]
is a closed set.
Given that $M_J = O_J\cap C_J$, we see that $M_J$ is a relatively closed subset of the open bisection $O_J$.
This leads us to the following definition.

\begin{definition}\label{def:singular}
	We will say that $f\in A_\KK(\G)$ is {\em singular} if $f$ can be written as a linear combination of the form
	\begin{equation}\label{eq:singularform}
	f = \sum_{i = 1}^nc_i1_{M_i}
	\end{equation}
	where the collection of $M_i$'s is pairwaise disjoint, each $M_i$ is a relatively closed subset of some open bisection, and each $M_i$ has empty interior.
	We let
	\begin{equation}\label{eq:singulardef}
	\s_\KK(\G) = \{f\in A_\KK(\G): f\text{ is singular}\}.
	\end{equation}
\end{definition}

We now prove a pair of topological lemmas which will help us characterize singular elements.
\begin{lemma}\label{lem:relclosed}
	Let $M$ be a relatively closed subset of an open set $O$ in some topological space, and suppose $M$ has empty interior. Then $\overline M\cap O = M$ and $\overline{M}$ has empty interior.
\end{lemma}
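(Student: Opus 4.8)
The plan is to prove the two assertions in turn, using the first to establish the second.

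For the identity $\overline{M}\cap O = M$, I would simply unpack the hypothesis that $M$ is relatively closed in $O$. By definition this means $M$ is closed in the subspace topology of $O$, so $M$ coincides with its closure computed inside $O$. The standard fact that the subspace closure of $M$ in $O$ equals $\overline{M}\cap O$ then yields $M = \overline{M}\cap O$ directly. No genuine work is required here; it is a reformulation of the definition together with one routine topological identity.

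For the second assertion, that $\overline{M}$ has empty interior, I would argue by contradiction. Suppose $U:=(\overline{M})^\circ$ is nonempty. The key point --- and the only real obstacle --- is that a priori this interior could sit inside $\overline{M}\setminus O$, so I must first force it to meet $O$. To do this, fix any $x\in U$; since $x\in\overline{M}$ and $U$ is an open neighbourhood of $x$, the density of $M$ in $\overline{M}$ produces a point $y\in U\cap M$, and because $M\subseteq O$ this $y$ witnesses $U\cap O\neq\emptyset$. Thus $W:=U\cap O$ is a nonempty open set.

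To finish, I would pull the argument back into $O$ and invoke the first assertion. Every point of $W$ lies in $U\subseteq\overline{M}$ and in $O$, so $W\subseteq\overline{M}\cap O = M$. Hence $M$ contains the nonempty open set $W$, contradicting the hypothesis that $M$ has empty interior, and so $(\overline{M})^\circ=\emptyset$. I expect the density step --- producing $y\in U\cap M$ and thereby showing that $U$ must meet $O$ --- to be the crux of the proof; once $W$ is known to lie in $O$, the first assertion closes the argument immediately.
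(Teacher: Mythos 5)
Your proof is correct, and it takes a slightly different route from the paper's in the second assertion. For the identity $\overline M\cap O=M$ you and the paper do essentially the same thing: the paper writes $M=O\cap C$ with $C$ closed and chases inclusions, while you quote the subspace-closure fact; these are the same content. The difference is in showing $\big(\overline M\big)^\circ=\emptyset$. The paper takes an open $U\subseteq\overline M$, uses the first part to get $U\cap O\subseteq M$ and hence $U\cap O=\emptyset$ by empty interior of $M$, and then must dispose of the remaining possibility that $U$ sits in $\overline O\setminus O$; it does so by observing $U\subseteq\partial O$ and invoking the general fact that the boundary of an open set has empty interior. You instead rule out that possibility at the outset: since any nonempty open $U\subseteq\overline M$ is a neighbourhood of points of $\overline M$, it must meet $M$ (this is just the definition of closure), hence meets $O$, so $U\cap O$ is a nonempty open subset of $M$ --- a single contradiction with the hypothesis. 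Your argument is a bit more economical, needing only one contradiction and no auxiliary lemma; in fact the paper's boundary fact is itself proved by exactly the density observation you use, so you have effectively inlined it. What the paper's organization buys is the isolation of a reusable topological fact (open boundaries are nowhere dense); what yours buys is a shorter, self-contained argument.
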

\begin{proof}
	Write $M = O\cap C$ for some closed set $C$.
	Then $\overline M \subseteq C$ and $\overline M \subseteq \overline O$.
	Then
	\[
	M = M\cap O \subseteq\overline M \cap  O \subseteq C\cap O = M \hspace{1cm} \Rightarrow \overline M\cap O = M.
	\]
	Suppose that $U\subseteq \overline M$ is an open set.
	Then
	\[
	U\cap O \subseteq \overline M \cap O = M
	\]
	and since $M$ has empty interior, we have $U\cap O = \emptyset$.
	We have $U\subseteq \overline M \subseteq \overline O$, so
	\[
	U \subseteq \overline O \setminus O \subseteq \partial O
	\]
	and since the boundary of any open set in any topological space has empty interior, we have $U = \emptyset$ and hence $\overline{M}$ has empty interior.
\end{proof}

\begin{lemma}\label{lem:unionemptyinterior}
	Suppose that $\{O_j\}_{j = 1}^{n}$ is a finite collection of open sets in a topological space, and suppose that for all $j$, $M_j$ is a relatively closed subset of $O_j$ with empty interior. Then $\cup_{j=1}^n M_j$ has empty interior.
\end{lemma}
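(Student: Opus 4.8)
The plan is to pass from the sets $M_j$, which need not be closed, to their closures, and then to invoke the elementary fact that a finite union of closed sets with empty interior has empty interior. First I would note that each $M_j$ is a relatively closed subset of the open set $O_j$ with empty interior, so Lemma~\ref{lem:relclosed} applies directly and tells me that $\overline{M_j}$ has empty interior. This is the step that absorbs the only genuine feature of the hypothesis, namely that the $O_j$ may all be different: once I have replaced each $M_j$ by the closed set $\overline{M_j}$, the open sets $O_j$ disappear from the argument. Since $\bigcup_{j=1}^n M_j \subseteq \bigcup_{j=1}^n \overline{M_j}$, it then suffices to show that $\bigcup_{j=1}^n \overline{M_j}$ has empty interior.

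It remains to prove that a finite union of closed sets with empty interior has empty interior, which I would do by induction on $n$, the case $n = 1$ being immediate. The inductive step rests on the two-set case: if $C_1$ and $C_2$ are closed subsets of a topological space $X$ with empty interior and $U \subseteq C_1 \cup C_2$ is open, then $U \setminus C_1 = U \cap (X \setminus C_1)$ is open and contained in $C_2$, hence empty because $C_2$ has empty interior; thus $U \subseteq C_1$, and since $C_1$ has empty interior this forces $U = \emptyset$. Applying this with $C_1 = \overline{M_n}$ and $C_2 = \bigcup_{j=1}^{n-1} \overline{M_j}$---the latter closed and, by the inductive hypothesis, of empty interior---completes the induction and hence the proof.

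I do not expect any serious obstacle here. The only point requiring care is that the sets $M_j$ live in possibly distinct open sets $O_j$, so I cannot run the argument inside a single ambient open set; Lemma~\ref{lem:relclosed} is precisely what lets me sidestep this by working with the closures instead. Note also that no completeness or Baire-type hypothesis on the ambient space is needed, since for \emph{finite} unions of closed nowhere dense sets the conclusion is purely elementary.
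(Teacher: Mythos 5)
Your proof is correct and follows essentially the same route as the paper's: both invoke Lemma~\ref{lem:relclosed} to obtain closed sets with empty interior and then run a finite induction whose key step is removing one closed, empty-interior set from a putative nonempty open subset of the union. The only difference is organizational---you pass to the closures $\overline{M_j}$ at the outset and reduce to the purely classical fact that a finite union of closed sets with empty interior has empty interior, whereas the paper runs the same induction by contradiction directly on the sets $M_j = O_j \cap \overline{M_j}$; both versions are equally valid.
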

\begin{proof}
	By Lemma \ref{lem:relclosed} we can assume that for each $j$, $M_j = O_j \cap C_j$ for a closed set $C_j$ with empty interior.  By way of contradiction
	suppose that $U\subseteq \cup_{j=1}^n M_j$ is a nonempty open set.
	Then
	\[
	U \setminus M_1 = U \setminus (O_1\cap C_1) = (U\setminus O_1) \cup (U\setminus C_1).
	\]
	Since $C_1$ has empty interior by assumption, $U$ is not contained in $C_1$, so $U\setminus C_1$ is a nonempty open set contained in $U \setminus M_1$.
	Since
	\[
	U\setminus M_1 = U \cap M_1^c \subseteq \left(\bigcup_{j=1}^n M_j\right) \cap M_1^c\subseteq \bigcup_{j = 2}^nM_j
	\]
	we must have that $\cup_{j=2}^n M_j$ has nonempty interior.
	Continuing inductively, we end up concluding that $M_n$ has nonempty interior, a contradiction.
	Hence, $\cup_{j=1}^n M_j$ has empty interior.
\end{proof}
We now have the following characterization of singular elements.
\begin{prop}\label{prp:singular empty int}
	A function $f\in A_\KK(\G)$ is singular if and only if $\supp(f)$ has empty interior.
\end{prop}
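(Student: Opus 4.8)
The plan is to prove both implications directly, leaning on the disjointification of $f$ recorded just before Definition~\ref{def:singular} together with Lemma~\ref{lem:unionemptyinterior}.

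For the forward implication, I would start from a singular presentation $f = \sum_{i=1}^n c_i 1_{M_i}$ as in \eqref{eq:singularform}, where the $M_i$ are pairwise disjoint, each relatively closed in some open bisection $O_i$, and each of empty interior. Because the $M_i$ are pairwise disjoint, the value of $f$ at a point $x$ is $c_i$ when $x \in M_i$ and $0$ otherwise, so $\supp(f) = \bigcup_{i : c_i \neq 0} M_i \subseteq \bigcup_{i=1}^n M_i$. Since open bisections are open sets, Lemma~\ref{lem:unionemptyinterior} applies to the finite family $\{M_i\}$ and shows that $\bigcup_{i=1}^n M_i$ has empty interior; as the interior operation is monotone, $\supp(f)$ inherits empty interior. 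This direction requires essentially no new ideas beyond invoking the lemma.

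For the converse, I would use the canonical presentation $f = \sum_{\emptyset \neq J \subseteq F} c_J 1_{M_J}$ from \eqref{eq:MJ}, in which the sets $M_J = O_J \cap C_J$ are pairwise disjoint and, as observed in the text preceding Definition~\ref{def:singular}, each $M_J$ is relatively closed in the open bisection $O_J$. The only missing ingredient for singularity is that each relevant $M_J$ has empty interior. Here I would observe that whenever $c_J \neq 0$ we have $M_J \subseteq \supp(f)$, since $f$ takes the nonzero value $c_J$ throughout $M_J$; as $\supp(f)$ has empty interior by hypothesis, monotonicity of interior forces $M_J$ to have empty interior as well. Discarding the terms with $c_J = 0$ then leaves a presentation of $f$ of exactly the form \eqref{eq:singularform}, so $f \in \s_\KK(\G)$.

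The argument is short once these two tools are in place, so there is no serious analytic obstacle. The one point needing care is to confirm that the canonical presentation genuinely delivers the two structural hypotheses of Definition~\ref{def:singular}---pairwise disjointness of the $M_J$ and relative closedness in an open bisection---before worrying about empty interiors; both are already established in the discussion preceding the definition, so the real content of the proof is the bookkeeping that $\supp(f)$ controls the interiors of the individual pieces $M_J$, via the elementary monotonicity of interiors combined with Lemma~\ref{lem:unionemptyinterior}.
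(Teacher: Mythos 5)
Your proposal is correct and follows essentially the same route as the paper's proof: one direction writes $f$ in the canonical form \eqref{eq:MJ}, discards the terms with $c_J = 0$, and uses $M_J \subseteq \supp(f)$ to conclude each remaining piece has empty interior; the other direction bounds $\supp(f)$ by the finite union of the $M_i$ and invokes Lemma~\ref{lem:unionemptyinterior}. The only difference is which implication you call ``forward,'' so there is nothing substantive to add.
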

\begin{proof}
	If $f\in A_\KK(\G)$ is any element for which $\supp(f)$ has empty interior, then upon writing $f$ in the form \eqref{eq:MJ} and discarding the terms corresponding to vanishing $c_J$, the remaining $M_J$ must have empty interior, since they are contained in
the support of $f$. Hence $f$ is singular.
	
	For the other implication, suppose that $f$ is singular, written as in \eqref{eq:singularform}.
	Then $\supp(f)\subseteq \cup_{i=1}^n M_n$, which has empty interior by Lemma~\ref{lem:unionemptyinterior}.
\end{proof}
\begin{prop}\label{prop:singularideal}
	Let $\G$ be an ample groupoid  with Hausdorff unit space. Then the set $\s_\KK(\G)$ of singular elements is an ideal of $A_\KK(\G)$.
\end{prop}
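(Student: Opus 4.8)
The plan is to lean on the characterization in Proposition~\ref{prp:singular empty int}, which reduces membership in $\s_\KK(\G)$ to the purely topological condition that a function's support has empty interior. Since a two-sided ideal is precisely a $\KK$-subspace that is closed under left and right multiplication, and since $A_\KK(\G)$ is spanned by the characteristic functions $1_B$ of compact open bisections, it suffices to verify three things: that $\s_\KK(\G)$ is a $\KK$-subspace, that $f * 1_B \in \s_\KK(\G)$ whenever $f \in \s_\KK(\G)$ and $B$ is a compact open bisection, and the symmetric statement for $1_B * f$. Closure under arbitrary multiplication then follows by linearity, since every $g \in A_\KK(\G)$ is a finite linear combination of such $1_B$.

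For the subspace property, closure under scalar multiplication is immediate from $\supp(cf) \subseteq \supp(f)$. For closure under addition I would write two singular functions $f$ and $g$ in the form~\eqref{eq:singularform}, so that each of $\supp(f)$ and $\supp(g)$ is contained in a finite union of sets, each relatively closed in an open bisection and each with empty interior. Because $\supp(f+g) \subseteq \supp(f) \cup \supp(g)$, this union is again a finite union of sets of that type, so Lemma~\ref{lem:unionemptyinterior} shows it has empty interior; as any subset of a set with empty interior again has empty interior, $\supp(f+g)$ has empty interior and $f+g \in \s_\KK(\G)$ by Proposition~\ref{prp:singular empty int}.

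The crux is the multiplication step, where the key idea is that convolution by a bisection acts on supports through a partial homeomorphism of $\G$. Fix a compact open bisection $B$, and for $\gamma$ with $s(\gamma) \in s(B)$ let $\beta_\gamma$ be the unique element of $B$ with $s(\beta_\gamma) = s(\gamma)$. A direct computation with the convolution formula gives $(f * 1_B)(\gamma) = f(\gamma \beta_\gamma\inv)$ when $s(\gamma) \in s(B)$ and $0$ otherwise, so that $\rho \colon \gamma \mapsto \gamma \beta_\gamma\inv$ is a homeomorphism from the open set $\{\gamma : s(\gamma) \in s(B)\}$ onto $\{\gamma : s(\gamma) \in r(B)\}$, with inverse given by translation along $B\inv$. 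Consequently $\supp(f * 1_B) = \rho\inv\big(\supp(f) \cap \{\gamma : s(\gamma) \in r(B)\}\big)$, which is the image under a homeomorphism of a subset of $\supp(f)$; since $\supp(f)$ has empty interior, so does $\supp(f * 1_B)$, and $f * 1_B$ is singular. The argument for $1_B * f$ is identical, using left translation by $B$. I expect the only real friction to be bookkeeping around ``empty interior'': one must observe that intersecting with an open set and passing through a homeomorphism between open subspaces each preserve the empty-interior property, and that interiors computed in an open subspace of $\G$ agree with interiors computed in $\G$. With the subspace property and the two one-sided multiplication facts in hand, linearity completes the proof that $\s_\KK(\G)$ is a two-sided ideal of $A_\KK(\G)$.
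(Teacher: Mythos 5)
Your proof is correct, and its key step runs along a genuinely different line from the paper's. The paper keeps the defining form of singular elements explicit: by linearity it reduces to showing $1_M 1_B = 1_{MB}$ is singular, where $M$ is a relatively closed, empty-interior subset of an open bisection $O$, and it then proves two facts about the set $MB$ itself --- that $MB$ is relatively closed in the open bisection $OB$ (a net argument using that $s$ restricts to a homeomorphism on $B$), and that $MB$ has empty interior (an algebraic argument: a nonempty open bisection $A \subseteq MB$ would give an open bisection $AB\inv \subseteq MBB\inv \subseteq M$, forcing $AB\inv = \emptyset$ and hence $A = As(A) \subseteq AB\inv B = \emptyset$). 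You instead work entirely through the support characterization of Proposition~\ref{prp:singular empty int}: the pointwise identity $(f * 1_B)(\gamma) = f(\gamma\beta_\gamma\inv)$ exhibits $\supp(f * 1_B)$ as the preimage of $\supp(f) \cap \{\gamma : s(\gamma) \in r(B)\}$ under a translation homeomorphism between open subsets of $\G$ (the right-handed analogue of the map $T_B$ that the paper already uses in Lemma~\ref{lem:bisections reg open}), and having empty interior is preserved both by intersecting with an open set and by homeomorphisms between open subspaces. Your route buys a shorter, more conceptual argument that never needs to verify relative closedness of products; what the paper's computation buys in exchange is the finer structural fact that multiplying $1_M$ by $1_B$ again yields the indicator of a relatively closed, empty-interior subset of an open bisection, so the generating form of $\s_\KK(\G)$ is itself preserved under multiplication. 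A further small merit of your write-up: where the paper dismisses closure under addition as clear, you correctly route it through the singular form and Lemma~\ref{lem:unionemptyinterior}, thereby avoiding the genuine pitfall that a union of two sets with empty interior need not have empty interior absent the relative-closedness hypothesis.
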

\begin{proof}
	It is clear that $\s_\KK(\G)$ is closed under addition and scalar multiplication.
	We need to show that $fg$ and $gf$ are in $\s_\KK(\G)$ for all $f\in \s_\KK(\G)$ and $g\in A_\KK(\G)$, and by linearity this will be accomplished if we can show that $1_B1_M = 1_{BM}$ and $1_M1_B = 1_{MB}$ are in $\s_\KK(\G)$, where $B$ is a compact open
bisection and $M$ is a relatively closed subset of some open bisection such that $M$ has empty interior.
	
	So let $B$ be a compact open bisection and let $M$ be a relatively closed subset of some open bisection $O$.
	We claim that $MB$ is a relatively closed subset of the open bisection $OB$ and that the interior of $MB$ is empty.
	
	To see that $MB$ is closed in $OB$, let $\{\gamma_i\}$ be a net in $MB$ converging to some $\gamma \in OB$.
	Write $\gamma_i = m_ib_i$ and $\gamma = xb$ with $m_i\in M$, $b_i, b\in B$ and $x\in O$.
	Then
	\[
	s(b_i) = s(m_ib_i) = s(\gamma_i) \to s(\gamma) = s(b),
	\]
	so $b_i\to b$ due to the fact that $s$ is a homeomorphism from $B$ to $s(B)$.
	Consequently
	\[
	m_i = m_ib_ib_i^{-1} = \gamma_ib_i^{-1} \to \gamma b^{-1} = x.
	\]
	We conclude that $x$ lies in the closure of $M$ relative to $O$, and so $x\in M$, because $M$ is closed in $O$.
	Consequently,
	\[
	\gamma =xb \in MB,
	\]
	thus proving that $MB$ is closed in $OB$.
	
	To see that the interior of $MB$ is empty, assume otherwise, so that there is a nonempty open bisection $A\subseteq MB$.
	We then have
	\[
	AB^{-1} \subseteq MBB^{-1} \subseteq M.
	\]
	Clearly $AB^{-1}$ is an open bisection, so it must be empty, because $M$ has empty interior.
	Observing that $s(A)\subseteq s(B)$, we then have that
	\[
	A = As(A) \subseteq As(B)= AB^{-1}B = \emptyset,
	\]
	contradicting our choice of $A$.
	This shows that $1_{MB} = 1_M1_B \in \s_\KK(\G)$.
	A similar argument shows that $1_{BM} = 1_B1_M \in \s_\KK(\G)$, and we are done.
	
\end{proof}
We will return to simplicity in Section~\ref{sec:steinbergsimplicity}, but for now we note the following consequence of the above.

\begin{cor}\label{cor:simplesupport}
	Let $\G$ be a second-countable \'etale groupoid and with Hausdorff unit space.
	If $A_\KK(\G)$ is simple, then $\supp(f)$ has nonempty interior for all nonzero $f\in A_\KK(\G)$.
\end{cor}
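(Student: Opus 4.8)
The plan is to derive this directly from the two preceding propositions by a contradiction argument. Suppose $A_\KK(\G)$ is simple but that, contrary to the claim, there is a nonzero $f \in A_\KK(\G)$ whose support $\supp(f)$ has empty interior. Proposition~\ref{prp:singular empty int} then tells us that $f$ is singular, so $f$ is a nonzero element of the ideal $\s_\KK(\G)$ of singular elements provided by Proposition~\ref{prop:singularideal}. In particular $\s_\KK(\G)$ is a nonzero two-sided ideal of $A_\KK(\G)$.

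Since $A_\KK(\G)$ is assumed simple, its only two-sided ideals are $\{0\}$ and $A_\KK(\G)$, so we are forced to conclude $\s_\KK(\G) = A_\KK(\G)$. The heart of the argument is to rule this out by exhibiting an element of $A_\KK(\G)$ that is \emph{not} singular. Simplicity in particular requires $A_\KK(\G) \neq \{0\}$, and $A_\KK(\G)$ is by definition the linear span of characteristic functions $1_B$ of compact open bisections $B$; hence there is at least one nonempty compact open bisection $B$. For such a $B$ we have $\supp(1_B) = B$, which is open and nonempty and therefore has nonempty interior. By Proposition~\ref{prp:singular empty int} this means $1_B$ is not singular, that is, $1_B \notin \s_\KK(\G)$, contradicting $\s_\KK(\G) = A_\KK(\G)$.

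The only subtle point is this last step: one must verify that $\s_\KK(\G)$ is a \emph{proper} ideal, since otherwise simplicity would yield no information. That properness is precisely what the existence of a nonsingular element supplies, and the characteristic function of any nonempty compact open bisection is the natural witness because its support is open by construction. Everything else is a formal application of Propositions~\ref{prp:singular empty int} and~\ref{prop:singularideal}, so I would not expect any genuine difficulty beyond confirming that $A_\KK(\G) \neq \{0\}$ guarantees the existence of such a bisection.
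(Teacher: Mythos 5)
Your proposal is correct and is essentially the argument the paper intends: the paper states this corollary without proof as an immediate consequence of Propositions~\ref{prp:singular empty int} and~\ref{prop:singularideal}, namely that a nonzero function with empty-interior support would make $\s_\KK(\G)$ a nonzero ideal, which simplicity then rules out because $\s_\KK(\G)$ is proper (the paper later notes properness via trivial intersection with $C_0(\go)$, while you witness it with $1_B$ for a nonempty compact open bisection --- the same idea). Your handling of the properness point, including that the existence of any nonzero element already yields a nonempty compact open bisection, fills in exactly the details the paper leaves implicit.
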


\subsection{A Uniqueness theorem for Steinberg algebras}

\medskip

The proof of  our uniqueness theorem is a non-Hausdorff version of
\cite[Lemma~3.3]{CEP}\footnote{See the sentence before \cite[Example~3.5]{CEP}.} which is our Lemma~\ref{lem:CEP2} below
(and is an algebraic analogue of \cite[Lemma~3.3(b)]{BNRSW}.)
The non-Hausdorff proof is almost exactly the same as the Hausdorff version; we
include the details.   First we establish a non-Hausdorff version of \cite[Lemma~3.3(a)]{BNRSW}.

\begin{lemma}
\label{lem:CEP1}
\label{lem:denseuu}Let $\G$ be a second-countable, ample groupoid such that $\go$ is Hausdorff.
Then \[X := \{u \in \go : \G^u_u \subseteq \intiso \}\]
is dense in $\go$.
\end{lemma}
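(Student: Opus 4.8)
The plan is to use the Baire category theorem. The space $\go$ is a locally compact Hausdorff space (by hypothesis on the groupoid), and since $\G$ is second-countable, $\go$ is second-countable as well; such a space is a Baire space. I would therefore try to exhibit the complement of $X$ as a countable union of closed sets each having empty interior (i.e. as meager), so that $X$, being the complement of a meager set in a Baire space, is dense.

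To do this, I first need a countable cover of the ``bad'' directions. Since $\G$ is ample and second-countable, there is a countable basis $\{B_n\}$ of compact open bisections for $\G$. The condition $\G^u_u \subseteq \intiso$ fails at $u$ precisely when there is some $\gamma \in \G^u_u$ that lies in $\G \setminus \intiso = \operatorname{Iso}(\G) \setminus \intiso$, i.e. some isotropy element at $u$ that is not in the interior of the isotropy bundle. The key step is to cover this non-interior part of the isotropy bundle by countably many pieces coming from the basis. For each $n$ such that $B_n \not\subseteq \intiso$, the bisection $B_n$ carries a partial homeomorphism from $r(B_n) \subseteq \go$ to $s(B_n)\subseteq\go$, and I would consider the set of units $u \in \go$ at which the corresponding element of $B_n$ lies in the isotropy group, namely
\[
F_n := \{u \in r(B_n)\cap s(B_n) : \alpha_{n,u} \in \G^u_u \text{ and } \alpha_{n,u} \notin \intiso\},
\]
where $\alpha_{n,u}$ is the unique element of $B_n$ with $r(\alpha_{n,u}) = u$. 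The point is that $u \notin X$ if and only if $u$ lies in $\bigcup_n F_n$ over those $B_n$ meeting $\operatorname{Iso}(\G)\setminus\intiso$.

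The main obstacle, and the heart of the argument, is showing that each such set $F_n$ (or rather its closure inside $\go$) has empty interior. Here is where effectiveness-type reasoning via $\intiso$ enters, and where one must be careful since $\G$ is not assumed Hausdorff. The set $D_n := \{u \in r(B_n)\cap s(B_n) : r(\alpha_{n,u})=s(\alpha_{n,u})\}$ of units whose $B_n$-element is in the isotropy is the set where two continuous maps $r,s$ restricted to the bisection agree; I would argue that on the interior of $D_n$ the bisection $B_n$ would restrict to an open bisection contained in the isotropy bundle, hence in $\intiso$, so that $\alpha_{n,u} \in \intiso$ there. Consequently the units $u$ where $\alpha_{n,u}$ lies in the isotropy but \emph{not} in $\intiso$ can only occur on the boundary $\partial(D_n)$, which is closed with empty interior. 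Thus $F_n$ is contained in a closed nowhere dense set.

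Assembling these pieces: $\go \setminus X \subseteq \bigcup_n \overline{F_n}$, a countable union of closed nowhere-dense sets, hence meager. Since $\go$ is a locally compact Hausdorff second-countable space and therefore Baire, the complement $X$ is dense in $\go$, completing the proof. I would expect the delicate point to be the precise justification that $\alpha_{n,u}\in\intiso$ on the interior of $D_n$ -- one wants to restrict $B_n$ to a bisection sitting over that interior and check it lands inside the interior of $\operatorname{Iso}(\G)$, using that the source and range maps agree there and that $B_n$ is an open bisection, so that the restriction is an open subset of $\operatorname{Iso}(\G)$.
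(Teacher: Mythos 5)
Your proof is correct and follows essentially the same route as the paper: both decompose $\go \setminus X$ as the countable union, over a countable basis $\{B_n\}$ of compact open bisections, of the sets $r\big(B_n \cap (\Iso(\G)\setminus \intiso)\big)$ (your $F_n$), show each is nowhere dense by pulling an open set of units back through the bisection to obtain an open subset of $\Iso(\G)$ that must then lie in $\intiso$, and conclude via the Baire category theorem in the locally compact Hausdorff space $\go$. The only difference is cosmetic: the paper proves $r\big(B_n \cap (\Iso(\G)\setminus \intiso)\big)$ is itself closed with empty interior (using compactness of $B_n$ and closedness of $\Iso(\G)$), whereas you trap $F_n$ inside the boundary of the coincidence set $D_n$, which is a routine-to-verify closed nowhere dense set.
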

\begin{proof}
Fix a compact open bisection $B$ of $\G$. Let
$B':=B\cap (\Iso(\G) \setminus \intiso)$.
Since $\Iso(\G)$ is closed in $\G$,  $B'$ is closed in $B$ with respect to the relative topology,
$r(B')$ is closed in $\go$.
We claim that $r(B')$ has empty interior.
Suppose that $V$ is an open subset contained in $r(B')$. We show that
$V$ is empty.  Since $B$ is open, \[VB = r^{-1}(V) \cap B \] is open too.  Further, since
$B$ is a bisection, $r$ is injective on $B$ and hence $VB \subseteq B'$.
Thus $VB$ is an open subset of
$\Iso(\G) \setminus \intiso$.  But
$\Iso(\G) \setminus \intiso$ has empty interior.
Therefore $V=\emptyset$ proving the claim.

Let
\[C:=\{u\in  \go:\G_u^u \nsubseteq \intiso\}.\]
Since  $\G$  has a basis of compact open
bisections,
\[C=\{r(B\cap (\Iso(\G)\setminus\intiso)) :B
\text{ is a compact open bisection }\}.\]
So $\G$ being second countable implies $C$ is a countable union of nowhere-dense sets.
By applying the Baire Category Theorem in the locally compact Hausdorff space $\go$, we
see that $C$ is nowhere dense. Hence
its complement $\{u\in  \go:\G_u^u \subseteq \intiso\}$ is dense in
$\go$.
\end{proof}

\begin{lemma}
 \label{lem:CEP2}
Let $\G$ be a second-countable, ample groupoid such that $\go$ is Hausdorff.
 Suppose $u \in \go $ is such that
 $\G^u_u \subseteq \intiso $ and take $f \in A_\KK(\G)$ such that
 there exists $\gamma _u \in \G^u_u$ with
 $f(\gamma _u) \neq 0$.  Then
 there exists a compact open set $L \subseteq \go $ such that $u \in L$ and
 \begin{equation}
  \label{eq:algsupport}\emptyset \neq \{\gamma \in \G : 1_Lf1_L(\gamma) \neq 0\} \subseteq \intiso.
 \end{equation}

\end{lemma}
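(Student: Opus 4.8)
The plan is to use $1_L f 1_L$, where $L$ is a suitably small compact open neighbourhood of $u$ inside $\go$; the role of $L$ is simply to cut $f$ down to the ``corner'' $L\G L := r^{-1}(L)\cap s^{-1}(L)$. First I would record the elementary convolution computation which, using $L\subseteq\go$, gives $(1_L f 1_L)(\gamma) = 1_L(r(\gamma))\,f(\gamma)\,1_L(s(\gamma))$ for every $\gamma$, so that
\[
\{\gamma\in\G: 1_L f 1_L(\gamma)\neq 0\} = \supp(f)\cap L\G L .
\]
Since $r(\gamma_u)=s(\gamma_u)=u$ and $f(\gamma_u)\neq 0$, the element $\gamma_u$ lies in this set for \emph{any} neighbourhood $L$ of $u$, so nonemptiness in \eqref{eq:algsupport} is automatic. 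The entire content is therefore to choose $L$ so that $\supp(f)\cap L\G L\subseteq\intiso$.

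Next I would write $f=\sum_{D\in F}a_D 1_D$ with $F$ a finite set of compact open bisections, so that $\supp(f)\subseteq\bigcup_{D\in F}D$. It then suffices to produce, for each $D\in F$ separately, an open neighbourhood $L_D\ni u$ with $D\cap L_D\G L_D\subseteq\intiso$, since the finite intersection $\bigcap_{D\in F}L_D$ is then an open neighbourhood of $u$ that I can shrink to a compact open neighbourhood $L$ using that $\go$ has a basis of compact open sets. For a fixed $D$ I would argue via the element $\alpha_x\in D$ with $r(\alpha_x)=x$ from Lemma~\ref{lem:bisections reg open}, noting that $x\mapsto\alpha_x=(r|_D)^{-1}(x)$ is continuous on the open set $r(D)\subseteq\go$, and splitting into two cases.

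In the first case $u\in r(D)$ and $s(\alpha_u)=u$, so $\alpha_u\in\G^u_u$; by hypothesis $\alpha_u\in\intiso$, and since $\intiso$ is open and $x\mapsto\alpha_x$ continuous, the set $L_D:=\{x\in r(D):\alpha_x\in\intiso\}$ is an open neighbourhood of $u$ with $D\cap r^{-1}(L_D)\subseteq\intiso$, which already gives $D\cap L_D\G L_D\subseteq\intiso$. In the remaining case $D$ carries no isotropy over $u$: either $u\notin r(D)$, where $L_D:=\go\setminus r(D)$ works because $r(D)$ is compact, hence closed in the Hausdorff space $\go$; or $u\in r(D)$ but $s(\alpha_u)\neq u$, where I would use the continuity of $x\mapsto s(\alpha_x)$ together with the Hausdorffness of $\go$ to separate $u$ from $s(\alpha_u)$ by disjoint open sets, and thereby find $L_D\ni u$ such that no $\gamma\in D$ has both $r(\gamma)$ and $s(\gamma)$ in $L_D$, i.e.\ $D\cap L_D\G L_D=\emptyset$. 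In every case $D\cap L_D\G L_D\subseteq\intiso$, as required.

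The main obstacle is that $\G$ itself is non-Hausdorff, so I cannot separate points of $\G$ directly; every separation argument must be pushed down into $\go$ (Hausdorff by assumption) or into a single bisection (Hausdorff by Lemma~\ref{lem:lchbis}). Concretely, the delicate case is $u\in r(D)$ with $s(\alpha_u)\neq u$: here I rely essentially on the Hausdorffness of $\go$ to peel the source away from $L$, and this is exactly where that hypothesis does the work. The hypothesis $\G^u_u\subseteq\intiso$, by contrast, is used only in the first case, to guarantee that the bisections genuinely carrying isotropy over $u$ already sit inside the open set $\intiso$, so that a small neighbourhood of $u$ confines their contribution there.
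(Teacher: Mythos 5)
Your proof is correct and follows essentially the same route as the paper's: write $f$ as a finite sum over compact open bisections, and for each bisection $D$ produce a neighbourhood of $u$ that either confines $D$'s contribution to $\intiso$ (when $D$ carries isotropy over $u$, using the hypothesis $\G^u_u \subseteq \intiso$) or annihilates it (using Hausdorffness of $\go$), then intersect over $D \in F$. The only cosmetic differences are that the paper's case analysis is symmetric in $r$ and $s$ and chooses each $V_D$ compact open from the outset, whereas you base your cases on $r$ alone and shrink to a compact open $L$ at the very end; both variants work.
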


\begin{proof}
 Write $f= \sum_{D \in F}a_D1_D$ where $D$ is a collection of compact open bisections.  For each $D \in F$,
 choose a compact open neighbourhood $V_D \subseteq \go$ of $u$ as follows:
 \begin{itemize}
  \item If $u = r(\gamma) = s(\gamma)$ for some $\gamma \in D$, then
  $\gamma \in \intiso$ by assumption.  Let
  $V_D$ be a compact open subset in $\go$ containing $u$ such that $V_D$
  is contained in the open set \[r(D \cap \intiso) = s(D \cap \intiso).\]
Then  $V_D D V_D \subseteq D \cap \intiso$.
\item If there exists $\gamma \in D$ such that $r(\gamma) = u$ and $s(\gamma) \neq u$ or $s(\gamma) = u$ and
$r(\gamma) \neq u$, we chose $V_D$ as follows.  Because $\go$ is Hausdorff, we can find a compact open subset
$D' \subseteq D$ containing $\gamma$ such that $r(D') \cap s(D') = \emptyset$.
Take $V_D = r(D')$ (or $V_D= s(D')$), so that $u \in V_D$ and  $V_D DV_D = \emptyset$.
\item If $u \notin r(D)$ and $u \notin s(D)$, use that $\go$ is Hausdorff to choose a compact open neighbourhood
$V_D$ of $u$ such that $V_D DV_D=\emptyset$.
 \end{itemize}
Let \[L:= \bigcap_{D \in F} V_D.\]  Then $L$ is a compact open set that contains $u$.
 Further the hypotheses about $u$ and $f$ imply
$1_Lf1_L(\gamma_u) \neq 0$ and by construction
\eqref{eq:algsupport} holds.
\end{proof}

\begin{thm}
\label{thm:cku}
Let $\G$ be a second-countable ample groupoid such that $\go$ is Hausdorff,
$\G$ is effective, and $\supp(f)$ has nonempty interior for all nonzero $f\in A_\KK(\G)$.
 Suppose $A$ is a $\KK$-algebra and $\pi:A_\KK(\G) \to A$ is a ring homomorphism with nonzero kernel.
 Then there exists a compact open $L \subseteq \go$ such that $1_L \in \operatorname{ker}(\pi)$.
\end{thm}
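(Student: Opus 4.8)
The plan is to start from a nonzero $f\in\ker(\pi)$ and, using the hypotheses, manufacture an element of $\ker(\pi)$ supported entirely inside the unit space $\go$; from such an element the desired $1_L$ will come out almost for free. The engine is the pair of Lemmas~\ref{lem:CEP1} and \ref{lem:CEP2}, used exactly as in the Hausdorff arguments of \cite{CEP,BNRSW}.

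First I would exploit the standing hypothesis that $\supp(f)$ has nonempty interior. Since $\G$ is ample, I can choose a compact open bisection $B$ with $B\subseteq\supp(f)^\circ$, so that $f$ is nonzero at every point of $B$. The key computation is that the element $g:=1_{B^{-1}}*f$, which again lies in the ideal $\ker(\pi)$, is nonzero on all of $s(B)$: for $u\in s(B)$ there is a unique $\eta\in B^{-1}$ with $r(\eta)=u$, and the convolution collapses to $g(u)=f(\eta^{-1})$ with $\eta^{-1}\in B\subseteq\supp(f)$, hence $g(u)\neq 0$. Thus $g\in\ker(\pi)$ is nonzero on the nonempty open subset $s(B)$ of $\go$. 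This step, which converts ``support with nonempty interior'' into ``honest support inside the unit space'', is the conceptual heart of the argument and the one genuinely non-Hausdorff obstacle; it is precisely where the hypothesis ruling out singular functions is needed.

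Next I would feed $g$ into the isotropy machinery. By Lemma~\ref{lem:CEP1} the set $X=\{u\in\go:\G^u_u\subseteq\intiso\}$ is dense, so it meets the open set $s(B)$; fix $u_0\in X\cap s(B)$. Taking $\gamma_{u_0}=u_0\in\G^{u_0}_{u_0}$ we have $g(u_0)\neq0$, so Lemma~\ref{lem:CEP2} supplies a compact open $L_0\subseteq\go$ containing $u_0$ with $\emptyset\neq\supp(1_{L_0}g1_{L_0})\subseteq\intiso$. Set $h:=1_{L_0}*g*1_{L_0}\in\ker(\pi)$; then $h\neq0$ and $\supp(h)\subseteq\intiso$. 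Here effectiveness enters decisively: $\intiso=\go$, so in fact $\supp(h)\subseteq\go$, and since we have cut down on both sides by $1_{L_0}$ we even get $\supp(h)\subseteq L_0$.

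Finally I would extract the idempotent, and this is where Hausdorffness of $\go$ makes everything routine. An element of $A_\KK(\G)$ supported in $\go$ is automatically well behaved: writing $h=\sum_i a_i 1_{D_i}$, each $D_i\cap\go=\{d\in D_i: r(d)=s(d)\}$ is the preimage of the diagonal of $\go\times\go$ (which is closed, as $\go$ is Hausdorff) under $d\mapsto(r(d),s(d))$, hence is compact open in $\go$; so $h$ is a locally constant, compactly supported $\KK$-valued function on $\go$. Disjointifying gives $h=\sum_j c_j 1_{K_j}$ with the $K_j\subseteq\go$ pairwise disjoint compact open and each $c_j\neq0$. Since subsets of $\go$ yield orthogonal idempotents, $1_{K_1}*h=c_1\,1_{K_1}$, and as $\KK$ is a field we conclude $1_{K_1}=c_1^{-1}(1_{K_1}*h)\in\ker(\pi)$. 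Taking $L=K_1$ completes the proof.
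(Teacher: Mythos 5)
Your construction of $h$ is sound and is essentially the paper's own argument: a compact open bisection $B\subseteq\supp(f)^\circ$, the convolution $g=1_{B^{-1}}*f\in\ker(\pi)$ nonvanishing on $s(B)$, a point $u_0\in s(B)$ with $\G^{u_0}_{u_0}\subseteq\intiso$ (you get it from Lemma~\ref{lem:CEP1}, the paper from topological principality; both are fine), and Lemma~\ref{lem:CEP2} plus effectiveness yielding $0\neq h\in\ker(\pi)$ with $\supp(h)\subseteq\go$. The gap is in your final step. The identity $D_i\cap\go=\{d\in D_i:r(d)=s(d)\}$ is false: the right-hand side is $D_i\cap\Iso(\G)$, and a bisection may contain non-unit isotropy elements, so the inclusion of $D_i\cap\go$ in that set can be strict. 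Worse, $D_i\cap\go$ is open in $D_i$ but need not be relatively closed in $D_i$ --- that would require $\go$ to be closed in $\G$, i.e.\ $\G$ Hausdorff --- so it need not be compact. Concretely, in the Grigorchuk groupoid $\ggx$ (which satisfies every hypothesis of this theorem when $\operatorname{char}\KK=0$, by Lemma~\ref{lem:grigsingular}), the compact open bisection $D=\Theta((\ew,b,\ew),C(1^m))$ has
$D\cap\go=\bigcup_{3n+2\geq m}\Theta((\ew,e,\ew),C(1^{3n+2}0))$
(see Lemma~\ref{lem:GrigInt}), an infinite disjoint union of nonempty open sets: it is not compact, and the non-unit $z_b=[(\ew,b,\ew),1^\infty]\in D$ lies in its closure. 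So your claim that an element of $A_\KK(\G)$ supported in $\go$ is ``automatically well behaved'' is exactly the non-Hausdorff difficulty this theorem must confront, and the diagonal argument you offer for it is only valid when $\go$ is closed in $\G$, i.e.\ in the Hausdorff case.

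The repair is the paper's own last move, and it explains why the standing hypothesis must be invoked a \emph{second} time, whereas your proof uses it only once, for $f$. Since $h\in\ker(\pi)$ is nonzero, the hypothesis gives that $\supp(h)$ has nonempty interior; writing $h$ in the disjointified form \eqref{eq:MJ} and applying Lemma~\ref{lem:unionemptyinterior}, some level set of $h$ with nonzero value $c$ has nonempty interior, and this interior lies in $\supp(h)\subseteq\go$, so by ampleness it contains a nonempty compact open $L\subseteq\go$. Because $\supp(h)\subseteq\go$, one checks directly that $1_L*h=c\,1_L$, whence $1_L=c^{-1}\,1_L*h\in\ker(\pi)$. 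Alternatively you would have to prove outright that any element of $A_\KK(\G)$ supported in $\go$ is a $\KK$-linear combination of indicators of compact open subsets of $\go$; whatever the fate of that statement, it requires a genuine argument exploiting compactness of the $D_i$, Hausdorffness of bisections, and the vanishing of $h$ on non-unit isotropy --- not the false identification of $D_i\cap\go$ with $D_i\cap\Iso(\G)$.
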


\begin{proof}
 Fix nonzero $f \in \operatorname{ker}(\pi)$.
 Since $\supp(f)$ has nonempty interior, by writing $f$ in the form \eqref{eq:MJ} and applying  Lemma~\ref{lem:unionemptyinterior},
 find a compact open bisection
$B$ such that $f$ is a nonzero constant on $B$.
 Consider the function $g:= f*1_{B^{-1}} \in \ker(\pi)$.

 We claim that $g(u) \neq 0$ for all $u \in r(B)$.
To see this, fix $u = bb^{-1} \in r(B)$.  Then
 \[g(u) = f*1_{B^{-1}}(u) = \sum_{r(\beta) = u} f(\beta) 1_{B^{-1}}(\beta^{-1}) = f(b) \neq 0.\]

Because $\G$ is second countable and effective, $\G$ is topologically
principal (for example, see \cite[Proposition~3.6]{Ren2}).
Thus, there exists $u \in r(B) \subseteq \go$ such that $\G^u_u = \{u\}$ and $g(u) \neq 0$.
Now we apply  Lemma~\ref{lem:CEP2} to find
 a compact open set $M \subseteq \go$ such that for
$h:=1_Mg1_M$, we have \eqref{eq:algsupport} holds for $h$.
Notice $h \in \ker(\pi)$.

Since $\G$ is effective,  $\intiso = \go$ and hence
\[\{\gamma \in \G: h(\gamma) \neq 0\} \subseteq \go.\]
By assumption, $\supp(h)$ has nonempty interior so again,
we find a compact open set
$L \subseteq \go $ such that $h$ is constant on $L$.
Now
\[
 1_L = (h(L))^{-1}1_L h \in \ker(\pi).\]
\end{proof}

\begin{cor}
\label{cor:cku}
Let $\G$ be a second-countable ample groupoid such that $\go$ is Hausdorff,
$\G$ is effective, and $\supp(f)$ has nonempty interior for all nonzero $f\in A_\KK(\G)$.
 Suppose $I$ is a nonzero ideal in $A_\KK(\G)$.
 Then there exists a nonempty compact open $L \subseteq \go$ such that $1_L \in I$.
\end{cor}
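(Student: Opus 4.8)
The plan is to deduce this immediately from Theorem~\ref{thm:cku} by passing to the quotient. First I would form the canonical quotient map $\pi\colon A_\KK(\G)\to A_\KK(\G)/I$, which is a ring homomorphism onto the quotient $\KK$-algebra. Since $I$ is a two-sided ideal, $\ker(\pi)=I$, and since $I$ is nonzero this kernel is nonzero. The hypotheses on $\G$ (second countable, ample, $\go$ Hausdorff, effective, and $\supp(f)$ having nonempty interior for every nonzero $f$) are exactly those of Theorem~\ref{thm:cku}, so that theorem applies and produces a compact open $L\subseteq\go$ with $1_L\in\ker(\pi)=I$.

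The one point that needs attention is that $L$ should be \emph{nonempty}, since $1_\ew=0$ belongs to every ideal and would yield no information. This is not stated verbatim in Theorem~\ref{thm:cku}, so I would appeal to its proof rather than only its statement: there the set $L$ is obtained as a compact open subset of $\go$ on which a particular function $h\in\ker(\pi)$ takes a nonzero constant value (indeed $h$ is arranged so that $h(\gamma_u)\neq 0$, and $L$ is chosen so that $h$ is constant and nonzero on $L$). Because $h$ is nonzero on $L$, the set $L$ cannot be empty. Hence the $L$ furnished by the theorem is automatically nonempty, and taking this $L$ gives a nonempty compact open $L\subseteq\go$ with $1_L\in I$, as required.

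The main (and essentially only) obstacle is this nonemptiness check; everything else is the standard observation that a proper two-sided ideal is the kernel of its quotient homomorphism, so that the uniqueness theorem transfers directly. No further groupoid-theoretic work is needed, since the substantive content — locating a unit-space function inside the kernel — has already been established in Theorem~\ref{thm:cku}.
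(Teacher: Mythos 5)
Your proposal is correct and is exactly the argument the paper intends: the corollary is stated without a separate proof precisely because it follows from Theorem~\ref{thm:cku} applied to the quotient map $\pi\colon A_\KK(\G)\to A_\KK(\G)/I$, whose kernel is the nonzero ideal $I$. Your attention to nonemptiness is well placed and correctly resolved: in the proof of Theorem~\ref{thm:cku} the set $L$ is chosen inside the support of the function $h\in\ker(\pi)$, on which $h$ is a nonzero constant (the formula $1_L=(h(L))^{-1}1_Lh$ only makes sense for such $L$), so the $L$ produced there is automatically nonempty.
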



\subsection{Simplicity of Steinberg algebras}
\label{sec:steinbergsimplicity}
\medskip

We are now in a position to generalize the following theorem:

\begin{thm}{\cite[Theorem 3.5]{St2}}
\label{thm:SteinbergSimple}
Let $\G$ be an ample groupoid such that $\go$ is Hausdorff. If $A_\KK(\G)$ is simple,
then $\G$ is effective and minimal. The converse holds if $\G$ is Hausdorff.
\end{thm}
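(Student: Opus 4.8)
The plan is to prove the two implications separately: first that simplicity forces $\G$ to be effective and minimal, using only that $\go$ is Hausdorff, and then that, when $\G$ itself is Hausdorff, effectiveness together with minimality give simplicity. For the necessity direction I would argue each condition by contrapositive, manufacturing a proper nonzero two-sided ideal out of a failure of minimality or of effectiveness. For the sufficiency direction I would lean on the Cuntz--Krieger uniqueness theorem, Corollary~\ref{cor:cku}, followed by a spreading argument driven by minimality.

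For minimality, suppose $\G$ is not minimal, so there is a nonempty $\G$-invariant open set $U \subsetneq \go$. Then $\G_U := s^{-1}(U) = r^{-1}(U)$ (the two agree because $U$ is invariant) is an open subgroupoid, and I would check that $I_U := \set{ f \in A_\KK(\G) : \supp(f) \subseteq \G_U}$ is a two-sided ideal. It is nonzero because $U \neq \ew$ (take $1_V$ for a small compact open $V \subseteq U$) and proper because $U \neq \go$ (any $1_W$ with $W \subseteq \go \setminus U$ avoids it), contradicting simplicity. This step is routine.

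For effectiveness, suppose $\G$ is not effective. By ampleness there is a compact open bisection $B \subseteq \Iso(\G)$ with $B \not\subseteq \go$; fix $\gamma_0 \in B \setminus \go$ and put $u_0 := s(\gamma_0) = r(\gamma_0)$, noting $u_0 \notin B$. The idea is to collapse the ``extra'' isotropy: $\intiso$ is a normal open subgroupoid of $\Iso(\G)$, and the quotient map $q\colon \G \to \G/\intiso$ should induce an augmentation-type algebra homomorphism $q_* \colon A_\KK(\G) \to A_\KK(\G/\intiso)$, given fibrewise by summing over $\intiso$-cosets. Because $B$ and $s(B)$ each meet every such coset in exactly one point, $q_*(1_B) = q_*(1_{s(B)})$, so the nonzero element $1_B - 1_{s(B)}$ lies in $\ker q_*$; since $q_*$ is surjective onto a nonzero algebra, $\ker q_*$ is a proper nonzero ideal. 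I expect this to be the main obstacle. In the Hausdorff case one simply uses that $\go$ is clopen to replace $B$ by a bisection \emph{disjoint} from $\go$ and then argues locally, but here $\go$ need not be closed, so one must instead justify that $q_*$ is a well-defined algebra homomorphism (each $f \in A_\KK(\G)$ meets every isotropy group in only finitely many points, so the coset sums are finite) and that the quotient has Hausdorff unit space and is ample. Making this precise in the merely-$\go$-Hausdorff setting is the delicate point.

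For the converse, assume $\G$ is Hausdorff, effective and minimal, and let $I$ be a nonzero ideal. Since $\G$ is Hausdorff, every compact open bisection is clopen, so $\supp(f)$ is open for every $f \in A_\KK(\G)$; in particular condition~\eqref{lemit3:key} of Lemma~\ref{lem:key} holds and there are no singular functions. Thus the hypotheses of Corollary~\ref{cor:cku} are met, and it yields a nonempty compact open $L \subseteq \go$ with $1_L \in I$. I would then spread $1_L$ using minimality: for any $v \in \go$, minimality gives $[L] = \go$, hence a compact open bisection $W$ with $r(W) \subseteq L$ and $v \in s(W)$, whence
\[
1_{W^{-1}} * 1_L * 1_W = 1_{W^{-1}} * 1_W = 1_{s(W)} \in I.
\]
Covering an arbitrary compact open $V \subseteq \go$ by finitely many such $s(W)$ and combining the resulting characteristic functions inside the commutative algebra $A_\KK(\go)$ shows $1_V \in I$, so $A_\KK(\go) \subseteq I$. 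Finally, for any compact open bisection $D$ we have $1_D = 1_{r(D)} * 1_D \in I$, and since such $1_D$ span $A_\KK(\G)$ we conclude $I = A_\KK(\G)$, so $A_\KK(\G)$ is simple.
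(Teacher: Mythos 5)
First, a point of comparison: the paper does not prove Theorem~\ref{thm:SteinbergSimple} at all --- it is quoted verbatim from \cite[Theorem 3.5]{St2} --- so your proposal can only be measured against Steinberg's argument and the paper's surrounding machinery. On its own merits, your two necessity arguments are essentially sound. For minimality there is one small slip: if the invariant open set $U$ is dense in $\go$, there is no nonempty open $W \subseteq \go \setminus U$; the correct witness for properness of $I_U$ is $1_W$ for $W$ a compact open neighbourhood of a point of $\go \setminus U$, which cannot lie in $I_U$ because $W \not\subseteq U$. For effectiveness, your quotient construction does go through: $\intiso$ is conjugation-invariant (conjugating an open bisection inside $\Iso(\G)$ by an open bisection stays inside $\Iso(\G)$), every compact open bisection meets each $\intiso$-coset in at most one point (all elements of a coset share their source), $q(D)$ is again a compact open bisection, and $q_*(1_D)=1_{q(D)}$ is multiplicative because $q(DE)=q(D)q(E)$; so $q_*$ is a well-defined nonzero homomorphism whose kernel contains $1_B - 1_{s(B)} \neq 0$. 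This is exactly the algebraic analogue of the augmentation-representation argument the paper itself uses for $\cs(\G)$ in Theorem~\ref{thm:c*simple}(1), and your instinct that this is where the non-closedness of $\go$ must be confronted is correct.

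The genuine gap is in the converse. Theorem~\ref{thm:SteinbergSimple} carries no second-countability hypothesis, but Corollary~\ref{cor:cku}, which you invoke, does --- and essentially so: its proof (via Theorem~\ref{thm:cku}) passes through ``second countable and effective implies topologically principal'' (\cite[Proposition~3.6]{Ren2}), a Baire-category argument. As written, your proof therefore establishes the converse only for second-countable $\G$. To get the stated generality you must use effectiveness directly rather than topological principality, which the Hausdorff hypothesis permits: given a nonzero $g \in I$ with $g$ locally constant and $g(u_0)\neq 0$ for some unit $u_0$ (obtained as in the proof of Theorem~\ref{thm:cku} by convolving with $1_{B^{-1}}$), the set $K:=\supp(g)\setminus\go$ is compact open since $\go$ is clopen, so it is covered by finitely many compact open bisections $B_1,\dots,B_n$ disjoint from $\go$. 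Effectiveness and Hausdorffness make each $r(B_i\cap\Iso(\G))$ closed with empty interior, and a \emph{finite} union of closed nowhere-dense sets cannot contain the nonempty open set $\{u\in\go : g(u)\neq 0\}$ --- no Baire category, hence no second countability, is needed. Choosing $u$ in the complement, one has $\G^u_u \cap K = \emptyset$, and the shrinking argument of Lemma~\ref{lem:CEP2} (which itself does not use second countability) produces a compact open $V \ni u$ with $1_V * g * 1_V$ nonzero and supported in $\go$, yielding $1_L \in I$. Your subsequent spreading argument from $1_L$ to $I = A_\KK(\G)$ via minimality is correct as stated.
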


\begin{thm}
\label{thm:simple}
 Let $\G$ be a second-countable, ample groupoid such that $\go$ is Hausdorff.
 Then $A_\KK(\G)$ is simple if and only if the following three conditions are satisfied:
 \begin{enumerate}
  \item\label{it1:thm} $\G$ is minimal,
  \item\label{it2:thm} $\G$ is effective, and
  \item\label{it3:thm} for every nonzero $f\in A_\KK(\G)$, $\supp(f)$ has nonempty interior.
 \end{enumerate}

\end{thm}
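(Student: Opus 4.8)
The plan is to prove the two implications separately; the forward implication is immediate from results already in hand, while the reverse implication carries all the content. For the forward direction, suppose $A_\KK(\G)$ is simple. Then conditions \eqref{it1:thm} and \eqref{it2:thm}---minimality and effectiveness---are exactly the conclusion of Theorem~\ref{thm:SteinbergSimple}, and condition \eqref{it3:thm} is exactly the conclusion of Corollary~\ref{cor:simplesupport}, which applies because $\G$ is in particular a second-countable \'etale groupoid with Hausdorff unit space. So no new work is needed for this direction.

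For the reverse direction, I would assume \eqref{it1:thm}--\eqref{it3:thm} and show that an arbitrary nonzero ideal $I \subseteq A_\KK(\G)$ equals $A_\KK(\G)$. Since $\G$ is effective and, by \eqref{it3:thm}, $\supp(f)$ has nonempty interior for every nonzero $f$, the hypotheses of Corollary~\ref{cor:cku} are met, so there is a nonempty compact open set $L \subseteq \go$ with $1_L \in I$. This $1_L$ is the seed from which minimality will let me reconstruct all of $A_\KK(\G)$.

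The spreading argument proceeds in three steps, using repeatedly that for compact open bisections $1_B * 1_D = 1_{BD}$ (valid with no Hausdorff hypothesis, since $B$ is a bisection). First, if $B$ is a compact open bisection with $r(B) \subseteq L$, then $LB = B$ and $B^{-1}B = s(B)$, so $1_{B^{-1}} * 1_L * 1_B = 1_{s(B)} \in I$. Second, given any nonempty compact open $V \subseteq \go$, minimality provides, for each $v \in V$, an element $\gamma$ with $s(\gamma) = v$ and $r(\gamma) \in L$; shrinking a compact open bisection about $\gamma$ yields a compact open bisection $B_v$ with $r(B_v) \subseteq L$ and $s(B_v) \subseteq V$, whence $1_{s(B_v)} \in I$ by the first step. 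The sets $s(B_v)$ are compact open neighbourhoods covering the compact set $V$; passing to a finite subcover $s(B_{v_1}), \dots, s(B_{v_n})$ and applying the pointwise identity $1_{A \cup B} = 1_A + 1_B - 1_A * 1_B$ repeatedly (legitimate over any field, and staying inside $I$ because $I$ is an ideal) gives $1_V \in I$. Third, for an arbitrary compact open bisection $B$ the set $r(B)$ is compact open in $\go$, so $1_{r(B)} \in I$ by the second step, and since $r(B)B = B$ we obtain $1_{r(B)} * 1_B = 1_B \in I$. As the functions $1_B$ span $A_\KK(\G)$, this forces $I = A_\KK(\G)$, and simplicity follows.

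The main obstacle is concentrated entirely in producing the seed $1_L \in I$, that is, in the appeal to Corollary~\ref{cor:cku}; this is precisely where condition \eqref{it3:thm} is indispensable. Indeed, \eqref{it3:thm} is exactly the requirement that the singular ideal $\s_\KK(\G)$ be trivial (Proposition~\ref{prp:singular empty int} together with Proposition~\ref{prop:singularideal}), which is the genuinely non-Hausdorff obstruction absent in the classical setting: without it, a nonzero ideal could consist entirely of singular functions and fail to meet $\go$. Once the seed is in hand, the minimality-driven spreading argument only manipulates characteristic functions of bisections and of compact open subsets of the Hausdorff space $\go$, and is formally identical to the Hausdorff case.
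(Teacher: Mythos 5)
Your proposal is correct and follows essentially the same route as the paper: the forward direction cites Theorem~\ref{thm:SteinbergSimple} and Corollary~\ref{cor:simplesupport} exactly as the paper does, and the reverse direction obtains the seed $1_L \in I$ from Corollary~\ref{cor:cku} and spreads it over $\go$ by conjugating with compact open bisections supplied by minimality. The only difference is that you spell out the finite-subcover and inclusion-exclusion step ($1_{A\cup B} = 1_A + 1_B - 1_A * 1_B$) that the paper compresses into the sentence ``Thus, we have $1_M \in I$ for any compact open subset $M$ of $\go$.''
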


\begin{proof}
 Suppose items \eqref{it1:thm}--\eqref{it3:thm} are satisfied.  Let $I$ be a nonzero ideal in $A_\KK(\G)$.
 By Corollary~\ref{cor:cku}, there exists a compact open set $L \subseteq \go$ such that $1_L \in I$.
We show that for each $x \in \go$, there exists a compact open $L_x$ containing $x$ such that $1_{L_x} \in I$.
Using item \eqref{it1:thm}, there exists $\gamma_x \in \G$ such that $r(\gamma_x)=x$ and $s(\gamma_x) \in L$.
Let $B_x$ be a compact open bisection containing $\gamma_x$.
Then the function
\[1_{B_x}1_{L}1_{B_x^{-1}} = 1_{B_xLB_x^{-1}} \in I.\]
 is nonzero at $x$.
 So the compact open subset $L_x := B_xLB_x^{-1} \subseteq \go$ suffices.
 Thus, we have $1_M \in I$ for any compact open subset $M$ of $\go$ and hence $I = A_\KK(\G)$.

 Now suppose $A_\KK(\G)$ is simple.
 Items~\eqref{it1:thm} and \eqref{it2:thm}
 hold by Theorem~\ref{thm:SteinbergSimple}, and  Corollary~\ref{cor:simplesupport} implies \eqref{it3:thm}.
\end{proof}

\begin{remark}
Theorem~\ref{thm:simple} is related to \cite[Proposition~4.1]{Nek16}, which was brought to our
attention after we posted this paper to the \emph{arXiv}. Specifically, suppose that
$\G$ is second-countable, ample, effective, and minimal. Then $\G$ is also topologically principal, and
so \cite[Proposition~4.1]{Nek16} shows that the collection of elements $f \in A_\KK(\G)$ that vanish
on $\mathcal{T} := \big\{\gamma \in \G : \G^{s(\gamma)}_{s(\gamma)} = \{s(\gamma)\}\big\}$ form an ideal
$I$ such that $A_\KK(\G)/I$ is simple. Since $\G \setminus \mathcal{T}$ has empty interior,
Proposition~\ref{prp:singular empty int} shows that $I$ is contained in the set $\s_\KK(\G)$ of singular
elements, which is also an ideal by Proposition~\ref{prop:singularideal}. So $A_\KK(\G)/\s_\KK(\G)$ is a
quotient of $A_\KK(\G)/I$. Since $\s_\KK(\G)$ is not all of $A_\KK(\G)$---it has trivial intersection with
$C_0(\G^{(0)})$ by Proposition~\ref{prp:singular empty int}---we see that $A_\KK(\G)/\s_\KK(\G)$ is nonzero.
Since $A_\KK(\G)/I$ is simple, it follows that $\s_\KK(\G) = I$. So the ``if" direction of
Theorem~\ref{thm:simple} can be recovered from \cite[Proposition~4.1]{Nek16}.
\end{remark}

By applying Lemma~\ref{lem:key}, we get the following corollary.

\begin{cor}
\label{cor:Stsimple}
 Let $\G$ be a second-countable, ample groupoid such that $\go$ is Hausdorff.
Suppose following three conditions are satisfied:
 \begin{enumerate}
  \item\label{it1:thmcs} $\G$ is minimal,
  \item\label{it2:thmcs} $\G$ is effective, and
  \item\label{it3:thmcs}every compact open subset of $\G$ is regular open.
 \end{enumerate}
  Then $A_\KK(\G)$ is simple.
\end{cor}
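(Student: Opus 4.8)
The plan is to reduce the corollary directly to Theorem~\ref{thm:simple}, which characterizes simplicity of $A_\KK(\G)$ by the three conditions of minimality, effectiveness, and the property that $\supp(f)$ has nonempty interior for every nonzero $f \in A_\KK(\G)$. Conditions~\eqref{it1:thmcs} and~\eqref{it2:thmcs} of the corollary are identical to conditions~\eqref{it1:thm} and~\eqref{it2:thm} of Theorem~\ref{thm:simple}, so these carry over verbatim. The only work is to deduce the third condition of Theorem~\ref{thm:simple} from the hypothesis~\eqref{it3:thmcs} that every compact open subset of $\G$ is regular open.

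First I would observe that condition~\eqref{it3:thmcs} of the corollary is precisely statement~\eqref{lemit1:key} of Lemma~\ref{lem:key}. Invoking the chain of implications \eqref{lemit1:key} $\implies$ \eqref{lemit2:key} $\implies$ \eqref{lemit3:key} established in that lemma---noting that no characteristic hypothesis on $\KK$ is required, since only the forward implications are used---I immediately obtain statement~\eqref{lemit3:key}: for every nonzero $f \in A_\KK(\G)$, the set $\supp(f)$ has nonempty interior. This is exactly condition~\eqref{it3:thm} of Theorem~\ref{thm:simple}.

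Having assembled all three hypotheses of Theorem~\ref{thm:simple}, I would conclude that $A_\KK(\G)$ is simple. There is essentially no obstacle at this level: the corollary is a formal consequence of two previously established results, with the genuine content residing in the implication \eqref{lemit1:key} $\implies$ \eqref{lemit3:key} of Lemma~\ref{lem:key} (which rests on Lemma~\ref{lem:regopendif}) and in the ``if'' direction of Theorem~\ref{thm:simple} (which rests on the Cuntz--Krieger uniqueness machinery of Corollary~\ref{cor:cku}). The one point to be careful about is to route the regular-open hypothesis solely through Lemma~\ref{lem:key}, rather than attempting to reprove nonempty-interiority of supports directly; this keeps the argument a genuine corollary and makes transparent that condition~\eqref{it3:thmcs} is merely a convenient \emph{sufficient} topological surrogate for the function-theoretic condition~\eqref{it3:thm}, not an equivalent one.
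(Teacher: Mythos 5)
Your proof is correct and is exactly the paper's argument: the paper derives this corollary by combining the implications \eqref{lemit1:key} $\implies$ \eqref{lemit2:key} $\implies$ \eqref{lemit3:key} of Lemma~\ref{lem:key} with the ``if'' direction of Theorem~\ref{thm:simple}. Your observation that the characteristic-zero hypothesis of Lemma~\ref{lem:key} is not needed (it is only required for the converse implication) is also consistent with the paper, which states the corollary for an arbitrary field $\KK$.
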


\section{Groupoid \texorpdfstring{$\cs$}{C*}-algebras}
\label{sec:an}

\subsection{Groupoid \texorpdfstring{$\cs$}{C*}-algebra preliminaries}

\medskip

In this section, we no longer restrict our attention to ample groupoids, instead we consider
arbitrary (second-countable, locally-compact) \'etale groupoids (with Hausdorff unit space).  Here we mainly deal with the `open' support,
(unconventionally) defined earlier in \eqref{eq:suppdef} as
\[\supp(f) = {\{x \in X: f(x) \neq 0\}}.\]
We  denote the set of continuous
functions with compact support by
\[C_c(X) :=\{ f:X \to \CC : f \text{ is continuous and $\exists$ compact $K$ such
that $f(x) = 0 \ \forall \  x \notin K$}\}.\]
We write $\Cc(\G)$ for Connes' algebra of
functions $f : \G \to \CC$ linearly spanned by the spaces $C_c(U)$ for open
bisections $U$ contained in $\G$.   We view a function in $C_c(U)$ as a function on $\G$ by
defining it to be 0 outside of $U$. In some papers and texts, this algebra $\Cc(\G)$ is simply
denoted $C_c(\G)$, but we avoid this since its elements are in general not continuous.

For $u \in \go$, we write $L_u$ for the regular representation $L_u : \Cc(\G) \to
\Bb(\ell^2(\G_u))$ satisfying
\[L_u(f)\delta_\gamma = \sum_{\alpha \in \G_{r(\gamma)}}
f(\alpha)\delta_{\alpha\gamma} \text{ for } f \in \Cc(\G).\]
By definition, $\cs_r(\G)$ is the
completion of the image of $\Cc(\G)$ under $\bigoplus_{u \in \go} L_u$.

\label{page_functions}
Recall that if $\G$ is an \'etale groupoid and $a \in \cs_r(\G)$, then we can define a
function $j(a) : \G \to \CC$ by $j(a)(\gamma) = \big(L_{s(\gamma)}(a) \delta_{s(\gamma)}
\mid \delta_\gamma\big)$. It is routine to check that $j(f) = f$ for $f \in \Cc(\G)$.
Since $\bigoplus_u L_u$ is faithful, the map $a \mapsto j(a)$
is injective. Write $B(\G)$ for the vector space of all bounded functions $f : \G \to \CC$,
and regard $B(\G)$ as a normed vector space under $\|\cdot\|_\infty$. For $a \in \cs_r(\G)$
and $\gamma \in \G$, we have
\[
|j(a)(\gamma)| = \big|\big(L_{s(\gamma)}(a)\delta_{s(\gamma)} \mid \delta_\gamma\big)\big| \le \|L_{s(\gamma)}(a)\| \le \|a\|,
\]
so $j$ is an injective norm-decreasing linear map from $\cs_r(\G)$ to $B(\G)$.

\subsection{Singular elements}

As in the Steinberg algebra setting, the presence of any singular elements gives rise to a nontrivial ideal.  Here we call $a \in \cs_r(\G)$ \emph{singular} if $\supp(j(a))$ has empty interior.     The following lemma gives some useful insight.

\begin{lemma}
\label{lem:key'} Let $\G$ be a locally compact, \'etale groupoid such that $\go$ is Hausdorff.
Consider the following conditions:
\begin{enumerate}
\item \label{lemit2:key'} For every $f \in \Cc(\G)$, every $z \in f(\G) \setminus\{0\}$
    and every $\varepsilon > 0$, the set \[\{\gamma \in \G : |f(\gamma) - z| <
    \varepsilon\}\] has nonempty interior.
\item \label{lemit3:key'} For every nonzero $a \in \cs_r(\G)$,  $\supp(j(a))$
    nonempty interior.
\end{enumerate}
Then (\ref{lemit2:key'}) $\implies$  (\ref{lemit3:key'}).
\end{lemma}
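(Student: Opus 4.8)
The plan is to exploit three facts assembled in the preliminaries: $\Cc(\G)$ is dense in $\cs_r(\G)$; the map $j$ is norm-decreasing from $\cs_r(\G)$ into $(B(\G),\|\cdot\|_\infty)$; and $j(f) = f$ for $f \in \Cc(\G)$. Together these say that a good $\Cc(\G)$-approximation of $a$ in the $\cs_r(\G)$-norm is automatically a good \emph{pointwise} approximation of $j(a)$. So the strategy is to approximate, transfer a single nonzero value of $j(a)$ to the approximant, and then invoke hypothesis~\eqref{lemit2:key'} to produce an open set on which $j(a)$ is forced to stay away from zero.

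First I would fix a nonzero $a \in \cs_r(\G)$. Since $j$ is injective, $j(a) \neq 0$, so I can choose $\gamma_0 \in \G$ with $j(a)(\gamma_0) = z_0 \neq 0$. Using density, choose $f \in \Cc(\G)$ with $\|a - f\| < \varepsilon$, where $\varepsilon > 0$ is a small constant to be pinned down below. Because $j$ is linear and norm-decreasing and $j(f) = f$, this yields the uniform pointwise estimate
\[
|j(a)(\gamma) - f(\gamma)| \le \|j(a-f)\|_\infty \le \|a - f\| < \varepsilon \qquad (\gamma \in \G).
\]
Evaluating at $\gamma_0$ and setting $z := f(\gamma_0)$ gives $|z - z_0| < \varepsilon$, so as long as $\varepsilon < |z_0|$ we have $z \in f(\G)\setminus\{0\}$ and $|z| > |z_0| - \varepsilon$.

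Next I would feed $f$, the value $z$, and a small $\delta > 0$ into hypothesis~\eqref{lemit2:key'} to obtain a nonempty open set $W$ contained in $\{\gamma \in \G : |f(\gamma) - z| < \delta\}$. On $W$ the reverse triangle inequality gives $|f(\gamma)| > |z| - \delta$, and combining this with the displayed estimate yields
\[
|j(a)(\gamma)| \ge |f(\gamma)| - \varepsilon > |z| - \delta - \varepsilon > |z_0| - 2\varepsilon - \delta \qquad (\gamma \in W).
\]
Fixing $\varepsilon$ and $\delta$ at the outset so that $2\varepsilon + \delta < |z_0|$ (for instance $\varepsilon, \delta < |z_0|/4$) makes the right-hand side positive, so $j(a)$ is nonzero throughout $W$. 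Hence the nonempty open set $W$ lies in $\supp(j(a))$, which therefore has nonempty interior, giving \eqref{lemit2:key'}$\implies$\eqref{lemit3:key'}.

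Once the three ingredients are in place the argument is pure constant-chasing, so I do not anticipate a serious obstacle; the only point requiring care is the bookkeeping of $\varepsilon$ and $\delta$. A conceptual subtlety worth flagging is that hypothesis~\eqref{lemit2:key'} need not place $W$ anywhere near $\gamma_0$: the point $\gamma_0$ is used only to manufacture a nonzero value $z = f(\gamma_0)$ of the approximant, after which the \emph{sup-norm} (hence location-independent) control of $j(a) - f$ propagates nonvanishing to whatever open set condition~\eqref{lemit2:key'} happens to supply.
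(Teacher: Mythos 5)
Your proof is correct and follows essentially the same route as the paper's: approximate $a$ by $f \in \Cc(\G)$ in the reduced norm, use the norm-decreasing property of $j$ to get uniform pointwise control of $j(a)-f$, apply hypothesis~(1) to $f$ at the nonzero value $f(\gamma_0)$, and conclude that $j(a)$ is bounded away from zero on the resulting open set. The paper simply fixes both of your constants $\varepsilon$ and $\delta$ to be $|j(a)(\gamma_0)|/3$, so the difference is only in the constant bookkeeping.
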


\begin{proof}
Choose a nonzero $a \in \cs_r(\G)$,
and fix $\gamma$ with $j(a)(\gamma) \not= 0$. Fix $f \in \Cc(\G)$ with $\|a - f\|_r <
|j(a)(\gamma)|/3$. Since $j$ is norm-decreasing, we see that $\|j(a) - f\|_\infty <
|j(a)(\gamma)|/3$ and so $|f(\gamma)| > 2|j(a)(\gamma)|/3$. By hypothesis,
\begin{equation}
\label{eq:ahh}U := \{\eta
\in \G : |f(\eta) - f(\gamma)| < |j(a)(\gamma)|/3\}
\end{equation} has nonempty interior. Since
$\|j(a) - f\|_\infty < |j(a)(\gamma)|/3$, we see that for $\eta \in U$ we have
\[
|j(a)(\eta)| > |f(\eta)| - |j(a)(\gamma)|/3 > (|f(\gamma)| - |j(a)(\gamma)|/3) - |j(a)(\gamma)|/3
    > 0.
\]
So $U \subseteq \supp(j(a))$, and since $U$ has nonempty interior, so does
$\supp(j(a))$.
\end{proof}

In what follows, we denote the collection of units with trivial isotropy as $S$.  That is
\[S:=\{u \in \go : \G^u_u = \{u\}\}.\]
Recall that $S$ and $\go\setminus S$ are {\em saturated}; that is $[S] = S$ and $[\go\setminus S] = \go\setminus S$.

\begin{lemma}
\label{lem:singiso}
 Let $\G$ be a locally compact, \'etale groupoid such that $\go$ is Hausdorff.  Suppose that
 $a \in \cs_r(\G)$ is a singular element.
 \begin{enumerate}
 \item\label{item1:lemahh} For every $\gamma \in \supp(j(a))$, there exists $f \in \Cc(\G)$ such that $\gamma \in \supp(f)$ and $f$ is discontinuous at $\gamma$.
 \item\label{item2:lemahh} We have $s(\supp(j(a))) \subseteq \go\setminus S$.
 \end{enumerate}
\end{lemma}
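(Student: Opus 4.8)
The plan is to prove both statements by exploiting the same basic mechanism: a singular element $a$ has $\supp(j(a))$ with empty interior, yet $j(a)$ is a norm-limit (in $B(\G)$) of genuinely continuous compactly-supported functions. This tension forces both the pointwise discontinuity in~\eqref{item1:lemahh} and the isotropy constraint in~\eqref{item2:lemahh}.

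For~\eqref{item1:lemahh}, fix $\gamma \in \supp(j(a))$ and set $k := j(a)(\gamma) \neq 0$. I would first choose $f \in \Cc(\G)$ with $\|a - f\|_r < |k|/3$; since $j$ is norm-decreasing, $\|j(a) - f\|_\infty < |k|/3$, so $f(\gamma)$ is close to $k$ and in particular nonzero, giving $\gamma \in \supp(f)$. The claim is that $f$ (or some such $f$) must be discontinuous at $\gamma$. Here I would argue by contradiction: if every such nearby $f$ were continuous at $\gamma$, then one could build an open neighbourhood of $\gamma$ on which $|j(a)|$ stays bounded away from $0$, exactly as in the proof of Lemma~\ref{lem:key'}. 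Indeed, if $f$ is continuous at $\gamma$ then $U := \{\eta : |f(\eta) - f(\gamma)| < |k|/3\}$ \emph{contains an open neighbourhood} of $\gamma$ (rather than merely having nonempty interior), and the same estimate as in Lemma~\ref{lem:key'} shows $U \subseteq \supp(j(a))$. This produces an interior point of $\supp(j(a))$, contradicting that $a$ is singular. Hence no choice of approximant $f$ with $f(\gamma) \neq 0$ can be continuous at $\gamma$, which is precisely the assertion.

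For~\eqref{item2:lemahh}, I would argue that if some $\gamma \in \supp(j(a))$ had $s(\gamma) = u \in S$, then $\gamma$ would be forced to be an interior point of $\supp(j(a))$, again contradicting singularity. The key structural input is that when $u \in S$, the isotropy group $\G^u_u$ is trivial, so near $u$ the groupoid behaves like a genuinely Hausdorff étale groupoid along the fibre $\G_u$; concretely, for any open bisection $B$ containing $\gamma$, the points of $B$ over a neighbourhood of $u$ can be separated from the rest of $\G$ so that $j(a)$ restricted near $\gamma$ coincides with a continuous function. The mechanism should mirror the computation in Lemma~\ref{lem:CEP2}: choose an approximant $f = \sum_{D \in F} a_D 1_D$, and using triviality of isotropy at $u$ together with Hausdorffness of $\go$, shrink to a bisection around $\gamma$ on which the relevant terms of $f$ behave continuously, forcing $j(a)$ to be bounded below on an open set around $\gamma$.

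The main obstacle will be making~\eqref{item2:lemahh} precise: I need to convert ``$\G^u_u = \{u\}$'' into genuine local Hausdorffness/continuity of $j(a)$ near $\gamma$, and the subtlety is that $j(a)$ is only a function, not a priori continuous, and its values are controlled only through $\|\cdot\|_\infty$ approximation by elements of $\Cc(\G)$. I expect the cleanest route is to combine the approximation argument of Lemma~\ref{lem:key'} with the neighbourhood-shrinking technique of Lemma~\ref{lem:CEP2}, using that at a point of trivial isotropy one can separate $\gamma$ from all the ``spurious'' sheets of the approximant $f$ that would otherwise cause discontinuity, thereby producing an open set on which $|j(a)|$ is bounded away from zero and contradicting the empty-interior hypothesis.
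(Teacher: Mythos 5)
Your treatment of part~\eqref{item1:lemahh} is correct and is essentially the paper's argument: the paper makes the same $|j(a)(\gamma)|/3$ estimates but runs them forwards, using the fact that the set $U$ of \eqref{eq:ahh} must have empty interior to extract a sequence $\gamma_n \to \gamma$ with $|f(\gamma_n)-f(\gamma)| \geq |j(a)(\gamma)|/3$; that explicit sequence (rather than bare discontinuity) is what the paper then feeds into part~\eqref{item2:lemahh}.

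For part~\eqref{item2:lemahh} your proposal has a genuine gap, and it sits exactly where you flagged it. The tool you propose to close it --- the neighbourhood-shrinking technique of Lemma~\ref{lem:CEP2} --- operates on neighbourhoods $V_D \subseteq \go$ of a \emph{unit}, controlling products of the form $V_D D V_D$; it cannot detect the phenomenon that actually causes discontinuity here, namely that $\gamma$ may lie in the closure of a compact set contained in a bisection that does not contain $\gamma$. The missing idea is a compactness argument, and it is the heart of the paper's proof. Write the approximant as $f = \sum_{D \in F} f_D$ with $f_D \in C_c(D)$, each supported on a compact $K_D \subseteq D$ (note this is not $\sum_D a_D 1_D$, which is Steinberg-algebra notation). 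The extension of $f_D$ by zero is continuous at every point of $D$ and at every point outside $\overline{K_D}$, so discontinuity of $f$ at $\gamma$ forces $\gamma \in \overline{K_{D_1}} \setminus D_1$ for some $D_1 \in F$. Now take a net in $K_{D_1}$ converging to $\gamma$ (the paper takes a sequence on which $|f_{D_1}|$ is bounded away from zero). By compactness of $K_{D_1}$ it has a subnet converging to some $\gamma_1 \in K_{D_1} \subseteq D_1$, and $\gamma_1 \neq \gamma$ since $\gamma \notin D_1$. This subnet converges to both $\gamma$ and $\gamma_1$; since $r$ and $s$ are continuous and $\go$ is Hausdorff, we get $s(\gamma_1) = s(\gamma)$ and $r(\gamma_1) = r(\gamma)$, so $\gamma_1^{-1}\gamma$ is a nontrivial element of $\G^{s(\gamma)}_{s(\gamma)}$, i.e.\ $s(\gamma) \in \go \setminus S$. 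The contrapositive of this implication is precisely your unproven claim that at a point of trivial isotropy one can ``separate $\gamma$ from the spurious sheets'': it is a consequence of compactness of the supports together with Hausdorffness of $\go$, not of a shrinking argument in the unit space, and without it part~\eqref{item2:lemahh} does not go through.
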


\begin{proof}
Suppose $a \in \cs_r(\G)$ is singular, that is  $\supp (j(a))$ has empty interior.
For \eqref{item1:lemahh}, as in the proof of Lemma~\ref{lem:key'}, find $f \in \Cc(\G)$
with  $\|a - f\|_r < |j(a)(\gamma)|/3$.   Then the set $U$ as defined in \eqref{eq:ahh},  must have empty interior, because otherwise the rest of the proof of Lemma~\ref{lem:key'} would imply that $\supp(j(a))$ has nonempty interior.  Using a decresasing
neighbourhood base, we can find a sequence $\{\gamma_n\}$ such that $\gamma_n$ converges to $\gamma$ but $\gamma_n \notin U$.
That is
\begin{equation}\label{eq:noconverge1}|f(\gamma_n) - f(\gamma)| \geq \epsilon\end{equation}
where $\epsilon= |j(a)(\gamma)|/3$.  Item \eqref{item1:lemahh} follows.

For item~\eqref{item2:lemahh}, write the $f$ from item~\eqref{item1:lemahh} as $f = \sum_{D \in F}f_D$ where $F$ is a finite collection of open bisections and each $f_D \in C_c(D)$.
We know from \eqref{eq:noconverge1} that $f(\gamma_n)$ does not converge to $f(\gamma)$, so there must exist $D_1\in F$ such that
\begin{equation}\label{eq:noconverge2}
f_{D_1}(\gamma_n)\not\to f_{D_1}(\gamma)
\end{equation}
and so $f_{D_1}$ is not continuous at $\gamma$.
If $\gamma\in D_1$ then $\gamma_n$ is in $D_1$ eventually, and since $f_{D_1}$ restricted to $D_1$ is continuous this contradicts \eqref{eq:noconverge2}.
Hence $\gamma\notin D_1$ which implies $f_{D_1}(\gamma) = 0$.
By \eqref{eq:noconverge2}, there exists a subsequence $\{\gamma_{n_k}\}$ such that $\{|f_{D_1}(\gamma_{n_k})|\}$ is bounded away from zero, in particular they are nonzero.
Hence $\gamma_{n_k}\in D_1$ for all $k$, and since $f_{D_1}$ is supported on a compact subset of $D_1$ which must necessarily contain all the $\gamma_{n_k}$, we can pass to a convergent subsequence $\gamma_{n_{k_\ell}}\to \gamma_1\in D_1$.
Because $f_{D_1}$ is continuous on $D_1$ and $\{|f_{D_1}(\gamma_{n_k})|\}$ is bounded away from zero, we have $f_{D_1}(\gamma_{n_{k_\ell}})\to f_{D_1}(\gamma_1) \neq 0$.
But $f_{D_1}(\gamma) = 0 \neq f_{D_1}(\gamma_1)$, so $\gamma \neq \gamma_1$.

Since $r$ and $s$ are continuous, we have
 $s(\gamma_1)=s(\gamma)$ and $r(\gamma_1)=r(\gamma)$.  Thus $\gamma\gamma_1^{-1} \in \operatorname{Iso}(\G) \setminus \go$.  Thus $s(\gamma) \in \go \setminus S$.
\end{proof}

\begin{prop}
\label{prop:simpleconverse}
 Let $\G$ be an effective, second countable, locally compact, \'etale groupoid such that $\go$ is Hausdorff.  If $\cs_r(\G)$ has any singular elements, then $\cs_r(G)$ is not simple.
\end{prop}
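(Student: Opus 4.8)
The plan is to show that the set $J := \{a \in \cs_r(\G) : a \text{ is singular}\}$ is a nonzero, proper, norm-closed two-sided ideal, which contradicts simplicity. By hypothesis $J \neq \{0\}$, and $J$ is proper because any nonzero $f \in C_c(\go) \subseteq \Cc(\G)$ has $j(f) = f$ with $\supp(f)$ a nonempty open subset of $\go$, so $f \notin J$.

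The organizing idea is to reformulate singularity in terms of one fixed set. Put $W := s^{-1}(\go\setminus S) = \{\gamma\in\G : s(\gamma)\notin S\}$. Since $\G$ is effective and second countable it is topologically principal (\cite[Proposition~3.6]{Ren2}), so $S$ is dense and $\go\setminus S$ has empty interior; as $s$ is an open map, $W$ has empty interior too, because an open bisection inside $W$ would map under $s$ onto a nonempty open subset of $\go\setminus S$. I claim that $a$ is singular if and only if $\supp(j(a))\subseteq W$. The forward implication is exactly Lemma~\ref{lem:singiso}\eqref{item2:lemahh}, and the reverse holds because $\supp(j(a))\subseteq W$ forces $\supp(j(a))$ to have empty interior. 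Given this characterization, closure of $J$ under scalars and sums is immediate, since $j$ is linear and $\supp(j(a+b))\subseteq\supp(j(a))\cup\supp(j(b))\subseteq W$.

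To see that $J$ is a two-sided ideal, I would use a support estimate for products. Expanding the matrix coefficient $j(ab)(\gamma) = \big(L_{s(\gamma)}(a)L_{s(\gamma)}(b)\delta_{s(\gamma)}\mid\delta_\gamma\big)$ via $L_u(ab)=L_u(a)L_u(b)$ yields the convolution identity $j(ab)(\gamma) = \sum_{\zeta\in\G_{s(\gamma)}} j(a)(\gamma\zeta^{-1})\,j(b)(\zeta)$, so whenever $j(ab)(\gamma)\neq 0$ there are $\eta\in\supp(j(a))$ and $\zeta\in\supp(j(b))$ with $\gamma=\eta\zeta$. Now fix $a\in J$ and $b\in\cs_r(\G)$. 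If $\gamma\in\supp(j(ba))$, write $\gamma=\eta\zeta$ with $\zeta\in\supp(j(a))$; then $s(\gamma)=s(\zeta)\in\go\setminus S$ by Lemma~\ref{lem:singiso}\eqref{item2:lemahh}, so $\gamma\in W$ and $ba\in J$. If instead $\gamma\in\supp(j(ab))$, write $\gamma=\eta\zeta$ with $\eta\in\supp(j(a))$; then $r(\zeta)=s(\eta)\in\go\setminus S$, and since $\go\setminus S$ is saturated we get $s(\gamma)=s(\zeta)\in\go\setminus S$, so again $\gamma\in W$ and $ab\in J$.

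Finally, $J$ is norm-closed: if $a_n\in J$ and $a_n\to a$, then $j$ being norm-decreasing gives $\|j(a_n)-j(a)\|_\infty\to 0$, hence $j(a_n)(\gamma)\to j(a)(\gamma)$ for every $\gamma$; since each $j(a_n)$ vanishes off $W$, so does $j(a)$, whence $a\in J$. Thus $J$ is a nonzero proper closed two-sided ideal and $\cs_r(\G)$ is not simple. I expect the main obstacle to be the product support estimate: one must justify the convolution identity for general elements of $\cs_r(\G)$ (rather than just elements of $\Cc(\G)$) directly from the regular representations, after which the invariance of $\go\setminus S$ and the empty interior of $W$ do the rest.
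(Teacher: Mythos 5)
Your proposal is correct, but it takes a genuinely different route from the paper's proof, which is much shorter: the paper fixes a single unit $u$ with trivial isotropy (such $u$ exists because effectiveness plus second countability gives topological principality), notes via Lemma~\ref{lem:singiso}\eqref{item2:lemahh} and saturation of $S$ that every matrix coefficient $\big(L_u(a)\delta_\eta \mid \delta_\gamma\big) = j(a)(\gamma\eta^{-1})$ of a singular element $a$ vanishes, so that $a \in \ker L_u$, and concludes that $\ker L_u$ is a nontrivial ideal. You instead prove the stronger structural statement that the set $J$ of \emph{all} singular elements is a nonzero, proper, norm-closed two-sided ideal, using the characterization that $a$ is singular if and only if $\supp(j(a)) \subseteq s^{-1}(\go\setminus S)$, together with a convolution support estimate for $j(ab)$ and $j(ba)$. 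Both arguments rest on exactly the same two ingredients---Lemma~\ref{lem:singiso}\eqref{item2:lemahh} and topological principality---and in fact your computation shows $J \subseteq \ker L_u$ for every $u \in S$, so your ideal sits inside the paper's. What your route buys is an analytic analogue of Proposition~\ref{prop:singularideal}, which the paper establishes only for Steinberg algebras, and which ties in with the gray-ideal discussion in the introduction; what it costs is the extra work you correctly flag: justifying the identity $j(ab)(\gamma) = \sum_{\zeta\in\G_{s(\gamma)}} j(a)(\gamma\zeta^{-1})\,j(b)(\zeta)$ for arbitrary elements of $\cs_r(\G)$. That step is routine but should be recorded: expand $L_u(b)\delta_u$ in the orthonormal basis of $\ell^2(\G_u)$, note that $\big(L_u(a)\delta_\zeta \mid \delta_\gamma\big) = j(a)(\gamma\zeta^{-1})$ holds for elements of $\Cc(\G)$ by direct computation and extends to all of $\cs_r(\G)$ by continuity, and observe that the resulting series converges absolutely by Cauchy--Schwarz.
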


\begin{proof}
Suppose  $a \in \cs_r(\G)$ is singular. Lemma~\ref{lem:singiso} gives
$s(\supp(j(a))) \subseteq~\go~\setminus S$.  Since $\G$ is second countable and effective, it is topologically principal by \cite[Proposition~3.6]{Ren2}.  Thus there exists $u \in S$.
Then  $L_u(a) = 0$ and hence the kernel of $L_u$ is a nontrivial ideal of $\cs_r(\G)$.
\end{proof}


\subsection{Uniqueness theorem for groupoid \texorpdfstring{$\cs$}{C*}-algebras}

\medskip

In this section, we prove a uniqueness theorem for the reduced $\cs$-algebra
$\cs_r(\G)$ of an \'etale groupoid $\G$:

 \begin{thm}\label{thm:reduced uniqueness}
Let $\G$ be a second-countable, locally compact,  \'etale groupoid such that $\go$ is Hausdorff, $\G$ is effective and for every nonzero $a \in \cs_r(\G)$, $\supp(j(a))$ has nonempty interior. Let $\rho:\cs_r(\G) \to B$ be a $\cs$-homomorphism that is
injective on
$C_0(\go)$. Then $\rho$ is injective.
\end{thm}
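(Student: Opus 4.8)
The plan is to prove the contrapositive in the following form: if $\ker\rho\neq 0$, then I can locate a nonzero element of $C_0(\go)$ whose image under $\rho$ is forced to be small, contradicting the fact that $\rho$, being injective on the $\cs$-algebra $C_0(\go)$, is isometric there. So I would fix a nonzero $a\in\ker\rho$ and manufacture from it such an element, following the route of the algebraic uniqueness theorem (Theorem~\ref{thm:cku}): first push the support of $a$ onto the unit space, then cut down near a unit with trivial isotropy. Throughout I will use that $\rho$ is contractive and is isometric on $C_0(\go)$.

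First I would invoke the standing hypothesis that $\supp(j(a))$ has nonempty interior to choose, using that $\G$ is \'etale, an open bisection $B$ with $B\subseteq\supp(j(a))$, so that $j(a)$ is nonvanishing throughout $B$. Picking $\psi\in C_c(B^{-1})$ with $\psi\geq 0$ and $\psi\neq 0$ and setting $g:=a\ast\psi\in\ker\rho$, the bisection property of $B$ collapses the convolution so that, for $u\in r(B)$, one has $j(g)(u)=j(a)(\beta_u)\psi(\beta_u^{-1})$, where $\beta_u$ is the unique element of $B$ with $r(\beta_u)=u$. Since $j(a)$ is nonzero on all of $B$, this shows $j(g)$ is nonzero on the nonempty open set $U:=\{u\in r(B):\psi(\beta_u^{-1})\neq 0\}\subseteq\go$. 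This is the analytic analogue of replacing $f$ by $f\ast 1_{B^{-1}}$ in Theorem~\ref{thm:cku}, and it is exactly here that the no-singular-functions hypothesis is essential: in the non-Hausdorff setting the ``diagonal'' function $u\mapsto j(x)(u)$ is generally discontinuous, so without a genuinely \emph{open} piece of support for $j(a)$ one could only land the support of $j(g)$ on a subset of $\go$ with empty interior, which need not meet the trivial-isotropy units.

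Because $\G$ is second countable and effective it is topologically principal, so the trivial-isotropy units $S$ are dense; I choose $u_0\in U\cap S$ and set $\delta:=|j(g)(u_0)|>0$. Next I cut $g$ down near $u_0$, mimicking Lemma~\ref{lem:CEP2}. Fix $f\in\Cc(\G)$ with $\|g-f\|_r<\delta/2$ and write $f=\sum_D f_D$ with $f_D\in C_c(D)$ over finitely many open bisections $D$. For each $D$ I produce an open neighbourhood $V_D\subseteq\go$ of $u_0$ as follows: if $u_0\in D$ then trivial isotropy forces $u_0\in D\cap\go$, and taking $V_D=r(D\cap\go)$ gives $V_D D V_D\subseteq\go$ by injectivity of $r$ on $D$; if $u_0\notin D$ then compactness of $\supp f_D$ together with $\G^{u_0}_{u_0}=\{u_0\}$ forces $\{\eta\in\supp f_D: r(\eta),s(\eta)\in V_D\}=\emptyset$ for $V_D$ small, since any violating net would subconverge in the compact set $\supp f_D$ to an element of $\G^{u_0}_{u_0}\subseteq\go$, contradicting $u_0\notin D$. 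Taking $h\in C_c(\go)$ with $0\leq h\leq 1$, $h(u_0)=1$ and $\supp h\subseteq\bigcap_D V_D$, each $hf_Dh$ is supported in $\go$, so $hfh\in C_c(\go)\subseteq C_0(\go)$.

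Finally I compare norms. As $\ker\rho$ is an ideal, $hgh\in\ker\rho$, so $\rho(hgh)=0$, and $\|hgh-hfh\|_r\leq\|g-f\|_r<\delta/2$. Since $hfh\in C_0(\go)$, on which $\rho$ is isometric, and $\rho$ is contractive,
\[
\|hfh\|_r=\|\rho(hfh)\|=\|\rho(hfh)-\rho(hgh)\|\leq\|hfh-hgh\|_r<\delta/2.
\]
On the other hand $\|hfh\|_r=\|hfh\|_\infty\geq|hfh(u_0)|=|f(u_0)|\geq|j(g)(u_0)|-\|g-f\|_r>\delta-\delta/2=\delta/2$, a contradiction. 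Hence $\ker\rho=0$ and $\rho$ is injective. I expect the cutting-down step to be the main obstacle: the discontinuity of the diagonal function means one cannot locate a unit in $S$ where the diagonal of $g$ is comparable to its norm, nor kill the off-diagonal part by shrinking neighbourhoods alone. Both issues are resolved by working with an honestly open piece of support (supplied by the hypothesis) and by exploiting that the approximant $f$ has compactly supported bisection components, so that escaping off-diagonal mass is removed automatically; carefully verifying the neighbourhood construction, exactly as in Lemma~\ref{lem:CEP2}, is where the real work lies.
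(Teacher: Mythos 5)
Your proof is correct, and it takes a genuinely different route from the paper's. The paper proves this theorem by verifying the hypotheses of the abstract uniqueness theorem of Brown--Nagy--Reznikoff--Sims--Williams (Theorem~\ref{thmBNRSW}): it shows in Lemma~\ref{lem:unique extension} that the evaluation states at trivial-isotropy units extend uniquely to states of $\cs_r(\G)$, then carries out a lengthy GNS computation showing that each such extension's GNS representation contains a copy of the regular representation $L_u$, and finally uses the no-singular-elements hypothesis to prove that $\bigoplus_{u \in S} L_u$ is faithful. You avoid states and GNS theory altogether and argue by direct norm comparison: right-convolution by $\psi \in C_c(B^{-1})$ pushes an honestly open piece of $\supp(j(a))$ onto $\go$ (this is where the hypothesis enters for you, exactly as it enters the paper's faithfulness step); compression near a trivial-isotropy unit $u_0$ reproves Lemma~\ref{lem:CEP2'} inline, and your two-case analysis --- using compactness of $\supp f_D$ inside the Hausdorff bisection $D$ plus Hausdorffness of $\go$ to rule out off-diagonal mass when $u_0 \notin D$ --- is a clean substitute for the paper's three-case construction; and the $\delta/2$-bookkeeping pitting contractivity of $\rho$ against its isometry on $C_0(\go)$ replaces the unique-state-extension machinery. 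Both proofs rest on the same pillars (no singular elements, topological principality from effectiveness plus second countability, and local compression onto the diagonal near a trivial-isotropy unit), but yours is more elementary and self-contained, and it directly parallels the algebraic Theorem~\ref{thm:cku}, whereas the paper's route yields extra structural information of independent interest (the $L_u$ occur as subrepresentations of GNS representations of canonical state extensions, in the spirit of Cartan-subalgebra theory). Two steps you assert without full verification are routine but worth writing out: the collapse identity $j(a \ast \psi)(u) = j(a)(\beta_u)\psi(\beta_u^{-1})$, which follows by computing matrix coefficients $\big(L_u(a)\delta_\alpha \mid \delta_\gamma\big) = j(a)(\gamma\alpha^{-1})$ and using that $B^{-1}$ is a bisection; and the membership $hf_Dh \in C_c(\go)$, where having its nonvanishing set inside $\go$ is not by itself obviously sufficient for an element of $\Cc(\G)$, but your choice $\supp h \subseteq r(D \cap \go)$ in the case $u_0 \in D$ forces the support of $hf_Dh$ to be a compact subset of $D \cap \go$, away from the boundary of $D \cap \go$ in $\go$, so continuity of the restriction to $\go$ does hold.
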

In our proof, we will invoke Theorem~3.2 of \cite{BNRSW}
which we restate for convenience.

\begin{thm}{\cite[Theorem~3.2]{BNRSW}}
\label{thmBNRSW}
 Let $A$ be a $\cs$-algebra and $M$ a $\cs$-subalgebra of $A$.  Suppose $S$ is collection
of states of $M$ such that
\begin{enumerate}
 \item\label{it1:BNRSW} every $\varphi \in S$ has a unique extension to a state $\tilde{\varphi}$ of $A$; and
\item the direct sum $\oplus_{\varphi\in S}\pi_{\tilde{\varphi}}$ of the GNS representations
associated to extensions of the elements of $S$ to $A$ is faithful on $A$.
\end{enumerate}
Let $\rho:A \to B$ be a $\cs$-homomorphism.  Then $\rho$ is injective if and only if it is injective
on $M$.
\end{thm}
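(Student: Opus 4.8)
The statement has two directions, and the forward one is immediate: if $\rho$ is injective on all of $A$ then it is certainly injective on the subalgebra $M$. So the content is the reverse implication, and the plan is to assume $\rho$ is injective on $M$ and prove $\ker\rho = \{0\}$. The strategy is to show that each extended state $\tilde\varphi$ factors through $\rho$, and then use the faithfulness hypothesis to squeeze out injectivity via the associated GNS representations.

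The key construction comes first. Since $\rho\restr{M}$ is an injective $\cs$-homomorphism it is isometric, so $\rho$ carries $M$ isomorphically onto the $\cs$-subalgebra $\rho(M)\subseteq B$; write $\theta:=(\rho\restr{M})\inv:\rho(M)\to M$ for the inverse $\cs$-isomorphism. For each $\varphi\in S$, the composite $\varphi\circ\theta$ is a state of $\rho(M)$, which I would extend (by the usual state-extension theorem for $\cs$-subalgebras) to a state $\sigma_\varphi$ of $B$. Then I would set $\psi_\varphi:=\sigma_\varphi\circ\rho$. This is a positive functional on $A$, and for $m\in M$ one has $\psi_\varphi(m)=\sigma_\varphi(\rho(m))=(\varphi\circ\theta)(\rho(m))=\varphi(m)$, so $\psi_\varphi\restr{M}=\varphi$. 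In particular $\|\psi_\varphi\|\ge\|\varphi\|=1$ while $\|\psi_\varphi\|\le 1$, so $\psi_\varphi$ is a state of $A$ extending $\varphi$. By the uniqueness hypothesis~\eqref{it1:BNRSW}, $\psi_\varphi=\tilde\varphi$; that is, every $\tilde\varphi$ factors as $\sigma_\varphi\circ\rho$ and hence annihilates $\ker\rho$.

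With this in hand the conclusion is a standard GNS computation. Fix $a\in\ker\rho$. Because $\ker\rho$ is a closed two-sided ideal, $c^*a^*ac\in\ker\rho$ for every $c\in A$, so $\tilde\varphi(c^*a^*ac)=\sigma_\varphi\big(\rho(c^*a^*ac)\big)=0$. Letting $\xi_{\tilde\varphi}$ denote the cyclic GNS vector, this says
\[
\big\|\pi_{\tilde\varphi}(a)\,\pi_{\tilde\varphi}(c)\,\xi_{\tilde\varphi}\big\|^2
    = \tilde\varphi\big((ac)^*(ac)\big) = 0
\]
for all $c\in A$. Since the vectors $\pi_{\tilde\varphi}(c)\xi_{\tilde\varphi}$ are dense in the GNS space, $\pi_{\tilde\varphi}(a)=0$ for every $\varphi\in S$. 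The faithfulness of $\bigoplus_{\varphi\in S}\pi_{\tilde\varphi}$ then forces $a=0$, so $\ker\rho=\{0\}$ and $\rho$ is injective.

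The main obstacle, and the only genuinely delicate point, is the construction step: producing a single state of $A$ that both extends $\varphi$ and factors through $\rho$, so that hypothesis~\eqref{it1:BNRSW} can be brought to bear. The care needed there is in checking that $\varphi\circ\theta$ really is a state of a bona fide $\cs$-subalgebra $\rho(M)$ (using that injective $\cs$-homomorphisms are isometric with closed range) and that its extension $\sigma_\varphi$, pulled back through $\rho$, restricts to $\varphi$ on $M$ and has norm one even when $A$ is non-unital. Everything after that is the routine observation that a state which kills the ideal $\ker\rho$ has a GNS representation that also kills $\ker\rho$.
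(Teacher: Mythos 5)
Your proof is correct; note that the paper itself gives no proof of this statement, restating it verbatim from \cite[Theorem~3.2]{BNRSW}, so there is nothing internal to compare against. Your argument --- extend $\varphi\circ(\rho\restr{M})\inv$ to a state of $B$, pull it back through $\rho$ to get a state extension of $\varphi$ that kills $\ker\rho$, invoke the uniqueness hypothesis to identify it with $\tilde\varphi$, and then use cyclicity of the GNS vector together with the ideal property of $\ker\rho$ and faithfulness of $\bigoplus_{\varphi\in S}\pi_{\tilde\varphi}$ --- is precisely the standard argument used in \cite{BNRSW}, with all the delicate points (isometry of injective $\cs$-homomorphisms, state extension in the non-unital setting, norm-one verification) handled correctly.
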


We use $C_0(\go)$ for the $M$ in Theorem~\ref{thmBNRSW}.  It is a subalgebra by the following lemma.

\begin{lemma}\label{lem:inclusion}
Let $\G$ be an \'etale groupoid such that $\go$ is Hausdorff.
Then
\begin{enumerate}
\item\label{it1:inclusion}  $C_c(\go) \subseteq \Cc(\G)$
\item\label{it2:inclusion} The inclusion map $i:C_c(\go) \to \Cc(\G)$ extends to
an injective homomorphism $i:C_0(\go) \to \cs_r(\G)$.
\end{enumerate}
\end{lemma}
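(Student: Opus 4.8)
The plan is to handle the two parts separately, with part~\eqref{it1:inclusion} being essentially immediate and part~\eqref{it2:inclusion} reducing to a norm computation via the regular representations. For \eqref{it1:inclusion}, I would observe that $\go$ is itself an open bisection: it is open in $\G$ because $\G$ is \'etale, and both $r$ and $s$ restrict to the identity map on $\go$, which is trivially a homeomorphism onto the open set $\go$. Since $\Cc(\G)$ is by definition the span of the spaces $C_c(U)$ over open bisections $U$, taking $U = \go$ gives $C_c(\go) \subseteq \Cc(\G)$ at once.

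For \eqref{it2:inclusion}, the first step is to check that $i$ is a $*$-homomorphism for the pointwise operations on $C_c(\go)$. Fixing $f,g \in C_c(\go)$ and $\gamma \in \G$, I would evaluate the convolution $f*g(\gamma)=\sum_{r(\eta)=r(\gamma)}f(\eta)g(\eta^{-1}\gamma)$ and use that $f$ vanishes off $\go$ to collapse the sum to the single term $\eta = r(\gamma)$; then $g$ vanishing off $\go$ forces $\gamma$ to be a unit, leaving $f*g(\gamma)=f(\gamma)g(\gamma)$ when $\gamma\in\go$ and $0$ otherwise. A similar one-line check with $f^*(\gamma)=\overline{f(\gamma^{-1})}$ shows $i$ preserves adjoints. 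Thus $i$ is a $*$-homomorphism into $\Cc(\G) \subseteq \cs_r(\G)$.

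The crux, and the step I expect to be the main obstacle, is to show that $i$ is isometric for $\|\cdot\|_\infty$ on $C_0(\go)$ and the reduced norm on $\cs_r(\G)$; this is what lets us extend from $C_c(\go)$ to $C_0(\go)$ and, being isometric, guarantees injectivity. I would compute the regular representation directly: for $f\in C_c(\go)$, $u \in \go$, and $\gamma \in \G_u$, the same collapsing of the sum as above gives $L_u(f)\delta_\gamma = f(r(\gamma))\delta_\gamma$, so $L_u(f)$ is the diagonal operator on $\ell^2(\G_u)$ with entries $f(r(\gamma))$. Its norm is $\sup_{\gamma\in\G_u}|f(r(\gamma))| \le \|f\|_\infty$, while taking $\gamma = u$ shows $\|L_u(f)\| \ge |f(u)|$. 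Letting $u$ range over $\go$ therefore yields $\|i(f)\|_r = \sup_{u\in\go}\|L_u(f)\| = \|f\|_\infty$.

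Finally, since $C_c(\go)$ is dense in $C_0(\go)$ in the supremum norm and $\cs_r(\G)$ is complete, the isometric map $i$ extends uniquely to an isometric, hence injective, $*$-homomorphism $i:C_0(\go)\to\cs_r(\G)$, as required. The only points needing care are confirming that the convolution and involution formulas collapse exactly as claimed on functions supported on the unit space, and that the supremum over all the $L_u$ recovers $\|f\|_\infty$ exactly rather than merely a norm bound; the lower estimate from evaluating at the unit $\gamma = u$ is precisely what upgrades a norm-decreasing inclusion to an isometry.
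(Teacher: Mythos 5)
Your proof is correct. For part~(1) it coincides with the paper's argument: both simply observe that $\go$ is an open bisection, so $C_c(\go) \subseteq \Cc(\G)$ by the very definition of $\Cc(\G)$. For part~(2) the difference is one of presentation rather than substance: the paper disposes of the extension in a single sentence by citing \cite[Proposition~1.9]{Phillips} and asserting that the argument there ``works verbatim'' in the non-Hausdorff case, whereas you supply the complete verification that the citation stands in for. Your computation --- convolution of functions supported on $\go$ collapses to pointwise multiplication, $L_u(f)$ acts diagonally by $\delta_\gamma \mapsto f(r(\gamma))\delta_\gamma$ on $\ell^2(\G_u)$, and evaluating at $\gamma = u$ upgrades the bound $\sup_{u}\|L_u(f)\| \le \|f\|_\infty$ to equality --- is sound, and it has the merit of making explicit why non-Hausdorffness is harmless here: every function involved is supported in the Hausdorff open set $\go$, the relevant operators are diagonal, and no continuity or limiting argument on $\G$ itself is ever needed (even though, as the paper remarks after the lemma, $i(f)$ need not be continuous as a function on $\G$). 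The isometry then extends to $C_0(\go)$ by density of $C_c(\go)$ and completeness of $\cs_r(\G)$, and injectivity is automatic. What the paper's approach buys is brevity; what yours buys is self-containedness and the slightly stronger isometric statement, which is exactly what legitimizes treating $C_0(\go)$ as the subalgebra $M$ in the application of Theorem~\ref{thmBNRSW}.
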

\begin{proof}
The unit space $\go$ is an open bisection in  $\G$, so each $f \in C_c(\go)$ belongs to
$\Cc(\G)$ by definition. The argument of \cite[Proposition~1.9]{Phillips} works verbatim in the non-Hausdorff case
to show that $i$ extends to injective $\cs$-homomorphisms.
\end{proof}

\begin{remark}
When we view elements $f \in C_c(\go)$ as  elements
of $\Cc(\G)$ via the inclusion map $i$,
$i(f)$ might not be continuous as a function on $\G$.
\end{remark}

We also need a non-ample version of Lemma~\ref{lem:CEP2} in order to establish
item~\eqref{it1:BNRSW} of Theorem~\ref{thmBNRSW}.

\begin{lemma}\label{lem:CEP2'}
Let $\G$ be a second-countable, \'etale groupoid such $\go$ is Hausdorff. Suppose $u \in
\go $ is such that $\G^u_u \subseteq \intiso $ and take $f \in \Cc(\G)$ such that there
exists $\gamma_u \in \G^u_u$ with $f(\gamma_u) \neq 0$.  Then there exists an open set $L
\subseteq \go $ such that $u \in L$ and
\[
\supp (b f b) \subseteq \intiso
\]
for every $b \in C_c(L)$; moreover, $bfb \not= 0$ whenever $b$ satisfies $b(u)
= 1$.
\end{lemma}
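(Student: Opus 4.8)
The plan is to follow the structure of the ample case, Lemma~\ref{lem:CEP2}, adapting it to the continuous setting. I would write $f = \sum_{D \in F} f_D$, where $F$ is a finite collection of open bisections and each $f_D \in C_c(D)$. The bridge between the convolution product and pointwise support is the identity
\[
(bfb)(\gamma) = b(r(\gamma))\, f(\gamma)\, b(s(\gamma)) \qquad (\gamma \in \G),
\]
valid for any $b \in C_c(L)$ with $L \subseteq \go$, because $b$ is supported on units: the only unit $\eta$ with $r(\eta) = r(\gamma)$ is $r(\gamma)$ itself, and symmetrically on the right. Expanding $f = \sum_D f_D$ then gives $\supp(bfb) \subseteq \bigcup_{D \in F}\{\gamma \in D : f_D(\gamma) \neq 0,\ r(\gamma), s(\gamma) \in \supp(b)\}$, so it suffices to choose, for each $D$, an open neighbourhood $V_D$ of $u$ with $\{\gamma \in D : f_D(\gamma) \neq 0,\ r(\gamma), s(\gamma) \in V_D\} \subseteq \intiso$, and then set $L = \bigcap_{D \in F} V_D$ and restrict to $b$ with $\supp(b) \subseteq L$.

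For each $D$ I would argue by the same three cases as in Lemma~\ref{lem:CEP2}, using that $r$ and $s$ are injective on the bisection $D$. If the unique $\gamma_r \in D$ with $r(\gamma_r) = u$ satisfies $s(\gamma_r) = u$, then $\gamma_r \in \G^u_u \subseteq \intiso$, and I take $V_D = r(D \cap \intiso)$, an open neighbourhood of $u$; injectivity of $r$ on $D$ forces $\{\gamma \in D : r(\gamma) \in V_D\} \subseteq \intiso$. If instead some $\gamma \in D$ has exactly one of $r(\gamma), s(\gamma)$ equal to $u$, I use that $\go$ is Hausdorff to separate $r(\gamma)$ from $s(\gamma)$, pull this back to an open bisection $D' \subseteq D$ containing $\gamma$ with $r(D') \cap s(D') = \emptyset$, and set $V_D = r(D')$ (or $s(D')$ in the symmetric case); the bisection argument of Lemma~\ref{lem:CEP2} then gives $V_D D V_D = \emptyset$, so the relevant set is empty.

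The remaining case—$u \notin r(D)$ and $u \notin s(D)$—is where the non-ample setting genuinely differs and is the main obstacle. In the ample proof one uses that $r(D)$ is compact, hence closed in the Hausdorff space $\go$, to separate $u$ from $r(D)$; but here $r(D)$ is merely open and $u$ could lie on its boundary, so no neighbourhood of $u$ avoids $r(D)$. The fix is to replace $D$ by the compact support $K_D := \cls(f_D)$, a compact subset of $D$ containing $\supp(f_D)$; then $r(K_D)$ is compact and therefore closed in $\go$, and $u \notin r(D) \supseteq r(K_D)$, so I can choose an open $V_D \ni u$ with $V_D \cap r(K_D) = \emptyset$, which forces $\{\gamma \in D : f_D(\gamma) \neq 0,\ r(\gamma) \in V_D\} = \emptyset$. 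Note that, since we work with $C_c(L)$ over $\C$ rather than with characteristic functions, the $V_D$ need only be open, not compact open. Taking $L = \bigcap_{D \in F} V_D$ gives an open neighbourhood of $u$ with $\supp(bfb) \subseteq \intiso$ for all $b \in C_c(L)$. Finally, for the ``moreover'' clause, since $\gamma_u \in \G^u_u$ has $r(\gamma_u) = s(\gamma_u) = u$, the displayed identity yields $(bfb)(\gamma_u) = b(u)^2 f(\gamma_u) = f(\gamma_u) \neq 0$ whenever $b(u) = 1$.
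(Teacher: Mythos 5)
Your proof is correct, and it is structurally parallel to the paper's own proof: the same decomposition $f = \sum_{D\in F} f_D$, the same three cases for choosing $V_D$, the same intersection $L = \bigcap_{D\in F} V_D$, and the same evaluation at $\gamma_u$ for the ``moreover'' clause. But your treatment of the third case is a genuine repair of a step the paper gets wrong, not a reproduction of it. The paper's proof declares the $D$'s to be \emph{compact} open bisections---a leftover from the ample-case Lemma~\ref{lem:CEP2}---and then, when $u \notin r(D) \cup s(D)$, simply asserts that Hausdorffness of $\go$ yields a neighbourhood $V_D$ of $u$ with $V_D D V_D = \emptyset$. In a general \'etale groupoid, however, $\Cc(\G)$ is spanned by the spaces $C_c(D)$ over \emph{merely open} bisections $D$ (compact open ones need not exist), and for such $D$ the asserted separation can fail exactly as you say: in the transformation groupoid of $x \mapsto x/2$ acting on $\mathbb{R}$, take $D = \{(x,1) : x \in (0,1)\}$ and $u = 0$; then $u \notin r(D)\cup s(D)$, yet every neighbourhood $V$ of $u$ satisfies $VDV \neq \emptyset$. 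Your observation that only the set where $f_D \neq 0$ matters, so that $D$ may be replaced by a compact $K_D \subseteq D$ containing $\supp(f_D)$, whose image $r(K_D)$ \emph{is} closed in the Hausdorff space $\go$, is precisely the fix, and it is available exactly because elements of $C_c(D)$ have compact support. One small caution: take $K_D$ to be the compact set from the definition of $C_c(D)$, or the closure of $\supp(f_D)$ \emph{relative to} $D$; the closure $\cls(f_D)$ computed in $\G$ could spill outside $D$ when $\G$ is non-Hausdorff. Finally, your explicit identity $(bfb)(\gamma) = b(r(\gamma))\,f(\gamma)\,b(s(\gamma))$ makes precise what the paper's proof leaves implicit as ``by construction.''
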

\begin{proof}
Write $f= \sum_{D \in F} f_D$ where $D$ is a collection of compact open bisections and
each $f_D \in C_c(D)$. For each $D \in F$, choose an open neighbourhood $V_D \subseteq
\go$ as follows:
\begin{itemize}
\item If $u = r(\gamma) = s(\gamma)$ for some $\gamma \in D$, then $\gamma \in
    \intiso$ by assumption.  Let $V_D$ be an open subset of $\go$ containing $u$ such
    that $V_D$ is contained in the open set
    \[
        r(D \cap \intiso) = s(D \cap \intiso).
    \]
    Then  $V_D D V_D \subseteq D \cap \intiso$.
\item If there exists $\gamma \in D$ such that $r(\gamma) = u$ and $s(\gamma) \neq u$
    or $s(\gamma) = u$ and $r(\gamma) \neq u$, we chose $V_D$ as follows. Because
    $\go$ is Hausdorff, we can find an open subset $D' \subseteq D$ containing
    $\gamma$ such that $r(D') \cap s(D') = \emptyset$. Take $V_D = r(D')$ (or $V_D=
    s(D')$), so that $u \in V_D$ and  $V_D DV_D = \emptyset$.
\item If $u \notin r(D)$ and $u \notin s(D)$, use that $\go$ is Hausdorff to choose a
    neighbourhood $V_D$ of $u$ such that $V_D DV_D=\emptyset$.
\end{itemize}
Let
\[
    L:= \bigcap_{D \in F} V_D.
\]
Then $L$ is an open set that contains $u$. If $b \in C_c(L)$, then
\[
    \supp (b f b) \subseteq \intiso.
\] by construction.
If $b(u) = 1$, then $(bfb)(\gamma_u) = b(u) f(\gamma_u) b(u) = f(\gamma_u) \not= 0$, and
so $bfb \not= 0$.
\end{proof}

\begin{lemma}\label{lem:unique extension}
Let $\G$ be a second-countable, effective \'etale groupoid such that for every nonzero $a \in \cs_r(\G)$,  $\supp(j(a))$ has nonempty interior. Let $u \in \go$ be a unit such that $\G^u_u = \{u\}$. Let $\epsilon_u$ be
the state of $C_0(\go)$ determined by evaluation at $u$. Then $\epsilon_u$ extends
uniquely to a state of $\cs_r(\G)$.
\end{lemma}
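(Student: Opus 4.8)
The plan is to produce the canonical extension explicitly and then show that every state restricting to $\epsilon_u$ agrees with it on the dense subalgebra $\Cc(\G)$. For existence, I would take $\widetilde{\epsilon_u}(a) := j(a)(u) = \big(L_u(a)\delta_u \mid \delta_u\big)$, the vector state associated to $\delta_u \in \ell^2(\G_u)$ in the regular representation $L_u$. Positivity is immediate since $\widetilde{\epsilon_u}(a^*a) = \|L_u(a)\delta_u\|^2 \ge 0$, the functional has norm $1$, and for $h \in C_0(\go)$ one computes $\widetilde{\epsilon_u}(i(h)) = h(u) = \epsilon_u(h)$, so $\widetilde{\epsilon_u}$ is indeed a state extending $\epsilon_u$.

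For uniqueness, let $\phi$ be any state of $\cs_r(\G)$ with $\phi\restr{C_0(\go)} = \epsilon_u$. Passing to the unitization, I would first record the standard localization trick: if $b \in C_c(\go)$ satisfies $0 \le b \le 1$ and $b(u) = 1$, then $c := 1 - b$ is positive with $\phi(c) = 1 - b(u) = 0$, whence $\phi(c^2) \le \|c\|\,\phi(c) = 0$, and Cauchy--Schwarz gives $\phi(ac) = \phi(ca) = 0$ for every $a$. Consequently $\phi(ab) = \phi(ba) = \phi(a)$, and applying this twice yields $\phi(bab) = \phi(a)$ for all $a \in \cs_r(\G)$. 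Since states are norm-continuous and $\Cc(\G)$ is dense in $\cs_r(\G)$, it then suffices to prove $\phi(f) = f(u)$ for every $f \in \Cc(\G)$.

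Fix such an $f$. Effectiveness gives $\intiso = \go$, and $\G^u_u = \{u\} \subseteq \intiso$, so Lemma~\ref{lem:CEP2'} supplies an open set $L \ni u$ with $\supp(bfb) \subseteq \go$ for every $b \in C_c(L)$. By Urysohn's lemma in the locally compact Hausdorff space $\go$, I would choose $b \in C_c(L)$ with $0 \le b \le 1$ and $b(u) = 1$. A direct convolution computation on the unit space shows $(bfb)(u') = b(u')^2 f(u')$ for $u' \in \go$; combining this with the containment $\supp(bfb) \subseteq \go$ and the fact that $f\restr{\go} \in C_c(\go)$, I would conclude that $bfb = i\big(b^2 f\restr{\go}\big)$ as elements of $\cs_r(\G)$. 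Then $\phi(f) = \phi(bfb) = \epsilon_u\big(b^2 f\restr{\go}\big) = b(u)^2 f(u) = f(u) = \widetilde{\epsilon_u}(f)$, and density of $\Cc(\G)$ forces $\phi = \widetilde{\epsilon_u}$.

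The main obstacle is the identification $bfb = i\big(b^2 f\restr{\go}\big)$: a priori $bfb$ is an opaque convolution product in $\Cc(\G)$ whose elements need not be continuous, so one must genuinely combine the support bound from Lemma~\ref{lem:CEP2'} (which uses effectiveness to push $\supp(bfb)$ into the unit space) with the pointwise convolution formula on $\go$ and the observation that $f\restr{\go}$ is continuous with compact support. I note that this argument does not appear to require the hypothesis that $\supp(j(a))$ has nonempty interior for nonzero $a$; that condition is what drives the faithfulness clause~\eqref{it1:BNRSW} of Theorem~\ref{thmBNRSW} rather than the unique-extension clause established here.
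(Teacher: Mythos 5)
Your proof has the same skeleton as the paper's: both reduce uniqueness to elements $f$ of the dense subalgebra $\Cc(\G)$, both invoke Lemma~\ref{lem:CEP2'} together with effectiveness ($\intiso=\go$) to force $bfb$ into the unit space for suitable $b$ supported near $u$, and both then identify $\phi(f)$ with $f(u)$. Where the paper quotes the unique-extension criterion of Anderson \cite{Anderson} (via \cite[Theorem~3.1(a)]{BNRSW}), you re-derive it by hand with the Cauchy--Schwarz localization $\phi(bab)=\phi(a)$; that part of your argument is correct and self-contained. Your closing observation is also right: the hypothesis that $\supp(j(a))$ has nonempty interior for nonzero $a$ is not used in this lemma; it enters only in the faithfulness clause of Theorem~\ref{thmBNRSW} in the proof of Theorem~\ref{thm:reduced uniqueness}.

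The gap is precisely at the step you flag as the main obstacle, and your proposed resolution is the one part that fails. You justify $bfb=i\big(b^2 f\restr{\go}\big)$ using ``the fact that $f\restr{\go}\in C_c(\go)$'', and that fact is false whenever $\G$ is non-Hausdorff, which is exactly the case the lemma is for. Indeed, if $\G$ is not Hausdorff, pick a non-unit $k\in\overline{\go}$ (necessarily $k\in\Iso(\G)$, say $r(k)=s(k)=u'$), an open bisection $D\ni k$, and $f_D\in C_c(D)$ with $f_D(k)=1$. Since $D$ is a bisection containing $k$ it cannot contain $u'$, so $f_D(u')=0$; yet any net of units $u_n\to k$ satisfies $u_n=r(u_n)\to u'$ in $\go$ while $f_D(u_n)\to f_D(k)=1$, so $f_D\restr{\go}$ is discontinuous at $u'$. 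This already happens in the Grigorchuk groupoid, which satisfies every hypothesis of the lemma, so continuity of $b^2f\restr{\go}$ cannot be inherited from $f\restr{\go}$. The identification you want is nevertheless true, but its proof must use the termwise information in the proof of Lemma~\ref{lem:CEP2'}, not just its statement: writing $f=\sum_{D\in F}f_D$ with $f_D\in C_c(D)$, the set $L$ is constructed so that $\supp(bf_Db)\subseteq LDL\subseteq D\cap\intiso=D\cap\go$ for each $D$ separately. Each $bf_Db$ is continuous on $D$, compactly supported in $D$, and nonzero only on the open set $D\cap\go$; hence $(bf_Db)\restr{\go}$ is continuous --- if $u_n\to u'$ in $\go$ with $|bf_Db(u_n)|\ge\varepsilon$, a subnet converges in the compact set $\{|bf_Db|\ge\varepsilon\}\subseteq D$ to some $k$, which lies in $\go$ because $bf_Db$ vanishes off $\go$, whence $k=u'$ by Hausdorffness of $\go$, so $u'\in D$ where continuity is clear --- and its $\varepsilon$-level sets are compact subsets of $\go$, so $(bf_Db)\restr{\go}\in C_0(\go)$. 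Summing over $D$ gives $bfb\in i(C_0(\go))$, after which your argument closes as written. To be fair, the paper asserts ``$bfb\in C_0(\go)$'' at the same spot without elaboration; your instinct that this is the crux is correct, but the justification must run through the support bound on each $bf_Db$, not through continuity of $f\restr{\go}$.
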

\begin{proof}
We follow the argument of \cite[Theorem~3.1(a)]{BNRSW}. By the argument preceding
\cite[Theorem~3.1]{Anderson}, it suffices to show that for each $a \in \cs_r(\G)$ and each
$\varepsilon > 0$, there exist $b \in C_0(\go)_+$ and $c \in C_0(\go)$ such that
$\epsilon_u(b) = \|b\| = 1$ and $\|bab - c\| < \varepsilon$.

For this, observe that by continuity it suffices to show that for each $f$ in the dense
subalgebra $\Cc(\G)$ there exists a positive $b \in C_c(\go)$ such that $bfb \in C_c(\go)$
and $\phi(b) = \|b\| = 1$. Since $\G$ is effective, we have $\intiso = \go$, and so we can
apply Lemma~\ref{lem:CEP2'} to find an open $L \subseteq \go$ such that $u \in L$ and $b
f b \in C_0(\go)$ for all $b \in C_c(L)$. Fix $b \in C_c(L)$ with $b(u) = 1$ and $0 \leq b(v)
\le 1$ for all $v \in L$. Then we have $\epsilon_u(b) = 1 = \|b\|$, so this $b$ suffices.
\end{proof}

\begin{proof}[Proof of Theorem~\ref{thm:reduced uniqueness}]
Let $M := C_0(\go) \subseteq \cs_r(\G)$. Lemma~\ref{lem:unique extension} shows that the
states $\{\varepsilon_u : \G^u_u = \{u\}\}$ have unique extension to states of $\cs_r(\G)$.
So by Theorem~\ref{thmBNRSW}, it suffices to show that the direct sum of the GNS
representations of these state extensions is faithful on $\cs_r(\G)$.

For this, first observe that for $u \in \go$, the vector state $\phi_u(a) := \big(L_u(a)
\delta_u \mid \delta_u\big)$ is an extension of $\epsilon_u$ to a state of $\cs_r(\G)$, so
if $u$ satisfies $\G^u_u = \{u\}$, then the preceding paragraph shows that this is the
unique extension. We will show that
\begin{equation}\label{eq:GNS contains regular}
\|\pi_{\phi_u}(a)\| \ge \|L_u(a)\| \text{ for every $a \in \cs_r(\G)$.}
\end{equation}
To see this, for each $\gamma \in \G_u$, let $B_\gamma$ be a compact open bisection
containing $\gamma$, and fix $f_\gamma \in C_c(B_\gamma)$ with $f_\gamma(\gamma) = 1$. So
the $f_\gamma$ belong to $\cs_r(\G)$. For each $\gamma \in \G_u$, let $h_\gamma :=
[f_\gamma]$ be the corresponding element of the GNS space $\Hh_{\phi_u}$.

We first claim that $\{h_\gamma : \gamma \in \G_u\}$ is an orthonormal set in the GNS
space $\Hh_{\phi_u}$ of $\phi_u$. To see this, fix $\gamma,\eta \in \G_u$, and calculate:
\[
\big(h_\gamma \mid h_\eta\big)
    = \phi_u(f_\eta^* f_\gamma)
    = j(f_\eta^* f_\gamma)(u)
    = (f_\eta^* f_\gamma)(u).
\]
Since $B_\eta$ and $B_\gamma$ are bisections containing $\eta$ and $\gamma$, and since
$s(\eta) = s(\gamma) = u$, we have $u \in B_\eta^{-1}B_\gamma \supseteq \supp(f_\eta^*
f_\gamma)$ if and only if $\eta = \gamma$. So if $\gamma \not= \eta$ then $\big(h_\gamma
\mid h_\eta\big) = 0$ and if $\gamma = \eta$, then
\[
\big(h_\gamma \mid h_\eta\big)
    = \sum_{\alpha\beta = u} f_\gamma^*(\alpha)f_\gamma(\beta)
    = \sum_{\alpha\beta = u} \overline{f_\gamma(\alpha^{-1})}f_\gamma(\beta)
    = |f_\gamma(\gamma)|^2 = 1.
\]
So
\[ \big(h_\gamma \mid h_\eta\big) = \delta_{\gamma,\eta},
\]
and $\{h_\gamma : \gamma \in \G_u\}$ is an orthonormal set as claimed.

We now claim that if $B$ is an open bisection in $\G$, $\gamma \in \G_u$, and $g \in
C_c(B)$, then
\[
\pi_{\phi_u}(g) h_\gamma
    = \begin{cases}
        g(\eta) h_{\eta\gamma} &\text{ if $\eta \in B \cap \G_{r(\gamma)}$}\\
        0 &\text{ if $B \cap \G_{r(\gamma)} = \emptyset$.}
    \end{cases}
\]
First suppose that $B \cap \G_{r(\gamma)} \not= \emptyset$, say $\eta \in B \cap
\G_{r(\gamma)}$. We calculate
\begin{align*}
\|\pi_{\phi_u}&(g) h_\gamma - g(\eta)h_{\eta\gamma}\|^2\\
    &= \big(\pi_{\phi_u}(g) h_\gamma - g(\eta)h_{\eta\gamma} \mid \pi_{\phi_u}(g) h_\gamma - g(\eta)h_{\eta\gamma}\big)\\
    &= \big([g*f_\gamma] - [g(\eta)f_{\eta\gamma}] \mid [g*f_\gamma] - [g(\eta)_{\eta\gamma}]\big)\\
    &= \phi_u\big((g*f_\gamma)^*(g*f_\gamma) - (g(\eta)f_{\eta\gamma})^*(g*f_\gamma)
            - (g*f_\gamma)^*(g(\eta)f_{\eta\gamma}) + |g(\eta)|^2 f_{\eta\gamma}^**f_{\eta\gamma}\big)\\
    &= \big(L_u((g*f_\gamma)^*(g*f_\gamma) \delta_u \mid \delta_u\big)
        - \big(L_u((g(\eta)f_{\eta\gamma})^*(g*f_\gamma)) \delta_u \mid \delta_u\big)\\
    &\qquad{} - \big(L_u((g*f_\gamma)^*(g(\eta)f_{\eta\gamma})) \delta_u \mid \delta_u\big)
        + |g(\eta)|^2\big(L_u(f_{\eta\gamma}^**f_{\eta\gamma}) \delta_u \mid \delta_u\big)\\
    &= \big((g*f_\gamma)^*(g*f_\gamma) - (g(\eta)f_{\eta\gamma})^*(g*f_\gamma)
            - (g*f_\gamma)^*(g(\eta)f_{\eta\gamma}) + |g(\eta)|^2 f_{\eta\gamma}^**f_{\eta\gamma}\big)(u).
\end{align*}
Since the unique element of $BB_\gamma$ with source equal to $u$ is $\eta\gamma$, we have
\begin{align*}
\big((g*f_\gamma)^*(g*f_\gamma)\big)(u)
    &= \big((g(\eta)f_{\eta\gamma})^*(g*f_\gamma)\big)(u)
    = \big((g*f_\gamma)^*(g(\eta)f_{\eta\gamma})\big)(u)\\
    &= \big(|g(\eta)|^2 f_{\eta\gamma}^**f_{\eta\gamma}\big)(u)
    = |g(\eta)|^2,
\end{align*}
so we deduce that
\[
\|\pi_{\phi_u}(g) h_\gamma - g(\eta)h_{\eta\gamma}\|^2 = 0,
\]
so $\pi_{\phi_u}(g) h_\gamma = g(\eta)h_{\eta\gamma}$. Now suppose that $B \cap
G_{r(\gamma)} = \emptyset$. Then $BB_\gamma \cap \G_u = \emptyset$, and so
\[
\|\pi_{\phi_u}(g)h_\gamma\|^2
    = \phi_u((g*f_\gamma)^*(g*f_\gamma))
    = \big(L_u((g*f_\gamma)^*(g*f_\gamma)) \delta_u \mid \delta_u\big)
    = \big((g*f_\gamma)^*(g*f_\gamma)\big)(u)
    = 0.
\]
This proves the claim.

Since $\lsp\{g : B\text{ is an open bisection and }g \in C_c(B)\}$ is dense in
$\cs_r(\G)$, it follows that $\supp\{h_\gamma : \gamma \in \G_u\}$ is invariant for
$\pi_u$. Moreover, since for every open bisection $B$ and every $g \in C_c(B)$, we have
\[
L_u(g) \delta_\gamma =
    \begin{cases}
        g(\eta)h_{\eta\gamma} &\text{ if $\eta \in B \cap G_{r(\gamma)}$}\\
        0 &\text{ if $B \cap G_{r(\gamma)} = \emptyset$,}
    \end{cases}
\]
the unitary $U : \ell^2(\G_u) \to H_{\phi_u}$ intertwines $L_u$ with the reduction of
$\pi_{\phi_u}$ to $\clsp\{h_\gamma : \gamma \in \G_u\}$. That is $\pi_{\phi_u}$ contains a
summand equivalent to $L_u$, proving~\eqref{eq:GNS contains regular}.

So to prove the theorem, it now suffices to show that $\bigoplus_{\G^u_u = \{u\}} L_u$ is
a faithful representation of $\cs_r(\G)$. So suppose that $a \in \cs_r(\G) \setminus
\{0\}$. Then $j(a)$ is nonzero, and so by hypothesis there is an open set $O \subseteq \G$
such that $j(a)(\gamma) \not= 0$ for all $\gamma \in O$.  Fix a nonempty open bisection $B$
contained in $O$. Then $s(B)$ is a nonempty open subset of $\go$ and so there exists $u \in s(B)
\cap S$. Let $\gamma \in B$ be the unique element in $B$ with $s(\gamma) = u$. Then
\[
\big(L_u(a) \delta_u \mid \delta_\gamma\big)
    = j(a)(\gamma) \not= 0
\]
because $\gamma \in O$. Hence $L_u(a) \not= 0$. Since $u \in S$, it follows that
$\bigoplus_{u \in S} L_u(a) \not= 0$ too, as required.
\end{proof}
\subsection{Simplicity of  groupoid \texorpdfstring{$\cs$}{C*}-algebras}

\medskip

\begin{thm}
\label{thm:c*simple}
Let $\G$ be a second-countable, locally compact, \'etale groupoid such that $\go$ is Hausdorff.
\begin{enumerate}
\item If $\cs(\G)$ is simple, then $\cs(\G) = \cs_r(\G)$, $\G$ is effective and for every nonzero $a \in \cs_r(\G)$, $\supp(j(a))$ has nonempty interior.
\item If $\cs_r(\G)$ is simple, then $\G$ is minimal.
\item If $\G$ is minimal and effective and for every nonzero $a \in \cs_r(\G)$, $\supp(j(a))$ has nonempty interior, then $\cs_r(\G)$ is simple.
\end{enumerate}
In particular, if $\cs(\G) = \cs_r(\G)$, then $\cs(\G)$ is simple if and only if $\G$ is
minimal, effective and for every nonzero $a \in \cs_r(\G)$, $\supp(j(a))$ has nonempty interior.
\end{thm}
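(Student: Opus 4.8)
The plan is to treat the three numbered implications separately and then read off the final ``if and only if'', with the sufficiency direction carried almost entirely by the uniqueness theorem, Theorem~\ref{thm:reduced uniqueness}.

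I would prove the sufficiency statement~(3) first. Assume $\G$ is minimal and effective and that $\supp(j(a))$ has nonempty interior for every nonzero $a\in\cs_r(\G)$. Let $I$ be a proper closed two-sided ideal and $\rho\colon\cs_r(\G)\to\cs_r(\G)/I$ the quotient map. By Theorem~\ref{thm:reduced uniqueness} it suffices to show $\rho$ is injective on $C_0(\go)$, equivalently that $I\cap C_0(\go)=0$. Now $I\cap C_0(\go)$ is a closed ideal of the commutative algebra $C_0(\go)$, hence equals $C_0(U)$ for a unique open $U\subseteq\go$, and the key step is to show $U$ is invariant. This is the standard conjugation computation: given $u\in U$ and $\gamma$ with $s(\gamma)=u$, pick an open bisection $B\ni\gamma$, a function $g\in C_c(B)$ with $g(\gamma)=1$, and $h\in C_0(U)\subseteq I$ with $h(u)=1$; then $ghg^*\in I\cap C_0(\go)=C_0(U)$ is supported in $\go$ and nonzero at $r(\gamma)$, forcing $r(\gamma)\in U$. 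Since $U$ is then saturated, minimality gives $U=\ew$ or $U=\go$; the case $U=\go$ would put $C_0(\go)$ inside $I$, and as $C_c(\go)$ contains an approximate identity for $\cs_r(\G)$ this makes $I=\cs_r(\G)$, contradicting properness. Hence $U=\ew$ and $I=0$.

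For the necessity of minimality~(2) I would argue the contrapositive. If $\G$ is not minimal there is a nonempty proper open invariant $U\subseteq\go$. Let $I_U$ be the closed ideal of $\cs_r(\G)$ generated by $C_0(U)$; it is nonzero since it contains $C_0(U)$. To see that it is proper, note that for $u\in\go\setminus U$ the orbit of $u$ avoids $U$ by invariance, so $\phi$ vanishes on $r(\G_u)$ for every $\phi\in C_0(U)$ and a direct computation with the regular representation gives $L_u(\phi)=0$; thus $L_u$, and hence $\bigoplus_{u\notin U}L_u$, annihilates $I_U$. Since $\go\setminus U\neq\ew$ this direct sum is nonzero on $\cs_r(\G)$, so $I_U\neq\cs_r(\G)$ and $\cs_r(\G)$ is not simple.

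Finally I would establish the three necessary conditions in~(1). That $\cs(\G)=\cs_r(\G)$ is immediate: the canonical surjection $\cs(\G)\to\cs_r(\G)$ is nonzero, so simplicity of $\cs(\G)$ forces its kernel to vanish. Effectiveness I again prove by contrapositive, and this is the step I expect to be the main obstacle, because---unlike in the Hausdorff world---it genuinely uses the full rather than the reduced algebra (a nontrivial free group, viewed as a one-object groupoid, is minimal with simple reduced algebra yet is not effective). If $\G$ is not effective, choose $\gamma_0\in\intiso\setminus\go$ and an open bisection $B\subseteq\intiso$ with $\gamma_0\in B$. The plan is to build the ``trivial-fibre'' representation $\pi$ on $\ell^2(\go)$ determined on $\Cc(\G)$ by $(\pi(f)\xi)(u)=\sum_{r(\gamma)=u}f(\gamma)\xi(s(\gamma))$; one checks this is a $*$-representation bounded in the universal norm, so it extends to a representation of $\cs(\G)$ that is faithful, hence nonzero, on $C_0(\go)$. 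Taking $f\in C_c(B)$ with $f(\gamma_0)=1$ and $g:=f\circ(r|_B)\inv\in C_c(\go)$, both $\pi(f)$ and $\pi(g)$ act as multiplication by the function $u\mapsto f(\text{the unique point of }B\text{ over }u)$, so $\pi(f)=\pi(g)$; but $f-g$ is nonzero in $\cs(\G)$ because it takes the value $1$ at $\gamma_0\notin\go$ and $\Cc(\G)$ embeds into $\cs(\G)$. Thus $\ker\pi$ is a nonzero proper ideal and $\cs(\G)$ is not simple. The delicate point is exactly that one cannot excise $\go$, which need not be clopen, so the argument must detect $f-g$ pointwise rather than by support, and $\pi$ must be checked to be well defined despite the discontinuity of elements of $\Cc(\G)$. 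Once effectiveness is in hand, the absence of singular elements follows from the contrapositive of Proposition~\ref{prop:simpleconverse}. The ``in particular'' statement is then pure assembly: assuming $\cs(\G)=\cs_r(\G)$, simplicity yields effectiveness and no singular elements by~(1) and minimality by~(2), while the converse is~(3).
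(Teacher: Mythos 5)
Your proposal is correct and takes essentially the same route as the paper's proof: sufficiency in (3) via Theorem~\ref{thm:reduced uniqueness} combined with a conjugation-by-bisection argument and the approximate identity in $C_0(\go)$, non-minimality in (2) detected by regular representations $L_u$ vanishing on functions supported off an orbit, non-effectiveness in (1) detected by an augmentation-type representation (the paper invokes $\epsilon_u$ on $\ell^2(r(\G_u))$ from \cite[Proposition~5.2]{BCFS}, where you construct the same representation by hand on $\ell^2(\go)$), and the absence of singular elements via Proposition~\ref{prop:simpleconverse}. The remaining differences are purely organizational (you run (3) in the contrapositive, identifying $I \cap C_0(\go)$ with $C_0(U)$ for an invariant open $U$, while the paper shows a nonzero ideal contains all of $C_0(\go)$) and do not change the substance of the argument.
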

\begin{proof}
If $\cs(\G) \not= \cs_r(\G)$, then the kernel of the
regular representation is nontrivial and so $\cs(\G)$ is not simple. Suppose that $\G$ is
not effective. Then there is an open bisection $B \nsubseteq \go$
such that for every $\gamma \in B$, $s(\gamma)=r(\gamma)$. Fix $u \in
s(B)$, and let $\epsilon_u$ be the augmentation representation on $\ell^2(r(\G_u))$
described in \cite[Proposition~5.2]{BCFS}. Fix a function $f \in C_c(B)$ and let $f_0 \in
C_c(s(B))$ be the function defined by $f_0(s(\gamma)) = f(\gamma)$ for $\gamma \in B$.
Then $\epsilon_u(f_0)$ is nonzero, and $\epsilon_u(f_0 - f) = 0$. So $\ker\epsilon_u$ is
a nontrivial ideal of $\cs(\G)$.  So we have $\cs(\G) = \cs_r(\G)$, $\G$ is effective.

By way of contradiction, suppose there exists $a \in \cs_r(\G)$ such that $\supp(j(a))$ has empty interior.  That is, $j(a)$ is a singular element.  Then by Proposition~\ref{prop:simpleconverse} $\cs_r(\G)$ is not simple, which is a contradiction.

For~(2) we proceed by contrapositive. Suppose that $\G$ is not minimal.  Then
we can find $u \in \go$ such that $r(\G_u)$ is not dense in $\go$. So there is an open set
$U \subseteq \go$ such that $r(\G_u) \cap U = \emptyset$. For any $f \in C_c(U) \setminus
0$, we have $L_u(f) = 0$, so $L_u$ is a nonzero representation of $\cs_r(\G)$ with
nontrivial kernel, and therefore $\cs_r(\G)$ is not simple.

For~(3), suppose that $I$ is a nonzero ideal of $\cs_r(\G)$. By Theorem~\ref{thm:reduced
uniqueness}, there exists
\[f \in (C_0(\go) \cap I) \setminus \{0\}.\]
For each $u \in \go$,
we have $r(\G_u) \cap \supp(f) \not= \emptyset$, and so we can choose a compact open
bisection $B$ such that $u \in s(B)$, and the unique $\gamma \in B$ with $s(\gamma) = u$
satisfies $f(r(\gamma)) \not= 0$. Fix $h \in C_c(B)$ such that $h(\gamma) = 1$. Then $g
:= h^* f h \in C_0(g_0)$ belongs to $I$ and satisfies
\[g(u) = h^*(\gamma^{-1})f(r(\gamma)) h(\gamma)  = f(r(\gamma)) \not= 0.\]
So $I \cap C_0(\go)$ is an ideal and
there is no $u$ in $\go$ such that $f(u) = 0$ for all $f \in I \cap C_0(\go)$. This
forces $I \cap C_0(\go) = C_0(\go)$, and since $C_0(\go)$ contains an approximate
identity for $\cs_r(\G)$, we deduce that $I = \cs_r(\G)$.
\end{proof}

\subsection{A special case:  simplicity of ample groupoid \texorpdfstring{$\cs$}{C*}-algebras}
\label{csample}

\medskip

In many of the examples we are interested in, the groupoid is ample.   So we state explicitly what our
$\cs$-algebraic results say in this case.   We also show that, like in the Steinberg algebra situation,  the condition of every compact open set is regular open is sufficient.

\begin{lemma}
\label{lem:amplecondequiv}
Let $\G$ be an ample groupoid such that $\go$ is Hausdorff.  If every compact open set is regular open, then for every nonzero $a \in \cs_r(\G)$, $\supp(j(a))$ has nonempty interior.
\end{lemma}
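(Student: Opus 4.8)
The plan is to deduce the conclusion from Lemma~\ref{lem:key'}: it suffices to verify condition~\eqref{lemit2:key'} of that lemma, namely that for every $f \in \Cc(\G)$, every $z \in f(\G)\setminus\{0\}$, and every $\varepsilon > 0$, the set $\{\gamma \in \G : |f(\gamma) - z| < \varepsilon\}$ has nonempty interior. The hypothesis that every compact open set of $\G$ is regular open is exactly condition~\eqref{lemit1:key} of Lemma~\ref{lem:key}, so Lemma~\ref{lem:key} supplies us, for every $g \in A_\CC(\G)$ and every $w \in g(\G)\setminus\{0\}$, with a nonempty open subset of $g^{-1}(w)$. The strategy is therefore to approximate the continuous function $f$ uniformly by a Steinberg-algebra element $g$, locate a nonempty open set on which $g$ is constantly equal to a value near $z$, and transfer this to $f$ by the triangle inequality.

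For the approximation step, I would first record that $\go$ is zero-dimensional: since $\G$ is ample and $\go$ is open in $\G$, the compact open bisections contained in $\go$ form a basis of compact open sets for $\go$. Writing $f = \sum_{U \in F} f_U$ with $F$ a finite set of open bisections and $f_U \in C_c(U)$, each $U$ is carried by the source map homeomorphically onto an open subset of the zero-dimensional, locally compact Hausdorff space $\go$; hence each $f_U$ can be uniformly approximated by finite linear combinations of characteristic functions of compact open bisections contained in $U$. Summing over the finite set $F$, for any $\delta > 0$ we obtain $g \in A_\CC(\G)$ with $\|f - g\|_\infty < \delta$.

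The transfer step is then routine. Fixing $z \in f(\G)\setminus\{0\}$, $\varepsilon > 0$, and $\gamma_0$ with $f(\gamma_0) = z$, I would choose $\delta < \min(\varepsilon/2, |z|)$ and an approximant $g$ as above. Setting $w := g(\gamma_0)$, we have $|w - z| < \delta < |z|$, so $w \in g(\G)\setminus\{0\}$, and Lemma~\ref{lem:key} produces a nonempty open set $O \subseteq g^{-1}(w)$. For $\gamma \in O$ we have $g(\gamma) = w$, so
\[
|f(\gamma) - z| \le |f(\gamma) - g(\gamma)| + |w - z| < 2\delta < \varepsilon,
\]
and thus $O$ witnesses that $\{\gamma : |f(\gamma) - z| < \varepsilon\}$ has nonempty interior. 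This establishes condition~\eqref{lemit2:key'} of Lemma~\ref{lem:key'}, which in turn yields condition~\eqref{lemit3:key'}, i.e.\ the desired conclusion that $\supp(j(a))$ has nonempty interior for every nonzero $a \in \cs_r(\G)$.

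The only genuinely technical point is the uniform density of Steinberg-algebra elements in $\Cc(\G)$, which rests on the zero-dimensionality of $\go$ furnished by ampleness; everything else is an $\varepsilon$-estimate chaining together Lemmas~\ref{lem:key} and~\ref{lem:key'}. I do not expect a serious obstacle, though some care is needed to ensure the approximant $g$ takes a nonzero value at the chosen point $\gamma_0$, which is precisely why $\delta$ must be taken smaller than $|z|$.
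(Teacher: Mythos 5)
Your proof is correct, but it takes a genuinely different route from the paper's. The paper never invokes Lemma~\ref{lem:key'} here: it fixes a nonzero $a \in \cs_r(\G)$, uses the density of $A_\CC(\G)$ in $\cs_r(\G)$ to pick $f \in A_\CC(\G)$ with $\|f - j(a)\|_\infty < \|j(a)\|_\infty/3$, applies Lemma~\ref{lem:key}\eqref{lemit2:key} to the exact level set of $f$ at a point where $|f(\gamma)| > 2\|j(a)\|_\infty/3$, and reads off that $j(a)$ is nonzero on the resulting open set --- in effect re-running the $1/3$-estimate from the proof of Lemma~\ref{lem:key'} with $A_\CC(\G)$ in place of $\Cc(\G)$. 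You instead treat Lemma~\ref{lem:key'} as the engine and verify its hypothesis~\eqref{lemit2:key'}, which forces you to supply an extra (standard, and not stated in the paper) ingredient: that $A_\CC(\G)$ is uniformly dense in $\Cc(\G)$, because ampleness makes each open bisection a zero-dimensional locally compact Hausdorff space on which $C_c$-functions are uniform limits of locally constant ones. Your $\varepsilon$-bookkeeping in the transfer step is sound; in particular, taking $\delta < |z|$ correctly guarantees that the approximant's value $w = g(\gamma_0)$ is nonzero, so Lemma~\ref{lem:key}\eqref{lemit2:key} applies to it. As for what each approach buys: the paper's proof is shorter but quotes the $C^*$-norm density of $A_\CC(\G)$ in $\cs_r(\G)$ as a known fact, whereas your proof needs only the definitional density of $\Cc(\G)$ in $\cs_r(\G)$ (already packaged into Lemma~\ref{lem:key'}) plus the uniform approximation fact --- which, combined with the estimate $\|h\|_r \le \|h\|_\infty$ for $h$ supported on a bisection, is essentially how the density the paper quotes is proved anyway. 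Your route also isolates a slightly stronger intermediate statement, namely that the regular-open hypothesis implies condition~\eqref{lemit2:key'} of Lemma~\ref{lem:key'} for all of $\Cc(\G)$, which is potentially reusable elsewhere.
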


\begin{proof}
Suppose that every compact open set is regular open, i.e.,
that Lemma~\ref{lem:key}\eqref{lemit1:key} holds. Thus
Lemma~\ref{lem:key}\eqref{lemit2:key} also holds.
Fix $a \in \cs_r(\G) \setminus \{0\}$. Since
$A_\CC(\G)$ is dense in $\cs_r(\G)$, we can find $f \in A_\CC(\G)$ where
\[\|f - a\| <\|j(a)\|_\infty/3.\] Since $j$ is norm-decreasing and $j(f) = f$ we deduce that
\[\|f -j(a)\| < \|j(a)\|_\infty/3,\] and hence $\|f\|_\infty > 2\|j(a)\|_\infty/3$. Fix $\gamma$
with $|f(\gamma)| > 2\|j(a)\|_\infty/3$. By Lemma~\ref{lem:key}\eqref{lemit2:key}, the set $O := \{\eta
\in \G : f(\eta) = f(\gamma)\}$ has nonempty interior. Since $\|f - j(a)\|_\infty <
\|j(a)\|_\infty/3$, for each $\eta \in O$, we have
\[
|j(a)(\eta)| \ge |f(\eta)| - \|j(a)\|_\infty/3 > \|j(a)\|_\infty/3 > 0.
\]
So $O \subseteq \{\eta \in \G : j(a)(\eta) \not=0\}$, and so the latter has nonempty
interior because $O$ does.

\end{proof}

Thus we combine Lemma~\ref{lem:amplecondequiv} and Theorem~\ref{thm:c*simple} to get the following corollary.

\begin{cor}
\label{cor:C*simpleample}
Let $\G$ be a second-countable, ample groupoid.
\begin{enumerate}
\item If $\cs(\G)$ is simple, then $\cs(\G) = \cs_r(\G)$ and $\G$ is
minimal and effective.
\item If $\G$ is minimal and effective and every compact open subset of $\G$ is regular
    open, then $\cs_r(\G)$ is simple.
    \item If $\cs_r(\G)$ is simple, then $A_{\C}(\G)$ is simple.
\end{enumerate}
\end{cor}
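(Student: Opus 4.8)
Parts~(1) and~(2) are direct applications of the results just proved. For~(1), Theorem~\ref{thm:c*simple}\,(1) gives $\cs(\G)=\cs_r(\G)$ and effectiveness as soon as $\cs(\G)$ is simple; since $\cs_r(\G)=\cs(\G)$ is then simple, Theorem~\ref{thm:c*simple}\,(2) supplies minimality. For~(2), the assumption that every compact open set is regular open is fed into Lemma~\ref{lem:amplecondequiv} to conclude that $\supp(j(a))$ has nonempty interior for every nonzero $a\in\cs_r(\G)$; combined with minimality and effectiveness, this is exactly the hypothesis of Theorem~\ref{thm:c*simple}\,(3), so $\cs_r(\G)$ is simple.

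The substance is in~(3), and my plan is to verify the three conditions of Theorem~\ref{thm:simple} for $A_\CC(\G)$. Minimality is immediate from Theorem~\ref{thm:c*simple}\,(2). For the support condition I would exploit that $A_\CC(\G)\subseteq\Cc(\G)\subseteq\cs_r(\G)$ and that $j$ restricts to the identity on $\Cc(\G)$, so that $\supp(f)=\supp(j(f))$ for every $f\in A_\CC(\G)$; thus a nonzero $f\in A_\CC(\G)$ with $\supp(f)$ of empty interior would be a singular element of $\cs_r(\G)$, which is impossible by Proposition~\ref{prop:simpleconverse} once effectiveness is known. Granting all three conditions, Theorem~\ref{thm:simple} yields the simplicity of $A_\CC(\G)$.

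The main obstacle is thus to deduce effectiveness of $\G$ from simplicity of $\cs_r(\G)$, which I would attack by contraposition. If $\G$ is not effective, then $\intiso\neq\go$, and ampleness lets me pick a compact open bisection $V$ with $V\subseteq\intiso$ and $V\nsubseteq\go$; set $g:=1_V-1_{s(V)}\in A_\CC(\G)$. First, $g\neq 0$ in $\cs_r(\G)$: choosing $\gamma_0\in V\setminus\go$ gives $j(g)(\gamma_0)=1_V(\gamma_0)-1_{s(V)}(\gamma_0)=1$. Second, $L_u(g)=0$ for every $u\in S$. Indeed, for $\gamma\in\G_u$ one computes $L_u(g)\delta_\gamma=\delta_{\alpha\gamma}-\delta_\gamma$ when $r(\gamma)\in s(V)$, where $\alpha$ is the unique element of $V$ over $r(\gamma)$, and $L_u(g)\delta_\gamma=0$ otherwise; but $u\in S$ and saturation of $S$ force $r(\gamma)\in S$, so the only element of $\Iso(\G)$ with source $r(\gamma)$ is the unit $r(\gamma)$ itself, whence $\alpha=r(\gamma)$ and $\alpha\gamma=\gamma$. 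Consequently $g$ lies in the kernel of $\bigoplus_{u\in S}L_u$, a closed two-sided ideal that is proper whenever $S\neq\varnothing$, contradicting the simplicity of $\cs_r(\G)$.

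The delicate point in this last step is exactly the nonemptiness of $S$: minimality (from Theorem~\ref{thm:c*simple}\,(2)) only guarantees that $S$ is empty or dense, while the regular representations $L_u$ detect the offending element $g$ solely at units of trivial isotropy. Showing that simplicity of $\cs_r(\G)$ forces $\G$ to be topologically principal---so that $S\neq\varnothing$---is the crux, and the step I expect to demand the most care in the non-Hausdorff setting.
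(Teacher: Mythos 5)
Your handling of parts (1) and (2) is correct and coincides with the paper's own (one-sentence) proof: (1) is Theorem~\ref{thm:c*simple}(1) followed by Theorem~\ref{thm:c*simple}(2), and (2) is Lemma~\ref{lem:amplecondequiv} fed into Theorem~\ref{thm:c*simple}(3). For part (3) your skeleton --- verify the three hypotheses of Theorem~\ref{thm:simple}, getting minimality from Theorem~\ref{thm:c*simple}(2), the support condition from the absence of singular elements, and effectiveness by contraposition --- is the natural one, and your computation that $g = 1_V - 1_{s(V)}$ is a nonzero element of $\cs_r(\G)$ annihilated by $L_u$ for every $u \in S$ is correct. But the gap you flag at the end is genuine and, more importantly, it cannot be closed: simplicity of $\cs_r(\G)$ does \emph{not} force $S \neq \emptyset$, nor topological principality, nor effectiveness.

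Concretely, let $\G = F_2$, the free group on two generators, viewed as a groupoid with unit space the single point $\{e\}$ and the discrete topology. This is a second-countable, ample (indeed Hausdorff) groupoid; $\cs_r(\G) = \cs_r(F_2)$ is simple by Powers' theorem; yet $\G$ is not effective (here $\intiso = \G \neq \go$), the set $S$ is empty (so your ideal $\ker\bigoplus_{u \in S} L_u$ is all of $\cs_r(\G)$ rather than a proper ideal), and $A_{\C}(\G) = \C[F_2]$ is not simple, since the augmentation map $\sum_g a_g g \mapsto \sum_g a_g$ has a nonzero proper kernel. So statement (3), read literally with the reduced $\cs$-algebra in the hypothesis, is false, and no proof of it can exist. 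This also explains why the paper's stated justification --- combining Lemma~\ref{lem:amplecondequiv} with Theorem~\ref{thm:c*simple} --- only makes sense if the hypothesis of (3) is simplicity of the \emph{full} algebra $\cs(\G)$: effectiveness and the condition that $\supp(j(a))$ have nonempty interior for every nonzero $a$ are conclusions of Theorem~\ref{thm:c*simple}(1), whose hypothesis is simplicity of $\cs(\G)$, not of $\cs_r(\G)$. Under that corrected reading, your first two paragraphs already finish the job: Theorem~\ref{thm:c*simple}(1) gives $\cs(\G) = \cs_r(\G)$, effectiveness, and the support condition; part (2) gives minimality; and since $j(f) = f$ for $f \in A_{\C}(\G)$, the support condition passes to nonzero elements of the Steinberg algebra, so Theorem~\ref{thm:simple} applies. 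Your third and fourth paragraphs (and even the appeal to Proposition~\ref{prop:simpleconverse}) then become unnecessary.
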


\section{Examples}
\label{sec:ex}

\subsection{A class showing minimal, topologically principal, and second countable are not sufficient for simplicity}
\label{aidanex}

In this example, we exhibit a class of minimal, topologically principal, amenable, second countable, \'etale groupoids whose corresponding algebras are not simple.
This shows that one really does need the groupoid to be effective and not just topologically principal.

Let $X$ be a compact Hausdorff space with no isolated points, let $\vp:X\to X$ be a minimal homeomorphism (which we recall means that, in contrast to the definition of minimality for a groupoid, the {\em forward} orbit of every point is dense), and fix
$x_0\in X$.
Let $\G = X \sqcup \{a_n: n\in \Z\}$.
We make $\G$ into a groupoid by declaring that the unit space of $\G$ is $X$, that $r(a_n) = s(a_n) = \vp^n(x_0)$ and that $a_na_n = r(a_n)$.  Note that the minimality of $\vp$ means that there are no other composable pairs in $\G \setminus \go$.
The basic open neighbourhoods of $a_n$ are of the form $U \cup \{a_n\} \setminus \{r(a_n)\} $, where $U$ ranges over a base of open neighbourhoods of $r(a_n)$.
It is straightforward to verify that $\G$ is an \'etale groupoid.

The map $\alpha: \G\to \G$ defined by
\[
\alpha(x) =\begin{cases} \vp(x) & \text{if }x\in X\\
a_{n+1}& \text{if }x = a_n\text{ for some }n
\end{cases}
\]
is readily verified to be a topological groupoid isomorphism, and so induces an action of $\Z$ on $\G$.
Recall that the semidirect product $\G\rtimes_\alpha \Z$ is is a groupoid that is the product space (with product topology) $\G \times \Z$ with range, source, product, and inverse given by
\[
r(\gamma, n) = (r(\gamma), 0),\hspace{1cm} s(\gamma, n) = (\alpha^{-n}(s(\gamma)), 0),
\]
\[
 (\gamma, n)(\alpha^{-n}(\eta), m) = (\gamma\eta, m+n),\hspace{1cm} (\gamma, n)^{-1} = (\alpha^{-n}(\gamma^{-1}), -n).
\]
Then $\G\rtimes_\alpha\Z$ is \'etale.
Since $X$ has no isolated points, $\G$ is not Hausdorff, and so neither is $\G\rtimes_\alpha\Z$.
The unit space of $\G\rtimes_\alpha \Z$ can be identified with $X$, and since $\vp$ is minimal, $\G\rtimes_\alpha \Z$ is minimal.
If $X$ is totally disconnected, then $\G\rtimes_\alpha \Z$ is ample.

If $r(\gamma,n) = s(\gamma, n)$, we must have that $\alpha^{-n}(s(\gamma))= r(\gamma)$.
Since $s(\gamma) = r(\gamma) \in X$ for all $\gamma\in \G$, this implies that $r(\gamma)$ is periodic for $\vp$ which contradicts minimality, unless $n = 0$. Hence $\operatorname{Iso}(\G\rtimes_\alpha\Z)\setminus X = \{(a_n,0): n\in\Z\}$ and each $(a_n,0)$
has an open neighbourhood of the form $V\times \{0\}$ contained in
$\operatorname{Iso}(\G\rtimes_\alpha\Z)$.
Hence, $\G\rtimes_\alpha\Z$ is not effective.
Take $y\in X$ not in the orbit of $x_0$ (which must exist because compact Hausdorff spaces with no isolated points must be uncountable).
Then the orbit of $y$ is dense in $X$, and each point in the orbit of $y$ has trivial isotropy.
Hence $\G\rtimes_\alpha\Z$ is topologically principal.

Consider the functions
\[
f = 1_{\G\rtimes_\alpha\Z^{(0)}}
\]
\[
g = 1_{\G\rtimes_\alpha\Z^{(0)}\cup\{(a_0,0)\}\setminus\{(x_0,0)\}}.
\]
Then $f,g\in \Cc(\G)$, since both $\G\rtimes_\alpha\Z^{(0)}$ and $\G\rtimes_\alpha\Z^{(0)}\cup\{(a_0,0)\}\setminus\{(x_0,0)\}$ are compact open bisections in $\G\rtimes_\alpha\Z$ (they are both homeomorphic to $X$).
But $\supp(f-g) = \{(a_0,0)\}\cup\{(x_0,0)\}$, which has empty interior.
Hence by Theorem~\ref{thm:c*simple}, $\cs_r(\G\rtimes_\alpha \Z) = \cs(\G\rtimes_\alpha \Z)$ is not simple.
Furthermore, if $\G\rtimes_\alpha\Z$ is ample,  then $f,g$ are elements of the Steinberg algebra.
So neither $A_\KK(\G\rtimes_\alpha\Z)$ nor $\cs_r(\G\rtimes_\alpha\Z)$ is simple by Theorem~\ref{thm:simple} and
Theorem~\ref{thm:c*simple} respectively.

\subsection{Inverse semigroup actions and their groupoids}

For any unreferenced claims in this section, see \cite{Exel3} and \cite{EP16}.
We will use the notation $Y\fs X$ to indicate that $Y$ is a finite subset of $X$.

An {\em inverse semigroup} is a semigroup $S$ for which every $s\in S$ has a unique ``inverse'' $s^*$ in the sense that
\[
ss^*s= s \quad \text{and} \quad s^*ss^* = s^*.
\]
For every $s, t\in S$ we have $(s^*)^* = s$ and $(st)^* = t^*s^*$. If $S$ has an identity, we will denote it $1_S$.
Every inverse semigroup will be assumed to be countable and have a zero element 0 which satisfies $0s = s0 = 0$ for all $s\in S$. The set of {\em idempotents} of $S$ is denoted
\[
E(S) = \{e\in S: e^2 = e\}
\]
and consists of all elements of the form $s^*s$.

Any inverse semigroup carries a natural order structure. For $s,t\in S$, we write $s\leqslant t$ if $ts^*s = s$. For two idempotents $e, f\in E(S)$, we have $e\leqslant f$ if and only if $ef = e$. A {\em filter} in $E(S)$ is a nonempty subset $\xi\subseteq
E(S)$ such that
\begin{enumerate}
	\item$0\notin \xi$,
	\item$e, f\in \xi$ implies that $ef\in \xi$, and
	\item $e\in \xi, e\leqslant f$ implies $f\in \xi$.
\end{enumerate}
We denote the set of filters $\Ef(S)$; it can be viewed as a subspace of $\{0,1\}^{E(S)}$. For $X,Y\fs E(S)$, let
\begin{equation}
U(X,Y) = \{\xi\in \Ef(S): X\subseteq \xi, Y\cap \xi = \emptyset\}.
\end{equation}
Sets of this form are clopen and generate the topology on $\Ef(S)$ as $X$ and $Y$ vary over all the finite subsets of $E(S)$.
With this topology, $\Ef(S)$ is called the {\em spectrum} of $E(S)$.
From the definition of a filter it is easy to see that if $X, Y\fs E(S)$ and $e := \prod_{x\in X}x$, then $U(X,Y) = U(\{e\}, Y)$, and so we can take sets of the form
\begin{equation}\label{eq:filterbasis}
U(\{e\},Y) = \{\xi\in \Ef(S): e\in \xi, Y\cap \xi = \emptyset\}
\end{equation}
to be the basis of the topology on $\Ef(S)$.

A filter is called an {\em ultrafilter} if it is not properly contained in any other filter. The set of all ultrafilters is denoted $\Eu(S)$. As a subspace of $\Ef(S)$, $\Eu(S)$ may not be closed. Let $\Et(S)$ denote the closure of $\Eu(S)$ in $\Ef(S)$ ---
this is called the {\em tight spectrum} of $E(S)$.

An {\em action} of an inverse semigroup $S$ on a space $X$ consists of a collection $\alpha = \{\alpha_s\}_{s\in S}$ of homeomorphisms between open subsets of $S$ satisfying:
\begin{enumerate}
	\item $\alpha_s\circ \alpha_t = \alpha_{st}$ for all $s, t\in S$, and
	\item the union of the domains of the $\alpha_s$ coincides with $X$.
\end{enumerate}
These imply that if $e$ is an idempotent, then $\alpha_e$ is the identity map on some open subset $D_e\subseteq X$, and that the domain of $\theta_s$ coincides with $D_{s^*s}$.

If $\alpha$ is an action of $S$ on a space $X$, we let
\[
S\ltimes_\alpha X = \{(s, x): x\in D_{s^*s}\}
\]
and put an equivalence relation on this set by saying $(s, x)\sim (t, y)$ if and only if $x = y$ and there exists $e\in E(S)$ such that $x\in D_e$ and $se = te$. The equivalence class of $(s, x)$ is denoted $[s, x]$ and is called the {\em germ} of $(s,x)$.
The set of all germs is denoted
\[
\G(\alpha) = \{[s,x]: x\in D_{s^*s}\}
\]
and becomes a groupoid with source, range, inverse, and product given by
\[
s([t,x]) = [t^*t, x], \quad r([t,x]) = [tt^*, \alpha_t(x)], \quad [t, x]^{-1} = [t^*, \alpha_t(x)], \quad [u, \alpha_t(x)][t,x] = [ut, x].
\]
The unit space $\G(\alpha)^{(0)}$ is identified with $X$.
If $X$ is a totally disconnected locally compact Hausdorff space such that $D_e$ is compact open for all $e\in E(S)$,  then $\G(\alpha)$ is ample, with sets of the form
\begin{equation}
\Theta(s, U) := \{[s, x]\in \G(\alpha): x\in U\subseteq D_{s^*s}\text{ and }U\text{ compact open}\}
\end{equation}
forming a basis of compact open sets for $\G(\alpha)$.
If $S$ is countable, then $\G(\alpha)$ is second countable.

It is possible for $\G(\alpha)$ to be non-Hausdorff, as we will see in examples of subsections \ref{sec:ssg} and \ref{sec:grig}.
In \cite[Theorem 3.15]{EP16} are given criteria on $\alpha$ which are equivalent to $\G(\alpha)$ being Hausdorff.
This problem was also considered in \cite{St2}.

An inverse semigroup acts on its tight spectrum. Let
\[
D^\theta_e = \{\xi\in \Et(S): e\in \xi\} = U(\{e\},\varnothing),
\]
and define
\[
\theta_s: D^\theta_{s^*s}\to D^\theta_{ss^*}
\]
\[
\theta_s(\xi) = \{e\in E(S): e\geqslant sfs^*\text{ for some }f\in \xi\}.
\]
This defines an action of $S$ on $\Et(S)$, called the {\em standard action} of $S$.
The groupoid of germs for the standard action is denoted $\gt(S) := \G(\theta)$ and is called the {\em tight groupoid} of $S$.
This groupoid is quite general.
Indeed, by \cite{Exel2} every ample groupoid arises this way.

In what follows, we are concerned with the subsets \eqref{eq:filterbasis} intersected with $\Et(S)$.
If $\xi$ is an ultrafilter, then by \cite[Proposition 2.5]{EP16} the set $\{D^\theta_e: e\in \xi\}$ is a neighbourhood basis for $\xi$.
Hence, if every tight filter is an ultrafilter (i.e., if $\Et(S) = \Eu(S)$), then the $D_e$ form a basis for the topology on $\Et(S)$.

\begin{lemma}\label{lem:basisISG}
	Let $S$ be an inverse semigroup and suppose that $\Et(S) = \Eu(S)$.
	Then
	\[
	\{\Theta(s, D^\theta_{s^*s}) :  s\in S\}\cup\{\emptyset\}
	\]
	is an inverse semigroup of compact open bisections which generates the topology of $\gt(S)$.
\end{lemma}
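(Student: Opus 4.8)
The plan is to verify directly the three assertions in the statement: that the collection $\{\Theta(s, D^\theta_{s^*s}) : s \in S\} \cup \{\emptyset\}$ consists of compact open bisections, that it is closed under the groupoid product and inverse (hence forms an inverse semigroup), and that it generates the topology of $\gt(S)$. Throughout I would lean on the hypothesis $\Et(S) = \Eu(S)$, which by the cited result \cite[Proposition~2.5]{EP16} guarantees that the sets $D^\theta_e$ themselves form a basis for the topology on $\Et(S)$; this is the feature that makes the ``full domain'' bisections $\Theta(s, D^\theta_{s^*s})$ (rather than $\Theta(s,U)$ for arbitrary compact open $U$) suffice.

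First I would recall from the preliminaries that each $\Theta(s,U)$ is a basic compact open set for $\gt(S)$, and check that it is a bisection: the source map sends $[s,\xi] \mapsto [s^*s,\xi] = \xi$, which identifies $\Theta(s, D^\theta_{s^*s})$ homeomorphically with $D^\theta_{s^*s} = U(\{s^*s\},\varnothing)$, an open subset of the unit space; likewise the range map is a homeomorphism onto $D^\theta_{ss^*}$. Taking $U = D^\theta_{s^*s}$ to be the whole domain is exactly what is needed for the product formula to close up. Next, for the inverse semigroup structure, I would compute the product and inverse using the germ operations given earlier: one expects
\[
\Theta(s, D^\theta_{s^*s}) \cdot \Theta(t, D^\theta_{t^*t}) = \Theta(st, D^\theta_{(st)^*(st)})
\qquad\text{and}\qquad
\Theta(s, D^\theta_{s^*s})^{-1} = \Theta(s^*, D^\theta_{ss^*}),
\]
so that the product of two members of the collection is again of the same form (with $\emptyset$ absorbing as the zero). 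The inverse identity is immediate from $[s,\xi]^{-1} = [s^*, \theta_s(\xi)]$. Verifying the product identity is where the germ relation and the identity $\theta_s(D^\theta_{s^*s}) = D^\theta_{ss^*}$ are needed, and it is the part I would write out most carefully.

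I expect the main obstacle to be the product computation, specifically showing the two sides agree as \emph{sets} of germs rather than just containing one another: the subtlety is that a composable pair $([s,\theta_t(\xi)],[t,\xi])$ must have $\theta_t(\xi) \in D^\theta_{ss^*}$ for the product $[st,\xi]$ to be defined, and conversely every germ $[st,\xi]$ with $\xi \in D^\theta_{(st)^*(st)}$ must decompose appropriately; here one uses that $(st)^*(st) = t^*s^*st \leqslant t^*t$ so that $\xi$ automatically lies in $D^\theta_{t^*t}$, and that $\xi \in D^\theta_{(st)^*(st)}$ forces $\theta_t(\xi) \in D^\theta_{ss^*}$. Finally, for the generation-of-topology claim, I would invoke that $\{\Theta(s,U)\}$ already forms a basis and observe that since the $D^\theta_e$ form a basis for $\Et(S)$ (again by $\Et(S) = \Eu(S)$ and \cite[Proposition~2.5]{EP16}), every basic set $\Theta(s,U)$ can be written as a union of sets $\Theta(se, D^\theta_{(se)^*(se)})$ as $e$ ranges over idempotents with $D^\theta_e \subseteq U$; this reduces the general basis to the proposed collection and completes the argument.
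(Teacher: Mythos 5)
Your proposal is correct and follows essentially the same route as the paper's proof: the same product formula $\Theta(s, D^\theta_{s^*s})\Theta(t, D^\theta_{t^*t}) = \Theta(st, D^\theta_{(st)^*st})$, the same inverse formula $\Theta(s, D^\theta_{s^*s})^{-1} = \Theta(s^*, D^\theta_{ss^*})$ via bijectivity of $\theta_s$, and the same appeal to the sets $D^\theta_e$ forming a basis of $\Et(S)$ when $\Et(S)=\Eu(S)$. One index slip to fix when writing it out: the condition for the germ $[s,\theta_t(\xi)]$ to be defined and to lie in $\Theta(s, D^\theta_{s^*s})$ is $\theta_t(\xi)\in D^\theta_{s^*s}$, not $D^\theta_{ss^*}$, and this is what follows from $\xi\in D^\theta_{(st)^*st}$, since $t\big((st)^*st\big)t^* = tt^*s^*stt^* \leqslant s^*s$ gives $s^*s\in\theta_t(\xi)$.
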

\begin{proof}
	The given set generates the topology on $\gt(S)$ because the $D_e^\theta$ generate the topology of $\Et(S)$ when $\Et(S) = \Eu(S)$.
	Since the set of compact open bisections in an ample groupoid forms an inverse semigroup under setwise product and inverse, we need to prove our set is closed under the product and inverse.
	
	We claim that
	\[
	\Theta(s, D^\theta_{s^*s})\Theta(t, D^\theta_{t^*t}) = \Theta(st, D^\theta_{(st)^*st})
	\]
	if the product is nonempty.
	If $\gamma\in \Theta(s, D^\theta_{s^*s})\Theta(t, D^\theta_{t^*t})$, then $\gamma = [s, \theta_t(\eta)][t, \eta]$ for some $\eta\in D^\theta_{t^*t}$.
	Since $\theta_t(\eta)\in D^\theta_{s^*s}$, we must have $\eta = \theta_{t^*}(\xi)$ for some $\xi \in D^\theta_{s^*s}$.
	Because $s^*s\in \xi$, $t^*s^*st = (st)^*st\in \eta$, and so $\eta\in D^\theta_{(st)^*st}$.
	Thus $\gamma\in \Theta(st, D^\theta_{(st)^*st})$ proving the $\subseteq$ inclusion.
	
	To prove the $\supseteq$ inclusion, we take $\xi\in D^\theta_{(st)^*st}$ and notice that $[st, \xi] = [s, \theta_t(\xi)][t,\xi]$.
	
	To prove closure under inverse, it is straightforward that
	\begin{align*}
	\Theta(s, D^\theta_{s^*s})^{-1} &= \{[s,\xi]^{-1}:\xi\in D^\theta_{s^*s} \}=  \{[s^*,\theta_s(\xi)]:\xi\in D^\theta_{s^*s} \}\\ &= \{[s^*,\eta]:\eta\in D^\theta_{ss^*} \} = \Theta(s^*, D^\theta_{ss^*})
	\end{align*}
	because $\theta_s: D^\theta_{s^*s}\to D^\theta_{ss^*}$ is a bijection.
\end{proof}

\begin{lemma}\label{lem:inclosure}
	Let $S$ be a inverse semigroup, let $s, t\in S$, let $\xi\in D^\theta_{t^*t}$, let $e\leqslant s^*s$, and let $Y\fs E(S)$. Then:
	\begin{enumerate}
		\item The following are equivalent:
		\begin{enumerate}
			\item $[t, \xi]\in \overline{\Theta(s, U(\{e\}, Y))}$
			\item $\xi\in  U(\{e\}, Y)$ and for every $f\in \xi$ and for all $Z\fs E(S)\setminus \xi$, there exists $0\neq k\leqslant ef$ such that $ky = 0$ for all $y\in Y\cup Z$, and $sk = tk$.
		\end{enumerate}
		\item In case of $(1)$, $[t, \xi]\not\in {\Theta(s, U(\{e\}, Y))}$ if and only if for any such $k$ we have that $k\not\in \xi$.
	\end{enumerate}
\end{lemma}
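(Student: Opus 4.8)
The plan is to decide membership of the germ $[t,\xi]$ in $\Theta(s,U(\{e\},Y))$ and in its closure by passing back and forth between tight filters and idempotents. Two standard facts drive everything: that $\Eu(S)$ is dense in $\Et(S)$, so every nonempty open subset of $\Et(S)$ contains an ultrafilter; and the ultrafilter characterization that $\eta$ is an ultrafilter exactly when $w\notin\eta$ forces some $c\in\eta$ with $cw=0$.

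First I would fix a convenient neighbourhood basis at $[t,\xi]$. Using the basic bisections and germ equivalence to keep the label $t$ on a smaller set, the sets $\Theta(t,U(\{f\},Z))$ with $f\in\xi$, $f\leqslant t^*t$, and $Z\fs E(S)\setminus\xi$ form a neighbourhood basis at $[t,\xi]$. I would then compute when such a neighbourhood meets $\Theta(s,U(\{e\},Y))$: since two germs agree iff they share a source and some idempotent in that source intertwines $s$ and $t$, the intersection $\Theta(t,U(\{f\},Z))\cap\Theta(s,U(\{e\},Y))$ is nonempty precisely when there is a tight filter $\eta$ with $ef\in\eta$, $(Y\cup Z)\cap\eta=\varnothing$, and $sg=tg$ for some $g\in\eta$. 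The set of such $\eta$ is open, so by density of $\Eu(S)$ it is nonempty iff it contains an ultrafilter; feeding the ultrafilter property into this yields the key translation: such $\eta$ exists iff there is $0\neq k\leqslant ef$ with $k(Y\cup Z)=0$ and $sk=tk$. One direction builds $k$ as $efg$ times finitely many witnesses $c_w$ annihilating the $w\in Y\cup Z$; the reverse extends the principal filter at $k$ to an ultrafilter. Part (1) then reads off: $[t,\xi]\in\overline{\Theta(s,U(\{e\},Y))}$ iff every basic neighbourhood meets $\Theta(s,U(\{e\},Y))$, iff such a $k$ exists for every $(f,Z)$. Finally I would note that this for-all condition already forces $\xi\in U(\{e\},Y)$: the witnessing filters all lie in the clopen set $D^\theta_e$ and in the closed complement of $\bigcup_{y\in Y}D^\theta_y$, and they cluster at $\xi$, so $e\in\xi$ and $Y\cap\xi=\varnothing$; this recovers the exact form of (b).

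For part (2), observe that in case (1) we already have $\xi\in U(\{e\},Y)$, so $[t,\xi]\in\Theta(s,U(\{e\},Y))$ reduces to the germ condition $[t,\xi]=[s,\xi]$, i.e.\ to the existence of $g\in\xi$ with $sg=tg$. The backward implication is immediate: if some admissible $k$ lies in $\xi$, then $k\leqslant e$ gives $e\in\xi$, the relations $ky=0$ with $k\in\xi$ give $Y\cap\xi=\varnothing$, and $sk=tk$ witnesses $[t,\xi]=[s,\xi]$. For the forward implication I would start from a germ witness $g\in\xi$, which we may take with $g\leqslant e t^*t$ and $sg=tg$, and manufacture an admissible $k$ inside $\xi$: for each $y\in Y$, since $y\notin\xi$ there is $c_y\in\xi$ with $c_yy=0$, and then $k:=g\prod_{y\in Y}c_y\in\xi$ satisfies $0\neq k\leqslant e$, $ky=0$ for all $y\in Y$, and $sk=tk$ (as $k\leqslant g$); taking $f=k$ and $Z=\varnothing$ exhibits an admissible $k$ lying in $\xi$. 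Contrapositively, if no admissible $k$ lies in $\xi$, no germ witness exists and $[t,\xi]\notin\Theta(s,U(\{e\},Y))$.

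The main obstacle is precisely the filter-to-idempotent translation and, sharpened, the forward direction of (2): a general tight filter need not satisfy that $y\notin\xi$ yields $c\in\xi$ with $cy=0$, so converting a germ witness into a $k$ that truly annihilates all of $Y$ \emph{from inside} $\xi$ requires $\xi$ to be an ultrafilter. This is where the running hypothesis $\Et(S)=\Eu(S)$ of this subsection is essential; everything else—the neighbourhood-basis bookkeeping and the cover-type equivalence in part (1)—goes through for arbitrary tight filters, since there ultrafilters are only needed in nearby approximating filters supplied by density.
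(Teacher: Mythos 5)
Your part (1) is correct and is essentially the paper's own argument: the same neighbourhood basis $\Theta(t,U(\{f\},Z))$, $f\in\xi$, $Z\fs E(S)\setminus\xi$, at $[t,\xi]$; the same reduction of closure membership to nonempty intersection with $\Theta(s,U(\{e\},Y))$; ultrafilter witnesses obtained from density of $\Eu(S)$ in $\Et(S)$; the element $k$ built by cutting down by $ef$ and by finitely many annihilators supplied by the ultrafilter property; and, in the converse direction, an ultrafilter containing the principal filter at $k$. Your derivation of $\xi\in U(\{e\},Y)$ from clustering of the witnessing filters is a cosmetic variant of the paper's argument, which uses continuity of the source map together with the fact that $U(\{e\},Y)$ is closed in $\Et(S)$.

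The divergence is in part (2), and it matters. The paper's entire proof of (2) is the sentence ``Point (2) is direct''; you supply an actual argument and correctly isolate the crux: the implication ``some $k$ as in (b) lies in $\xi$ $\Rightarrow$ $[t,\xi]\in\Theta(s,U(\{e\},Y))$'' is immediate for any filter, but the converse requires producing, for each $y\in Y$, some $c_y\in\xi$ with $c_yy=0$, which is the defining property of an ultrafilter and can fail for a general tight filter. However, your justification for assuming this --- ``the running hypothesis $\Et(S)=\Eu(S)$ of this subsection'' --- misreads the paper: there is no such standing hypothesis. Lemmas \ref{lem:basisISG} and \ref{Lem: (E) => regular} assume $\Et(S)=\Eu(S)$ explicitly in their statements, while Lemma \ref{lem:inclosure} is stated for an arbitrary inverse semigroup and an arbitrary tight filter $\xi$; so, as a proof of the lemma as written, your part (2) uses a hypothesis you were not given. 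The interesting point is that this hypothesis cannot simply be dropped: for $Y\neq\emptyset$, part (2) is false for tight filters that are not ultrafilters. For example, in the graph inverse semigroup of the graph with one vertex and countably many loops $e_1,e_2,\dots$, the filter $\xi$ consisting of the two idempotents determined by the vertex and by the path $e_2$ is tight (the vertex is an infinite emitter) but not an ultrafilter; take $s=t=e$ to be the idempotent determined by $e_2$ and $Y=\{e_2e_1\}$. Then (1)(b) holds (given $f\in\xi$ and $Z\fs E(S)\setminus\xi$, the idempotent of $e_2e_j$ for suitably large $j$ is a witness), and $[t,\xi]\in\Theta(s,U(\{e\},Y))$ since $s=t$ and $\xi\in U(\{e\},Y)$; yet any $k$ as in (b) satisfies $k\leqslant e$ and annihilates the idempotent of $e_2e_1$, and the only element of $\xi$ below $e$ is $e$ itself, which does not annihilate it, so no such $k$ lies in $\xi$. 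Thus the right-hand side of (2) holds while the left-hand side fails. In short, you have located a genuine gap in the paper's ``direct'' claim rather than committed an error of reasoning, but to be a correct proof your write-up must state the ultrafilter hypothesis on $\xi$ (equivalently $\Et(S)=\Eu(S)$) as an added assumption --- or restrict to $Y=\emptyset$, where your construction with $Z=\emptyset$ needs no annihilators and works for every tight filter. Since all of the paper's subsequent uses of this lemma (Lemmas \ref{Lem: (E) => regular} and \ref{lem:ssgcalc}) occur under $\Et(S)=\Eu(S)$, nothing downstream is affected.
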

\begin{proof}
	For the sake of brevity, define $R:=\Theta(s, U(\{e\}, Y))$ and suppose that $[t, \xi]\in \overline{R}$.
	We have
	$$\xi= s([t,\xi])\in s(\overline R)\subseteq \overline{s(R)} = \overline{U(\{e\}, Y)}.$$
	Since $U(\{e\}, Y)$ is closed in $\Et(S)$, there exists a closed set $V\subseteq \gt(S)$ such that $U(\{e\}, Y) = V\cap \Et(S)$.
	Hence, $\overline{U(\{e\}, Y)} \subseteq \overline V = V$ and so $\overline{U(\{e\}, Y)}\cap \Et(S)\subseteq V\cap \Et(S) = U(\{e\}, Y)$, and so $\xi\in U(\{e\}, Y)$.
	
	For all $f\in \xi$ and for all $Z\fs E(S)\setminus \xi$, there is a point in the intersection $\Theta(s, U(\{e\}, Y))\cap \Theta(t, U(\{f\}, Z))$.
	Hence for all $f\in \xi$ and for all $Z\fs E(S)\setminus \xi$, there exists $\eta\in U(\{e\}, Y)\cap U(\{f\}, Z) = U(\{ef\}, Y\cup Z)$ such that $[s,\eta] = [t,\eta]$.
	In other words, there exists $k\in \eta$ such that $tk = sk$.
	Since $\Theta(s, U(\{e\}, Y))\cap \Theta(t, U(\{f\}, Z))$ is open and $s$ is an open map, we may assume that $\eta$ is an ultrafilter.
	Without loss of generality, we can assume that $k\leqslant ef$ by perhaps replacing it with $kef$ if needed, and since $k, e, f\in \eta$, we must have $kef\neq 0$.
	If $y\in Y\cup Z$, the fact that $\eta$ is an ultrafilter which does not contain $y$ implies there exists $e_y\in \eta$ such that $ye_y = 0$.
	So also without loss of generality, we can assume that $ky = 0$ by perhaps replacing it by $ke_y$ if needed.
	This establishes (a) $\Rightarrow$ (b)
	
	For the other implication, suppose (b) holds.
	We will be done if we show that for every $f\in \xi$ and for all $Z\fs E(S)\setminus \xi$, there is a point in $\Theta(s, U(\{e\}, Y))\cap \Theta(t, U(\{f\}, Z))$.
	For $f\in \xi$, find the $k\leqslant f$ guaranteed by (b) and find an ultrafilter $\eta$ containing $k$.
	Since $tk = sk$, we have that $[t,\eta]=[s,\eta]$, and since $k\in \eta$ we must have that $k,e, f\in y$.
	Since $ky = 0$ for all $y\in Y\cup Z$ we must also have that $y\notin \eta$ for all $y\in Y\cup Z$.
	Hence $\Theta(t, U(\{f\}, Z))\ni [t, \eta] = [s,\eta] \in \Theta(s, U(\{e\}, Y))$, establishing (b) $\Rightarrow$ (a).
	
	Point (2) is direct, so we are done.
\end{proof}

\begin{lemma}\label{lem:Effective=> basic reg}
	Let $S$ be a inverse semigroup, and take $s\in S$, $e\in E(S)$ with $e\leqslant s^*s$, and $Y\fs E(S)$. If $\gt(S)$ is effective, then $\Theta(s, U(\{e\}, Y))$ is regular open.
\end{lemma}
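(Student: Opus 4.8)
The plan is to realize $R := \Theta(s, U(\{e\}, Y))$ as the interior of a relatively closed subset of an open bisection and then invoke Lemma~\ref{lem:bisections reg open}, whose sole hypothesis beyond the standing one (locally compact, \'etale, Hausdorff unit space, all of which $\gt(S)$ enjoys since it is ample) is effectiveness --- exactly what is assumed here. The guiding point is that $R$ is \emph{at once} open, being one of the basic bisections generating the topology of $\gt(S)$, and relatively closed in a slightly larger open bisection. Taking $K = R$ in Lemma~\ref{lem:bisections reg open} then gives $K^\circ = R$ regular open, which is the assertion.

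Concretely, first I would set $B := \Theta(s, D^\theta_e)$. Because $e \leqslant s^*s$ and filters are upward closed, any $\xi$ with $e \in \xi$ also contains $s^*s$, so $D^\theta_e = U(\{e\}, \varnothing) \subseteq D^\theta_{s^*s}$, the domain of $\theta_s$; hence $B$ is a well-defined open bisection, and $R \subseteq B$ since $U(\{e\}, Y) \subseteq D^\theta_e$. Restricting the source map to $B$ gives a homeomorphism onto $D^\theta_e$ that carries $R$ onto $U(\{e\}, Y)$.

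The one fact that actually needs checking is that $U(\{e\}, Y)$ is \emph{closed} in $\Et(S)$; its openness is immediate since it is a basic set. This closedness is where I expect the (small) subtlety to sit, and it comes from the structural fact recorded for the spectrum that every set $U(X,Y)$ is clopen in $\Ef(S)$: intersecting with the subspace $\Et(S)$ preserves clopenness. Thus $U(\{e\}, Y)$ is relatively closed in $D^\theta_e$, and transporting this through the source homeomorphism shows $R$ is relatively closed in $B$.

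Finally, with $B$ an open bisection and $K := R$ relatively closed in $B$, effectiveness of $\gt(S)$ lets Lemma~\ref{lem:bisections reg open} conclude that $R^\circ = K^\circ$ is regular open; since $R$ is open, $R^\circ = R$, and therefore $R = \Theta(s, U(\{e\}, Y))$ is regular open. The main obstacle, such as it is, is organizational rather than deep: one must notice that the non-Hausdorff pathology is sidestepped precisely because $R$ is simultaneously open (as a basic bisection) and relatively closed in $B$ (via the clopenness of $U(\{e\}, Y)$ in the unit space), after which the work is all done by the already-established Lemma~\ref{lem:bisections reg open}.
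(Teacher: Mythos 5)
Your proof is correct, and it turns on the same pivot as the paper's: Lemma~\ref{lem:bisections reg open} applied under effectiveness. The difference is in how the hypothesis of that lemma is verified. The paper's own proof is a one-liner: $\Theta(s, U(\{e\}, Y))$ is a \emph{compact} open bisection, so the ``in particular'' (compact) clause of Lemma~\ref{lem:bisections reg open} applies immediately, say with $B = K = \Theta(s, U(\{e\},Y))$ itself, whence $K^\circ = K$ is regular open; there, relative closedness comes for free from compactness inside a Hausdorff bisection. You instead embed $R = \Theta(s, U(\{e\},Y))$ into the strictly larger bisection $B = \Theta(s, D^\theta_e)$ and obtain relative closedness of $R$ in $B$ from the clopenness of $U(\{e\},Y)$ in $\Et(S)$, transported through the source homeomorphism; compactness never enters. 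Your route is marginally more general (it works verbatim for $\Theta(s,U)$ with $U$ clopen but not necessarily compact) and it makes explicit exactly why the lemma's hypothesis holds. The paper's route is shorter and, it is worth noting, leans on the more robust clause of Lemma~\ref{lem:bisections reg open}: the general ``relatively closed in $B$'' clause is delicate, since its proof requires $s(K)$ to be relatively closed in $\go$, and a relatively closed subset of the open set $s(B)$ need not be relatively closed in $\go$ --- compactness is what usually rescues this step. In your application this delicacy is harmless for exactly the reason you identify: $s(R) = U(\{e\},Y)\cap\Et(S)$ is closed in the whole unit space $\go = \Et(S)$, not merely in $s(B) = D^\theta_e$. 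Making that remark explicit (i.e., that the clopenness gives closedness in $\go$, which is what the proof of Lemma~\ref{lem:bisections reg open} actually consumes) would make your argument fully self-supporting.
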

\begin{proof}
	This follows from Lemma~\ref{lem:bisections reg open}, because each $\Theta(s, U(\{e\}, Y))$ is a compact open bisection.
\end{proof}

Let us introduce a couple of (equivalent) properties that extend effectiveness of $\gt(S)$, and give us necessary conditions for regularity of compact open sets. Following the notation in \cite[Section 4]{EP16}, for any $s\in S$ we will denote by
$$F_s:=\{x\in \Et(S) : \theta_s(x)=x \}$$
the set of {\em fixed elements} for $s$, and by
$$TF_s:=\{x\in \Et(S) : \exists e\in E(S) \text{ with } e\leqslant s \text{ and } x\in D^\theta_e\}$$
the set of {\em trivially fixed elements} for $s$. By \cite[Proposition 4.5]{EP16}, if $\mathcal{J}_s:=\{e\in E :e\leqslant s\}$, then
$$TF_s=\bigcup\limits_{e\in \mathcal{J}_s}D^\theta_e$$

\begin{definition}\label{Def:cond(S)}
	Let $S$ be an inverse semigroup.
	We will say $S$ satisfies \emph{condition \eqref{eq:cond(S)}} if whenever there exists an element $x\in \Et(S)$ and a finite set $\{s_1, \dots, s_n\}\subseteq S\setminus E(S)$ such that  $x\in \bigcap\limits_{i=1}^{n}F_{s_i}$, then
	
	\begin{gather}\label{eq:cond(S)}\tag{S}
	x\in \bigcap\limits_{i=1}^{n}(F_{s_i}\setminus TF_{s_i}) \implies x\not\in \Big(\bigcup_{i = 1}^nF_{s_i}\Big)^\circ .
	\end{gather}
	
\end{definition}

\begin{remark}\label{Rem:various}
	Suppose that $S$ satisfies \eqref{eq:cond(S)}.
	For the particular case $n=1$ we have that $x\in F_s\setminus TF_s$ implies $x\not\in F^\circ_{s}$; this is equivalent to saying that if $x\in F^\circ_{s}$ then $x\not\in F_s\setminus TF_s$, i.e. $x\in TF_s$, which is exactly \cite[Theorem
4.10(iii)]{EP16}.
	Thus if either $\Eu(S)=\Et(S)$ or $\gt(S)$ is Hausdorff and $S$ satisfies \eqref{eq:cond(S)},  \cite[Theorem 4.10]{EP16} implies $\gt(S)$ is effective.
	We will see in the sequel that effectiveness of $\gt(S)$ does not imply \eqref{eq:cond(S)}, so for $n=1$ \eqref{eq:cond(S)} is closely related to but in general weaker than effectiveness of $\gt(S)$.
	
which is exactly \cite[Theorem 4.10(iii)]{EP16}.
	
in general closely related but weaker than effectiveness of $\gt(S)$.
\end{remark}

\begin{lemma}\label{Lem: (E) => regular}
	Let $S$ be an inverse semigroup which satisfies condition \eqref{eq:cond(S)}, and suppose that $\Et(S) = \Eu(S)$.  Then any compact open subset of $\gt(S)$ is regular open.
\end{lemma}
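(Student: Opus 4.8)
The plan is to reduce everything to condition~\eqref{eq:cond(S)} via a boundary analysis of a general compact open set. First I would observe that the hypotheses already force $\gt(S)$ to be effective (Remark~\ref{Rem:various}), so every compact open \emph{bisection} is regular open by Lemma~\ref{lem:bisections reg open}; the real content is that an arbitrary compact open set, which is only a \emph{finite union} of such bisections, is still regular open. Using $\Et(S)=\Eu(S)$, the sets $D^\theta_e$ form a basis of $\Et(S)$ (Lemma~\ref{lem:basisISG}), so by compactness I can write any compact open $V\subseteq\gt(S)$ as $V=\bigcup_{i=1}^n B_i$ with $B_i=\Theta(s_i,D^\theta_{e_i})$ and $e_i\leqslant s_i^*s_i$. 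Since $V$ is open we always have $V\subseteq(\overline V)^\circ$, so it suffices to prove $(\overline V)^\circ\subseteq V$.

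I would argue by contradiction: suppose $\gamma=[t,\xi]\in(\overline V)^\circ\setminus V$, and let $J=\{i:\gamma\in\overline{B_i}\}$, which is nonempty because $\overline V=\bigcup_i\overline{B_i}$. The key construction is to set $w_i:=ts_i^*$ and $\zeta:=r(\gamma)=\theta_t(\xi)$. For each $i\in J$ I would show, via a net in $B_i$ converging to $\gamma$ together with continuity of $s$, $r$, $\theta_{s_i}$ and the fact that $D^\theta_{e_i}$ is clopen, that $\xi\in D^\theta_{e_i}$ and $\theta_t(\xi)=\theta_{s_i}(\xi)$; the action identity $\theta_t\circ\theta_{s_i^*}=\theta_{w_i}$ then gives $\theta_{w_i}(\zeta)=\zeta$, i.e.\ $\zeta\in F_{w_i}$. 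Next, because $[w_i,\zeta]=\gamma[s_i,\xi]^{-1}$ and $\gamma\notin B_i$ (as $B_i\subseteq V$) while $[s_i,\xi]\in B_i$, the element $[w_i,\zeta]$ is not a unit; this simultaneously shows $w_i\in S\setminus E(S)$ and $\zeta\notin TF_{w_i}$. Thus $\zeta\in\bigcap_{i\in J}(F_{w_i}\setminus TF_{w_i})$ with $\{w_i\}_{i\in J}\subseteq S\setminus E(S)$, so in particular $\zeta\in\bigcap_{i\in J}F_{w_i}$.

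To produce the interior hypothesis of~\eqref{eq:cond(S)}, I would choose a compact open bisection $B=\Theta(t,D^\theta_e)\ni\gamma$ small enough that $B\subseteq(\overline V)^\circ$ and $B\cap\overline{B_i}=\emptyset$ for $i\notin J$; then $B\subseteq\bigcup_{i\in J}\overline{B_i}$. Running the same closure computation for an arbitrary $\delta=[t,\eta]\in B$ (every element of $B$ shares the representative $t$, so the \emph{same} $w_i$ appear), each such $\delta$ lies in some $\overline{B_i}$ with $i\in J$ and hence $r(\delta)=\theta_t(\eta)\in F_{w_i}$. Therefore $r(B)\subseteq\bigcup_{i\in J}F_{w_i}$, and since $r(B)$ is an open neighbourhood of $\zeta$ we conclude $\zeta\in\bigl(\bigcup_{i\in J}F_{w_i}\bigr)^\circ$. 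This directly contradicts condition~\eqref{eq:cond(S)}, which forces $\zeta\notin\bigl(\bigcup_{i\in J}F_{w_i}\bigr)^\circ$. Hence no such $\gamma$ exists and $V$ is regular open.

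The main obstacle I anticipate is the ``same-representative'' bookkeeping: I must ensure that as $\delta$ ranges over the neighbourhood $B$ the relevant semigroup elements $w_i$ stay fixed, which is exactly why it is essential to express $B$ with the single representative $t$ and to identify $\theta_t(\eta)$ with $\theta_{s_i}(\eta)$ on all of $\overline{B_i}$ rather than merely at $\gamma$. The delicate point inside this is the limit identity $\theta_t(\eta)=\theta_{s_i}(\eta)$, which relies on $\Et(S)$ being Hausdorff (uniqueness of limits) and on $D^\theta_{e_i}$ being clopen so that the limit point $\eta$ remains in the domain of $\theta_{s_i}$; alternatively one could extract this equality from the closure description in Lemma~\ref{lem:inclosure}, but the continuity argument appears cleaner.
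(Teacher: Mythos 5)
Your proof is correct and follows essentially the same route as the paper's: write the compact open set as a finite union of basic bisections $\Theta(s_i, D^\theta_{e_i})$, show that a boundary point forces the corresponding unit to lie in $\bigcap_i (F_{w_i}\setminus TF_{w_i})$ for non-idempotent elements $w_i$, and feed this into condition~\eqref{eq:cond(S)}. The differences are only organizational: you work on the range side with $w_i = ts_i^*$ and argue by contradiction, exhibiting the open set $r(B)\subseteq \bigcup_{i\in J}F_{w_i}$ that \eqref{eq:cond(S)} forbids, and you re-derive the needed closure facts by a net/continuity argument, whereas the paper works on the source side with $t^*s_i$, invokes Lemma~\ref{lem:inclosure} for those facts, and uses the conclusion of \eqref{eq:cond(S)} to produce points outside $\overline{V}$ in every neighbourhood of the boundary point.
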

\begin{proof}
	As noted before, the condition $\Et(S) = \Eu(S)$ implies that the $D^\theta_e$ are a basis for the topology on $\Et(S)$, and hence sets of the form $\Theta(t, D^\theta_e)$ are a basis for the topology on $\gt(S)$.
	
	Let $V$ be a compact open  subset of $\gt(S)$.
	Then, there exist $\{s_1, \dots, s_n\}\subseteq S$ and $\{e_1, \dots ,e_n\}\subseteq E(S)$ with $e_i\leqslant s_i^*s_i$ for all $1\leq i\leq n$ such that
	$$V=\bigcup\limits_{i=1}^{n}\Theta(s_i, D^\theta_{e_i}).$$
	Take $[t,x]\in \overline{V}\setminus V$.
	We will show that any open neighbourhood of $[t, x]$ contains a point outside of $\overline{V}$.
	If $1\leq i\leq n$ is any index such that $[t,x]\notin \overline{\Theta(s_i, D^\theta_{e_i})}$, then there is a neighbourhood of $[t,x]$ disjoint from $\Theta(s_i, D^\theta_{e_i})$, so we can assume without loss of generality that $[t,x]\in
\overline{\Theta(s_i, D^\theta_{e_i})}\setminus \Theta(s_i, D^\theta_{e_i})$ for all $1\leq i\leq n$.
	Hence, by Lemma \ref{lem:inclosure}, for each $1\leq i\leq n$ we have that $\theta_{t^*s_i}(x)=x$ but $[t^*s_i, x]\not\in T{F}_{t^*s_i}$; in particular, $\{t^*s_1, \dots ,t^*s_n\}\subseteq S\setminus E(S)$.
	Thus, $x\in \bigcap\limits_{i=1}^{n}(F_{t^*s_i}\setminus T{F}_{t^*s_i})$.
	Since condition \eqref{eq:cond(S)} holds, for every $f\in x$ we have that $D^\theta_f\not\subseteq \bigcup\limits_{i=1}^{n}F_{t^*s_i}$, and so $\Theta(t,D^\theta_f)\not\subseteq\overline{V}$, as desired.
\end{proof}

Now, we will apply these results to various classes of algebras.

\subsection{Algebras of self-similar graphs} \label{sec:ssE}

In this subsection, we consider the algebras $\OGE$ associated to triples $(G,E, \varphi)$, introduced in \cite{EP17}.   We use the convention where a path in the graph $E$ is a sequence of edges
$e_1\cdots e_n$ such that $s(e_i)=r(e_{i+1})$.  Let us recall the construction.

\begin{noname}\label{basicdata}
	{\rm The basic data for our construction is a triple $(G,E,\varphi)$ consisting of:
		\begin{enumerate}
			\item A finite directed graph $E=(E^0, E^1, r, s)$ without sources.
			\item A discrete group $G$ acting on $E$ by graph automorphisms.
			\item A map $\varphi: G\times E^1\rightarrow G$ satisfying
			\begin{enumerate}
				\item $\varphi(gh, a) = \varphi(g, h\cdot a)\varphi(h, a)$, and
				\item $\varphi(g,a)\cdot v=g\cdot v$ for every $g\in G, a\in E^1, v\in E^0$.
			\end{enumerate}
		\end{enumerate}
	}
\end{noname}
The property~(3)(b) required of $\varphi$ is tagged~$(2.3)$ in \cite{EP17}.

\begin{definition}\label{Def:OGE}
	{\rm Given a triple $(G,E, \varphi)$ as in~(\ref{basicdata}), we define $\OGE$ to be the universal $\cs$-algebra as follows:
		\begin{enumerate}
			\item \underline{Generators}:
			$$\{p_x : x\in E^0\}\cup\{s_a : a\in E^1\} \cup \{u_g : g\in G\}.$$
			\item \underline{Relations}:
			\begin{enumerate}
				\item $\{p_x : x\in E^0\}\cup\{s_a : a\in E^1\}$ is a Cuntz-Krieger $E$-family in the sense of \cite{Raeburn}.
				\item The map $u:G\rightarrow \OGE$ defined by the rule $g\mapsto u_g$ is a unitary $\ast$-representation of $G$.
				\item $u_gs_a=s_{g\cdot a}u_{\varphi(g,a)}$ for every $g\in G, a\in E^1$.
				\item $u_gp_x=p_{g\cdot x}u_g$ for every $g\in G, x\in E^0$.
			\end{enumerate}
		\end{enumerate}
	}
\end{definition}
\noindent Notice that the relation (2a) in Definition \ref{Def:OGE} implies that there is a natural representation map
$$
\begin{array}{cccc}
\phi: & \cs(E) &\to   & \OGE  \\
& p_x & \mapsto  & p_x \\
& s_a & \mapsto  & s_a
\end{array}
$$
which is injective \cite[Proposition 11.1]{EP17}.

Recall from \cite[Definition 4.1]{EP17} that given a triple $(G,E,\varphi)$ as in (\ref{basicdata}), we define an inverse semigroup $\SGE$ as follows:
\begin{enumerate}
	\item The set is
	$$\SGE=\{ (\alpha,g,\beta) : \alpha, \beta\in E^*, g\in G, d(\alpha)=gd(\beta)\}\cup \{ 0\},$$
	where $E^*$ denotes the set of finite paths in $E$.
	\item The operation is defined by:
	$$(\alpha,g,\beta)(\gamma,h,\delta):=
	\begin{cases}
	(\alpha (g\cdot\varepsilon), \varphi(g,\varepsilon) h, \delta) &  \text{if } \gamma=\beta\varepsilon    \\
	&\\
	(\alpha, g\varphi(h^{-1},\varepsilon)^{-1}, \delta (h^{-1}\cdot\varepsilon)) &   \text{if } \beta=\gamma \varepsilon  \\
	&\\
	0 & \text{otherwise,}
	\end{cases}
	$$
	and $(\alpha,g,\beta)^*:= (\beta,g^{-1}, \alpha)$.
\end{enumerate}

Then, we can construct the groupoid of germs of the action of $\SGE$ on the space of tight filters $\Et(\SGE)$
of the semilattice ${E}(\SGE)$ of idempotents of $\SGE$. In our concrete case, $\Et(\SGE)$ turns out to be
homeomorphic to the compact space $E^{\infty}$ of one-sided infinite paths on $E$ which has the {\em cylinder sets}
$$C(\gamma):=\{\gamma \widehat{\eta} : \widehat{\eta}\in E^{\infty}\}$$ as a basis for its topology.
In particular, $\Et(\SGE)=\Eu(\SGE)$, and thus condition \eqref{eq:cond(S)} implies effectiveness of the groupoid $\CG$ defined below.

The action of
$(\alpha,g,\beta)\in \SGE$ on $\eta=\beta\widehat{\eta}$  is given by the rule $(\alpha,g,\beta)\cdot \eta=\alpha (g\widehat{\eta})$.
Thus, the groupoid of germs is
$$\CG=\{[\alpha,g,\beta;\eta] : \eta=\beta\widehat{\eta}\},$$
where $[s;\eta]=[t;\mu]$ if and only if $\eta=\mu$ and there exists $0\ne e^2=e\in \SGE$ such that $e\cdot\eta=\eta$ and $se=te$.
The unit space
\[\CG^{(0)}=\{[\alpha,1,\alpha;\eta] : \eta=\alpha\widehat{\eta}\}\]
is identified with the one-sided infinite path space $E^{\infty}$, via the
homeomorphism $[\alpha,1,\alpha; \eta]   \mapsto   \eta.$
Under this identification, the range and source maps on
$\CG$ are:
\[s([\alpha,g,\beta;\beta \widehat{\eta}])= \beta\widehat{\eta} \quad \text{ and } \quad
r([\alpha,g,\beta;\beta \widehat{\eta}])= \alpha(g \widehat{\eta}).\]

A basis for the topology on $\CG$ is given by compact open bisections of the form
\[ \Theta(\alpha,g,\beta,C(\gamma)):= \{ [\alpha, g, \beta;\xi ] \in \CG : \xi \in  C(\gamma) \} \]
for $\alpha,\beta,\gamma \in E^*$ and $g\in G$.
Thus $\CG$ is locally compact and ample.
In \cite{EP17} characterizations are given for when
$\CG$ is Hausdorff \cite[Theorem 12.2]{EP17},
amenable \cite[Corollary 10.18]{EP17} and effective \cite[Theorem 14.10]{EP17}\footnote{We note that in \cite{EP16} and \cite{EP17}, `essentially principal' is defined to be what we are calling effective, see \cite[Definition 4.6]{EP16}.}
in terms of the properties of the triple $(G,E,\varphi)$ and the action of $\SGE$ on $E^{\infty}$.

We recall the property which guarantees that $\CG$ is Hausdorff.
For this we require some notation.
For $g\in G$, let
\begin{equation}\label{eq:FWdef}
FW_g = \{\alpha\in X^*: g\cdot \alpha = \alpha\}
\end{equation}
and call this the set of {\em fixed} paths for $g$.
We also let
\begin{equation}\label{eq:SFWdef}
SFW_g = \{\alpha\in E_A^*: g\cdot \alpha = \alpha\text{ and }\vp(g, \alpha)= 1_G\}
\end{equation}
and call this the set of {\em strongly fixed} paths for $g$.
If $\beta\in E_A^*$ has a prefix which is strongly fixed by $g$, then $\beta$ will be strongly fixed by $g$ as well.
We say a path $\alpha$ is {\em minimally strongly fixed} by $g$ if it is strongly fixed by $g$ and no proper prefix of $\alpha$ is strongly fixed by $g$.
We denote this set by
\begin{equation}\label{eq:MSFWdef}
MSFW_g = \{\alpha\in E_A^*: \alpha\in SFW_g\text{ and no prefix of $\alpha$ is in }SF_g\}.
\end{equation}

In \cite[Theorem 12.2]{EP17} it is shown that
\begin{equation}\label{eq:SSGHausdorff}
\CG \text{ is Hausdorff } \iff MSFW_g \text{ is finite for all }g\in G\setminus\{1_G\}.
\end{equation}

By \cite[Theorem 6.3 \& Corollary 6.4]{EP17}, we have a $\ast$-isomorphism $\OGE\cong \cs(\CG)$, so that $\OGE$ can be seen as a full groupoid $\cs$-algebra. The complex Steinberg algebra $A_{\mathbb{C}}(\CG)$ is a dense subalgebra of $\mathcal{O}_{G,E} \cong
\cs(\CG)$
by \cite[Proposition~6.7]{St}.

Finally, recall that for any unital commutative ring $R$,
the Steinberg algebra $A_R(\CG)$ is isomorphic to the $R$-algebra
$\mathcal{O}_{(G,E)}^{\text{alg}}(R)$ with presentation given by Definition \ref{Def:OGE} \cite[Theorem 6.4]{CEP}.

Now, we apply the results obtained in the previous subsection to this case.

\begin{lemma}\label{lem:Effec=> basic reg SGE}
	Let $(G,E,\varphi)$ be a triple as in (\ref{basicdata}), let $s\in \SGE$ and $e\in E(\SGE)$ with $e\leqslant s^*s$. If $\CG$ is effective, then $\Theta(s, D^\theta_e)$ is regular open.
\end{lemma}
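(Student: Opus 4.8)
The plan is to recognise that this lemma is simply the self-similar-graph instance of the general inverse-semigroup statement Lemma~\ref{lem:Effective=> basic reg}, specialised to the case $Y = \ew$. First I would recall that $\CG = \gt(\SGE)$ is the tight groupoid of the inverse semigroup $\SGE$, and that, as noted in the discussion preceding this lemma, in this setting $\Et(\SGE) = \Eu(\SGE)$. Hence the sets $D^\theta_e = U(\{e\},\ew)$ are exactly the basic compact open subsets of $\Et(\SGE) \cong E^\infty$ (the cylinder sets), and the condition $e \leqslant s^*s$ guarantees $D^\theta_e \subseteq D^\theta_{s^*s}$, so that $\Theta(s, D^\theta_e)$ is a well-defined compact open bisection of $\CG$.

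With this identification in hand the conclusion is immediate. Since $D^\theta_e = U(\{e\},\ew)$, we have $\Theta(s, D^\theta_e) = \Theta(s, U(\{e\},\ew))$, and applying Lemma~\ref{lem:Effective=> basic reg} with $S = \SGE$ and $Y = \ew$ yields that this set is regular open, using only that $\CG$ is effective. Alternatively, one may bypass Lemma~\ref{lem:Effective=> basic reg} entirely and appeal directly to Lemma~\ref{lem:bisections reg open}: $\Theta(s, D^\theta_e)$ is a compact subset of the open bisection $\Theta(s, D^\theta_{s^*s})$, compact subsets of a bisection are relatively closed, and in an effective groupoid such a set has regular open interior; since $\Theta(s, D^\theta_e)$ is itself open it equals its own interior and is therefore regular open.

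There is no genuine obstacle here: all the analytic content lives in Lemma~\ref{lem:bisections reg open} (effectiveness forces compact open bisections to be regular open), and the only thing that needs checking is the purely routine bookkeeping that $\Theta(s, D^\theta_e)$ is indeed a compact open bisection of $\CG$, which is already recorded in the description of the basis for $\CG$. I would therefore present the proof as a single sentence invoking Lemma~\ref{lem:Effective=> basic reg} (or Lemma~\ref{lem:bisections reg open}), taking care only to flag the equality $D^\theta_e = U(\{e\},\ew)$ that licenses the reduction to the already-proved case.
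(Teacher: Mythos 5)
Your proposal is correct and matches the paper's proof essentially verbatim: the paper also disposes of this lemma in one line by invoking Lemma~\ref{lem:Effective=> basic reg} (whose own proof is exactly your alternative route, namely that $\Theta(s, D^\theta_e)$ is a compact open bisection and Lemma~\ref{lem:bisections reg open} applies under effectiveness). The bookkeeping you flag, that $D^\theta_e = U(\{e\},\ew)$ so the general inverse-semigroup lemma specialises with $Y=\ew$, is precisely the reduction the paper intends.
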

\begin{proof}
	This is immediate from Lemma \ref{lem:Effective=> basic reg}.
\end{proof}

We next prove a lemma that will allow us to verify condition \eqref{eq:cond(S)} more readily in this context.

\begin{lemma}\label{lem:SforSSGraphs}
	Let $(G,E,\varphi)$ be a triple as in (\ref{basicdata}), and suppose that for every vertex $v$ and every finite set $\{g_1, g_2, \dots g_n\}\subseteq G\setminus\{1_G\}$ we have that
	\begin{equation}\label{eq:SforSSGraphs}
	x\in \bigcap_{i=1}^nF_{(v,g_i,v)}\setminus TF_{(v,g_i,v)} \implies x\notin \left(
	\bigcup_{i = 1}^n F_{(v,g_i,v)}\right)^\circ.
	\end{equation}
	Then $\SGE$ satisfies \eqref{eq:cond(S)}.
\end{lemma}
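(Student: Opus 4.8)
The plan is to deduce condition \eqref{eq:cond(S)} for arbitrary non-idempotent elements from the special hypothesis \eqref{eq:SforSSGraphs}, which only concerns elements of the form $(v,h,v)$ with $v$ a vertex and $h\in G\setminus\{1_G\}$. So suppose $\{s_1,\dots,s_n\}\subseteq\SGE\setminus E(\SGE)$ and $x\in\bigcap_{i=1}^n(F_{s_i}\setminus TF_{s_i})$, and, arguing by contradiction, assume $x\in\big(\bigcup_{i=1}^nF_{s_i}\big)^\circ$. Write $s_i=(\alpha_i,g_i,\beta_i)$. Since $x\in F_{s_i}$ and $\Et(\SGE)\cong E^{\infty}$, both $\alpha_i$ and $\beta_i$ are prefixes of the infinite path $x$, hence comparable; so either $\alpha_i=\beta_i$ (I call $i$ \emph{diagonal}) or one is a proper prefix of the other.

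First I would dispose of the non-diagonal indices. If $\alpha_i\neq\beta_i$ then, replacing $s_i$ by $s_i^*$ if necessary (which has the same fixed-point set), $\beta_i$ is a proper prefix of $\alpha_i$, so $\theta_{s_i}$ maps the cylinder $C(\beta_i)$ strictly inside the shorter cylinder $C(\alpha_i)$ and strictly lengthens the common prefix; iterating shows $F_{s_i}\subseteq\bigcap_k\theta_{s_i}^k(C(\beta_i))$ is a single point, necessarily $\{x\}$. A short computation with the product in $\SGE$ also shows that no nonzero idempotent lies below such an $s_i$, so $TF_{s_i}=\varnothing$. Thus the non-diagonal terms contribute only the single point $x$ to $\bigcup_iF_{s_i}$. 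In particular, if every index is non-diagonal then $\bigcup_iF_{s_i}=\{x\}$ has empty interior (as $E^{\infty}$ has no isolated points in the cases of interest), contradicting our assumption; so from now on I may assume at least one index is diagonal, and I collect the diagonal indices into a set $I$.

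The heart of the argument is a conjugation that moves the diagonal elements to a common vertex. Let $\tau$ be the prefix of $x$ of length $\max_{i\in I}|\beta_i|$, let $v$ be its terminal vertex, and write $x=\tau y$ with $y\in E^{\infty}$ based at $v$. For $i\in I$ put $\tau=\beta_i\rho_i$. Since $\tau y=x\in F_{s_i}$ means $g_i\cdot(\rho_i y)=\rho_i y$, comparing initial segments forces $g_i\cdot\rho_i=\rho_i$, and then $h_i:=\vp(g_i,\rho_i)$ satisfies $h_i\cdot y=y$; running this backwards gives
\[
F_{s_i}\cap C(\tau)=\tau\cdot F_{(v,h_i,v)}\qquad(i\in I),
\]
where $(v,h_i,v)\in\SGE$ because $g_i$ fixes $v$. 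I would then check $h_i\neq 1_G$: if $h_i=1_G$ then $\rho_i\in SFW_{g_i}$, so $(\tau,1,\tau)\leqslant s_i$ and $x\in C(\tau)=D^\theta_{(\tau,1,\tau)}\subseteq TF_{s_i}$, contradicting $x\notin TF_{s_i}$. Similarly $y\notin TF_{(v,h_i,v)}$: a strongly fixed prefix $\sigma$ of $y$ for $h_i$ would, via the cocycle identity $\vp(g_i,\rho_i\sigma)=\vp(\vp(g_i,\rho_i),\sigma)=\vp(h_i,\sigma)=1_G$ together with $g_i\cdot(\rho_i\sigma)=\rho_i\sigma$, show $\rho_i\sigma\in SFW_{g_i}$ and hence $x\in TF_{s_i}$, again a contradiction. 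Therefore $y\in\bigcap_{i\in I}\big(F_{(v,h_i,v)}\setminus TF_{(v,h_i,v)}\big)$ with each $h_i\in G\setminus\{1_G\}$.

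Finally I would cash in the interior hypothesis. Choose a cylinder $C(\gamma)\ni x$ with $C(\gamma)\subseteq\bigcup_iF_{s_i}$; shrinking it, I may take $\tau$ to be a prefix of $\gamma$, say $\gamma=\tau\gamma'$, so that $C(\gamma)=\tau\cdot C(\gamma')\subseteq C(\tau)$ and $x=\tau y$. Since the non-diagonal terms only add the point $x$, we get $C(\gamma)\setminus\{x\}\subseteq C(\tau)\cap\bigcup_{i\in I}F_{s_i}=\tau\cdot\bigcup_{i\in I}F_{(v,h_i,v)}$, and stripping the prefix $\tau$ (a homeomorphism onto $C(\tau)$) yields $C(\gamma')\setminus\{y\}\subseteq\bigcup_{i\in I}F_{(v,h_i,v)}$. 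But $y\in F_{(v,h_i,v)}$ for every $i\in I$, so in fact $C(\gamma')\subseteq\bigcup_{i\in I}F_{(v,h_i,v)}$, i.e.\ $y\in\big(\bigcup_{i\in I}F_{(v,h_i,v)}\big)^\circ$. This contradicts hypothesis \eqref{eq:SforSSGraphs}, and the contradiction proves $x\notin\big(\bigcup_iF_{s_i}\big)^\circ$, establishing \eqref{eq:cond(S)}.

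I expect the main obstacle to be the conjugation step: correctly identifying $F_{s_i}\cap C(\tau)$ with $\tau\cdot F_{(v,h_i,v)}$ and matching up the trivially fixed points on the two sides, since this requires careful bookkeeping of the self-similarity cocycle $\vp$ and of the prefix arithmetic in $\SGE$. The handling of the non-diagonal elements (single fixed point, empty $TF$) is routine once the contraction picture is in place, and the final interior extraction is then a short point-set argument.
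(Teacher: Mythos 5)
Your proof is correct and takes essentially the same route as the paper's: dispose of the non-diagonal elements first (the paper cites \cite[Proposition~14.3]{EP17} where you give a direct nested-cylinder argument), translate the diagonal elements along the common prefix of $x$ to vertex-based elements $(v,h_i,v)$ via the cocycle, match up the fixed and trivially fixed sets on both sides, and apply \eqref{eq:SforSSGraphs} to the translated data. If anything you are more careful than the paper at two points: you verify $h_i\neq 1_G$ before invoking the hypothesis (the paper never checks this, though it follows from $x\notin TF_{s_i}$ exactly as you argue), and you state the conjugation identity in its correct relative form $F_{s_i}\cap C(\tau)=\tau\,F_{(v,h_i,v)}$, whereas the paper's claim $\alpha F_{t_i}=F_{s_i}$ is literally false in general (fixed points of $s_i$ need not lie in $C(\alpha)$) and is only valid, and only used, after intersecting with $C(\alpha)$.
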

\begin{proof}
	Suppose that we have $\{s_1, \dots s_n\}\subseteq \SGE\setminus E(\SGE)$ and $x\in \bigcap_{i = 1}^{n}F_{s_i}\setminus TF_{s_i}$.
	We need to show that $x\notin \left(
	\bigcup_{i = 1}^n F_{s_i}\right)^\circ$.
	
	For $i = 1, \dots n$, write $s_i = (\alpha_i, g_i, \beta_i)$.
	If $|\alpha_i|\neq |\beta_i|$, by \cite[Proposition~14.3]{EP17} $F_{s_i}$ has at most one point, and since $x$ is assumed to be in this set it must be $x$.
	Hence $F_{s_i}\subset F_{s_j}$ for all $j = 1, \dots n$, so in showing that $x\notin \left(
	\bigcup_{i = 1}^n F_{s_i}\right)^\circ$ we can assume without loss of generality that $|\alpha_i| = |\beta_i|$ for $i =1, \dots, n$, which means that $\alpha_i = \beta_i$ for $i = 1, \dots n$.
	
	Since $x$ is fixed by each $s_i$, there exists $\alpha\in E^*$ such that $\alpha = \alpha_i\gamma_i$ for each $i = 1, \dots, n$ and $\gamma_i\in E^*$.
	We also must have that $g_i\cdot \gamma_i = \gamma_i$ and $s(\gamma_i) = s(\alpha)$ for each $i = 1, \dots, n$.
	
	Write $v := s(\alpha)$ and let $t_i = (v, \vp(g_i, \gamma_i), v)$.
	We claim that $\alpha F_{t_i} = F_{s_i}$ and $\alpha TF_{t_i} = TF_{s_i}$ for $i = 1, \dots n$.
	First, let $y\in F_{t_i}$.
	Then $\vp(g_i, \gamma_i)\cdot y = y$.
	We calculate
	\begin{align*}
	\theta_{s_i}(\alpha y) &= \theta_{(\alpha_i, g_i, \alpha_i)}(\alpha_i\gamma_i y)\\
	&= \alpha_i g_i\cdot (\gamma_i y)\\
	&= \alpha_i \gamma_i \vp(g_i, \gamma_i)\cdot y\\
	& = \alpha y
	\end{align*}
	Conversely, if $\alpha z \in F_{s_i}$ a similar calculation shows that $z\in F_{t_i}$, and so $\alpha F_{t_i} = F_{s_i}$.
	
	If $x\in TF_{t_i}$, then there exists $e\in E(\SGE)$ such that $t_ie = e$ and $x\in D_e^\theta$.
	We may write $e = (\mu, 1_G, \mu)$ for some $\mu\in E^*$ with $|\mu|\geq 1$, and so $y = \mu z$ for some $z\in E^\infty$.
	We want to show $\alpha\mu z$ is trivially fixed by $s_i$.
	Since $t_i e = e$, we have
	\begin{align*}
	(v, \vp(g_i, \gamma_i), v)(\mu, 1_G, \mu) &= (\mu, 1_G, \mu)\\
	(\vp(g_i, \gamma_i)\cdot\mu, \vp(\vp(g_i, \gamma_i), \mu), \mu) &= (\mu, 1_G, \mu)
	\end{align*}
	Which implies $\vp(g_i, \gamma_i)\cdot\mu = \mu$ and $\vp(\vp(g_i, \gamma_i), \mu) = 1_G$.
	If we let $f = (\alpha\mu, 1_G, \alpha\mu)$, then
	\begin{align*}
	s_if &= (\alpha_i, g_i, \alpha_i) (\alpha_i\gamma_i\mu, 1_G, \alpha_i\gamma_i\mu) \\
	&= (\alpha_ig_i\cdot(\gamma_i\mu), \vp(g_i, \gamma_i\mu), \alpha\mu))\\
	&= (\alpha_i\gamma_i\vp(g_i, \mu)\cdot \mu, \vp(g_i, \gamma_i\mu)\vp(g_i, \gamma_i\mu), \alpha\mu)\\
	&= (\alpha\mu, 1_G, \alpha\mu) = f
	\end{align*}
	and clearly $\alpha\mu z\in D_f^\theta$.
	Hence $\alpha\mu z$ is trivially fixed by $s_i$.
	Conversely, a similar calculation shows that if $\alpha y$ is trivially fixed by $s_i$ then $y$ is trivially fixed by $t_i$.
	
	Now, $x\in \bigcap_{i = 1}^{n}F_{s_i}\setminus TF_{s_i}$ implies $x = \alpha y$ for some $y\in E^\infty$.
	Since
	$$\bigcap_{i = 1}^{n}F_{s_i}\setminus TF_{s_i} = \bigcap_{i = 1}^{n}\alpha F_{t_i}\setminus \alpha TF_{t_i} = \alpha \bigcap_{i = 1}^{n}F_{t_i}\setminus TF_{ts_i}$$
	we have that $y\in \bigcap_{i = 1}^{n}F_{t_i}\setminus TF_{t_i}$ and so by hypothesis we must have that $y\notin \left(
	\bigcup_{i = 1}^n F_{t_i}\right)^\circ$.
	
	Now suppose $\alpha y\in \left(
	\bigcup_{i = 1}^n \alpha F_{t_i}\right)^\circ$.
	Then there exists a prefix $\mu$ of $y$ such that $\alpha y = \alpha\mu y'$ and $C(\alpha\mu) \subseteq \bigcup_{i = 1}^n \alpha F_{t_i}$.
	But then we must have $y\in C(\mu)\subseteq \bigcup_{i = 1}^n F_{t_i}$, which contradicts the fact that $y\notin \left(
	\bigcup_{i = 1}^n F_{t_i}\right)^\circ$.
	Therefore we must have  $\alpha y = x\notin \left(
	\bigcup_{i = 1}^n \alpha F_{t_i}\right)^\circ$ and we are done.
\end{proof}
\begin{lemma}\label{Lem: (E) => regular SGE}
	Let $(G,E,\varphi)$ be a triple as in \rm{(\ref{basicdata})} which satisfies \eqref{eq:SforSSGraphs}.
	Then, any compact open set is regular open.
\end{lemma}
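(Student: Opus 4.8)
The plan is to recognize this lemma as a direct specialization of Lemma~\ref{Lem: (E) => regular} to the inverse semigroup $\SGE$. Indeed, by construction the groupoid $\CG$ is the tight groupoid $\gt(\SGE)$, obtained as the groupoid of germs of the standard action of $\SGE$ on its tight spectrum $\Et(\SGE)$. To invoke Lemma~\ref{Lem: (E) => regular} I need only verify its two hypotheses: that $\Et(\SGE) = \Eu(\SGE)$, and that $\SGE$ satisfies condition \eqref{eq:cond(S)}.

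I would dispatch the first hypothesis by quoting the preliminary discussion of this subsection: since $\Et(\SGE)$ is homeomorphic to the one-sided infinite path space $E^\infty$ with cylinder sets as a basis, every tight filter is an ultrafilter, so $\Et(\SGE) = \Eu(\SGE)$. For the second hypothesis, I would appeal to Lemma~\ref{lem:SforSSGraphs}: the standing assumption that $(G,E,\varphi)$ satisfies \eqref{eq:SforSSGraphs} is precisely the hypothesis of that lemma, whose conclusion is that $\SGE$ satisfies \eqref{eq:cond(S)}. With both hypotheses established, Lemma~\ref{Lem: (E) => regular} applies to $S = \SGE$ and yields that every compact open subset of $\gt(\SGE) = \CG$ is regular open, completing the argument.

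There is no genuine obstacle here, since all of the substantive work has already been carried out upstream: Lemma~\ref{lem:SforSSGraphs} translates the combinatorial condition \eqref{eq:SforSSGraphs} on the triple into the abstract semigroup condition \eqref{eq:cond(S)}, and Lemma~\ref{Lem: (E) => regular} derives regularity of compact open sets from \eqref{eq:cond(S)} together with $\Et(S) = \Eu(S)$. The only point requiring a moment's care is confirming that $\CG$ really is the tight groupoid of $\SGE$, so that the condition \eqref{eq:cond(S)} and the identity $\Et(S) = \Eu(S)$ are being applied to the correct inverse semigroup; this is immediate from the identification of $\CG$ with the groupoid of germs of the standard action of $\SGE$ on $\Et(\SGE) \cong E^\infty$.
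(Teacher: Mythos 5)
Your proof is correct and follows exactly the paper's argument: the paper likewise deduces condition \eqref{eq:cond(S)} for $\SGE$ from Lemma~\ref{lem:SforSSGraphs} and then invokes Lemma~\ref{Lem: (E) => regular}. The only difference is that you explicitly verify the hypothesis $\Et(\SGE) = \Eu(\SGE)$, which the paper leaves implicit (having noted it in the preliminary discussion of the subsection); this is a minor but welcome addition of care.
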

\begin{proof}
	By Lemma~\ref{lem:SforSSGraphs}, $\SGE$ satisfies \eqref{eq:cond(S)} and so Lemma~\ref{Lem: (E) => regular} implies the result.
\end{proof}

We can now state a consequence of our results to the case of self-similar graphs.
\begin{thm}\label{thm:ssEsimple}
	Let $(G,E,\varphi)$ be a triple as in~\eqref{basicdata} such that $\SGE$ satisfies condition~\eqref{eq:cond(S)} and such that $\CG$ is minimal.
	Then:
	\begin{enumerate}
		\item $\cs_r(\CG)$ is simple.
		\item If $G$ is amenable, then $\OGE$ is simple.
		\item For any field $\KK$, the algebra $A_\KK(\CG)$ is simple.
	\end{enumerate}
\end{thm}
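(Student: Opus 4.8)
The plan is to verify the three standing hypotheses of our simplicity criteria---minimality, effectiveness, and regular-openness of every compact open set---and then read off the three conclusions directly from the corollaries of Sections~\ref{sec:st} and~\ref{csample}. Minimality of $\CG$ is assumed outright. For effectiveness, I would invoke the observation recorded just before this subsection that in the self-similar graph setting $\Et(\SGE) = \Eu(\SGE)$; combined with the assumption that $\SGE$ satisfies condition~\eqref{eq:cond(S)}, Remark~\ref{Rem:various} (which in turn applies \cite[Theorem~4.10]{EP16}) then yields that $\CG$ is effective.

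Next I would supply the topological input. Since $\SGE$ is countable, $\CG$ is a second-countable ample groupoid whose unit space $E^\infty$ is Hausdorff. Because $\SGE$ satisfies~\eqref{eq:cond(S)} and $\Et(\SGE) = \Eu(\SGE)$, the hypotheses of Lemma~\ref{Lem: (E) => regular} are met with $S = \SGE$, and that lemma shows that every compact open subset of $\CG = \gt(\SGE)$ is regular open. Note that this uses the general inverse-semigroup Lemma~\ref{Lem: (E) => regular} rather than its graph-specific refinement Lemma~\ref{Lem: (E) => regular SGE}, since we assume condition~\eqref{eq:cond(S)} for $\SGE$ directly and do not assume the stronger hypothesis~\eqref{eq:SforSSGraphs}.

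With minimality, effectiveness, and regular-openness of all compact open sets now in hand, item~(3) follows immediately from Corollary~\ref{cor:Stsimple}, and the simplicity of $\cs_r(\CG)$ in item~(1) follows immediately from Corollary~\ref{cor:C*simpleample}(2). For item~(2), I would use the $\ast$-isomorphism $\OGE \cong \cs(\CG)$ of \cite[Theorem~6.3 \& Corollary~6.4]{EP17}. When $G$ is amenable, \cite[Corollary~10.18]{EP17} gives that $\CG$ is amenable, so that the regular representation is faithful and $\cs(\CG) = \cs_r(\CG)$; combining this with item~(1) yields that $\OGE \cong \cs(\CG) = \cs_r(\CG)$ is simple.

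Since every step is a direct appeal to an already-established result, I do not expect a serious obstacle in this proof; the work has all been front-loaded into the preparatory lemmas. The only points requiring genuine care are the two bookkeeping observations flagged above: first, that the correct regularity tool is Lemma~\ref{Lem: (E) => regular}, whose second hypothesis $\Et(\SGE) = \Eu(\SGE)$ is automatic in the graph setting; and second, that the passage from $\cs_r(\CG)$ to $\OGE$ in item~(2) genuinely requires the amenability hypothesis, so that the full and reduced groupoid $\cs$-algebras coincide before the isomorphism $\OGE \cong \cs(\CG)$ is applied.
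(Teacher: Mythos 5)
Your proposal is correct and follows essentially the same route as the paper: effectiveness via $\Et(\SGE)=\Eu(\SGE)$, condition~\eqref{eq:cond(S)} and Remark~\ref{Rem:various}; regular-openness of compact open sets via the general Lemma~\ref{Lem: (E) => regular}; and then the simplicity criteria (the paper cites Theorem~\ref{thm:simple} and Theorem~\ref{thm:c*simple} directly where you cite their packaged forms, Corollary~\ref{cor:Stsimple} and Corollary~\ref{cor:C*simpleample}, which amounts to the same thing), with amenability of $G$ passing to $\CG$ to identify $\OGE\cong\cs(\CG)$ with $\cs_r(\CG)$ for item~(2).
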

\begin{proof}
	Since $\Et(\SGE)=\Eu(\SGE)$, condition \eqref{eq:cond(S)} and countability implies that $\CG$ is effective by Remark \ref{Rem:various}.
	By Lemma~\ref{Lem: (E) => regular}, every compact open subset of $\CG$ is regular open. Now (3) follows from Theorem~\ref{thm:simple}, and (1) and (2) follow from Theorem~\ref{thm:c*simple} together with \cite[Corollary 10.16]{EP17}.
\end{proof}

We note that the minimality assumption in Theorem~\ref{thm:ssEsimple} is satisfied in many cases, for example when the action of $G$ fixes every vertex and the graph is transitive, see \cite[Theorem~13.6]{EP17} together with note (2) below
\cite[Corollary~13.7]{EP17}.
\subsection{A simple Katsura algebra with non-Hausdorff groupoid}

In \cite{Ka08}, Katsura associates a $\cs$-algebra to a pair of square integer matrices $A,B$ and studies their properties.
These were recast as $\cs$-algebras of self-similar graphs in \cite{EP17}.
In this section we describe such a self-similar graph action which gives a groupoid which is minimal, effective, non-Hausdorff groupoid in which every compact open set is regular open.

Consider the matrices
\begin{equation}\label{eq:matrices}
A = \left[\begin{matrix}
2&1&0\\
1&2&1\\
1&1&2
\end{matrix}\right]
\hspace{.5cm} \text{and} \hspace{.5cm}
B = \left[\begin{matrix}
1&2&0\\
2&1&2\\
0&2&1
\end{matrix}\right].
\end{equation}
Let $E_A = (E_A^0, E_A^1, r, s)$ be the directed graph whose incidence matrix is $A$.
A diagram of $E_A$ is given below.
\begin{center}
	\begin{tikzpicture}
	\node at (0,0) {$1$};
	\node[vertex] (v1) at (0,0)   {$\,$}
	edge [->,>=latex,out=160,in=110,loop,thick] node[auto,swap,xshift=-0.1cm,yshift=0.1cm,pos=0.5]{} (v1)
	edge [->,>=latex,out=170,in=100,loop,thick] node[auto,xshift=0.1cm,yshift=0.1cm,pos=0.5]{$e^0_{11}$, $e^1_{11}$} (v1);
	\node at (4,0) {$2$};
	\node[vertex] (v2) at (4,0)   {$\,$}
	edge [->,>=latex,out=80,in=10,loop,thick] node[auto,swap,xshift=-0.1cm,yshift=0.1cm,pos=0.5]{} (v2)
	edge [->,>=latex,out=70,in=20,loop,thick] node[auto,xshift=0.1cm,yshift=0.1cm,pos=0.5]{$e^0_{22}, e^1_{22}$} (v2)
	edge [->,>=latex,out=200,in=340,thick] node[auto,yshift=-0.2cm]{$e_{21}$} (v1)
	edge [<-,>=latex,out=150,in=30,thick] node[auto,yshift=0.7cm]{$e_{12}$} (v1);
	\node at (2,-3.4) {$3$};
	\node[vertex] (v3) at (2,-3.4)   {$\,$}
	edge [->,>=latex,out=305,in=235,loop,thick] node[auto,swap,xshift=0cm,yshift=-0.8cm,pos=0.5]{$e^0_{33}, e^1_{33}$} (v3)
	edge [->,>=latex,out=295,in=245,loop,thick] node[auto,xshift=0.1cm,yshift=0.1cm,pos=0.5]{} (v3)
	edge [<-,>=latex,out=20,in=270,thick] node[auto,xshift=1cm,yshift=-0.2cm,pos=0.5]{$e_{23}$} (v2)
	edge [->,>=latex,out=80,in=230,thick] node[auto,xshift=-0.2cm,yshift=-0.2cm,pos=0.5]{$e_{32}$} (v2)
	edge [<-,>=latex,out=160,in=270,thick] node[auto,xshift=-0.1cm,yshift=0.4cm,pos=0.5]{$e_{13}$} (v1);
	\end{tikzpicture}
\end{center}
The matrix $B$ determines a $\Z$ action and a cocycle $\vp: \Z\times E^1_A\to \Z$, as described in \cite{EP17}. This action and cocycle are described for $1\in\Z$ as follows:
\begin{align*}
1\cdot e_{ii}^0 = e_{ii}^1, \hspace{1cm}& \vp(1, e_{ii}^0) = 0 \quad  \text{ for } i = 1, 2, 3,\\
1\cdot e_{ii}^1 = e_{ii}^0, \hspace{1cm}& \vp(1, e_{ii}^1) = 1 \quad \text{ for } i = 1,2,3,\\
1\cdot e_{12} = e_{12}, \hspace{1cm}& \vp(1, e_{12}) = 2, \\
1\cdot e_{21} = e_{21}, \hspace{1cm}& \vp(1, e_{21}) = 2,\\
1\cdot e_{32} = e_{32}, \hspace{1cm}& \vp(1, e_{32}) = 2, \\
1\cdot e_{23} = e_{23}, \hspace{1cm}& \vp(1, e_{23}) = 2, \\
1\cdot e_{13} = e_{13}, \hspace{1cm}& \vp(1, e_{13}) = 0.
\end{align*}

In what follows, we let
\begin{align}
W &= \{w\in E^*_A: r(e) = s(e)\text{ for every edge $e$ in $w$}\}\label{eq:Wdef}\\
V &= \{v\in E^*_A: r(e) \neq  s(e)\text{ for every edge $e$ in $v$}\}\label{eq:Vdef}.
\end{align}
Evidently, $\Z$ acts differently on paths in $W$ than on paths in $V$.
On paths in $W$, $\Z$ acts like a 2-odometer, while for $n\in \Z$ and $v\in V$ we have
\begin{align}
n\cdot v &= v & \vp(n, v) &= \begin{cases}0 &\text{if }e_{13}\text{ is an edge in }v\\n2^{|v|}&\text{otherwise}.\label{eq:ndotv}
\end{cases}
\end{align}
Furthermore, suppose that $n\cdot w = w$ for some $n\in \Z$ and $w\in W$.
Then we must have that $n = k2^{|w|}$ for some $k\in \Z$, and in this case
\begin{equation}
k2^{|w|}\cdot w = w \hspace{1cm}\vp(k2^{|w|}, w) = k,\label{eq:ndotw}
\end{equation}
in other words, $\vp(n, w) = 2^{-|w|}n$.
We note that this works for either positive or negative $n$, using the rule $\vp(-n, \mu) = -\vp(n, (-n)\cdot\mu)$, see \cite[Proposition 2.6]{EP16}.

\begin{lemma}\label{lem:katsMinNH}
	Let $(\Z, E_A,\vp)$ be the Katsura triple associated to the matrices \eqref{eq:matrices}, and let $\gzeA$ be the associated groupoid. Then $\gzeA$ is minimal and non-Hausdorff.
\end{lemma}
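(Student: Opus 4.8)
The plan is to establish the two assertions separately. Minimality will follow from the structural criterion for self-similar graph groupoids, while non-Hausdorffness will be read off from the characterization \eqref{eq:SSGHausdorff}.

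For minimality, the first step is to check that the $\Z$-action fixes every vertex of $E_A$. Reading off the action table, the generator $1$ carries each edge to an edge with the same source and range; since $\Z$ acts by graph automorphisms, this forces $1\cdot v=v$ for every $v\in E_A^0$. The second step is to verify that $E_A$ is transitive, i.e.\ strongly connected. This is immediate from the incidence matrix: the only vanishing off-diagonal entry of $A$ is $A_{13}$, which is filled in at the next step since $A^2_{13}=A_{12}A_{23}=1>0$; hence $E_A$ is strongly connected (indeed $A^2>0$). With the action fixing every vertex and the graph transitive, minimality of $\gzeA$ follows directly from \cite[Theorem~13.6]{EP17} together with note (2) below \cite[Corollary~13.7]{EP17}, exactly as indicated after Theorem~\ref{thm:ssEsimple}.

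For non-Hausdorffness, by \eqref{eq:SSGHausdorff} it suffices to produce a single $n\in\Z\setminus\{0\}$ for which the set $MSFW_n$ of minimally strongly fixed paths is infinite; I would take $n=1$. The decisive feature of this example is visible in \eqref{eq:ndotv}: every path $v\in V$ is fixed by $1$, and its cocycle $\vp(1,v)$ vanishes precisely when $e_{13}$ occurs in $v$, while otherwise $\vp(1,v)=2^{|v|}\neq 0$. Thus a path in $V$ is strongly fixed by $1$ if and only if it contains $e_{13}$. The plan is to consider the family $\mu_j e_{13}$ for $j\geq 0$, where $\mu_j:=(e_{32}e_{23})^{j}e_{32}$. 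Each $\mu_j$ lies in $V$, avoids $e_{13}$, has source $3=r(e_{13})$ so that $\mu_j e_{13}$ is a legitimate path, and has length $2j+1$; hence the $\mu_j e_{13}$ are pairwise distinct paths in $V$ that contain $e_{13}$ and so are strongly fixed by $1$.

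It then remains to confirm that each $\mu_j e_{13}$ is \emph{minimally} strongly fixed. Every proper prefix of $\mu_j e_{13}$ is a prefix of $\mu_j$, hence a path in $V$ not containing $e_{13}$; by \eqref{eq:ndotv} any such prefix $\nu$ has $\vp(1,\nu)=2^{|\nu|}\neq 0$, so although it is fixed it is not strongly fixed (and the vertices are not strongly fixed by $1\neq 0$ either). Therefore $\{\mu_j e_{13}:j\geq 0\}\subseteq MSFW_1$, so $MSFW_1$ is infinite and $\gzeA$ is non-Hausdorff. The two halves are largely bookkeeping once the tools are in place; the step I expect to require the most care is the non-Hausdorff argument, specifically choosing the infinite family so that $e_{13}$ occurs only as the final edge---this is what simultaneously forces strong fixing of the whole path (via the collapse of the cocycle) and rules out strong fixing of every proper prefix (via the nonvanishing, in fact exponentially growing, cocycle on $e_{13}$-free paths).
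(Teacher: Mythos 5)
Your proposal is correct and takes essentially the same approach as the paper: minimality is obtained from the irreducibility/transitivity of $A$ by citing \cite{EP17} (the paper invokes Theorem~18.7 there, while you route through Theorem~13.6 and the note below Corollary~13.7, exactly as suggested after Theorem~\ref{thm:ssEsimple}; the two citations do the same job), and non-Hausdorffness is established via \eqref{eq:SSGHausdorff} by exhibiting an infinite family of minimal strongly fixed paths for $1\in\Z$ whose only occurrence of $e_{13}$ is the final edge. Your family $(e_{32}e_{23})^{j}e_{32}e_{13}$ is interchangeable with the paper's $(e_{23}e_{32})^{k}e_{13}$, and your verification (cocycle collapses to $0$ on the full path, but equals $2^{|\nu|}\neq 0$ on every $e_{13}$-free proper prefix) matches the paper's.
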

\begin{proof}
	The matrix $A$ is irreducible, and so $\gzeA$ is minimal by \cite[Theorem 18.7]{EP17}.

To show non-Hausdorff, by \eqref{eq:SSGHausdorff}, it will be enough to find an element of $\Z$ with infinitely many minimal strongly fixed paths.
We claim that the generator $1$ does the job.
Indeed, for any $k\geq 1$, if we let $\alpha^{(k)}= (e_{23}e_{32})^ke_{13}$, then
\[
1\cdot \alpha^{(k)} = \alpha^{(k)}, \hspace{1cm} \vp(1, \alpha^{(k)}) = e
\]
while if $\alpha^{(k)}_{[1,n]}$ denotes the prefix of $\alpha^{(k)}$ of length $n$, we see that
\[
1\cdot \alpha^{(k)}_{[1,n]} = \alpha^{(k)}_{[1,n]}, \hspace{1cm} \vp\left(z, \alpha^{(k)}_{[1,n]}\right) = z^{2n}.
\]
Hence $\alpha^{(k)}$ is a minimal strongly fixed word for $1$, and since they are distinct for each $k\geq 1$, $1$ has infinitely many minimal strongly fixed paths.
\end{proof}

We now show that the inverse semigroup associated to this self-similar action satisfies condition \eqref{eq:cond(S)}.
Let $x\in E_A^0$ be any vertex, and use the shorthand
\begin{equation}\label{eq:Fn}
F_n := F_{(x, n, x)},\hspace{1cm}TF_n := TF_{(x, n, x)}.
\end{equation}
\begin{lemma}\label{lem:katsTF}
	Let $(\Z, E_A,\vp)$ be the Katsura triple associated to the matrices \eqref{eq:matrices}, and let $x\in E_A^0$.
	Then keeping the notation \eqref{eq:Fn} in force, we have
	\begin{enumerate}
		\item If $\ell\in\Z$ is odd, then $F_\ell = F_1$ and $TF_\ell = TF_1$.
		\item If $\ell\in\Z$ is nonzero, even and $\ell = m2^n$ for some $n\geq 1$ and some odd $m\in\Z$, then  $F_\ell = F_{2^n}$ and $TF_\ell = TF_{2^n}$.
		\item For every $n\geq 0$ we have $F_{2^n}\subsetneq F_{2^{n+1}}$ and $TF_{2^n}\subsetneq TF_{2^{n+1}}$.
	\end{enumerate}
\end{lemma}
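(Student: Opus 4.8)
The plan is to reduce all three statements to the $2$-adic valuation of the group element, since the only way $\Z$ interacts with the dynamics here is through the $2$-odometer on loops and the doubling/killing cocycle on non-loop edges. First I would unwind the definitions relative to the fixed vertex $x$. The idempotent $s^*s$ for $s=(x,n,x)$ equals $(x,0,x)$, whose tight-filter domain $D^\theta_{(x,0,x)}$ is the cylinder $C(x)\subseteq E_A^\infty$, and $\theta_s$ acts on $\eta\in C(x)$ by $\eta\mapsto n\cdot\eta$. Thus $F_n=\{\eta\in C(x):n\cdot\eta=\eta\}$, while by \cite[Proposition~4.5]{EP16} one gets $TF_n=\bigcup\{C(\mu):r(\mu)=x,\ n\cdot\mu=\mu,\ \vp(n,\mu)=0\}$; that is, $\eta\in TF_n$ exactly when $\eta$ has a prefix that is strongly fixed by $n$. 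Note both $F_n$ and $TF_n$ range over the same set $C(x)$ as $n$ varies, so to prove $F_\ell=F_{n'}$ it suffices to show the two defining predicates agree pointwise.

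Next I would track fixedness edge-by-edge. Writing $\eta=b_1b_2\cdots$ and $g_i:=\vp(n,b_1\cdots b_i)$ (so $g_0=n$ and $g_i=\vp(g_{i-1},b_i)$ by the usual multiplicativity of $\cdot$ and $\vp$ along paths), the formula $g\cdot(\alpha\beta)=(g\cdot\alpha)(\vp(g,\alpha)\cdot\beta)$ shows $\eta\in F_n$ iff $g_{i-1}\cdot b_i=b_i$ for every $i$. Reading off the $2$-odometer action on the two loops at each vertex together with \eqref{eq:ndotv} and \eqref{eq:ndotw}, the transition $g_{i-1}\mapsto g_i$ obeys three rules: if $b_i$ is a loop then $g_{i-1}\cdot b_i=b_i$ forces $g_{i-1}$ even and $g_i=g_{i-1}/2$; if $b_i$ is a non-loop edge other than $e_{13}$ then $g_i=2g_{i-1}$ with no constraint; and if $b_i=e_{13}$ then $g_i=0$ with no constraint. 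The key point, which I would isolate as a claim, is that writing $\nu(g)$ for the $2$-adic valuation (with $\nu(0)=\infty$), both the transition and the loop constraint depend on $g_{i-1}$ \emph{only} through $\nu(g_{i-1})$: loops send $\nu\mapsto\nu-1$ and require $\nu\ge1$, other non-loop edges send $\nu\mapsto\nu+1$, and $e_{13}$ sends $\nu\mapsto\infty$ (absorbing). Consequently the predicate ``$\eta\in F_n$'' and the predicate ``$\eta\in TF_n$'' (the latter being ``the valuation reaches $\infty$ at a finite stage'', which can only happen by traversing an $e_{13}$) are functions of $\nu(n)$ and the edge-type sequence of $\eta$ alone. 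Since $\nu(\ell)=0=\nu(1)$ for $\ell$ odd and $\nu(m2^n)=n=\nu(2^n)$ for $m$ odd and $n\ge1$, parts (1) and (2) follow immediately.

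For part (3) I would first get the inclusions by coupling the traversals with starting valuations $n$ and $n+1$ on a common path: while both are finite the difference is preserved by all three rules, and they reach $\infty$ together at the same $e_{13}$, so the loop constraint for the start-$n$ run is stronger than for the start-$(n+1)$ run; hence $F_{2^n}\subseteq F_{2^{n+1}}$ and $TF_{2^n}\subseteq TF_{2^{n+1}}$. For strictness I would produce a single separating path handling both inclusions at once: let $\eta^\star$ begin with $n+1$ loops at $x$ (available since every vertex carries two loops), then follow a finite non-loop path from $x$ that traverses $e_{13}$ (such a path exists because $A$ is irreducible), then any infinite tail. With start valuation $n+1$ the initial loops run validly down to $\nu=0$, the ensuing non-loop edges only raise $\nu$, and the traversal reaches $e_{13}$, so $\eta^\star\in TF_{2^{n+1}}\subseteq F_{2^{n+1}}$; with start valuation $n$ the $(n+1)$-st loop violates $\nu\ge1$, so $\eta^\star\notin F_{2^n}\supseteq TF_{2^n}$. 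Thus $\eta^\star$ witnesses both $F_{2^n}\subsetneq F_{2^{n+1}}$ and $TF_{2^n}\subsetneq TF_{2^{n+1}}$.

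The step I expect to be the main obstacle is the second one: rigorously extracting the three local transition rules from the odometer action and from \eqref{eq:ndotv}--\eqref{eq:ndotw}, and in particular verifying the claim that both the transitions and the evenness constraint at loops depend on $g_{i-1}$ solely through $\nu(g_{i-1})$. Once this ``valuation dynamics'' is established, parts (1)--(2) are essentially immediate and part (3) reduces to the monotone coupling estimate plus the explicit separating path.
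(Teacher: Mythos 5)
Your proposal is correct and is essentially the paper's own argument in lightly different clothing: your edge-by-edge $2$-adic valuation dynamics is the local form of the paper's block-length inequalities \eqref{eq:Katlength} and \eqref{eq:2nfixed}, and your characterization of $TF_\ell$ via occurrence of $e_{13}$, the monotonicity-in-$n$ step, and the separating path ($n+1$ loops at $x$ followed by a loop-free path through $e_{13}$, then an arbitrary tail) all match the paper's proof, whose witness at vertex $1$ is $(e_{11})^{n+1}e_{21}e_{32}e_{13}(e_{21}e_{12})^\infty$. One cosmetic point: the existence of a loop-free path from $x$ through $e_{13}$ is better justified by inspection of the three-vertex graph, or by deleting loop edges from an arbitrary path (which preserves composability), since irreducibility of $A$ by itself only yields a path that may contain loops, and traversing those loops would impose additional valuation constraints.
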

\begin{proof}

\begin{enumerate}
	\item Clearly, $F_1\subseteq F_\ell$ and $TF_1\subseteq TF_\ell$ for all $\ell\in\Z$.
	We prove the other containments.

Given $\xi\in E_A^\infty$ it is of one of two forms:
\[
\xi = w_1v_1w_2v_2\cdots \hspace{1cm} \text{or}\hspace{1cm} \xi = v_1w_2v_2w_3\cdots\hspace{1cm} w_i\in W, v_i\in V, i = 1, 2, \dots
\]
Suppose $\ell\in \Z$ is odd and that $\xi\in F_\ell$.
Since $\ell$ is odd, $\xi$ must be of the form $x = v_1w_2v_2w_3\cdots$.
We claim that the equation
\begin{equation}\label{eq:Katlength}
|w_j| \leq \sum_{i = 1}^{j-1}|v_i| - \sum_{i = 2}^{j-1}|w_i|
\end{equation}
must hold for every $j$ such that $v_k$ does not contain the edge $e_{13}$ for $k = 1, \dots, j$.
Suppose that we have such a $j$.
Since $\vp(\ell, v_1) = \ell 2^{|v_1|}$ (because $v_1$ does not contain $e_{13}$) and $w_2$ is fixed by multiples of $2^{|w_2|}$, we must have $|w_2|\leq |v_1|$.
So suppose that we know
\[
|w_k| \leq \sum_{i = 1}^{k-1}|v_i| - \sum_{i = 2}^{k-1}|w_i|
\]
holds for some $k = 3, \dots j-1$.
Then
\begin{align*}
\ell\cdot \xi &= \ell\cdot v_1w_2v_2w_3\cdots\\
&= v_1w_2v_2\cdots w_k\vp(\ell, v_1w_2v_2\cdots w_k)\cdot v_kw_{k+1}\cdots\\
&= v_1w_2v_2\cdots w_k(\ell2^{\sum_{i = 1}^{k-1}|v_i| - \sum_{i = 2}^{k}|w_i|})\cdot v_kw_{k+1}\cdots
\end{align*}
and we know the power of 2 is greater than or equal to 0. Hence
\begin{align*}
\ell\cdot \xi &= v_1w_2v_2\cdots w_kv_k(\ell2^{|v_k|}2^{\sum_{i = 1}^{k-1}|v_i| - \sum_{i = 2}^{k}|w_i|})\cdot w_{k+1}\cdots\\
&= v_1w_2v_2\cdots w_kv_k(\ell2^{\sum_{i = 1}^{k}|v_i| - \sum_{i = 2}^{k}|w_i|})\cdot w_{k+1}\cdots
\end{align*}
By hypothesis $\xi$ is fixed by $\ell$, and so $\ell2^{\sum_{i = 1}^{k}|v_i| - \sum_{i = 2}^{k}|w_i|}\cdot w_{k+1} = w_{k+1}$ hence $\ell2^{\sum_{i = 1}^{k}|v_i| - \sum_{i = 2}^{k}|w_i|}$ must be a multiple of $2^{|w_k|}$, i.e., $|w_k|\leq \sum_{i =
1}^{k}|v_i| - \sum_{i = 2}^{k}|w_i|$.
Hence \eqref{eq:Katlength} holds for all such $j$.
Now the same calculation as above with 1 replacing $\ell$ shows that $\xi$ is fixed by $1$.
Hence $F_1 = F_\ell$.

We now turn to the trivially fixed infinite paths.
It is straightforward to verify that a path $\xi\in F_\ell$ will be trivially fixed if and only if there is a prefix $\mu$ of $\xi$ such that $\ell\cdot \mu = \mu$ and $\vp(\ell, \mu) = 0$, and this happens if and only if the edge $e_{13}$ appears in $\xi$.
This statement does not depend on what $\ell$ is, and so we see that $TF_\ell = TF_1$.
\item Suppose $\ell$ is nonzero and even, and write $\ell = m2^n$ for $n\geq 1$ and odd $m$.
If we write $\xi = w_1v_1w_2v_2\cdots$ for $w_i\in W$, $v_i\in V$ (with the possibility that $w_1$ is the vertex $x$), then similar arguments to the above imply that $\xi\in F_\ell$ if and only if the equation
\begin{equation}\label{eq:2nfixed}
|w_j| \leq n + \sum_{i = 1}^{j-1}|v_i| - \sum_{i = 1}^{j-1}|w_i|
\end{equation}
holds for every $j$ such that $v_k$ does not contain the edge $e_{13}$ for $k = 1, \dots, j$.
This statement does not depend on $m$, so $F_\ell = F_{2^n}$. Again for similar reasons as the above, $TF_\ell = TF_{2^n}$.
\item If $\xi$ satisfies \eqref{eq:2nfixed} for every $j$ such that $v_k$ does not contain the edge $e_{13}$ for $k = 1, \dots, j$, then the same will be true if $n$ is replaced by $n+1$.
Hence $F_{2^n}\subseteq F_{2^{n+1}}$ and $TF_{2^n}\subseteq TF_{2^{n+1}}$.

To show the containments are proper, we note that at vertex 1 we have that
\[
(e_{11})^{n+1}e_{21}e_{32}e_{13}(e_{21}e_{12})^\infty
\]
is in both $TF_{2^{n+1}}\setminus TF_{2^n}$ and $F_{2^{n+1}}\setminus F_{2^n}$
and similar paths can be constructed at the other vertices.
\end{enumerate}
\end{proof}

\begin{lemma}\label{lem:katsEff}
	Let $(\Z, E_A,\vp)$ be the Katsura triple associated to the matrices \eqref{eq:matrices}, and let $\gzeA$ be the associated groupoid.
	Then $\gzeA$ is effective.
\end{lemma}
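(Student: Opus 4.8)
The plan is to reduce effectiveness to a pointwise fixed-point statement and then exploit the odometer dynamics carried by the loop edges at each vertex. By \cite[Theorem~4.10]{EP16} (see Remark~\ref{Rem:various}), and because $\Et(\SGE)=\Eu(\SGE)$, the groupoid $\gzeA$ is effective as soon as the $n=1$ instance of condition~\eqref{eq:cond(S)} holds, that is, as soon as $F_s^\circ\subseteq TF_s$ for every $s\in\SGE$. So I would fix $s=(\alpha,m,\beta)$ with $m\in\Z$, together with some $\xi\in F_s^\circ$ viewed as an infinite path, and aim to produce an idempotent $e\leqslant s$ with $\xi\in D^\theta_e$, which places $\xi$ in $TF_s$.

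First I would pin down the shape of $s$. Since $\xi\in F_s^\circ$ there is a finite path $\mu$ with $\xi\in C(\mu)\subseteq F_s$; as $E_A^\infty$ has no isolated points, $C(\mu)$ and hence $F_s$ is infinite, so \cite[Proposition~14.3]{EP17} forces $|\alpha|=|\beta|$. Every element of $F_s$ has the form $\beta\widehat\eta$ with $\theta_s(\beta\widehat\eta)=\alpha(m\cdot\widehat\eta)=\beta\widehat\eta$, and comparing prefixes of equal length gives $\alpha=\beta$. Because $C(\mu)\subseteq C(\alpha)$, I may then write $\mu=\alpha\nu$ for a (possibly empty) path $\nu$ with $s(\nu)=:v$.

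The crux is to unravel the inclusion $C(\alpha\nu)\subseteq F_s$. For every tail $\rho$ with $\alpha\nu\rho\in E_A^\infty$, self-similarity gives
\[
\theta_s(\alpha\nu\rho)=\alpha\,(m\cdot\nu)\,(\vp(m,\nu)\cdot\rho),
\]
so fixedness forces $m\cdot\nu=\nu$ and $\vp(m,\nu)\cdot\rho=\rho$ for every such $\rho$. This is exactly where the concrete dynamics enter: each vertex $v$ of $E_A$ carries the two loops $e^0_{vv},e^1_{vv}$, and $\Z$ acts on the infinite loop paths at $v$ as the dyadic odometer, adding $\vp(m,\nu)$. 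Since a nonzero translation of the odometer is fixed-point free, choosing $\rho$ to be a loop path at $v$ forces $\vp(m,\nu)=0$. I expect this odometer-freeness step to be the only genuine point of the argument; everything else is bookkeeping with the semigroup multiplication.

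To finish, with $m\cdot\nu=\nu$ and $\vp(m,\nu)=0$ in hand, the multiplication in $\SGE$ shows that the idempotent $e:=(\alpha\nu,0,\alpha\nu)$ satisfies $se=e$, i.e.\ $e\leqslant s$, while $\xi\in C(\alpha\nu)=D^\theta_e$; hence $\xi\in TF_s$, as required. Notably this handles every $m\in\Z$ uniformly, so I anticipate not needing the detailed length inequalities of Lemma~\ref{lem:katsTF}: those are better suited to the stronger regularity conclusion that comes from verifying the full condition~\eqref{eq:cond(S)} rather than to effectiveness alone.
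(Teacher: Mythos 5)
Your proof is correct, but it is packaged differently from the paper's. The paper argues directly from the definition of effectiveness: it takes $\gamma = [(\alpha,\ell,\beta),\beta\xi]$ in the interior of $\operatorname{Iso}(\gzeA)$, reduces to $\alpha=\beta$ via \cite[Proposition 14.3]{EP17} and the absence of isolated points (exactly as you do), and then proceeds by contradiction: if $\xi$ were not trivially fixed, then by the characterization established in the proof of Lemma~\ref{lem:katsTF} the edge $e_{13}$ cannot occur in $\xi$, so the cocycle along every prefix $\mu$ of $\xi$ is nonzero, and appending an infinite path of loops at $s(\mu)$ produces points arbitrarily close to $\gamma$ that are not fixed, contradicting interiority. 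You instead verify the fixed-point criterion $F_s^\circ\subseteq TF_s$ for all $s\in\SGE$ (the $n=1$ case of condition~\eqref{eq:cond(S)}, i.e.\ \cite[Theorem 4.10(iii)]{EP16}) and invoke Remark~\ref{Rem:various} together with $\Et(\SGE)=\Eu(\SGE)$ --- the same mechanism the paper itself uses in the proof of Theorem~\ref{thm:ssEsimple}. The dynamical heart is identical in both arguments: a nonzero translation of the dyadic odometer on infinite loop paths has no fixed points, and every vertex of $E_A$ supports such loop paths. What your route buys: it is constructive rather than by contradiction (you exhibit the idempotent $(\alpha\nu,0,\alpha\nu)\leqslant s$ witnessing trivial fixedness directly from $\vp(m,\nu)=0$), and it bypasses the $e_{13}$-characterization of trivially fixed paths, so it does not depend on Lemma~\ref{lem:katsTF}. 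What it costs: reliance on the external criterion \cite[Theorem 4.10]{EP16}, whereas the paper's argument is self-contained at the level of the groupoid.
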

\begin{proof}
Let $\gamma\in$ (Iso$(\gzeA))^\circ$.
Then $\gamma = [(\alpha, \ell, \beta), \beta\xi]$ for $\alpha,\beta\in E_A^\infty$, $\ell\in \Z$,  and $\beta\xi$ is a fixed point for $(\alpha,n,\beta)$.
By \cite[Proposition 14.3(i)]{EP17} and the fact that our path space has no isolated points, we can assume that $|\alpha| = |\beta|$, which implies $\alpha = \beta$ and $\xi$ is a fixed point for $\ell$.

If $\xi$ is not trivially fixed by $\ell$, then the edge $e_{13}$ does not appear in $\xi$.
Hence for every prefix $\mu$ of $\xi$, we can find an element of $C(\mu)$ (say $\mu x$ where $x\in E^\infty_A$ consists only of loops at the vertex $s(\mu)$) which is not fixed by $\ell$.
Hence for every neighbourhood $U$ of $\gamma$ we can find some $[(\alpha,\ell,\beta), \mu x]\in U$ which is not in the isotropy group bundle.
This contradicts $\gamma\in$ ($\operatorname{Iso}(\gzeA))^\circ$, so $\xi$ must be trivially fixed by $n$, which implies that $\gamma\in \gzeA^{(0)}$.
Thus $\gzeA$ is effective.
\end{proof}
We can now prove that our example satisfies condition \eqref{eq:cond(S)}.

\begin{lemma}\label{lem:kats(S)}
	Let $(\Z, E_A,\vp)$ be the Katsura triple associated to the matrices \eqref{eq:matrices}, and let $\mathcal{S}_{E_A, \Z}$ be the associated inverse semigroup.
	Then $\mathcal{S}_{E_A, \Z}$ satisfies condition \eqref{eq:cond(S)}.
\end{lemma}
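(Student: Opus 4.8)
The plan is to verify condition~\eqref{eq:cond(S)} for $\mathcal{S}_{E_A,\Z}$ by invoking Lemma~\ref{lem:SforSSGraphs}, which reduces the task to checking the vertex-localized condition~\eqref{eq:SforSSGraphs}. So I would fix a vertex $x \in E_A^0$ and a finite set $\{\ell_1, \dots, \ell_n\} \subseteq \Z \setminus \{0\}$, adopt the shorthand $F_{\ell_i} := F_{(x,\ell_i,x)}$ and $TF_{\ell_i} := TF_{(x,\ell_i,x)}$ from~\eqref{eq:Fn}, assume that $x \in \bigcap_{i=1}^n F_{\ell_i} \setminus TF_{\ell_i}$, and aim to prove that $x \notin \big(\bigcup_{i=1}^n F_{\ell_i}\big)^\circ$.

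The key observation is that Lemma~\ref{lem:katsTF} collapses all of these sets onto a single totally ordered family indexed by powers of two. Writing $k_i \geq 0$ for the $2$-adic valuation of $\ell_i$, parts~(1) and~(2) of Lemma~\ref{lem:katsTF} give $F_{\ell_i} = F_{2^{k_i}}$ and $TF_{\ell_i} = TF_{2^{k_i}}$, while part~(3) shows that these form strictly increasing chains $F_{2^0} \subsetneq F_{2^1} \subsetneq \cdots$ and $TF_{2^0} \subsetneq TF_{2^1} \subsetneq \cdots$. Setting $K := \max_i k_i$, the nesting of the $F$-chain lets me rewrite the union as a single set, $\bigcup_{i=1}^n F_{\ell_i} = F_{2^K}$. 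The hypothesis $x \in \bigcap_i F_{\ell_i}$ places $x$ in the smallest member $F_{2^{\min_i k_i}}$, hence a fortiori $x \in F_{2^K}$; and $x \notin TF_{\ell_i}$ for every $i$ gives $x \notin \bigcup_i TF_{2^{k_i}} = TF_{2^K}$. Thus the entire multi-element hypothesis funnels down to the single statement $x \in F_{2^K} \setminus TF_{2^K}$.

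It then remains to show that $x \notin (F_{2^K})^\circ$, which is precisely the $n=1$ instance of condition~\eqref{eq:cond(S)} applied to the non-idempotent element $(x, 2^K, x)$. As recorded in Remark~\ref{Rem:various}, this $n=1$ instance coincides with \cite[Theorem~4.10(iii)]{EP16}, and since $\Et(\SGE) = \Eu(\SGE)$ for self-similar graphs it is a consequence of effectiveness of $\gzeA$. That effectiveness is exactly Lemma~\ref{lem:katsEff}, so the desired conclusion follows and~\eqref{eq:SforSSGraphs} holds; Lemma~\ref{lem:SforSSGraphs} then delivers condition~\eqref{eq:cond(S)}.

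The genuinely substantive work has already been carried out in Lemmas~\ref{lem:katsTF} and~\ref{lem:katsEff}, so the remaining step is short. The one point I would take care over is the double bookkeeping of the nesting: I use that the $F$-chain is increasing to convert the intersection hypothesis into membership in the top set $F_{2^K}$, while simultaneously using that the $TF$-chain is increasing to convert the family of non-membership hypotheses into the single non-membership $x \notin TF_{2^K}$. Getting the direction of each inclusion right is exactly what legitimizes the reduction to the $n=1$ case, and hence to the already-established effectiveness of $\gzeA$.
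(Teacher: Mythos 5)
Your proposal is correct and takes essentially the same route as the paper: both reduce to condition~\eqref{eq:SforSSGraphs} via Lemma~\ref{lem:SforSSGraphs}, use the nested chains of Lemma~\ref{lem:katsTF} to collapse the hypotheses onto a single set $F_{2^K}$ (the paper via an explicit case analysis on how many of the $\ell_i$ are odd, you via the maximal $2$-adic valuation), and then conclude from effectiveness (Lemma~\ref{lem:katsEff} together with the results of \cite{EP16}) that $x \notin TF_{2^K} = \big(F_{2^K}\big)^\circ = \big(\bigcup_{i} F_{\ell_i}\big)^\circ$. The only differences are cosmetic: your max-valuation bookkeeping replaces the paper's three-case analysis, and you reach the key identity $TF_\ell = \mathring{F_\ell}$ through Remark~\ref{Rem:various} and \cite[Theorem~4.10]{EP16} where the paper cites \cite[Theorem~4.7]{EP16}.
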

\begin{proof}
	We use Lemma~\ref{lem:SforSSGraphs}.
	Let $\ell_1, \ell_2, \dots, \ell_n$ be a sequence of distinct integers, and let $x\in E_A^0$.
	Without loss of generality we may assume that there exists $0\leq k\leq n$ such that the $\ell_i$ are arranged in order so that $\ell_1, \dots, \ell_k$ are odd and such that for any $j = k+1, \dots n$ we have that $\ell_j = r_j 2^{m_j}$ for some $r_j\in
\Z$ and $1\leq m_{k+1}< m_{k+2} < \dots <m_{n}$ (we take the case $k=0$ to mean that none of the $\ell_i$ are odd).
	Then
	\begin{align*}
	\bigcap_{i = 1}^{n}F_{\ell_i}\setminus TF_{\ell_i} &= \bigcap_{i = 1}^{n}F_{\ell_i}\cap TF_{\ell_i}^c\\
	&= \left(F_{\ell_1}\cap\cdots\cap F_{\ell_n}\right)\cap\left( TF_{\ell_1}^c\cap\cdots\cap TF_{\ell_n}^c\right)\\
	&= F_{\ell_1}\cap TF_{\ell_n}^c& \text{by Lemma~\ref{lem:katsTF}(3)}\\
	&= F_{\ell_1}\setminus TF_{\ell_{n}}\\
	&=\left\{\begin{array}{ll}
	F_1\setminus TF_1&\text{if }k=n\\
	F_1\setminus TF_{2^{m_n}}&\text{if }1<k<n\\
	F_{2^{m_{k+1}}}\setminus TF_{2^{m_n}}&\text{if }k=0.
	\end{array}\right.
	\end{align*}
	On the other hand, we have
	\[
		\left(\bigcup_{i=1}^n F_{\ell_i}\right)^\circ =
	\begin{cases}\mathring{F_1}&\text{if }k=n\\
	\mathring{F_{2^{m_n}}}&\text{if }1<k<n\\
	\mathring{F_{2^{m_n}}}&\text{if }k=0.
	\end{cases}
	\]
	By Lemma~\ref{lem:katsEff} $\gzeA$ is effective, so \cite[Definition 4.1]{EP16} and \cite[Theorem 4.7]{EP16} imply that $TF_\ell = \mathring{F_\ell}$ for every $\ell\in\Z$.
	
	If $k=n$, we have
	\begin{align*}
	\xi\in \bigcap_{i = 1}^{n}F_{\ell_i}\setminus TF_{\ell_i} = F_1\setminus TF_1 \implies \xi\notin TF_1 = \mathring{F_1} \implies \xi\notin 	\left(\bigcup_{i=1}^n F_{\ell_i}\right)^\circ.
	\end{align*}
	The other two cases are similar.
	Thus the conditions of Lemma~\ref{lem:SforSSGraphs} are satisfied and so we are done.
	
\end{proof}

\begin{thm}\label{thm:katsSimple}
	Let $(\Z, E_A,\vp)$ be the Katsura triple associated to the matrices \eqref{eq:matrices}, and let $\gzeA$ be the associated groupoid.
	Then
	\begin{enumerate}
		\item $\mathcal{O}_{\Z,E_A} = C^*(\gzeA)$ is simple and
		\item for any field $\KK$, the algebra $A_\KK(\gzeA)$ is simple.
	\end{enumerate}
\end{thm}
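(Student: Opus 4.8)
The plan is to obtain Theorem~\ref{thm:katsSimple} as an immediate application of the general simplicity result Theorem~\ref{thm:ssEsimple} to the Katsura triple $(\Z, E_A, \vp)$, once its two hypotheses have been checked. Those hypotheses are that the associated inverse semigroup $\mathcal{S}_{E_A,\Z}$ satisfies condition~\eqref{eq:cond(S)}, and that the groupoid $\gzeA$ is minimal.

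First I would verify the hypotheses by invoking the preceding lemmas. Lemma~\ref{lem:kats(S)} shows that $\mathcal{S}_{E_A,\Z}$ satisfies condition~\eqref{eq:cond(S)}, and Lemma~\ref{lem:katsMinNH} shows that $\gzeA$ is minimal; so both hypotheses of Theorem~\ref{thm:ssEsimple} hold. Statement~(2) then follows directly from Theorem~\ref{thm:ssEsimple}(3), which yields simplicity of $A_\KK(\gzeA)$ for every field $\KK$. For statement~(1), I would note that $\Z$ is abelian, hence amenable, so Theorem~\ref{thm:ssEsimple}(2) applies and gives that $\OGE = \mathcal{O}_{\Z,E_A}$ is simple; the identification $\mathcal{O}_{\Z,E_A} \cong \cs(\gzeA)$ is the isomorphism of \cite[Theorem 6.3 \& Corollary 6.4]{EP17}, and amenability of $\Z$ ensures via \cite[Corollary 10.16]{EP17} that $\cs(\gzeA) = \cs_r(\gzeA)$, so this simple algebra is simultaneously the full and reduced groupoid $\cs$-algebra, as the statement records.

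Rather than a genuine obstacle, the point worth emphasizing is that simplicity survives even though $\gzeA$ is non-Hausdorff (Lemma~\ref{lem:katsMinNH}). The reason is that condition~\eqref{eq:cond(S)}, combined with $\Et(\mathcal{S}_{E_A,\Z}) = \Eu(\mathcal{S}_{E_A,\Z})$ for this standard action on a path space, forces every compact open subset of $\gzeA$ to be regular open (Lemma~\ref{Lem: (E) => regular}), which rules out the nonzero singular functions that form the only obstruction---beyond minimality and effectiveness---in the non-Hausdorff simplicity criteria of Theorem~\ref{thm:simple} and Theorem~\ref{thm:c*simple}. Since all the substantive verification is already contained in Lemmas~\ref{lem:katsMinNH}--\ref{lem:kats(S)}, no further computation is needed to assemble the result.
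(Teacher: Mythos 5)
Your proposal is correct and follows essentially the same route as the paper: the paper's proof is a one-line citation of Theorem~\ref{thm:ssEsimple} together with Lemma~\ref{lem:kats(S)}, Lemma~\ref{lem:katsMinNH}, and Lemma~\ref{lem:katsEff}, and you verify exactly the same hypotheses (condition~\eqref{eq:cond(S)} and minimality) before invoking that theorem, with amenability of $\Z$ handling part~(1). The only cosmetic difference is that the paper also cites Lemma~\ref{lem:katsEff} explicitly, but effectiveness is already subsumed: it is used inside the proof of Lemma~\ref{lem:kats(S)} and is deduced from condition~\eqref{eq:cond(S)} within the proof of Theorem~\ref{thm:ssEsimple} itself.
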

\begin{proof}
This follows from Theorem~\ref{thm:ssEsimple}, Lemma~\ref{lem:kats(S)}, Lemma~\ref{lem:katsMinNH}, and Lemma~\ref{lem:katsEff}.
\end{proof}

\subsection{Algebras of self-similar actions} \label{sec:ssg}

Let $X$ be a finite set with more than one element, let $G$ be a group, and let $X^*$ denote the set of all words in elements of $X$, including an empty word $\ew$.
Let $X^\omega$ denote the Cantor set of one-sided infinite words in $X$, with the product topology of the discrete topology on $X$.
Recall that the cylinder sets $C(\alpha) = \{\alpha x: x\in X^\omega\}$ form a clopen basis for the topology on $X^\omega$ as $\alpha$ ranges over $X^*$.

Suppose that we have a faithful length-preserving action of $G$ on $X^*$, with $(g, \alpha)\mapsto g\cdot \alpha$, such that for all $g\in G$, $x\in X$ there exists a unique element of $G$, denoted $\left.g\right|_x$, such that for all $\alpha\in X^*$
\[
g(x\alpha) = (g\cdot x)(\left.g\right|_x\cdot \alpha).
\]
In this case, the pair $(G,X)$ is called a {\em self-similar action}. The map $G\times X\to G$, $(g, x)\mapsto \left.g\right|_x$ is called the {\em restriction} and extends to $G\times X^*$ via the formula
\[
\left.g\right|_{\alpha_1\cdots \alpha_n} = \left.g\right|_{\alpha_1}\left.\right|_{\alpha_2}\cdots \left.\right|_{\alpha_n}
\]
and this restriction has the property that for $\alpha, \beta\in X^*$, we have
\[
g(\alpha\beta) = (g\cdot \alpha)(\left.g\right|_{\alpha}\cdot \beta).
\]
The action of $G$ on $X^*$ extends to an action of $G$ on $X^\omega$ given by
\[
g\cdot (x_1x_2x_3\dots) = (g\cdot x_1)(\left.g\right|_{x_1}\cdot x_2)(\left.g\right|_{x_1x_2}\cdot x_3)\cdots .
\]
Notice that if $R_{\vert X\vert}$ denotes the graph with a single vertex and $\vert X\vert$ edges, and $\varphi(g,\alpha):=\left.g\right|_{\alpha}$, then the self-similar group $(G, X)$ is equivalent to the self-similar graph triple $(G , R_{\vert X\vert},
\varphi)$.

In \cite{Nek09}, Nekrashevych associates a C*-algebra to $(G, X)$, denoted $\mathcal{O}_{G, X}$, which is the universal C*-algebra generated by a set of isometries $\{s_x\}_{x\in X}$ and a unitary representation $\{u_g\}_{g\in G}$ satisfying
\begin{enumerate}
	\item[(i)]$s_x^*s_y = 0$ if $x\neq y$,
	\item[(ii)]$\sum_{x\in X}s_xs_x^* = 1$ and
	\item[(iii)]$u_gs_x = s_{g\cdot x}u_{\left.g\right|_x}$.
\end{enumerate}
The Nekrashevych $\cs$-algebra $\mathcal{O}_{G, X}$ turns out to be the self-similar graph $\cs$-algebra $\mathcal{O}_{G, R_{\vert X\vert}}$ associated to the triple $(G , R_{\vert X\vert}, \varphi)$. Hence, the results in previous subsection apply here.
Nevertheless, we will recall the specific construction of the groupoid, as it will be useful for understanding the example in next subsection.

As before, one can express $\mathcal{O}_{G,X}$ as the tight C*-algebra of an inverse semigroup. Let
\[
S_{G, X} = \{ (\alpha, g, \beta): \alpha,\beta\in X^*, g\in G\}\cup\{0\}.
\]
This set becomes an inverse semigroup when given the operation
\[
(\alpha, g, \beta)(\gamma, h, \nu) = \begin{cases}(\alpha (g\cdot\varepsilon), \left.g\right|_{\varepsilon}h, \nu) &\text{if }\gamma = \beta\varepsilon,\\ (\alpha, g(\left.h^{-1}\right|_{\varepsilon})^{-1}, \nu (h^{-1}\cdot\varepsilon)) & \text{if } \beta =
\gamma\varepsilon,\\ 0 &\text{otherwise}\end{cases}
\]
with
\[
(\alpha, g, \beta)^* = (\beta, g^{-1}, \alpha).
\]
The set of idempotents is given by
\[
E(\sgx) = \{(\alpha, 1_G, \alpha): \alpha \in X^*\}\cup\{0\}.
\]
The tight spectrum of $E(\sgx)$ is homeomorphic to $X^\omega$, and the standard action of $\sgx$ on its tight spectrum is realized as follows: for $\alpha, \beta\in X^*$ and $g\in G$, let
\[
\theta_{(\alpha, g, \beta)}: C(\beta) \to C(\alpha)
\]
\[
\theta_{(\alpha, g, \beta)}(\beta w) = \alpha(g\cdot w)
\]
for every $w\in X^\omega$.

We use the notation $\ggx:= \gt(\sgx)$. Then $\ggx$ is ample and minimal, and since the action of $G$ on $X^*$ is faithful, then $\ggx$ is effective, see \cite[Section 17]{EP17}.
The C*-algebra is isomorphic to $\cs(\ggx)$, see \cite[Example~3.3]{EP17} and \cite[Corollary~6.4]{EP17}.
We keep the notation in \eqref{eq:FWdef}, \eqref{eq:SFWdef}, \eqref{eq:MSFWdef} in force (though we call their elements {\em words} rather than paths) and note that \eqref{eq:SSGHausdorff} still characterizes when $\ggx$ is Hausdorff.

We record the translation of Lemma~\ref{lem:inclosure} to this context.

\begin{lemma}\label{lem:ssgcalc}
	Let $(G, X)$ be a self-similar action, let $(\alpha, g, \beta)\in \sgx$, let $\eta\in X^*$, and let $U = \Theta((\alpha, g, \beta), C(\beta\eta))$. Let $z = [(\gamma, h, \delta), \delta w]$ for some $\gamma, \delta\in X^*$, $w\in X^\omega$, and $h\in G$
with $|\delta|\geq |\beta\eta|$. Then
	
	\begin{enumerate}
		\item $z\in \overline U$ if and only if the following conditions hold:
		\begin{enumerate}
			\item $\delta = \beta\eta\eta'$ and $\gamma = \alpha(g\cdot(\eta\eta'))$ for some $\eta'\in X^*$, and
			\item every prefix of $w$ can be extended to a strongly fixed word for $h^{-1}\left. g\right|_{\eta\eta'}$.
		\end{enumerate}
		\item In the case of (1), $z\notin U$ if and only if no prefix of $w$ is strongly fixed by $h^{-1}\left. g\right|_{\eta\eta'}$.
	\end{enumerate}

\end{lemma}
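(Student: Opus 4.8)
The strategy is to derive the lemma directly from Lemma~\ref{lem:inclosure}, applied to the inverse semigroup $S = \sgx$ acting on its tight spectrum $\Et(\sgx) = X^\omega$. Concretely, I would take $s = (\alpha, g, \beta)$, $t = (\gamma, h, \delta)$, the idempotent $e = (\beta\eta, 1_G, \beta\eta)$, the finite set $Y = \varnothing$, and $\xi$ the ultrafilter corresponding to the infinite word $\delta w$. Since $C(\beta\eta) = D^\theta_{(\beta\eta, 1_G, \beta\eta)} = U(\{e\}, \varnothing)$ and $e \leqslant s^*s = (\beta, 1_G, \beta)$, the bisection $U$ is exactly $\Theta(s, U(\{e\}, \varnothing))$, so Lemma~\ref{lem:inclosure} applies. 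The first step is to unwind $\xi \in U(\{e\}, \varnothing)$: this says $\beta\eta$ is a prefix of $\delta w$, and since $|\delta| \geq |\beta\eta|$ it is equivalent to $\beta\eta$ being a prefix of $\delta$, that is $\delta = \beta\eta\eta'$ for some $\eta' \in X^*$. This yields the first half of condition~(a).

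The core computation is to determine, for an idempotent $k = (\kappa, 1_G, \kappa)$ with $\kappa$ extending $\delta$, exactly when $sk = tk$. Writing $\kappa = \delta\varepsilon' = \beta(\eta\eta'\varepsilon')$, the multiplication rules give $sk = (\alpha(g\cdot\eta\eta'\varepsilon'), \left.g\right|_{\eta\eta'\varepsilon'}, \kappa)$ and $tk = (\gamma(h\cdot\varepsilon'), \left.h\right|_{\varepsilon'}, \kappa)$. Using $g\cdot(\eta\eta'\varepsilon') = (g\cdot\eta\eta')(\left.g\right|_{\eta\eta'}\cdot\varepsilon')$, equality of the first coordinates forces $\gamma = \alpha(g\cdot\eta\eta')$ (comparing the initial segments of length $|\alpha|+|\eta\eta'|$) and then reduces to $\left.g\right|_{\eta\eta'}\cdot\varepsilon' = h\cdot\varepsilon'$; setting $q := h^{-1}\left.g\right|_{\eta\eta'}$ this is $q\cdot\varepsilon' = \varepsilon'$. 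For the second coordinate I would apply the cocycle identities $\left.(ab)\right|_c = \left.a\right|_{b\cdot c}\left.b\right|_c$ and $\left.h^{-1}\right|_{h\cdot\varepsilon'} = (\left.h\right|_{\varepsilon'})^{-1}$ together with $\left.g\right|_{\eta\eta'\varepsilon'} = \left.(\left.g\right|_{\eta\eta'})\right|_{\varepsilon'}$ to rewrite $\left.q\right|_{\varepsilon'} = (\left.h\right|_{\varepsilon'})^{-1}\left.(\left.g\right|_{\eta\eta'})\right|_{\varepsilon'}$, so that equality of the second coordinates becomes $\left.q\right|_{\varepsilon'} = 1_G$. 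Hence, once $\gamma = \alpha(g\cdot\eta\eta')$ holds, $sk = tk$ if and only if $\varepsilon'$ is strongly fixed by $q$. This group-cocycle bookkeeping is the step I expect to be the main obstacle.

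With this identity established, I would translate condition~(1)(b) of Lemma~\ref{lem:inclosure}. The elements $f \in \xi$ that extend $e$ are $f = (\delta w_{[1,m]}, 1_G, \delta w_{[1,m]})$ for $m \geq 0$, with $ef = f$; finding $0 \neq k \leqslant f$ with $sk = tk$ amounts, by the previous paragraph, to finding $\varepsilon' = w_{[1,m]}\lambda$ that is strongly fixed by $q$. Ranging over all $m$, this is possible exactly when every prefix of $w$ extends to a strongly fixed word for $q$, which is condition~(b), while the appearance of a nonzero such $k$ forces $\gamma = \alpha(g\cdot\eta\eta')$, completing condition~(a). The remaining requirement $ky = 0$ for $y \in Z$ is handled using finiteness of $Z$: choosing $m \geq \max_{\nu \in Z}|\nu|$ makes $\kappa = \delta\varepsilon'$ agree with $\delta w$ through its first $|\delta|+m$ letters, so $\kappa$ is incomparable with each $\nu \in Z$ (every such $\nu$, not being a prefix of $\delta w$, diverges from it within that range). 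Thus~(1) follows.

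Finally, for~(2) I would invoke Lemma~\ref{lem:inclosure}(2), which says $z \notin U$ if and only if every admissible $k$ satisfies $k \notin \xi$. Here $k = (\delta\varepsilon', 1_G, \delta\varepsilon')$ lies in $\xi$ precisely when $\varepsilon'$ is a prefix of $w$, and the admissible $\varepsilon'$ are exactly the words strongly fixed by $q$. Therefore $z \notin U$ if and only if no prefix of $w$ is strongly fixed by $q = h^{-1}\left.g\right|_{\eta\eta'}$, which is the asserted statement.
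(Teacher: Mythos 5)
Your proposal is correct and takes exactly the paper's approach: the paper's entire proof of this lemma is ``This is direct from Lemma~\ref{lem:inclosure} and is left to the reader,'' and your argument is precisely that specialization, with the choices $s = (\alpha,g,\beta)$, $t = (\gamma,h,\delta)$, $e = (\beta\eta,1_G,\beta\eta)$, $Y = \varnothing$, and the cocycle computation showing $sk = tk$ if and only if $\gamma = \alpha(g\cdot\eta\eta')$ and $\varepsilon'$ is strongly fixed by $h^{-1}\left.g\right|_{\eta\eta'}$ carried out correctly. In effect you have supplied the details the paper chose to omit.
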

\begin{proof}
	This is direct from Lemma~\ref{lem:inclosure} and is left to the reader.
\end{proof}

We note that Lemma~\ref{lem:ssgcalc}(1) implies that if $[(\gamma, h, \delta), \delta w]\in \overline U$, then $w\in X^\omega$ is fixed by $h^{-1}\left. g\right|_{\eta\eta'}$ since every prefix of $w$ extends to a strongly fixed word for $h^{-1}\left.
g\right|_{\eta\eta'}$.

\begin{lemma}\label{lem:basisRO}
	Let $(G, X)$ be a self-similar action.
	If $(G, X)$ is faithful, then for all $(\alpha, g, \beta)\in \sgx$ and all $\eta\in X^*$, the compact open bisection $\Theta((\alpha, g, \beta), C(\beta\eta))$ is regular open.
\end{lemma}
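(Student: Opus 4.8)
The plan is to treat this as a direct instance of Lemma~\ref{lem:bisections reg open}, which already shows that in an effective groupoid every compact open subset of an open bisection is regular open. Viewed this way, the only work is to verify the two hypotheses of that lemma in the present setting: that $\ggx$ is effective, and that $\Theta((\alpha, g, \beta), C(\beta\eta))$ is a compact open bisection of $\ggx$.

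First I would invoke effectiveness. As recalled immediately before the statement, faithfulness of the action of $G$ on $X^*$ guarantees that $\ggx$ is effective (see \cite[Section~17]{EP17}). Second, I would identify $\Theta((\alpha, g, \beta), C(\beta\eta))$ as one of the basic compact open bisections of $\ggx$. Writing $s = (\alpha, g, \beta)$, one has $s^*s = (\beta, 1_G, \beta)$, whose associated cylinder is $C(\beta)$, and $C(\beta\eta) \subseteq C(\beta)$ since $\beta\eta$ extends $\beta$. Thus $\Theta((\alpha, g, \beta), C(\beta\eta))$ is precisely of the form listed earlier as a basis of compact open bisections for $\ggx$.

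With both hypotheses in hand the conclusion is immediate: applying Lemma~\ref{lem:bisections reg open} with $B = \Theta((\alpha, g, \beta), C(\beta\eta))$ and $K = B$ (a compact subset of the open bisection $B$, since $B$ is compact open), one obtains that $K^\circ = B$ is regular open. Alternatively, since the self-similar action $(G,X)$ is equivalent to the self-similar graph triple $(G, R_{|X|}, \varphi)$, so that $\ggx = \CG$ for $E = R_{|X|}$, one may quote Lemma~\ref{lem:Effec=> basic reg SGE} verbatim once effectiveness is known; this lemma is essentially the translation of that one into self-similar-action notation.

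I expect no genuine obstacle here: all of the topological content is absorbed into Lemma~\ref{lem:bisections reg open}, whose proof transports regularity of $K^\circ$ along the homeomorphism $T_B$ to a statement about relatively closed subsets of the Hausdorff unit space, which is then settled by Corollary~\ref{cor:closed unit set}. The only point requiring a moment's care is the bookkeeping confirming $C(\beta\eta) \subseteq C(\beta)$; the degenerate case $s = 0$, where $\Theta$ is empty, is trivially regular open.
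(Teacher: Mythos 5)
Your proof is correct and follows essentially the same route as the paper: the paper also deduces effectiveness of $\ggx$ from faithfulness and then cites Lemma~\ref{lem:Effective=> basic reg}, which is itself just Lemma~\ref{lem:bisections reg open} applied to the basic compact open bisections. Unwinding that reduction explicitly, as you do, adds nothing new but is perfectly valid.
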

\begin{proof}
	If $(G,X)$ is faithful, $\ggx$ is effective. Hence this follows from Lemma~\ref{lem:Effective=> basic reg}.
\end{proof}

As mentioned before, Lemma~\ref{lem:basisRO} is not enough to allow us to conclude that every compact open set is regular open, see Example~\ref{sec:grig}.
The following condition is enough to guarantee that all compact open sets are regular open.

\begin{definition}\label{def:omegafaithful}
	We say that a self-similar action $(G, X)$ is {\em $\omega$-faithful} if for every $F\fs G$ and every $x\in X^\omega$ such that every prefix of $x$ is in $\cap_{f\in F}FW_f\setminus SFW_f$, there exists $n\in \N$ such that for every prefix $\mu$ of $x$
with $|\mu|\geq n$ there exists a word $\xi$ such that $\left.f\right|_\mu\cdot \xi \neq \xi$ for all $f\in F$.
\end{definition}

\begin{lemma}\label{lem:condF}Let $(G, X)$ be an $\omega$-faithful self-similar action.
Then $\sgx$ satisfies condition \eqref{eq:cond(S)} and hence every compact open subset of $\ggx$ is regular open.
\end{lemma}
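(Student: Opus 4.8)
The plan is to verify hypothesis \eqref{eq:SforSSGraphs} of Lemma~\ref{lem:SforSSGraphs} and then invoke that lemma together with Lemma~\ref{Lem: (E) => regular}. Since the graph $R_{|X|}$ has a single vertex $\ew$, the only fixed-point sets that arise are $F_{(\ew, g, \ew)}$ and $TF_{(\ew, g, \ew)}$ for $g \in G$, so first I would record their translation into the language of the section. Because $\theta_{(\ew,g,\ew)}(x) = g\cdot x$, we have $F_{(\ew,g,\ew)} = \{x \in X^\omega : g\cdot x = x\}$; and a short computation in $\sgx$ shows that the nonzero idempotents below $(\ew,g,\ew)$ are exactly the $(\mu, 1_G, \mu)$ with $\mu \in SFW_g$, so that $TF_{(\ew,g,\ew)}$ is the set of $x \in X^\omega$ having a prefix in $SFW_g$.

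Next I would fix a finite set $\{g_1, \dots, g_n\} \subseteq G \setminus \{1_G\}$ and a point $x \in \bigcap_{i=1}^n (F_{(\ew,g_i,\ew)} \setminus TF_{(\ew,g_i,\ew)})$, with the goal of showing $x \notin \big(\bigcup_{i=1}^n F_{(\ew,g_i,\ew)}\big)^\circ$. The key preliminary observation is that each prefix $\mu$ of $x$ lies in $\bigcap_{i=1}^n (FW_{g_i} \setminus SFW_{g_i})$: length-preservation of the action shows that $g_i \cdot x = x$ forces $g_i\cdot\mu = \mu$, so $\mu \in FW_{g_i}$, while $x \notin TF_{(\ew,g_i,\ew)}$ says that no prefix of $x$, in particular not $\mu$, lies in $SFW_{g_i}$. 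This is precisely the hypothesis required to apply $\omega$-faithfulness (Definition~\ref{def:omegafaithful}) to $F = \{g_1, \dots, g_n\}$ and $x$, producing an $n_0 \in \N$ such that for every prefix $\mu$ of $x$ with $|\mu| \geq n_0$ there is a word $\xi = \xi_\mu$ with $\left.g_i\right|_\mu \cdot \xi \neq \xi$ for all $i$.

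With this in hand I would argue by contradiction. If $x \in \big(\bigcup_i F_{(\ew,g_i,\ew)}\big)^\circ$, then since the cylinders $C(\mu)$ (for $\mu$ a prefix of $x$) form a neighbourhood base at $x$, there is a prefix $\mu$ of $x$, which we may take with $|\mu| \geq n_0$, such that $C(\mu) \subseteq \bigcup_i F_{(\ew,g_i,\ew)}$. Choosing $\xi = \xi_\mu$ and any $w \in X^\omega$, set $y = \mu\xi w \in C(\mu)$. Using $g_i\cdot\mu = \mu$ and $g_i(\mu\beta) = (g_i\cdot\mu)(\left.g_i\right|_\mu\cdot\beta)$, the word $g_i \cdot y$ agrees with $\mu$ on its first $|\mu|$ letters and then continues with $\left.g_i\right|_\mu\cdot\xi \neq \xi$, so $g_i\cdot y \neq y$ for every $i$. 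Thus $y \in C(\mu)$ is fixed by no $g_i$, contradicting $C(\mu) \subseteq \bigcup_i F_{(\ew,g_i,\ew)}$. This proves \eqref{eq:SforSSGraphs}, so $\sgx$ satisfies \eqref{eq:cond(S)} by Lemma~\ref{lem:SforSSGraphs}. Finally, since the tight spectrum of $E(\sgx)$ is $X^\omega$ and $\Et(\sgx) = \Eu(\sgx)$, Lemma~\ref{Lem: (E) => regular} yields that every compact open subset of $\ggx$ is regular open.

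The main obstacle I anticipate is the bookkeeping in the two translations, namely identifying $TF_{(\ew,g,\ew)}$ correctly through the idempotents below $(\ew,g,\ew)$, and then verifying that the single infinite word $y = \mu\xi w$ genuinely escapes \emph{every} $F_{(\ew,g_i,\ew)}$ at once. The latter is exactly where $\omega$-faithfulness does its work: it must supply one $\xi$ moved simultaneously by all the restrictions $\left.g_i\right|_\mu$, which is what defeats the whole union $\bigcup_i F_{(\ew,g_i,\ew)}$ rather than merely one set at a time, and this is the feature that distinguishes it from the weaker basic regularity of Lemma~\ref{lem:basisRO}.
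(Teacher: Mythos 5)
Your proposal is correct and follows essentially the same route as the paper's proof: both reduce to verifying hypothesis \eqref{eq:SforSSGraphs} of Lemma~\ref{lem:SforSSGraphs}, observe that every prefix of $x$ lies in $\bigcap_i (FW_{g_i}\setminus SFW_{g_i})$, invoke $\omega$-faithfulness to produce a word $\xi$ moved simultaneously by all the restrictions $\left.g_i\right|_\mu$, and conclude that $\mu\xi w$ escapes every $F_{g_i}$, so $x$ is not interior to the union. Your version is simply more explicit than the paper's (identifying $TF_{(\ew,g,\ew)}$ via the idempotents below $(\ew,g,\ew)$, and spelling out the prefix computation showing $g_i\cdot(\mu\xi w)\neq \mu\xi w$), but the underlying argument is identical.
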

\begin{proof}
	We use Lemma~\ref{lem:SforSSGraphs}.
	Suppose that we have $F\fs G\setminus \{1_G\}$ such that $f\cdot x = x$ for all $f\in F$ but that $x$ is not trivially fixed by any $f\in F$.
	Then every prefix of $x$ must be in $\cap FW_g\setminus SFW_g$.
	Let $n$ be the element of $\N$ guaranteed to exist by the definition of $\omega$-faithful, let $\mu$ be a prefix of $x$ longer than $n$, and consider the corresponding cylinder set $C(\mu)$.
	We show that $C(\mu)$ contains an element not fixed by any $f\in F$.
	
	Let $\xi$ be a word such that $\left.f\right|_\mu\cdot \xi \neq \xi$ for all $f\in F$.
	Then for any $y\in X^\omega$, $\mu\xi y\in C(\mu)$ is not fixed by any element of $F$, so $x$ is not in the interior of $\cup F_f$.
	Hence by Lemma~\ref{lem:SforSSGraphs}, $\sgx$ satisfies \eqref{eq:cond(S)}.

	\end{proof}
We can now state the culmination of our results in the case of self-similar actions.
\begin{thm}\label{thm:ssgsimple}
	Let $(G, X)$ be an $\omega$-faithful self-similar action. Then
	\begin{enumerate}
		\item $\cs_r(\ggx)$ is simple.
		\item If $G$ is amenable, then $\ogx$ is simple.
		\item For any field $\KK$, the algebra $A_\KK(\ggx)$ is simple.
	\end{enumerate}
\end{thm}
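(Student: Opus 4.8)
The plan is to recognize this theorem as a direct specialization of the self-similar graph result Theorem~\ref{thm:ssEsimple} to the single-vertex graph. As recorded at the start of this subsection, a self-similar action $(G,X)$ is equivalent to the self-similar graph triple $(G, R_{|X|}, \varphi)$, where $R_{|X|}$ is the graph with one vertex and $|X|$ edges and $\varphi(g,\alpha) = \left.g\right|_\alpha$; under this equivalence $\ggx$ coincides with $\CG$ and $\ogx$ with $\OGE$ for that triple, and the complex Steinberg algebra and the reduced $\cs$-algebra are correspondingly identified. Thus it suffices to verify the two hypotheses of Theorem~\ref{thm:ssEsimple}, namely that $\sgx$ satisfies condition~\eqref{eq:cond(S)} and that $\ggx$ is minimal.

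First I would note that minimality of $\ggx$ holds for every self-similar action, as recorded earlier in this subsection via \cite[Section 17]{EP17} (consistently with the remark after Theorem~\ref{thm:ssEsimple}, since $R_{|X|}$ is transitive with a single vertex fixed by $G$). Second, and this is where the hypothesis of $\omega$-faithfulness enters, I would invoke Lemma~\ref{lem:condF}: since $(G,X)$ is $\omega$-faithful, $\sgx$ satisfies condition~\eqref{eq:cond(S)}. With both hypotheses in hand, Theorem~\ref{thm:ssEsimple} applies and yields simplicity of $\cs_r(\ggx)$, of $\ogx$ when $G$ is amenable, and of $A_\KK(\ggx)$ for every field $\KK$, which are exactly assertions~(1), (2) and (3).

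The substance of the argument has in fact already been distilled into the preceding lemmas, so there is no serious obstacle left at the level of this theorem itself. The only point that warrants a moment's care is the translation of $\omega$-faithfulness into the fixed-point condition~\eqref{eq:SforSSGraphs} of Lemma~\ref{lem:SforSSGraphs}---producing, for a sufficiently long prefix $\mu$ of a point that is fixed but not trivially fixed, a word $\xi$ moved by every $\left.f\right|_\mu$---but this is precisely what Lemma~\ref{lem:condF} supplies, and the equality $\Et(\sgx) = \Eu(\sgx)$ needed to pass from condition~\eqref{eq:cond(S)} to regularity of every compact open set (hence to the hypotheses of Theorem~\ref{thm:simple} and Theorem~\ref{thm:c*simple} inside the proof of Theorem~\ref{thm:ssEsimple}) holds because the tight spectrum is the Cantor set $X^\omega$. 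Hence the proof reduces to citing Lemma~\ref{lem:condF} and Theorem~\ref{thm:ssEsimple}.
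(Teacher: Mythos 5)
Your proposal is correct and matches the paper's proof in substance: both arguments rest on Lemma~\ref{lem:condF} (which converts $\omega$-faithfulness into condition~\eqref{eq:cond(S)} and hence regular openness of all compact open sets), on minimality of $\ggx$ holding for every self-similar action, and on the general simplicity theorems. The only cosmetic difference is that the paper applies Theorem~\ref{thm:simple} and Theorem~\ref{thm:c*simple} (with \cite[Corollary 10.16]{EP17} for amenability) directly, deducing effectiveness from faithfulness of the action, whereas you factor through Theorem~\ref{thm:ssEsimple} via the single-vertex graph $R_{|X|}$; since the proof of that theorem runs exactly these same steps, the two routes coincide.
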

\begin{proof}
	Faithfulness implies that $\ggx$ is effective.
	Regardless of what $G$ is or its action, $\ggx$ is always minimal (see for example \cite[Lemma 4.2]{Sta15}).
	By Lemma~\ref{lem:condF}, every compact open subset of $\ggx$ is regular open.
	Now (3) follows from Theorem~\ref{thm:simple}, and (1) and (2) follow from Theorem~\ref{thm:c*simple} together with \cite[Corollary 10.16]{EP17}.
\end{proof}

\subsection{The Grigorchuk group}\label{sec:grig}
\label{grig}

Let $X = \{0,1\}$, and let $a, b, c, d$ be the length-preserving bijections of $X^*$ determined by the formulas

$$
\begin{array}{lll}
a\cdot(0w) = 1w && c\cdot (0w) = 0(a\cdot w)\\
a\cdot(1w) = 0w && c\cdot (1w) = 1(d\cdot w)\\
\\
b\cdot(0w) = 0(a\cdot w) && d\cdot (0w) = 0w\\
b\cdot(1w) = 1(c\cdot w) && d\cdot (1w) = 1(b\cdot w)
\end{array}
$$
for every $w\in X^*$. The group $G$ generated by the set of bijections $\{a,b,c,d\}$ is called the {\em Grigorchuk group}, and was the first example of an amenable group with intermediate word growth \cite{Gr80, Gr84}.
It is worth noting that we have the relations $a^2 = b^2 = c^2 = d^2 = e$ (here we write the identity element of $G$ as $e$), $bc = d = cb$, $db=c = bd$, and $cd= b = dc$.
We also have that $(G, X)$ is a faithful self-similar action with restrictions given by
$$
\begin{array}{lll}
\left.a\right|_0 = e&& \left.c\right|_0 = a\\
\left.a\right|_1 = e && \left.c\right|_1 = d\\
\\
\left.b\right|_0 = a && \left.d\right|_0 = e\\
\left.b\right|_1 = c && \left.d\right|_1 = b.
\end{array}
$$

Each of the elements $b, c,$ and $d$ have infinitely many minimally fixed words. Indeed, some short calculations
\footnote{These calculations were aided by following the presentation of the Grigorchuk group and its nucleus given in \cite[Section 2.4]{LRRW14}.}
show that for all $n\in \N$,

\begin{eqnarray}
b\cdot(1^{3n+2}0) = 1^{3n+2}0 && \left.b\right|_{1^{3n+2}0} = e\nonumber\\
c\cdot(1^{3n+1}0) = 1^{3n+1}0 && \left.c\right|_{1^{3n+1}0} = e\label{eq:restrictions}\\
d\cdot(1^{3n}0) = 1^{3n}0 && \left.d\right|_{1^{3n}0} = e\nonumber
\end{eqnarray}
and that none of the prefixes of the words in question are strongly fixed. Hence $\ggx$ is not Hausdorff.

In what follows, we will make use of the fact that $(G,X)$ is {\em contracting with nucleus} $\{e,a,b,c,d\}$, which means that for any $g\in G$ there exists $n\geq 0$ such that $\left.g\right|_v\in \{e, a, b, c, d\}$ for all $v\in X^*$ with $|v|\geq n$, see
\cite[Proposition 2.7]{LRRW14}.

The remainder of this paper is devoted to proving the following theorem.

\begin{thm}\label{thm:grigsimple}
	Let $G$ be the Grigorchuk group, let $(G,X)$ be its self-similar action, and let $\ggx$ be the associated groupoid. Then
	\begin{enumerate}
		\item\label{it:grigthm1} For any field $\KK$ of characteristic zero, $A_\KK(\ggx)$ is simple, and
		\item\label{it:grigthm2} $\ogx$ is simple.
	\end{enumerate}
\end{thm}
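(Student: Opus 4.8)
The plan is to verify, for both algebras, the simplicity criteria established earlier: Theorem~\ref{thm:simple} for the Steinberg algebra and Theorem~\ref{thm:c*simple} for the \cs-algebra. Minimality and effectiveness come essentially for free: $\ggx$ is always minimal (exactly as in the proof of Theorem~\ref{thm:ssgsimple}, e.g.\ \cite[Lemma~4.2]{Sta15}), and since $(G,X)$ is faithful, $\ggx$ is effective. Moreover $G$ is amenable, so $\ggx$ is amenable and $\cs(\ggx)=\cs_r(\ggx)$ by \cite[Corollary~10.16]{EP17}; this reduces~(\ref{it:grigthm2}) to proving $\cs_r(\ggx)$ simple. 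Hence in both parts the entire content is the ``no singular elements'' hypothesis: for~(\ref{it:grigthm1}) I must show that over a field $\KK$ of characteristic zero every nonzero $f\in A_\KK(\ggx)$ has $\supp(f)$ with nonempty interior, and for~(\ref{it:grigthm2}) the analogous statement for $\supp(j(a))$, $a\in\cs_r(\ggx)$. This is precisely where characteristic zero must enter, because $\ggx$ is non-Hausdorff and (by the calculations \eqref{eq:restrictions}) does \emph{not} have all compact open sets regular open, so Corollary~\ref{cor:Stsimple} and Lemma~\ref{lem:amplecondequiv} are unavailable.

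First I would localise the possible singular support. Using Lemma~\ref{lem:ssgcalc}(2) together with \eqref{eq:restrictions}, a germ $[(\gamma,h,\delta),\delta w]$ lies in $\overline D\setminus D$ for a basic bisection $D$ exactly when every prefix of $w$ extends to, but no prefix of $w$ equals, a strongly fixed word; for the generators $b,c,d$ this forces $w=1^\infty$. Consequently (via Lemma~\ref{lem:SupportElementBoundary} in the algebraic case and Lemma~\ref{lem:singiso} in the analytic case) the support of any singular element is contained in the germs lying over the countable set $T=\{\delta 1^\infty:\delta\in X^*\}$, and over such points the relevant isotropy is the Klein four-group $V_4=\{e,b,c,d\}$ recorded in \eqref{eq:restrictions}. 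The fact that $(G,X)$ is contracting with nucleus $\{e,a,b,c,d\}$ then lets me pass to a deep enough cylinder $C(1^m)$ so that every germ appearing in $\supp(f)$ near a point of $T$ is represented by a nucleus element, reducing the analysis near each bad point to a finite, $V_4$-indexed computation.

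The heart of the matter is the following characteristic-zero obstruction at $1^\infty$. The three cylinder families $C(1^{3n}0)$, $C(1^{3n+1}0)$, $C(1^{3n+2}0)$ all accumulate at $1^\infty$, and on them exactly one of $d,c,b$ respectively fixes trivially, while the remaining two nucleus elements have coinciding germs (since $b|_0=c|_0=a$): the germ pairings are $\{e,d\},\{b,c\}$, then $\{e,c\},\{b,d\}$, then $\{e,b\},\{c,d\}$. Writing the leading local data of a putative singular $f$ as $1_{\go}+\alpha\,1_{\Theta((\ew,b,\ew),C(\ew))}+\beta\,1_{\Theta((\ew,c,\ew),C(\ew))}+\gamma\,1_{\Theta((\ew,d,\ew),C(\ew))}$ and demanding (as forced by empty interior) that $f$ vanish on each of these three families of open cylinders, the unit germs give $\alpha=\beta=\gamma=-1$ while the off-diagonal germs give $\alpha=-\beta$, $\alpha=-\gamma$, $\beta=-\gamma$. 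This system is consistent only when $1=-1$, i.e.\ only in characteristic two --- which is exactly Nekrashevych's $\mathbb{Z}_2$ singular element of \cite[Example~4.5]{Nek16} (see Corollary~\ref{cor:grigZ2notsimple}). Over characteristic zero the only solution is the zero local data (the case of vanishing $1_{\go}$-coefficient collapses to $\alpha=\beta=\gamma=0$ directly). Since the same obstruction applies after translating any point of $T$ to $1^\infty$ by a bisection, the local data vanishes at every point of $T$, and as $\supp(f)$ is contained in the germs over $T$ this forces $f=0$; with Theorem~\ref{thm:simple} this yields~(\ref{it:grigthm1}). For~(\ref{it:grigthm2}) the identical local obstruction governs $\cs_r(\ggx)$, but now elements are merely $\|\cdot\|$-limits of Steinberg functions, so I would combine Lemma~\ref{lem:singiso} and the $\varepsilon$-approximation of Lemma~\ref{lem:key'} with the semisimplicity of $\C[V_4]$ (the analytic avatar of ``$1\neq -1$'') to upgrade the finite computation to $\cs_r(\ggx)$, and then apply Theorem~\ref{thm:c*simple}(3).

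I expect the principal difficulty to be making rigorous the reduction ``general singular element $\Rightarrow$ nucleus-represented leading local data.'' A priori $f$ is a sum of characteristic functions of bisections $\Theta(s,C(\mu))$ of unbounded depth and with arbitrary group labels, so one must argue carefully, via the contracting property and Lemma~\ref{lem:ssgcalc}, that near a chosen support point the behaviour is in fact controlled by finitely many $V_4$-germs before the linear-algebra obstruction can be invoked; and in the \cs-setting one must further ensure this control survives passage to norm limits.
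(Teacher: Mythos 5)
Your proposal follows essentially the same route as the paper's proof: reduce via Theorems~\ref{thm:simple} and~\ref{thm:c*simple} to excluding singular elements, localise any singular support to germs over $\{\delta 1^\infty : \delta \in X^*\}$ via Lemma~\ref{lem:ssgcalc}, use the contracting property to reduce to Klein four-group local data at $z_e$ (the paper's Lemmas~\ref{zebcd} and~\ref{lem:Ugm}), observe that the resulting linear system --- identical to the paper's \eqref{eq:homlinearequations} and Lemma~\ref{lem:GrigInt} --- is solvable only in characteristic two, and translate arbitrary support points back to $z_e$ exactly as in Lemma~\ref{lem:grigsingular}. Your treatment of the $\cs$-case is the only sketchier point: where you invoke ``semisimplicity of $\C[V_4]$,'' the paper makes the finite-dimensional nonvanishing quantitative (Lemma~\ref{lem:LCboundedbelow}: $|f|\geq |f(z_e)|/4$ on a set with nonempty interior) so that it survives the norm-limit approximation in Lemmas~\ref{lem:cstarzeinterior} and~\ref{lem:limitinterior}, but this is precisely the estimate your finite-dimensional reduction supplies.
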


Interestingly, simplicity can fail when $\KK$ has nonzero characteristic, see Corollary~\ref{cor:grigZ2notsimple}.

We begin by observing that $\ggx$ has compact open subsets which are not regular open.
\begin{lemma}\label{lem:compactopennotregularopen}
	Let
	\[
	U = \Theta((\ew,b,\ew), X^\omega) \cup \Theta((\ew, c, \ew), X^\omega) \cup\Theta(\ew, d,\ew)), X^\omega),
	\]
	then $U$ is a compact open set which is not regular open.
\end{lemma}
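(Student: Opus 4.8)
The plan is to confirm that $U$ is compact open by the obvious argument and then to exhibit a single unit lying in $(\overline U)^\circ\setminus U$; since $U$ is open we always have $U\subseteq(\overline U)^\circ$, so producing such a unit shows $U\neq(\overline U)^\circ$, i.e.\ that $U$ is not regular open. Each $\Theta((\ew,g,\ew),X^\omega)$ for $g\in\{b,c,d\}$ is a basic compact open bisection of $\ggx$, so their union $U$ is compact and open. The candidate unit will be $1^\infty=111\cdots\in X^\omega$, regarded as the unit $[(\ew,e,\ew);1^\infty]\in\ggx^{(0)}$.

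First I would check that $1^\infty\notin U$. Because the source map sends $[(\ew,g,\ew);\xi]$ to $\xi$, the unit $1^\infty$ can lie in $\Theta((\ew,g,\ew),X^\omega)$ only if the germ $[(\ew,g,\ew);1^\infty]$ equals the unit germ $[(\ew,e,\ew);1^\infty]$, and by the defining germ relation this happens exactly when some prefix $1^k$ of $1^\infty$ is strongly fixed by $g$. Computing restrictions along $1^\infty$ shows that for each $g\in\{b,c,d\}$ the restriction $g|_{1^k}$ cycles through $\{b,c,d\}$ with period three (for instance $b|_{1},b|_{11},b|_{111}=c,d,b$), and in particular is never $e$; hence no prefix $1^k$ is strongly fixed by $b$, $c$, or $d$, so $1^\infty\notin U$.

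Next I would show that every other unit lies in $U$. Given $\xi\in X^\omega$ with $\xi\neq 1^\infty$, write the initial segment up to its first $0$ as the prefix $1^m0$. By \eqref{eq:restrictions} the word $1^m0$ is strongly fixed by one of $d,c,b$ according to $m\bmod 3$ (with $1^00=0$ strongly fixed by $d$), so the idempotent $(1^m0,e,1^m0)$ witnesses that $[(\ew,e,\ew);\xi]=[(\ew,g,\ew);\xi]$ for the corresponding $g$. Thus $\xi\in\Theta((\ew,g,\ew),X^\omega)\subseteq U$, and so $\ggx^{(0)}\setminus\{1^\infty\}\subseteq U$.

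Finally I would assemble these facts. Since $X^\omega$ has no isolated points, $1^\infty$ is a limit of units different from itself, all of which lie in $U$, so $1^\infty\in\overline U$; together with the previous step this gives $\ggx^{(0)}\subseteq\overline U$. As $\ggx^{(0)}$ is open in $\ggx$, it is contained in $(\overline U)^\circ$, and in particular $1^\infty\in(\overline U)^\circ\setminus U$. Hence $U\neq(\overline U)^\circ$ and $U$ is not regular open. I expect the main obstacle to be the bookkeeping intrinsic to the non-Hausdorff tight groupoid, namely correctly deciding when a germ $[(\ew,g,\ew);\xi]$ collapses onto a unit (equivalently, when $\xi$ has a strongly fixed prefix); for this I would rely on the germ description, on Lemma~\ref{lem:ssgcalc}, and on the explicit restriction data recorded in \eqref{eq:restrictions}.
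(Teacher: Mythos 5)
Your proof is correct and follows essentially the same route as the paper: both single out $z_e=[(\ew,e,\ew);1^\infty]$ as a point of $\overline U\setminus U$, use the mod-3 case analysis from \eqref{eq:restrictions} to show that every unit other than $z_e$ lies in $U$, and conclude that the open unit space $\ggx^{(0)}$ is contained in $\overline U$, so $z_e$ is an interior point of $\overline U$ not belonging to $U$. The only (harmless) difference is that you deduce $z_e\in\overline U$ for free from the density of the other units in $X^\omega$, whereas the paper gets it from the closure criterion (every prefix of $1^\infty$ extends to a strongly fixed word for $b$, $c$, $d$) in the spirit of Lemma~\ref{lem:ssgcalc}; both are valid.
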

\begin{proof}It is clear that $U$ is compact and open because it is a union of compact open sets.

Let $z_e = [(\ew,e, \ew), 1^\infty]$.
Every prefix of $1^\infty$ can be extended to a strongly fixed word for each of $b, c, d$ by \eqref{eq:restrictions}, but no prefix of $1^\infty$ is strongly fixed by any of these elements. Thus $z_e\in \overline U\setminus U$.

We will find a neighbourhood $V$ of $z_e$ such that $V\setminus \{z_e\}\subseteq U$, which will show that $z_e$ is an interior point of $\overline U$.
We take $V = \ggx^{(0)}$, the entire unit space of $\ggx$.
Every point in $\ggx^{(0)}\setminus \{z_e\}$ is of the form $ [(\ew,e, \ew), 1^n0x]$ for some $x\in X^\omega$ and $n\geq 0$.

If $n \equiv 0 \mod 3$ then $n = 3m$ for some $m\geq 0$ and
\[
(\ew, d, \ew)(1^{3m}0x, e, 1^{3m}0x) = (1^{3m}0x, e, 1^{3m}0x) = (\ew,e, \ew)(1^{3m}0x, e, 1^{3m}0x)
\]
and so
\[
[(\ew,e, \ew), 1^n0x] = [(\ew, d, \ew), 1^n0x] \in U.
\]
If $n \equiv 1 \mod 3$ then $n = 3m+1$ for some $m\geq 0$ and
\[
(\ew, c, \ew)(1^{3m+1}0x, e, 1^{3m+1}0x) = (1^{3m+1}0x, e, 1^{3m+1}0x) = (\ew,e, \ew)(1^{3m+1}0x, e, 1^{3m+1}0x)
\]
and so again
\[
[(\ew,e, \ew), 1^n0x] = [(\ew, c, \ew), 1^n0x] \in U.
\]
If $n \equiv 2 \mod 3$ then $n = 3m+2$ for some $m\geq 0$ and
\[
(\ew, b, \ew)(1^{3m+2}0x, e, 1^{3m+2}0x) = (1^{3m+2}0x, e, 1^{3m+2}0x) = (\ew,e, \ew)(1^{3m+2}0x, e, 1^{3+2m}0x)
\]
and so once again we have
\[
[(\ew,e, \ew), 1^n0x] = [(\ew, b, \ew), 1^n0x] \in U.
\]
Hence $\ggx^{(0)}\setminus \{z_e\}\subseteq \overline U$, and since $z\in \overline U\setminus U$, $z_e$ is an interior point of $\overline U$.
This proves that $U$ is not regular open.
\end{proof}

Even though $\ggx$ admits a compact open set which is not regular open, we can still prove that $A_\KK(\ggx)$ is simple for any characteristic zero field $\KK$.
We do this over a sequence of lemmas.
First, for $g\in G$ and $m\in \N$,  we use the notation
\[
U_{g, m} := \Theta((\varnothing, g, \varnothing), C(1^m)).
\]
\[
U_{g, m}' := \Theta((\varnothing, g, \varnothing), C(1^m0)).
\]
In addition, following the notation set in the proof of Lemma~\ref{lem:compactopennotregularopen} we set
\begin{align}
z_e := [(\varnothing, e, \varnothing), 1^\infty], && z_c := [(\varnothing, c, \varnothing), 1^\infty],\label{eq:zebc}\\
z_b := [(\varnothing, b, \varnothing), 1^\infty], && z_d := [(\varnothing, d, \varnothing), 1^\infty].\nonumber
\end{align}
\begin{lemma}\label{lem:GrigInt}
	For all $m\in \N$, we have
	\begin{align*}
		U_{b,m}\cap U_{e, m} &= \bigcup_{\substack{n\in\N\\3n+2\geq m}} U_{b, 3n+2}' =\bigcup_{\substack{n\in\N\\3n+2\geq m}} U_{e, 3n+2}'\\
		U_{c,m}\cap U_{e, m} &= \bigcup_{\substack{n\in\N\\3n+1\geq m}}  U_{c, 3n+1}' =\bigcup_{\substack{n\in\N\\3n+1\geq m}} U_{e, 3n+1}'\\
		U_{d,m}\cap U_{e, m} &=  \bigcup_{\substack{n\in\N\\3n\geq m}}U_{d, 3n}' =\bigcup_{\substack{n\in\N\\3n\geq m}} U_{e, 3n}'\\
		U_{c,m}\cap U_{d, m} &=  \bigcup_{\substack{n\in\N\\3n+2\geq m}}U_{c, 3n+2}' =\bigcup_{\substack{n\in\N\\3n+2\geq m}}U_{d, 3n+2}'\\
		U_{b,m}\cap U_{d, m} &=  \bigcup_{\substack{n\in\N\\3n+1\geq m}}U_{b, 3n+1}' =\bigcup_{\substack{n\in\N\\3n+1\geq m}} U_{d, 3n+1}'\\
		U_{c,m}\cap U_{b, m} &=  \bigcup_{\substack{n\in\N\\3n\geq m}}U_{c, 3n}' =\bigcup_{\substack{n\in\N\\3n\geq m}} U_{b, 3n}'.
	\end{align*}
\end{lemma}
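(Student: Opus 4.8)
The plan is to exploit the fact that all six identities have the same shape and to reduce them to a single computation about germs. The key tool is a criterion for when two germs based at the same point agree: for $g,h\in\{e,b,c,d\}$ and $\xi\in X^\omega$, I claim that $[(\varnothing,g,\varnothing),\xi]=[(\varnothing,h,\varnothing),\xi]$ in $\ggx$ if and only if some prefix $\mu$ of $\xi$ lies in $SFW_{h^{-1}g}$. To see this, recall that two germs based at $\xi$ agree precisely when there is a nonzero idempotent $(\mu,1_G,\mu)$ with $\mu$ a prefix of $\xi$ and $(\varnothing,g,\varnothing)(\mu,1_G,\mu)=(\varnothing,h,\varnothing)(\mu,1_G,\mu)$. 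Using the product in $\sgx$, these two products compute to $(g\cdot\mu,\left.g\right|_\mu,\mu)$ and $(h\cdot\mu,\left.h\right|_\mu,\mu)$, so the germs agree exactly when $g\cdot\mu=h\cdot\mu$ and $\left.g\right|_\mu=\left.h\right|_\mu$. Applying the cocycle identity $\left.(hk)\right|_\mu=\left.h\right|_{k\cdot\mu}\left.k\right|_\mu$ with $k=h^{-1}g$, this pair of conditions is equivalent to $k\cdot\mu=\mu$ and $\left.k\right|_\mu=1_G$, i.e.\ to $\mu\in SFW_{h^{-1}g}$.

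Next I would identify the relevant group elements. Since an element of $U_{g,m}\cap U_{h,m}$ is a single germ expressible both as $[(\varnothing,g,\varnothing),\xi]$ with $\xi\in C(1^m)$ and as $[(\varnothing,h,\varnothing),\xi]$, the criterion shows $U_{g,m}\cap U_{h,m}$ consists exactly of those germs whose base $\xi\in C(1^m)$ has a prefix in $SFW_{h^{-1}g}$. Using the Grigorchuk relations (the elements $b,c,d$ are involutions with $bc=cb=d$, $bd=db=c$, $cd=dc=b$), the products $h^{-1}g$ for the six pairs $(b,e),(c,e),(d,e),(c,d),(b,d),(c,b)$ come out to $b,c,d,b,c,d$ respectively, matching the residues $2,1,0,2,1,0\pmod 3$ appearing on the right-hand sides.

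The technical heart is to pin down the strongly fixed words of $b$, $c$, $d$. Using the explicit restriction formulas one checks that $a$ flips the first letter, that no strongly fixed word of $b$ or $c$ can begin with $0$, and that every word beginning with $0$ is strongly fixed by $d$ (since $\left.d\right|_0=1_G$). Feeding $\left.b\right|_1=c$, $\left.c\right|_1=d$, $\left.d\right|_1=b$ into the definition yields the recursions $SFW_b=1^{2}0\,X^*\cup 1^{3}\,SFW_b$, $SFW_c=1\cdot SFW_d$ and $SFW_d=0\,X^*\cup 1\cdot SFW_b$; since all words are finite, iterating the first recursion gives
\[
SFW_b=\bigcup_{n\ge 0}1^{3n+2}0\,X^*,\qquad SFW_c=\bigcup_{n\ge 0}1^{3n+1}0\,X^*,\qquad SFW_d=\bigcup_{n\ge 0}1^{3n}0\,X^*,
\]
recovering in particular $MSFW_b=\{1^{3n+2}0\}$, $MSFW_c=\{1^{3n+1}0\}$, $MSFW_d=\{1^{3n}0\}$ consistently with \eqref{eq:restrictions}. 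Writing $r\in\{0,1,2\}$ for the residue attached to $h^{-1}g$, it follows that a base $\xi\in C(1^m)$ has a prefix in $SFW_{h^{-1}g}$ if and only if $\xi\in C(1^{3n+r}0)$ for some $n$ with $3n+r\ge m$: the inequality is forced because the terminal $0$ of $1^{3n+r}0$ must occur beyond the initial block $1^m$, and when $3n+r\ge m$ one has $C(1^{3n+r}0)\subseteq C(1^m)$.

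Finally I would assemble the identities. For each $n$ with $3n+r\ge m$ the prefix $1^{3n+r}0$ lies in $SFW_{h^{-1}g}$, so by the germ criterion the bisections $U_{g,3n+r}'$ and $U_{h,3n+r}'$ coincide and are contained in $U_{g,m}\cap U_{h,m}$; conversely the previous paragraph shows every germ of the intersection lies in some such $U_{g,3n+r}'$. This yields the displayed chain of equalities for all six pairs at once. I expect the main obstacle to be the $\subseteq$ inclusion in the third step---namely proving that the words in \eqref{eq:restrictions} exhaust the (minimal) strongly fixed words---since that is the only place requiring the full recursive structure of the restrictions rather than a one-line verification.
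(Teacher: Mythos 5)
Your proof is correct, and its core mechanism is the same as the paper's: identify membership of a germ in $U_{g,m}\cap U_{h,m}$ with the existence of a prefix of the base point that is strongly fixed by the group element comparing $g$ and $h$, then determine which prefixes those can be. The differences are nonetheless worth recording. The paper proves only the first and fourth displayed lines (working out the germ equality by hand in each case, arriving at $b$ and at $cd^{-1}=b$ respectively) and declares the other four ``similar''; you instead isolate a single germ-equality criterion --- $[(\ew,g,\ew),\xi]=[(\ew,h,\ew),\xi]$ if and only if some prefix of $\xi$ lies in $SFW_{h^{-1}g}$ --- and dispatch all six lines at once by computing $h^{-1}g$ in the Klein four-group $\{e,b,c,d\}$, which produces the residues $2,1,0,2,1,0 \pmod 3$. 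More substantively, the paper's argument needs the implication ``$1^m\alpha$ strongly fixed by $b$ $\implies$ $1^m\alpha$ has a prefix $1^{3n+2}0$'', i.e.\ the \emph{exhaustiveness} of the list of minimal strongly fixed words; the text surrounding \eqref{eq:restrictions} only asserts (via ``short calculations'') that the words $1^{3n+2}0$, $1^{3n+1}0$, $1^{3n}0$ are strongly fixed with no strongly fixed proper prefixes, not that they are the only minimal ones. Your recursions $SFW_c=1\cdot SFW_d$, $SFW_d=0X^*\cup 1\cdot SFW_b$, $SFW_b=1^20X^*\cup 1^3 SFW_b$, combined with the observation that no strongly fixed word of $b$ or $c$ can begin with $0$, give a complete, self-contained proof of exactly that converse, so your write-up fills in a step the paper leaves to the reader. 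The remaining bookkeeping (the inequality $3n+r\geq m$ being forced because the terminal $0$ must lie beyond the block $1^m$, and $U'_{g,3n+r}=U'_{h,3n+r}$ whenever $1^{3n+r}0\in SFW_{h^{-1}g}$) is handled correctly, so there is no gap.
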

\begin{proof}
	We prove the first and fourth lines --- the rest are similar.
	
	Suppose that $z\in U_{b,m}\cap U_{e, m}$, so that there exists $w\in X^\omega$ such that
	\[
	z = [(\varnothing, b, \varnothing), 1^mw] = [(\varnothing, e, \varnothing), 1^mw].
	\]
	Hence there exists $\alpha\in X^*$ such that $w = \alpha v$ and
	\begin{align*}
		(\varnothing, b, \varnothing)(1^m\alpha, e, 1^m\alpha)&=(\varnothing, e, \varnothing)(1^m\alpha, e, 1^m\alpha)\\
		(b\cdot(1^m\alpha), \left.b\right|_{1^m\alpha}, 1^m\alpha)&=(1^m\alpha, e, 1^m\alpha)
	\end{align*}
	which implies that $1^m\alpha$ is strongly fixed by $b$, and so $1^m\alpha = 1^{3n+2}$ for some $n\geq 0$.
	Furthermore, any $w \in C(\alpha)$ of this form will produce such a $z$ in the intersection, so
	\[
	U_{b,m}\cap U_{e, m} = \bigcup_{\substack{n\in\N\\3n+2\geq m}} U_{b, 3n+2}' =\bigcup_{\substack{n\in\N\\3n+2\geq m}} U_{e, 3n+2}'.
	\]
	Now suppose that $z \in U_{c,m}\cap U_{d, m}$, so that there exists $w\in X^\omega$ such that
	\[
	z = [(\varnothing, c, \varnothing), 1^mw] = [(\varnothing, d, \varnothing), 1^mw].
	\]
	Hence there exists $\alpha\in X^*$ such that $w = \alpha v$ and
	\begin{align*}
		(\varnothing, c, \varnothing)(1^m\alpha, e, 1^m\alpha)&=(\varnothing, d, \varnothing)(1^m\alpha, e, 1^m\alpha)\\
		(c\cdot(1^m \alpha), \left.c\right|_{1^m\alpha}, 1^m\alpha)&=(d\cdot(1^m \alpha), \left.d\right|_{1^m\alpha}, 1^m\alpha)
	\end{align*}
	implying that $c\cdot (1^m\alpha) = d \cdot(1^m \alpha)$ and $\left.c\right|_{1^m\alpha} = \left.d\right|_{1^m\alpha}$, and hence $1^m\alpha$ is strongly fixed by $cd^{-1} = cd = b$. So as before
	\[
	U_{c,m}\cap U_{d, m} =  \bigcup_{\substack{n\in\N\\3n+2\geq m}}U_{c, 3n+2}' =\bigcup_{\substack{n\in\N\\3n+2\geq m}} U_{d, 3n+2}'.
	\]
\end{proof}
\begin{lemma}\label{lem:LCzero}
	Let $m\in \N$, let $\KK$ be a field of characteristic zero, and let
	\begin{equation}\label{eq:singularLC}
	f = \sum_{g\in \{e, b, c, d\}}c_g 1_{U_{g, m}}
	\end{equation}
	for some $c_g\in \KK$. Then supp$(f)$ has empty interior if and only if $f$ is identically zero.
\end{lemma}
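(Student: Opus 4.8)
The plan is to prove the substantive (forward) implication and note that the reverse is immediate, since $f\equiv 0$ forces $\supp(f)=\varnothing$, which has empty interior. The guiding observation is elementary: if $f$ is constant and nonzero on some nonempty open set $U$, then $U\subseteq\supp(f)$, so $\supp(f)$ has nonempty interior. Hence, \emph{assuming} $\supp(f)$ has empty interior, every nonempty open set on which $f$ is constant must carry the value $0$, and I will extract from this a linear system in $c_e,c_b,c_c,c_d$ whose only solution is $0$.

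First I would compute the value of $f$ on the compact open bisections $U_{g,n}'=\Theta((\varnothing,g,\varnothing),C(1^n0))$ for $n\ge m$, which are nonempty open sets. A germ in $U_{g,n}'$ has the form $[(\varnothing,g,\varnothing),\eta]$ with $\eta\in C(1^n0)$, and it lies in $U_{h,m}$ exactly when $[(\varnothing,g,\varnothing),\eta]=[(\varnothing,h,\varnothing),\eta]$, i.e.\ when some prefix of $\eta$ is strongly fixed by $h^{-1}g$. Since $\eta$ begins with exactly $n$ ones followed by a $0$, the strongly fixed words recorded in \eqref{eq:restrictions} together with $bc=cb=d$, $bd=db=c$, $cd=dc=b$ show that the coincidences among the four germs are governed entirely by $n \bmod 3$; rather than redo these restriction calculations I would invoke Lemma~\ref{lem:GrigInt} directly. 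The conclusion is that over $C(1^n0)$ the four germs merge into two pairs, namely $\{e,d\},\{b,c\}$ if $n\equiv 0$, then $\{e,c\},\{b,d\}$ if $n\equiv 1$, and $\{e,b\},\{c,d\}$ if $n\equiv 2$. Therefore $f$ is constant on each $U_{g,n}'$, taking the value $\sum_h c_h$ over the pair containing $g$.

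Next I would use the empty-interior hypothesis to force each of these constants to vanish. For every residue $r\in\{0,1,2\}$ there are infinitely many $n\ge m$ with $n\equiv r\pmod 3$, so the relevant sets $U_{g,n}'$ are all nonempty; choosing one representative from each of the two pairs, for each residue, yields the six equations
\begin{gather*}
c_e+c_d=0,\qquad c_b+c_c=0,\qquad c_e+c_c=0,\\
c_b+c_d=0,\qquad c_e+c_b=0,\qquad c_c+c_d=0.
\end{gather*}
The first, third and fifth give $c_b=c_c=c_d=-c_e$, and substituting into the second equation produces $2c_e=0$.

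The difficulty is concentrated in two places. The first is the bookkeeping that no \emph{additional} coincidences occur over $C(1^n0)$ (so that three germs never merge simultaneously); I would settle this from Lemma~\ref{lem:GrigInt} by observing that a path $\eta\ne 1^\infty$ has a well-defined number of leading ones, hence lies in $C(1^k0)$ for exactly one value of $k$, so precisely one pairing type applies and the partition really is into two pairs. The second, and the essential one, is the final deduction $2c_e=0\implies c_e=0$: this is exactly where the characteristic-zero hypothesis enters, and it is precisely the step that fails in characteristic $2$, which is what underlies the contrasting non-simplicity over $\Z_2$. With $c_e=0$ we obtain $c_b=c_c=c_d=0$, so $f\equiv 0$, completing the argument.
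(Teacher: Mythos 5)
Your proof is correct and takes essentially the same approach as the paper's: both rest on Lemma~\ref{lem:GrigInt} to produce the identical system of six linear equations in $c_e,c_b,c_c,c_d$ and then use characteristic zero (really characteristic $\neq 2$, exactly as you note in connection with the $\mathbb{Z}_2$ failure) to force the trivial solution. The only cosmetic difference is that you evaluate $f$ directly on the basic open sets $U'_{g,n}$, whereas the paper disjointifies $\supp(f)$ into sets of the form \eqref{eq:support} and identifies the six pieces with nonempty interior; the underlying content is the same.
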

\begin{proof}
	We can write the support of such an $f$ as the disjoint union of sets of the form given in \eqref{eq:support} for $F_1, F_2$ a partition of $\{U_{g, m}\}_{g = e, b, c, d}$.
	From Lemma~\ref{lem:GrigInt} it is straightforward to see that the intersection of any three of these four sets is empty.
	By looking at the first three lines of Lemma~\ref{lem:GrigInt}, one can see that $U_{b, m}\cup U_{c, m}\cup U_{d,m}$ will contain every point of the form $[(\varnothing, e, \varnothing), 1^mw]$ where $w\in X^\omega$ contains at least one $0$ --- and so it
includes every point in $U_{e, m}$ except for $z_e = [(\varnothing, e, \varnothing), 1^\infty]$. Hence
	\[
	U_{e, m}\setminus(U_{b, m}\cup U_{c, m}\cup U_{d,m}) = \{z_e\}.
	\]
	Similarly, we have
	\begin{align*}
		U_{b, m}\setminus(U_{c, m}\cup U_{d, m}\cup U_{e,m}) & =  \{z_b\}\\
		U_{c, m}\setminus(U_{d, m}\cup U_{e, m}\cup U_{b,m}) & =  \{z_c\}\\
		U_{d, m}\setminus(U_{e, m}\cup U_{b, m}\cup U_{c,m}) & =  \{z_d\}
	\end{align*}
	all of which have empty interior.
	Hence there are six sets of the form \eqref{eq:support} which have nonempty interior, and they are listed in Lemma~\ref{lem:GrigInt}.
	Then supp$(f)$ has empty interior if and only if $f$ takes the value zero on these sets.
	This leads to the following six equations
	\begin{align}
		c_e + c_b = 0 && c_c + c_d = 0\nonumber\\
		c_e + c_c = 0 && c_b + c_d = 0\label{eq:homlinearequations}\\
		c_e + c_d = 0 && c_c + c_b = 0\nonumber
	\end{align}
	and since $\KK$ has characteristic zero, this has the unique solution $c_e = c_b = c_c = c_d = 0$.
	Hence supp$(f)$ has empty interior if and only if $f$ is identically zero.
\end{proof}

Before moving on, we note an interesting corollary of the above proof. We learned, subsequent to submission of this paper, that this result
is also proved in \cite[Example~4.5]{Nek16}.

\begin{cor}\label{cor:grigZ2notsimple}
	Let $G$ be the Grigorchuk group, let $(G,X)$ be its self-similar action, and let $\ggx$ be the associated groupoid.
	Then $\s_{\Z_2}(\ggx)$ is nonzero, and hence the Steinberg algebra $A_{\Z_{2}}(\ggx)$ is not simple.
\end{cor}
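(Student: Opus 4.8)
The plan is to reuse the computation from the proof of Lemma~\ref{lem:LCzero} essentially verbatim and simply observe that its conclusion used characteristic zero in an essential way. In $\Z_2$ we have $1 = -1$, so each of the six equations in \eqref{eq:homlinearequations}, all of the form $c_g + c_h = 0$, becomes $c_g = c_h$; the whole system therefore collapses to $c_e = c_b = c_c = c_d$, which has the nonzero solution $c_e = c_b = c_c = c_d = 1$ rather than forcing every coefficient to vanish. Accordingly I would fix $m=0$ and set
\[
f = 1_{U_{e,0}} + 1_{U_{b,0}} + 1_{U_{c,0}} + 1_{U_{d,0}} \in A_{\Z_2}(\ggx).
\]

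Next I would identify $\supp(f)$ using the set-theoretic decomposition already produced in the proof of Lemma~\ref{lem:LCzero}. On each of the six pairwise intersections catalogued in Lemma~\ref{lem:GrigInt}, a point lies in exactly two of the four sets $U_{g,0}$ (since every triple intersection among them is empty), so there $f$ takes the value $1 + 1 = 0$ in $\Z_2$. On the four complementary singletons $\{z_e\}, \{z_b\}, \{z_c\}, \{z_d\}$ from \eqref{eq:zebc}, where $\{z_e\} = U_{e,0}\setminus(U_{b,0}\cup U_{c,0}\cup U_{d,0})$ and similarly for the others, the relevant point lies in a single $U_{g,0}$, so $f$ takes the value $1$. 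Hence $\supp(f) = \{z_e, z_b, z_c, z_d\}$.

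It then remains to check that $f$ is nonzero and singular, i.e. that this four-point set is nonempty with empty interior. The four germs are distinct: they are the germs of $e, b, c, d$ at $1^\infty$, and since by \eqref{eq:restrictions} no prefix of $1^\infty$ is strongly fixed by any of $b, c, d$, no two of them can coincide. For the empty interior, none of the four points is isolated in $\ggx$: each $z_g$ has the neighbourhood basis $\{U_{g,m}\}_m$, and every $U_{g,m}$ contains germs $[(\ew, g, \ew), 1^m w]$ with $w \neq 1^\infty$ that are distinct from $z_g$. A finite set of non-isolated points has empty interior, so $\supp(f)$ has empty interior, and by Proposition~\ref{prp:singular empty int} the nonzero element $f$ lies in $\s_{\Z_2}(\ggx)$. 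This shows $\s_{\Z_2}(\ggx) \neq \{0\}$.

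For the final conclusion I would invoke Theorem~\ref{thm:simple}: as recorded in Section~\ref{sec:ssg}, the faithful self-similar action of the Grigorchuk group makes $\ggx$ minimal and effective, but the existence of the nonzero singular element $f$ violates condition~\eqref{it3:thm}, so $A_{\Z_2}(\ggx)$ is not simple. (Equivalently, by Proposition~\ref{prop:singularideal} the set $\s_{\Z_2}(\ggx)$ is an ideal, and it is proper because it contains no nonzero function supported on the open set $\ggx^{(0)}$, so it is a proper nonzero ideal.) I expect no real obstacle here: all the hard geometric bookkeeping—identifying which unions of the $U_{g,0}$ cover everything except the four germs—is exactly what the proof of Lemma~\ref{lem:LCzero} already supplies, so the only substantive step is reading off the characteristic-$2$ consequence of the linear system \eqref{eq:homlinearequations} in place of the characteristic-$0$ one.
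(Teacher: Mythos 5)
Your proposal is correct and follows essentially the same route as the paper: take all coefficients equal to $1$ in \eqref{eq:singularLC} (the paper uses $m=1$ where you use $m=0$, an immaterial difference), observe that the characteristic-$2$ collapse of the system \eqref{eq:homlinearequations} leaves this as a nonzero solution, and read off from the decomposition behind Lemma~\ref{lem:LCzero} and Lemma~\ref{lem:GrigInt} that $f = 1_{\{z_e,z_b,z_c,z_d\}}$ is a nonzero singular element. Your extra verifications (distinctness of the four germs, non-isolatedness giving empty interior, which is valid here since $\ggx$ is T1, being \'etale with Hausdorff unit space) only make explicit what the paper treats as clear.
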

\begin{proof}
	Consider $f:= \sum_{g\in \{e,b,c,d\}} 1_{U_{g,1}}$, that is, take $c_g = 1\in\Z_2$ for all $g\in \{e,b,c,d\}$ in \eqref{eq:singularLC}.
	Then the equations \eqref{eq:homlinearequations} are satisfied, and so supp$(f)$ has empty interior.
	From the proof of Lemma~\ref{lem:LCzero}, it is clear that in fact $f = 1_{\{z_e, z_b, z_c, z_d\}}$ is the characteristic function of a set with empty interior.
	Hence $f$ is singular.
\end{proof}

We now prove that if the closure of a basic compact open bisection contains one of the points in \eqref{eq:zebc}, it contains them all.
\begin{lemma}\label{zebcd}
	Let $D = \Theta((\alpha, g, \beta), C(\beta\eta))$. Then either
	\[
	\{z_e, z_b, z_c, z_d\}\subseteq \overline{D}\hspace{1cm}\text{or}\hspace{1cm}\{z_e, z_b, z_c, z_d\}\cap \overline{D} = \emptyset
	\]
\end{lemma}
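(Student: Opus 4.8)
The plan is to analyze when one of the four special points $z_e, z_b, z_c, z_d$ lies in $\overline{D}$ for $D = \Theta((\alpha, g, \beta), C(\beta\eta))$, and to show that membership of any one forces membership of all four. The key tool is Lemma~\ref{lem:ssgcalc}, which characterizes closure membership: if $z = [(\varnothing, h, \varnothing), 1^\infty]$ (with $h \in \{e,b,c,d\}$) lies in $\overline{D}$, then by Lemma~\ref{lem:ssgcalc}(1), writing $\delta = 1^\infty$ suitably truncated, we need $\delta = \beta\eta\eta'$ and $\gamma = \alpha(g\cdot(\eta\eta'))$ for some $\eta'$, together with the condition that every prefix of the ``tail'' $w$ can be extended to a strongly fixed word for $h^{-1}g|_{\eta\eta'}$. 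Since each $z_h$ has $w = 1^\infty$, the crucial point is that the tail is the same infinite string $1^\infty$ for all four points.

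First I would observe that the geometric conditions in Lemma~\ref{lem:ssgcalc}(1)(a)---namely $\delta = \beta\eta\eta'$ and $\gamma = \alpha(g\cdot(\eta\eta'))$---depend only on $\delta$ and $\gamma$, and since all four points $z_e, z_b, z_c, z_d$ share the identical underlying path $1^\infty$ (they differ only in the group element decorating the germ), condition (a) holds for one of them precisely when it holds for all of them. The only condition that could distinguish the four points is (b): whether every prefix of $w = 1^\infty$ extends to a strongly fixed word for the relevant group element $h^{-1}g|_{\eta\eta'}$. So the task reduces to showing that this extendability condition holds simultaneously for all $h \in \{e,b,c,d\}$ or fails for all of them.

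The main obstacle---and the heart of the argument---is handling condition (b) across the four choices of $h$. The key algebraic fact is the relation \eqref{eq:restrictions}: for $1^\infty$, every prefix $1^n$ extends to a strongly fixed word for each of $b, c, d$ (via $1^{3n+2}0$, $1^{3n+1}0$, $1^{3n}0$ respectively), but for $e$ trivially every prefix is already strongly fixed. The plan is to show that $g|_{\eta\eta'}$ is forced to lie in $\{e,b,c,d\}$ by the contracting property with nucleus $\{e,a,b,c,d\}$ (choosing $\eta'$ long enough), and then to check that for each $h \in \{e,b,c,d\}$, the element $h^{-1}g|_{\eta\eta'}$ again lies in this nucleus and fixes $1^\infty$ exactly when $g|_{\eta\eta'}$ does. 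Because the relations $bc=d$, $bd=c$, $cd=b$ close up the set $\{e,b,c,d\}$ under the relevant products, multiplying $g|_{\eta\eta'}$ by any of $e,b,c,d$ permutes within $\{e,b,c,d\}$, and the extendability condition \eqref{eq:restrictions} for $1^\infty$ holds for every element of $\{b,c,d\}$ (and for $e$).

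I would therefore argue that if $z_{h_0} \in \overline{D}$ for some $h_0$, then $g|_{\eta\eta'}$ fixes $1^\infty$ and equals $h_0$ (as elements acting on $1^\infty$, up to the strongly-fixed condition), forcing $g|_{\eta\eta'} \in \{e,b,c,d\}$; then for every other $h \in \{e,b,c,d\}$ the product $h^{-1}g|_{\eta\eta'}$ is again in $\{e,b,c,d\}$, so condition (b) is satisfied by \eqref{eq:restrictions}, giving $z_h \in \overline{D}$. Conversely, if $z_{h_0} \notin \overline{D}$, then condition (a) must fail (since we just saw that whenever (a) holds, (b) holds for all $h$ including $h_0$), and since (a) is independent of $h$, it fails for all four points, so none lies in $\overline{D}$. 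This establishes the dichotomy. The delicate step to get right is verifying that $g|_{\eta\eta'}$ must genuinely land in the nucleus $\{e,b,c,d\}$ and that the residual group elements behave as claimed under the Grigorchuk relations; everything else is bookkeeping with Lemma~\ref{lem:ssgcalc} and \eqref{eq:restrictions}.
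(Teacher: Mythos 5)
Your core argument (the forward direction) is correct and is essentially the paper's own proof: both rest on Lemma~\ref{lem:ssgcalc}, on the contracting property to force $g|_{\eta\eta'}$ into the nucleus for a sufficiently long germ representative, on excluding $a$ because $a$ does not fix the letter $1$, on the closure of $\{e,b,c,d\}$ under multiplication, and on \eqref{eq:restrictions}. The paper packages this slightly differently---it passes to an intermediate bisection $B = \Theta((1^{|\beta\eta|+m}, k, 1^{|\beta\eta|+m}), C(1^{|\beta\eta|+m})) \subseteq D$ with $k = g|_{1^{|\eta|+m}} \in \{e,b,c,d\}$ and shows all four points lie in $\overline{B}$, and it chooses $|\beta\eta|+m$ divisible by $3$ so that the germ representatives carry the group elements $h$ themselves rather than $h|_{1^j}$---but these are cosmetic differences, and your glossing of the $h$ versus $h|_{1^j}$ point is harmless since $h|_{1^j}$ stays in $\{e,b,c,d\}$.

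One caution about your final ``conversely'' paragraph: it asserts that ``whenever (a) holds, (b) holds for all $h$ including $h_0$.'' That is not what your forward argument shows, and it is false. Condition (a) together with the contracting property only places $g|_{\eta\eta'}$ in the nucleus $\{e,a,b,c,d\}$; the possibility $g|_{\eta\eta'} = a$, which makes (b) fail for \emph{every} $h$ (none of $a$, $ba$, $ca$, $da$ fixes any word beginning with $1$), is ruled out in your own argument only by invoking (b) for $h_0$. Concretely, $aba$ fixes $1$ and $(aba)|_1 = a$, so for $D = \Theta((\varnothing, aba, \varnothing), C(1))$ condition (a) holds at the length-one representatives of $z_e, z_b, z_c, z_d$ while (b) fails for all four of them. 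Fortunately that paragraph is redundant: the dichotomy in Lemma~\ref{zebcd} already follows from the forward direction alone, since if any one of the four points lies in $\overline{D}$ then all do, and otherwise none does. So your proof stands once that paragraph is deleted or its justification is replaced by this observation.
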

\begin{proof}
	Since $(G,X)$ is contracting with nucleus $\{e,a,b,c,d\}$, we can find $m$ so that $\left.g\right|_v\in \{e,a,b,c,d\}$ for all $v\in X^*$ such that $|v|\geq m$, and also such that $|\beta\eta|+m$ is divisible by 3.
	Take $h\in \{e, b, c, d\}$.
	By Lemma~\ref{lem:ssgcalc}, $z_h = [(1^{|\beta\eta|+m}, h, 1^{|\beta\eta|+m}), 1^\infty]\in \overline D$ implies that
	\[
	\beta\eta = 1^{|\beta\eta|}, \hspace{1cm}\alpha = 1^{|\alpha|},\hspace{1cm}g\cdot(1^{|\eta|+m})= 1^{|\eta|+m},
	\]
	and every prefix of $1^\infty$ can be extended to a strongly fixed word for $h^{-1}\left.g\right|_{1^{|\eta|+m}}\in\{e,a,b,c,d\}$.
	Hence, $h^{-1}\left.g\right|_{1^{|\eta|+m}}$ is either $e, b, c,$ or $d$.
	
	We can write $D = \Theta((1^{|\beta|}, g, 1^{|\beta|}), C(1^{|\beta\eta|})) = \Theta((1^{|\beta\eta|},\left.g\right|_{1^{|\eta|}}, 1^{|\beta\eta|}), C(1^{|\beta\eta|}))$ and we denote $\left.g\right|_{1^{|\eta|+m}} = k$.
	
	The compact open bisection $B:= \Theta((1^{|\beta\eta|+m}, k, 1^{|\beta\eta|+m}), C(1^{|\beta\eta|+m}))$ is contained in $D$.
	Take $f\in \{e, b, c, d\}$, and find $n$ such that $3n\geq |\beta\eta|+ m$.
	Then by Lemma~\ref{lem:ssgcalc}, the element $z_f = [(\varnothing, f, \varnothing), 1^\infty] = [(1^{3n}, f, 1^{3n}), 1^\infty]$ is in $\overline{B}$, because no matter what $f$ is every prefix of $1^\infty$ can be extended to a strongly fixed word for
$f^{-1}\left.k\right|_{1^{3n-|\beta\eta|-m}} \in \{e, b, c, d\}$.
	Hence $z_f\in \overline B\subseteq \overline D$.
	Since $f$ was arbitrary, we have $\{z_e, z_b, z_c, z_d\}\subseteq \overline B\subseteq \overline D$.
\end{proof}

We note that the proof of Lemma~\ref{zebcd} shows that for the closure of a basic compact bisection to contain $z_e$, it must be of the form $D = \Theta((1^k, g, 1^k), C(1^k))$ for some $k\geq 0$, where $\left.g\right|_{1^m}\in \{e, b, c, d\}$ for some $m
\geq 0$.

In the next lemma, we use the notation
\begin{equation}
U_m := U_{e, m}\cup U_{b, m}\cup U_{c, m}\cup U_{d, m}.
\end{equation}
\begin{lemma}\label{lem:Ugm}
	Suppose $D$ is a basic compact bisection whose closure contains $z_e$.
	Then there exists $h\in\{e, b, c, d\}$ and  $n\geq 0$ such that
	\[
	D\cap U_n = D\cap U_{h,n} = U_{h,n}.
	\]
\end{lemma}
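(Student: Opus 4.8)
The plan is to start from the note following Lemma~\ref{zebcd}, which tells us that because $z_e\in\overline D$ the bisection must have the shape $D=\Theta((1^k,g,1^k),C(1^k))$ for some $k\geq 0$ with $\left.g\right|_{1^m}\in\{e,b,c,d\}$ for some $m$. First I would fix such an $m$, taking it past the contracting bound so that $\left.g\right|_{1^m}\in\{e,b,c,d\}$, and I would record the extra fact that $g\cdot 1^m=1^m$. This last equality is not free but comes from applying Lemma~\ref{lem:ssgcalc} to the refinement $z_e=[(1^{k+m},e,1^{k+m}),1^\infty]\in\overline D$: condition (1)(a) of that lemma forces $1^{k+m}=1^k(g\cdot 1^m)$, whence $g\cdot 1^m=1^m$. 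I then set $h_0:=\left.g\right|_{1^m}\in\{e,b,c,d\}$ and $n:=k+m$.

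The central computation is to identify the part of $D$ lying over $C(1^n)$. Refining $D$ over $C(1^n)\subseteq C(1^k)$ and using $g\cdot 1^m=1^m$ together with $\left.g\right|_{1^m}=h_0$, each germ $[(1^k,g,1^k);1^n\zeta]$ rewrites as $[(1^n,h_0,1^n);1^n\zeta]$, so the elements of $D$ with source in $C(1^n)$ are exactly $\Theta((1^n,h_0,1^n),C(1^n))$. Next I would choose $h\in\{e,b,c,d\}$ with $\left.h\right|_{1^n}=h_0$; this is possible because restriction at $1^n$ fixes $e$ and cyclically permutes $\{b,c,d\}$ according to $n\bmod 3$ (recall $\left.b\right|_1=c$, $\left.c\right|_1=d$, $\left.d\right|_1=b$), hence is a bijection of $\{e,b,c,d\}$. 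Since $h\cdot 1^n=1^n$ for $h\in\{e,b,c,d\}$, one has $U_{h,n}=\Theta((\varnothing,h,\varnothing),C(1^n))=\Theta((1^n,\left.h\right|_{1^n},1^n),C(1^n))=\Theta((1^n,h_0,1^n),C(1^n))$. Comparing with the previous paragraph gives $U_{h,n}\subseteq D$, and therefore $D\cap U_{h,n}=U_{h,n}$.

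It then remains to prove $D\cap U_n\subseteq U_{h,n}$, i.e. $D\cap U_{h',n}\subseteq U_{h,n}$ for every $h'\in\{e,b,c,d\}$. An element of $D\cap U_{h',n}$ has source in $C(1^k)\cap C(1^n)=C(1^n)$, and being an element of $D$ it must, by the central computation, equal $[(1^n,h_0,1^n);1^n\zeta]$ for some $\zeta\in X^\omega$. But every such germ lies in $\Theta((1^n,h_0,1^n),C(1^n))=U_{h,n}$, so indeed $D\cap U_{h',n}\subseteq U_{h,n}$ no matter what $h'$ is. Taking the union over $h'$ and combining with $U_{h,n}\subseteq D$ yields $D\cap U_n=D\cap U_{h,n}=U_{h,n}$, as desired.

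I expect the main obstacle to be conceptual rather than computational: one is tempted to prove the statement by showing that $D\cap U_{h',n}$ is empty for the ``wrong'' $h'$, but this is false, since distinct nucleus elements share germs on large cylinders (for instance $b$ and $c$ agree on all of $C(0)$ because $bc=d$ acts trivially there), so these intersections are genuinely nonempty. The device that sidesteps the difficulty is the observation that mere membership of a germ in $D$ already pins down its normal form $[(1^n,h_0,1^n);\,\cdot\,]$; hence whichever $U_{h',n}$ happens to witness a given germ, that germ is automatically absorbed into the single set $U_{h,n}$, which is what makes the three-way equality collapse.
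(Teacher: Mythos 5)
Your proof is correct, and it shares the paper's overall skeleton: the normal form $D=\Theta((1^k,g,1^k),C(1^k))$ with $\left.g\right|_{1^m}$ in the nucleus (from the remark after Lemma~\ref{zebcd}), and the unique $h\in\{e,b,c,d\}$ with $\left.h\right|_{1^{k+m}}=\left.g\right|_{1^m}$. Where you genuinely depart from the paper is in the reverse inclusion $D\cap U_n\subseteq U_{h,n}$. The paper establishes only the one-sided inclusions $U_{h,k+m}\subseteq B:=\Theta((1^{k+m},\left.g\right|_{1^m},1^{k+m}),C(1^{k+m}))\subseteq D$, and must then argue separately for each $f\in\{e,b,c,d\}\setminus\{h\}$ that $U_{f,k+m}\cap D\subseteq U_{h,k+m}$: it extracts from the germ equality a word $\alpha$ strongly fixed by $\bigl(\left.f\right|_{1^{k+m}}\bigr)^{-1}\left.g\right|_{1^m}$ and performs a chain of germ rewritings to land in $U_{h,k+m}$; this occupies roughly half of the paper's proof. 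You instead upgrade the inclusions to exact identifications: the set of elements of $D$ with source in $C(1^n)$ equals $\Theta((1^n,h_0,1^n),C(1^n))$, and this set equals $U_{h,n}$ elementwise (both verified with the idempotent $(1^n,1_G,1^n)$, using $g\cdot 1^m=1^m$ and $h\cdot 1^n=1^n$). Once this is in place, any element of $D\cap U_{f,n}$ has source in $C(1^n)$ and lies in $D$, hence lies in $U_{h,n}$, and the strongly-fixed-word computation is never needed. Two further points in your favour: you justify $g\cdot 1^m=1^m$ explicitly via Lemma~\ref{lem:ssgcalc}(1)(a), a fact the paper imports silently from the proof of Lemma~\ref{zebcd}; and your closing observation is accurate --- the intersections $D\cap U_{f,n}$ for $f\neq h$ are genuinely nonempty, which is precisely why the paper proves containment in $U_{h,n}$ rather than emptiness, and why your absorption argument is the right device.
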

\begin{proof}
	As noted above the lemma, there exists $k\geq 0$ such that $D = \Theta((1^k, g, 1^k), C(1^k))$ where $\left.g\right|_{1^m}\in \{e, b, c, d\}$ for some $m \geq 0$.
	Then as in the proof of Lemma~\ref{zebcd}, we have $B = \Theta((1^{k+m}, \left.g\right|_{1^m}, 1^{k+m}), C(1^{k+m}))$ is contained in $D$.
	Then there exists one and only one $h\in \{e, b, c, d\}$ such that $\left.h\right|_{1^{k+m}} = \left.g\right|_{1^m}$, and it follows from Lemma~\ref{lem:ssgcalc} that $U_{h, k+m}\subseteq B\subseteq D$.
	
	Now suppose $f\in\{e, b, c, d\}\setminus \{h\}$, and consider $D\cap U_{f, k+m}$.
	If $z = [(\varnothing, f, \varnothing), 1^{k+m}w]\in U_{f, k+m}\cap D$, we have $z = [(1^{k}, g, 1^k), 1^{k+m}w] = [(1^{k+m}, \left.g\right|_{1^m}, 1^{k+m}), 1^{k+m}w]$.
	So there exists $\alpha\in X^*$ with $w = \alpha y$ for some $y\in X^\omega$ such that
	\[
	\left.f\right|_{1^{k+m}}\cdot\alpha = \left.g\right|_{1^m}\cdot\alpha,\hspace{1cm} \left.g\right|_{1^m}\left.\right|_\alpha = \left.f\right|_{1^{k+m}}\left.\right|_\alpha.
	\]
	This implies that $\alpha$ is strongly fixed by $\left(\left.f\right|_{1^{k+m}}\right)^{-1}\left.g\right|_{1^m}$.
	Since $f\neq h$, this group element is not the identity.
	We have
	\begin{align*}
		[(\varnothing, f, \varnothing), 1^{k+m}\alpha y] &=  [(\varnothing, f, \varnothing)(1^{k+m}\alpha, e,1^{k+m}\alpha), 1^{k+m}\alpha y]\\
		&= [(1^{k+m}\left.f\right|_{1^{k+m}}\cdot\alpha, \left.f\right|_{1^{k+m}}\left.\right|_\alpha, 1^{k+m}\alpha), 1^{k+m}\alpha y]\\
		&= [(1^{k+m}\left.g\right|_{1^{m}}\cdot\alpha, \left.g\right|_{1^{m}}\left.\right|_\alpha, 1^{k+m}\alpha), 1^{k+m}\alpha y]\\
		&= [(1^{k+m}\left.h\right|_{1^{k+m}}\cdot\alpha, \left.h\right|_{1^{k+m}}\left.\right|_\alpha, 1^{k+m}\alpha), 1^{k+m}\alpha y]\\
		&= [(\varnothing, h, \varnothing), 1^{k+m}\alpha y]\in U_{h, k+m}.
	\end{align*}
	So $ U_{f, k+m}\cap D\subseteq U_{h, k+m}$, and paired with the above we conclude that $D\cap U_{k+n} = U_{h, k+m}$.
\end{proof}
We now show that if a function is nonzero at the point $[(\varnothing, e, \varnothing), 1^\infty]$, then its support has nonempty interior.
\begin{lemma}\label{lem:zeinterior}
	Let $\KK$ be a field of characteristic zero, and let $f\in A_\KK(\ggx)$.
	If $f(z_e)\neq 0$, then supp$(f)$ has nonempty interior.
\end{lemma}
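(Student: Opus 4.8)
The plan is to reduce the problem to the situation already analyzed in Lemma~\ref{lem:LCzero} by isolating the behaviour of $f$ near the four special points $z_e,z_b,z_c,z_d$ of \eqref{eq:zebc}. First I would write $f=\sum_{D\in F}a_D1_D$ as a finite $\KK$-linear combination of characteristic functions of \emph{basic} compact open bisections $D=\Theta((\alpha,g,\beta),C(\beta\eta))$, which is possible since every compact open bisection is a finite union of basic ones. Then split $F=F_1\sqcup F_2$, where $F_1=\{D\in F:z_e\in\overline D\}$. By Lemma~\ref{zebcd}, each $D\in F_1$ satisfies $\{z_e,z_b,z_c,z_d\}\subseteq\overline D$, while each $D\in F_2$ satisfies $\overline D\cap\{z_e,z_b,z_c,z_d\}=\ew$. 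Consequently $W:=\ggx\setminus\bigcup_{D\in F_2}\overline D$ is an open set containing all four points $z_e,z_b,z_c,z_d$, and every $D\in F_2$ is disjoint from $W$.

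Next I would apply Lemma~\ref{lem:Ugm} uniformly. Inspecting its proof, each $D\in F_1$ has the form $\Theta((1^{k},g,1^{k}),C(1^{k}))$ with $\left.g\right|_{1^{m}}\in\{e,b,c,d\}$ for suitable $k,m$, and the conclusion $D\cap U_n=U_{h_D,n}$ holds with the same $h_D\in\{e,b,c,d\}$ for every sufficiently large $n$; so I can fix a single $N$ large enough that $D\cap U_N=U_{h_D,N}$ for all $D\in F_1$ at once. Setting $c_h:=\sum_{D\in F_1,\,h_D=h}a_D$ and $g:=\sum_{h\in\{e,b,c,d\}}c_h1_{U_{h,N}}$, I get $\sum_{D\in F_1}a_D1_D=g$ on $U_N$, and hence $f=g+\sum_{D\in F_2}a_D1_D$ on $U_N$. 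A short germ computation using \eqref{eq:restrictions} shows that $z_e\in U_{h,N}$ only for $h=e$ (no power $1^j$ is strongly fixed by $b,c,d$), so $f(z_e)=\sum_{D\in F_1,\,h_D=e}a_D=c_e$; thus the hypothesis $f(z_e)\neq0$ gives $c_e\neq0$, and in particular $g\not\equiv0$.

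By Lemma~\ref{lem:LCzero}, $\supp(g)$ then has nonempty interior. The key remaining point is that this interior reaches into $W$. Here I would use the description of $\supp(g)$ from the proof of Lemma~\ref{lem:LCzero}: since $U_N$ is partitioned into the four singletons $\{z_h\}$ and the open ``double'' pieces $U_{h,N}\cap U_{h',N}$, the set $(\supp g)^\circ$ must contain one of these double pieces (necessarily with $c_h+c_{h'}\neq0$). By Lemma~\ref{lem:GrigInt} each such piece is a union of cylinder bisections $U'_{h,m}$ with $m$ arbitrarily large, whose points converge to $z_h$; hence $z_h\in\overline{(\supp g)^\circ}$ for some $h$. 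As $z_h\in W$ and $W$ is open, the set $O:=W\cap(\supp g)^\circ$ is nonempty and open. Finally $O\subseteq\supp(g)\subseteq U_N$ and $O\cap D=\ew$ for every $D\in F_2$, so on $O$ we have $f=g\neq0$; thus $O\subseteq\supp(f)$ and $\supp(f)$ has nonempty interior.

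The main obstacle is precisely the interaction between the two parts of $f$: the terms indexed by $F_2$ need not vanish on $U_N$, so one cannot simply restrict $f$ to $U_N$ and read off $g$. The resolution is the combination of Lemma~\ref{zebcd} (which forces the $F_2$ bisections away from \emph{all four} points $z_e,z_b,z_c,z_d$, not merely from $z_e$) with the observation that $(\supp g)^\circ$ accumulates at one of these points; this is what lets $W$ and $(\supp g)^\circ$ overlap in an open set on which the $F_2$ contribution is identically zero. A secondary technical point, handled by re-examining the proof of Lemma~\ref{lem:Ugm}, is arranging the identity $D\cap U_n=U_{h_D,n}$ at a single common level $N$ for all $D\in F_1$.
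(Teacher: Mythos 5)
Your proof is correct, and structurally it is the paper's proof: the same decomposition of $f$ into basic bisections, the same split $F = F_1 \sqcup F_2$ according to whether $z_e \in \overline{D}$, and the same use of Lemmas~\ref{zebcd}, \ref{lem:Ugm} and~\ref{lem:LCzero} to identify the $F_1$ part on $U_N$ with a combination $g=\sum_h c_h 1_{U_{h,N}}$ satisfying $c_e = f(z_e) \neq 0$. Where you diverge is in handling the $F_2$ terms, and the ``main obstacle'' you describe is not actually an obstacle. You claim one cannot simply restrict $f$ to $U_N$ because the $F_2$ terms need not vanish there; but the paper chooses the level large enough that they do vanish: since Lemma~\ref{zebcd} puts all four points $z_e,z_b,z_c,z_d$ outside $\overline{B}$ for every $B \in F_2$, and the sets $U_{g,m}$ form a neighbourhood basis at $z_g$, for each $B \in F_2$ there is some $m_B$ with $U_{m_B} \cap B = \emptyset$. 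Taking $m$ to be the maximum of these $m_B$ and of the levels supplied by Lemma~\ref{lem:Ugm} (which persist to all higher levels, as you also note), one gets $f|_{U_m} = \sum_h c_h 1_{U_{h,m}}$ exactly, with no $F_2$ contamination, and Lemma~\ref{lem:LCzero} finishes at once because $\supp(f|_{U_m}) \subseteq \supp(f)$. Your alternative --- introducing the open set $W = \ggx \setminus \bigcup_{B\in F_2}\overline{B}$ and showing that $(\supp g)^\circ$ meets $W$ because it must contain one of the open double pieces $U_{h,N}\cap U_{h',N}$ of Lemma~\ref{lem:GrigInt}, which accumulates at $z_h \in W$ --- is valid (the interior cannot consist only of the singletons $\{z_h\}$, since no point of $\ggx$ is isolated), but it trades a one-line enlargement of the level for an extra accumulation argument. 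Both proofs rest on exactly the same lemmas; yours simply enters Lemma~\ref{lem:LCzero} at a fixed level $N$ and then has to work its way back around the $F_2$ terms that the paper's choice of $m$ eliminates outright.
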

\begin{proof}
	Let $f = \sum_{D\in F}c_D1_{D}$ for some finite set $F$ of compact open bisections and $c_D\in \KK$ for $D\in F$.
	By Lemma~\ref{lem:basisISG} and \cite[Lemma~4.14]{St}, we may assume each element of $F$ is of the form $\Theta((\alpha, g,\beta), C(\beta\eta))$.
	
	Let $F_e = \{D\in F: z_e\in \overline D\}$ and write
	\[
	f = \sum_{D\in F_e}c_D1_D + \sum_{B\notin F_e}c_B1_B.
	\]
	If $B\notin F_e$, for each $g\in \{e, b, c, d\}$ there exists $m_g\geq 0$ such that $U_{g, m}$ is disjoint from $B$ (because such sets form a neighbourhood basis of $z_g$ and $z_g$ is not in the closure of $B$).
	Then if we let $m_B = \max_{g\in\{e, b, c, d\}}\{m_g\}$, $U_{m_B}$ is disjoint from $B$. Likewise, for $D\in F_e$, Lemma~\ref{lem:Ugm} tells us we can find $m_D\geq 0$ such that $U_{m_D}\cap D = U_{m_D}$.
	
	Define $m:= \max_{D\in F}\{m_D\}$. Then we can write
	\begin{align*}
		\left.f\right|_{U_{m}} &= \left.\left(\sum_{D\in F_e}c_D1_D + \sum_{B\notin F_e}c_B1_B\right)\right|_{U_m}\\
		&= \left.\left(\sum_{D\in F_e}c_D1_D \right)\right|_{U_m}\\
		&= \left(\sum_{D\in F_e}c_D\left.\left(1_D\right)\right|_{U_{m}} \right)\\
		&= \sum_{g = e, b, c, d} \left(\sum_{\substack{ D\in F_e\\D\cap U_m = U_{g, m}}}c_D\right)1_{U_{g,m}}
	\end{align*}
	Note that while we would not expect $A_\KK(\ggx)$ to be closed under function restriction, in this case it happens that 	$\left.f\right|_{U_{m}}\in A_\KK(\ggx)$.
	Since $z_e\in U_m$ and $f(z_e)\neq 0$, this function is not identically zero.
	Hence by Lemma~\ref{lem:LCzero}, the support of this function has nonempty interior.
	The support of this function is contained in the support of $f$, so we conclude that supp$(f)$ has nonempty interior.
\end{proof}

\begin{lemma}\label{lem:grigsingular}
	Suppose that $\KK$ is a field of characteristic zero, let $f\in A_\KK(\ggx)$, and suppose $f\in \s_\KK(\ggx)$ (that is, supp$(f)$ has empty interior). Then $f$ is identically zero.
\end{lemma}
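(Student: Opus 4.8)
The plan is to argue by contradiction, translating $f$ within the ideal of singular elements so that its translate fails to vanish at the unit $z_e = [(\ew,e,\ew),1^\infty]$, which will collide with Lemma~\ref{lem:zeinterior}. So suppose $f\in\s_\KK(\ggx)$ is nonzero and pick $\gamma_0$ with $f(\gamma_0)\neq 0$. Since $\s_\KK(\ggx)$ is an ideal (Proposition~\ref{prop:singularideal}), for any compact open bisections $B,C$ the element $1_Bf1_C$ again lies in $\s_\KK(\ggx)$ and hence, by Proposition~\ref{prp:singular empty int}, has support with empty interior. If I can arrange $(1_Bf1_C)(z_e)\neq 0$, then Lemma~\ref{lem:zeinterior} forces $\supp(1_Bf1_C)$ to have nonempty interior, the contradiction I am after.

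The crux is a structural claim: every $\gamma_0\in\supp(f)$ has range and source of the form $\delta 1^\infty$ for a finite word $\delta$, which is exactly what makes $z_e$ reachable. Since $\supp(f)$ has empty interior, Lemma~\ref{lem:SupportElementBoundary} gives a compact open bisection with $\gamma_0\in\overline D\setminus D$; writing $D$ as a finite union of basic bisections $\Theta((\alpha,g,\beta),C(\beta\eta))$ (Lemma~\ref{lem:basisISG}) and using $\overline{D}=\bigcup\overline{D_i}$, I may take $D$ itself basic with $\gamma_0\in\overline D\setminus D$. Representing $\gamma_0=[(\mu,h,\nu),\nu w]$ with $|\nu|$ large and applying Lemma~\ref{lem:ssgcalc}, the condition $\gamma_0\in\overline D\setminus D$ means that every prefix of $w$ extends to a strongly fixed word for $k:=h^{-1}\left.g\right|_{\eta\eta'}$ while no prefix of $w$ is itself strongly fixed. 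Because $(G,X)$ is contracting with nucleus $\{e,a,b,c,d\}$, for long prefixes $\mu'$ of $w$ the restriction $\left.k\right|_{\mu'}$ lies in the nucleus: it cannot be $e$ (else $\mu'$ would be strongly fixed), nor $a$ (which strongly fixes no word), so it lies in $\{b,c,d\}$; the explicit strongly fixed words $1^{3n+2}0$, $1^{3n+1}0$, $1^{3n}0$ of \eqref{eq:restrictions} then force the tail of $w$ to be $1^\infty$. Thus $s(\gamma_0)=\nu w=\delta_s 1^\infty$, and since every nucleus element carries $1^\infty$ to a word with tail $1^\infty$, also $r(\gamma_0)=\mu(h\cdot w)=\delta_r 1^\infty$.

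With $r(\gamma_0)=\delta_r 1^\infty$ established, I set $\alpha:=[(\ew,e,\delta_r),\delta_r 1^\infty]$, so that $s(\alpha)=r(\gamma_0)$ and $r(\alpha)=1^\infty$, and put $\delta':=\gamma_0^{-1}\alpha^{-1}$, which is composable and satisfies $s(\delta')=1^\infty$. Choosing compact open bisections $B\ni\alpha$ and $C\ni\delta'$, the bisection property makes $\alpha$ the unique element of $B$ with range $1^\infty$ and $\delta'$ the unique element of $C$ with source $1^\infty$, so evaluating the convolution at the unit $1^\infty$ isolates a single term and yields $(1_Bf1_C)(z_e)=f(\alpha^{-1}\,1^\infty\,\delta'^{-1})=f(\gamma_0)\neq 0$.

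This completes the contradiction: $1_Bf1_C\in\s_\KK(\ggx)$ has support of empty interior, yet is nonzero at $z_e$, contrary to Lemma~\ref{lem:zeinterior}. Hence no such $\gamma_0$ exists, $\supp(f)=\ew$, and $f$ is identically zero. I expect the main obstacle to be the structural step of the second paragraph, confining $\supp(f)$ to germs over the orbit of $1^\infty$; the translation and ideal arguments are routine, but this step genuinely requires the contracting/nucleus structure together with the explicit strongly fixed words, since it is precisely the accumulation of those words at $1^\infty$ that both produces the singular elements and pins their support to the $1^\infty$-orbit.
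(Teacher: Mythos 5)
Your proof is correct and takes essentially the same route as the paper's: use Lemma~\ref{lem:SupportElementBoundary} together with Lemma~\ref{lem:ssgcalc} and the structure of strongly fixed words to pin every point of $\supp(f)$ over the orbit of $1^\infty$, then use the ideal property of $\s_\KK(\ggx)$ (Proposition~\ref{prop:singularideal}) to translate a nonvanishing value to $z_e$ and contradict Lemma~\ref{lem:zeinterior}. The only differences are cosmetic: you conjugate through the range of $\gamma_0$ (requiring the extra nucleus observation that $r(\gamma_0)$ also has tail $1^\infty$), whereas the paper works on the source side via $\xi = [(\mu,e,1^{|\mu|}),1^\infty]$, and you spell out the contracting/nucleus argument that the paper leaves implicit when it asserts $\gamma = [s,\mu 1^\infty]$.
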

\begin{proof}
	Suppose that $\gamma \in$ supp$(f)$.
	Since supp$(f)$ has empty interior, by Lemma~\ref{lem:SupportElementBoundary} there must be a compact open bisection $D$ such that $\gamma\in \overline D \setminus D$, and by \cite[Lemma~4.14]{St} we can assume that $D$ is of the form $\Theta((\alpha, g,
\beta), C(\beta\eta))$.
	By Lemma~\ref{lem:ssgcalc} this implies that there exists $s\in \sgx$ and $\mu\in X^*$ such that $\gamma = [s, \mu1^\infty]$.
	If we let
	\[
	\xi = [(\mu, e, 1^{|\mu|}), 1^\infty]
	\]
	then a short calculation shows that $\xi^{-1}\gamma^{-1}\gamma\xi = z_e$.
	Find compact open bisections $B$ and $C$ with $(\gamma\xi)^{-1}\in B$, $\xi\in C$. Then we have
	\begin{align*}
		1_B * f* 1_C (z_e) &= 1_B* f* 1_C (\xi^{-1}\gamma^{-1}\gamma\xi)\\
		&= f* 1_C(\gamma\xi)\\
		&= f(\gamma) \neq 0.
	\end{align*}
	The singular elements form an ideal by Proposition~\ref{prop:singularideal}, and so $1_B * f * 1_C$ is singular.
	But by Lemma~\ref{lem:zeinterior}, singular elements must be zero at $z_e$, a contradiction.
	Hence no such $\gamma$ exists, which implies that $f$ is identically zero.
\end{proof}
We note that Lemmas~\ref{lem:compactopennotregularopen} and \ref{lem:grigsingular} combine to show that in Lemma~\ref{lem:key}, \eqref{lemit3:key} is strictly weaker than \eqref{lemit1:key} and \eqref{lemit2:key}.
\begin{proof}[Proof of Theorem~\ref{thm:grigsimple}\eqref{it:grigthm1}]
	This follows from Theorem~\ref{thm:simple} and Lemma~\ref{lem:grigsingular}.
\end{proof}

We now turn to the $\cs$-algebra of $\ggx$.
By Theorem~\ref{thm:c*simple}, we need to prove that for every nonzero $a\in \cs_r(\ggx)$ we have that $\supp(j(a))$ has nonempty interior.
For a given $a\in \cs_r(\ggx)$, by density of $A_\CC(\ggx)$ we can find a sequence $(f_n)$ in $A_\CC(\ggx)$ converging to $a$, and so
\[
\|j(a) - f_n\|_\infty = \|j(a - f_n)\|_\infty \leq \|a-f_n\| \to 0.
\]
Hence $j(a)$ is a uniform limit of elements of $A_\CC(\ggx)$.

We proceed as we did in the Steinberg algebra case --- prove our result at the point $z_e$ and then translate it to an arbitrary point.

	\begin{lemma}\label{lem:LCboundedbelow}
		Let $m\in \N$, and let
		\[
		f = \sum_{g\in \{e, b, c, d\}}c_g 1_{U_{g, m}} \in A_\CC(\ggx)
		\]
		for some $c_g\in \CC$.
		If $f(z_e) \neq 0$, then $|f|\geq \dfrac{|f(z_e)|}{4}$ on a set with nonempty interior.
	\end{lemma}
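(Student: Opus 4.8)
The plan is to read off the value $f(z_e)$ and the constant values of $f$ on the six intersection regions already identified in the proof of Lemma~\ref{lem:LCzero}, and then to deduce the bound from a single elementary linear identity.

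First I would compute $f(z_e)$. The proof of Lemma~\ref{lem:LCzero} shows that $U_{e,m}\setminus(U_{b,m}\cup U_{c,m}\cup U_{d,m}) = \{z_e\}$, so $z_e\in U_{e,m}$ while $z_e\notin U_{b,m},U_{c,m},U_{d,m}$. Evaluating the defining sum of $f$ at $z_e$ then gives $f(z_e)=c_e$, so the hypothesis $f(z_e)\neq 0$ becomes $c_e\neq 0$.

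Next I would recall the geometry of $U_m$ from the same proof: the triple intersections of the four sets $U_{e,m},U_{b,m},U_{c,m},U_{d,m}$ are empty, and $f$ is constant on each of the six pairwise intersections listed in Lemma~\ref{lem:GrigInt}, taking there the values $c_e+c_b$, $c_e+c_c$, $c_e+c_d$, $c_c+c_d$, $c_b+c_d$, $c_c+c_b$ exactly as in \eqref{eq:homlinearequations}. Each of these six sets is a nonempty union of basic compact open bisections $U_{g,k}'$, hence is open and nonempty, and so has nonempty interior.

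The key step is the identity
\[
2c_e=(c_e+c_c)+(c_e+c_d)-(c_c+c_d),
\]
from which $2|c_e|\le |c_e+c_c|+|c_e+c_d|+|c_c+c_d|\le 3M$, where $M$ is the largest of the three moduli on the right. Thus $M\ge \tfrac23|c_e|>\tfrac14|c_e|$, so on whichever of the three regions $U_{e,m}\cap U_{c,m}$, $U_{e,m}\cap U_{d,m}$, $U_{c,m}\cap U_{d,m}$ realises this maximum, $f$ is constant of modulus at least $|c_e|/4=|f(z_e)|/4$. Since that region has nonempty interior, $|f|\ge |f(z_e)|/4$ there, which is what is claimed. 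There is no serious obstacle here: the content is bookkeeping together with one triangle-inequality estimate, and the only points needing care are the identification $f(z_e)=c_e$ and the correct constant value of $f$ on each intersection region, both of which are supplied verbatim by Lemma~\ref{lem:GrigInt} and the proof of Lemma~\ref{lem:LCzero}.
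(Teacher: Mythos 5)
Your proof is correct, and it shares the paper's setup: you read off $f(z_e)=c_e$ and the six constant values of $f$ on the pairwise intersections from Lemma~\ref{lem:GrigInt}, and you justify correctly that these regions are nonempty open sets (triple intersections are empty, and each region is an infinite union of nonempty basic sets $U_{g,k}'$). Where you genuinely diverge is the endgame. The paper records all six equations, rearranges them into three identities --- one of which, $K_2+K_3-K_4=2R$ in its notation, is exactly your identity $2c_e=(c_e+c_c)+(c_e+c_d)-(c_c+c_d)$ --- and then solves the resulting linear system in parameters $r,s,t$ and argues by contradiction against the hypothesis that all $|K_i|<|R|/4$. You instead isolate that single identity and apply the triangle inequality directly, getting $\max\bigl\{|c_e+c_c|,\,|c_e+c_d|,\,|c_c+c_d|\bigr\}\ge \tfrac{2}{3}|c_e|$. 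This is strictly simpler (no system-solving, no contradiction) and yields a stronger constant, $2/3$ in place of $1/4$, so it proves slightly more than the lemma asserts; the stated bound $|f(z_e)|/4$ is then just a weakening. The only inputs you take from earlier results --- the identification $f(z_e)=c_e$, the emptiness of triple intersections, and the nonempty interior of the pairwise intersections --- are all established in the proof of Lemma~\ref{lem:LCzero} and in Lemma~\ref{lem:GrigInt}, so there is no gap.
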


\begin{proof}
	As in the proof of Lemma~\ref{lem:LCzero}, $f$ is possibly nonzero on six sets with nonempty interior --- the six listed in Lemma~\ref{lem:GrigInt}. Call the values on these sets $K_i$, $i = 1, \dots 6$.
	Writing $R:= f(z_e) = c_e$, we have
	\begin{eqnarray}
		R + c_b = K_1 && c_c + c_d = K_4\nonumber\\
		R + c_c = K_2 && c_b + c_d = K_5\label{eq:linearequations}\\
		R + c_d = K_3 && c_c + c_b = K_6\nonumber
	\end{eqnarray}
	Rearranging \eqref{eq:linearequations} we have
	\begin{align*}
	K_1 + K_2 - K_6 &= 2R\\
	K_2 + K_3 - K_4 &= 2R\\
	K_1 + K_3 - K_5 &= 2R
	\end{align*}
	Solving this linear system yields
	\begin{align*}
	K_1 &= -\frac12r + \frac12s + \frac12t + R\\
	K_2 &= \frac12r  -\frac12s + \frac12t + R\\
	K_3 &= \frac12r + \frac12s + -\frac12t + R\\
	K_4 &= r\\
	K_5 &= s\\
	K_6 &= t
	\end{align*}
	for $r,s,t\in \CC$.
	
	By way of contradiction, suppose that $|K_i|<\dfrac{|R|}{4}$ for $i = 1, \dots, 6.$
	Then in particular $|r|, |s|, |t| < \dfrac{|R|}{4}$ and so
	\[
	\left|-\frac12r + \frac12s +\frac12t\right|\leq \frac{|R|}{8} + \frac{|R|}{8} + \frac{|R|}{8} = \frac{3|R|}{8},
	\]
	which implies
	\begin{align*}
	|K_1| &= \left|R-\frac12r + \frac12s + \frac12t\right| \\
	&\geq \left||R| - \left|-\frac12r + \frac12s +\frac12t\right|\right|\\
	&= |R| - \left|-\frac12r + \frac12s +\frac12t\right|\\
	&\geq |R| - \frac{3|R|}{8} = \frac{5|R|}{8}
	\end{align*}
	which is a contradiction, since $|K_1|$ was supposed to be less than $\dfrac{|R|}{4}$.
	Hence $|K_i| \geq \dfrac{|R|}{4}$ for some $i$, and hence $|f| \geq \dfrac{|R|}{4}$ on a set with nonempty interior.
\end{proof}
\begin{lemma}\label{lem:cstarzeinterior}
	Suppose that $f\in B(\ggx)$, $f(z_e) \neq 0$, that $f_n\in A_\CC(\ggx)$ for all $n$ and that $f_n\to f$ uniformly.
	Then $\supp(f)$ has nonempty interior.
\end{lemma}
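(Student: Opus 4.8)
The plan is to adapt the argument of Lemma~\ref{lem:zeinterior} to the analytic setting, with the crucial difference that I cannot restrict $f$ itself to a finite combination of the sets $U_{g,m}$; instead I would work with a single, sufficiently good approximant $f_n$ and then use the \emph{quantitative} lower bound of Lemma~\ref{lem:LCboundedbelow} to transport nonvanishing from $f_n$ to $f$. The decisive feature is that Lemma~\ref{lem:LCboundedbelow} produces not merely a point but an entire set of nonempty interior on which $|f_n|$ is bounded below by a definite fraction of $|f_n(z_e)|$, and such a bound is stable under small uniform perturbations.

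Concretely, I would set $\delta := |f(z_e)|/6 > 0$ and choose $n$ with $\|f - f_n\|_\infty < \delta$. Then $|f_n(z_e)| \ge |f(z_e)| - \delta = 5\delta > 0$, so in particular $f_n(z_e) \neq 0$. Next I would apply to $f_n$ the reduction carried out in the proof of Lemma~\ref{lem:zeinterior}: writing $f_n$ as a finite $\CC$-linear combination of basic compact open bisections $\Theta((\alpha,g,\beta),C(\beta\eta))$ (possible by Lemma~\ref{lem:basisISG} together with \cite[Lemma~4.14]{St}), separating the terms whose closure contains $z_e$ from the rest, and invoking Lemma~\ref{lem:Ugm}, so as to obtain an $m \in \N$ for which $f_n$ agrees on $U_m$ with a function of the form $\sum_{g \in \{e,b,c,d\}} c_g 1_{U_{g,m}} \in A_\CC(\ggx)$. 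Since $z_e \in U_{e,m} \subseteq U_m$ and restriction to $U_m$ leaves values on $U_m$ unchanged, this function takes the value $f_n(z_e) \neq 0$ at $z_e$.

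Having reduced to this situation, Lemma~\ref{lem:LCboundedbelow} would apply and yield a set $W \subseteq U_m$ with nonempty interior on which $|f_n| \ge |f_n(z_e)|/4$. On $W$ I would then estimate
\[
|f| \ge |f_n| - \|f - f_n\|_\infty \ge \frac{|f_n(z_e)|}{4} - \delta \ge \frac{5\delta}{4} - \delta = \frac{\delta}{4} > 0,
\]
so that $W \subseteq \supp(f)$. Since $W$ has nonempty interior, so does $\supp(f)$, completing the argument.

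The main obstacle is precisely the passage from $f_n$ to $f$: the purely qualitative conclusion that $\supp(f_n)$ has nonempty interior (as in Lemma~\ref{lem:zeinterior}) would be useless here, since the values of $f_n$ on the relevant set could be arbitrarily small and hence swamped by the approximation error $\|f - f_n\|_\infty$. It is the uniform lower bound $|f_n(z_e)|/4$ furnished by Lemma~\ref{lem:LCboundedbelow} that survives the perturbation, which is exactly why that lemma was established with an explicit constant rather than as a bare nonvanishing statement.
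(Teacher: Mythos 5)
Your proposal is correct and follows essentially the same route as the paper's own proof: approximate $f$ by a close element $f_N$ of $A_\CC(\ggx)$, perform the reduction from Lemma~\ref{lem:zeinterior} to write $f_N|_{U_m}$ as $\sum_{g\in\{e,b,c,d\}} c_g 1_{U_{g,m}}$, invoke the quantitative bound $|f_N|\geq |f_N(z_e)|/4$ from Lemma~\ref{lem:LCboundedbelow} on a set of nonempty interior, and transfer this bound to $f$ via the uniform estimate (the paper uses $|R|/10$ where you use $|f(z_e)|/6$; both work). The only cosmetic difference is that the paper first disposes of the easy case where $f_N^{-1}(f_N(z_e))$ has nonempty interior before running this argument, a shortcut your version correctly shows is unnecessary.
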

\begin{proof}
	Write $R:= f(z_e)$, and find $N$ such that $n\geq N$ implies $\|f - f_n\|_\infty< \dfrac{|R|}{10}$.
	Then in particular \[|f_N(z_e) - R| <\dfrac{|R|}{10} \quad  \text{ so } \quad |f_N(z_e)| \geq \dfrac{9|R|}{10}.\]
	Also, $0\notin B_{|R|/10}(f_N(z_e))$ .  If $V := f_N^{-1}(f_N(z_e))$ has nonempty interior then we would be done since $f(v) \in B_{|R|/10}(f_N(v)) = B_{|R|/10}(f_N(z_e))$ for all $v\in V$, implying that $f$ is nonzero on $V$.
	
	So suppose $V$ has empty interior.
	By the same reasoning as in the proof of Lemma~\ref{lem:zeinterior}, we can find $m\geq 0$ and $c_e, c_b, c_c, c_d\in \CC$ such that
	\[
	\left.f_N\right|_{U_m} = \sum_{g\in \{e, b, c, d\}}c_g 1_{U_{g,m}}
	\]
	where $c_e = f_N(z_e) = R$. Then Lemma~\ref{lem:LCboundedbelow} implies that there exists a set $W$ with nonempty interior such that for all $w\in W$ we have \[|f_N(w)| \geq \dfrac{|f_N(z_e)|}{4}\geq \dfrac{9|R|}{40} .\]
	Then for all $w\in W$, $|f(w) - f_N(w)|< \dfrac{|R|}{10}$ implies that $|f(w)|$ is at least
	 \[\dfrac{9|R|}{40} - \dfrac{|R|}{10} = \dfrac{5|R|}{40}\] for all $w\in W$.
	Hence $f$ is nonzero on a set with nonempty interior.	
\end{proof}
Now, as in Lemma~\ref{lem:grigsingular}, we have the same conclusion for general nonzero uniform limits of elements of $A_\CC(\ggx)$ by using Lemma~\ref{lem:cstarzeinterior} and translating.
\begin{lemma}\label{lem:limitinterior}
	Suppose that $0 \neq f \in B(\ggx)$ and that $f_n\to f$ uniformly with $f_n\in A_\KK(\ggx)$ for all $n$.
	Then $\supp(f)$ has nonempty interior.
\end{lemma}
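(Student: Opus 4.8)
The plan is to argue by contradiction, reducing the general case to the point $z_e$ already handled by Lemma~\ref{lem:cstarzeinterior}, in exactly the way Lemma~\ref{lem:grigsingular} reduces the Steinberg statement to Lemma~\ref{lem:zeinterior}. So suppose instead that $\supp(f)$ has empty interior, and fix $\gamma\in\supp(f)$ with $f(\gamma)=k\neq 0$.

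The first and most delicate step is to produce a compact open bisection on whose topological boundary $\gamma$ sits, since here we cannot simply quote Lemma~\ref{lem:SupportElementBoundary} (which is stated for $f\in A_\KK(\G)$, not for uniform limits). Because $\supp(f)$ has empty interior, every neighbourhood of $\gamma$ meets the zero set of $f$, so there is a sequence $\gamma_m\to\gamma$ with $f(\gamma_m)=0$. Choosing $N$ with $\|f-f_N\|_\infty<|k|/3$ gives $|f_N(\gamma)|>2|k|/3$ while $|f_N(\gamma_m)|<|k|/3$, so the Steinberg function $f_N$ is discontinuous at $\gamma$. Writing $f_N=\sum_{D\in F}c_D1_D$ with each $D$ a basic bisection $\Theta((\alpha,g,\beta),C(\beta\eta))$ (using Lemma~\ref{lem:basisISG} and \cite[Lemma~4.14]{St}), at least one summand $1_D$ must be discontinuous at $\gamma$; since $D$ is open this forces $\gamma\in\overline D\setminus D$. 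This is the analogue, for uniform limits, of Lemma~\ref{lem:SupportElementBoundary}, and I expect it to be the main obstacle: everything afterwards is the translation machinery already developed for the Steinberg case.

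Next I would run the translation of Lemma~\ref{lem:grigsingular} verbatim. By Lemma~\ref{lem:ssgcalc}, the relation $\gamma\in\overline D\setminus D$ forces $\gamma=[s,\mu1^\infty]$ for some $s\in\sgx$ and $\mu\in X^*$; setting $\xi=[(\mu,e,1^{|\mu|}),1^\infty]$ one checks $\xi^{-1}\gamma^{-1}\gamma\xi=z_e$. Choosing compact open bisections $B\ni(\gamma\xi)^{-1}$ and $C\ni\xi$ and putting $g:=1_B*f*1_C$, the same computation as in Lemma~\ref{lem:grigsingular} gives $g(z_e)=f(\gamma)\neq 0$. Moreover $g$ is again a uniform limit of Steinberg functions: the functions $g_n:=1_B*f_n*1_C$ lie in $A_\CC(\ggx)$, and since for a bisection $B$ one has $(1_B*h)(\eta)=h(b^{-1}\eta)$ for the unique $b\in B$ with $r(b)=r(\eta)$ (and $0$ if none exists), convolution by $1_B$ and by $1_C$ is a contraction for $\|\cdot\|_\infty$, whence $\|g_n-g\|_\infty\le\|f_n-f\|_\infty\to 0$.

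Finally I would apply Lemma~\ref{lem:cstarzeinterior} to $g$: since $g(z_e)\neq 0$ and $g$ is a uniform limit of elements of $A_\CC(\ggx)$, $\supp(g)$ has nonempty interior. To transfer this back to $f$, note that $g(\eta)=f(b^{-1}\eta c^{-1})$ on the open set where $b\in B$ and $c\in C$ with $r(b)=r(\eta)$, $s(c)=s(\eta)$ exist, so that $\eta\mapsto b^{-1}\eta c^{-1}$ is a homeomorphism between open subsets of $\G$ satisfying $\supp(g)=\{\eta: b^{-1}\eta c^{-1}\in\supp(f)\}$. It therefore carries a nonempty open subset of $\supp(g)$ onto a nonempty open subset of $\supp(f)$. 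Hence $\supp(f)$ has nonempty interior, contradicting our standing assumption and completing the proof.
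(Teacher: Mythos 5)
Your proposal is correct and takes essentially the same route as the paper's proof: locate a basic compact open bisection $D$ with $\gamma \in \overline{D}\setminus D$, translate $\gamma$ to $z_e$ via $\xi$ and convolution with $1_B$ and $1_C$, and invoke the $z_e$-case. The paper's execution differs only in two minor points --- it obtains the boundary point by assuming, without loss of generality, that $f_N^{-1}(f_N(\gamma))$ has empty interior (otherwise the claim is immediate) rather than by your zero-sequence/discontinuity argument (which is also valid, since $\ggx$ is second countable and hence first countable), and it convolves $f_N$ rather than $f$, rerunning the quantitative estimate of Lemma~\ref{lem:LCboundedbelow} together with the $\|\cdot\|_\infty$-contraction bound, where you instead apply Lemma~\ref{lem:cstarzeinterior} as a black box to $1_B * f * 1_C$ and transfer the interior back through the translation homeomorphism --- but both versions are sound.
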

\begin{proof}
	Find $\gamma\in\ggx$ and as before set $R: = f(\gamma)$.
	Again find $N$ such that for all $n\geq N$ we have $\|f_n - f\|_\infty< \dfrac{|R|}{10}$ --- for the same reasons as in the proof of Lemma~\ref{lem:cstarzeinterior} we can assume that $f_N^{-1}(f_N(\gamma))$ has empty interior.
	
	As before, it follows from this that there exists a compact bisection $D$ of the form $\Theta((\alpha, g, \beta), C(\beta\eta))$ such that $\gamma\in \overline D \setminus D$.
	By Lemma~\ref{lem:ssgcalc} this implies that there exists $s\in \sgx$ and $\mu\in X^*$ such that $\gamma = [s, \mu1^\infty]$.
	As in the proof of Lemma~\ref{lem:grigsingular}, letting
	\[
	\xi = [(\mu, e, 1^{|\mu|}), 1^\infty]
	\]
	yields $\xi^{-1}\gamma^{-1}\gamma\xi = z_e$, and finding compact open bisections $B$ and $C$ with $(\gamma\xi)^{-1}\in B$, $\xi\in C$ gives
	\begin{eqnarray*}
		1_B * f_N* 1_C (z_e) &=& 1_B* f_N* 1_C (\xi^{-1}\gamma^{-1}\gamma\xi)\\
		&=& f_N* 1_C(\gamma\xi)\\
		&=& f_N(\gamma) \neq 0.
	\end{eqnarray*}
	Let $g: = 1_B * f_N *1_C$.
    Then as in the proof of Lemma~\ref{lem:cstarzeinterior} there exists a set $W$ with nonempty interior such that for all $w\in W$ we have $|g(w)| \geq \dfrac{|f_N(\gamma)|}{4}\geq \dfrac{9|R|}{40} $. For any bisection $D$ and any $h\in B(\ggx)$, the ranges
of $1_D * h$ and $h*1_D $ are contained in the range of $h$, so $\|1_D * h\|_\infty, \|h*1_D \|_\infty \leq \|h\|_\infty$.
	Hence
	\[
	\|g - 1_B*f*1_C\|_\infty \leq \|f_N - f\|_\infty< \dfrac{|R|}{10}	
	\]
	and so $1_B*f*1_C$ is nonzero on a set with nonempty interior.
	But then the support of $f$ has nonempty interior, because otherwise the same would be true of $1_B*f*1_C$.
\end{proof}
We can now complete the proof of Theorem~\ref{thm:grigsimple}.
 \begin{proof}[Proof of Theorem~\ref{thm:grigsimple}\eqref{it:grigthm2}]
	This follows from Theorem~\ref{thm:c*simple}(3) , Lemma~\ref{lem:limitinterior}, the fact that amenability of $G$ implies amenability of $\ggx$ by \cite[Corollary 10.18]{EP17}.
\end{proof}



\begin{thebibliography}{99}

\bibitem{Anderson} J. Anderson, \emph{Extensions, restrictions, and representations of
    states on $\cs$-algebras}, Trans. Amer. Math. Soc. \textbf{249} (1979), 303--329.

\bibitem{BCFS}  J.H. Brown, L.O. Clark, C. Farthing and A. Sims, \emph{Simplicity of
    algebras associated to \'etale groupoids}, Semigroup Forum \textbf{88} (2014),
    433--452.

\bibitem{BNRSW} J.H. Brown, G. Nagy, S. Reznikoff, A. Sims and
D.P. Williams, \emph{Cartan subalgebras in $\cs$-algebras of Hausdorff \'etale groupoids}, Integral Eq. Oper. Th. \textbf{85} (2016), 109--126.

\bibitem{CEP} L.O. Clark, R. Exel and E. Pardo,
\emph{A Generalised uniqueness theorem and the graded ideal structure of Steinberg algebras}, Forum Math. \textbf{30} (2018), 533--552.

\bibitem{CFST} L.O. Clark, C. Farthing, A. Sims and M. Tomforde, {\em A groupoid generalization of Leavitt path algebras},
Semigroup Forum \textbf{89} (2014), 501--517.

\bibitem{CS2015} L.O. Clark and A. Sims, \emph{Equivalent groupoids have Morita equivalent Steinberg algebras}, J. Pure Appl. Algebra \textbf{219} (2015), 2062--2075.

\bibitem{Connes82} A. Connes,
\emph{A survey of foliations and operator algebras},
in: Operator Algebras and Applications, \emph{Proc.
Symp. Pure Math. A.M.S.} \textbf{38} part I (1982), 521--628.


\bibitem{Connes}A. Connes, ``Non commutative geometry'', Academic Press, 1994.



\bibitem{Exel} R. Exel, \emph{Non-Hausdorff \'etale groupoids}, Proc. Amer. Math. Soc. \textbf{139} (2011), 897--907.

\bibitem{Exel2} R. Exel,  \emph{Reconstructing a totally disconnected groupoid from its ample semigroup},
Proc. Amer. Math. Soc.  \textbf{138} (2010), 2991--3001.

\bibitem{Exel3} R. Exel,  \emph{Inverse semigroups and combinatorial $\cs$-algebras}, 	
Bull. Braz. Math. Soc. \textbf{39} (2008), 191--313.

\bibitem{EP16}
R. {Exel} and E. {Pardo}, \emph{{The tight groupoid of an inverse
  semigroup}}, Semigroup Forum \textbf{92} (2016), 274--303.

\bibitem{EP17}
R. Exel and E. Pardo, \emph{Self-similar graphs, a unified treatment of
  Katsura and Nekrashevych C*-algebras}, Adv. Math. \textbf{306}
  (2017), 1046--1129.

\bibitem{ExelPitts}
R. Exel and D. Pitts, \emph{Weak Cartan inclusions and non-Hausdorff groupoids}, preprint, 2019,
arXiv:1901.09683 [math.OA].



\bibitem{GivHal} St. Givant and P. Halmos,
``Introduction to Boolean algebras'', Undergraduate Texts in Mathematics.
Springer, New York, 2009. xiv+574 pp.

\bibitem{Gr80}
R. I. Grigorchuk, {\em Burnside problem on periodic groups}, Funktsional. Anal. i Prilozhen., {\bf 14} (1980), 53--54; Funct. Anal. Appl., {\bf 14} (1980), 41--43.

\bibitem{Gr84}
R. I. Grigorchuk.
\newblock {\em Degrees of growth of finitely generated groups, and the theory of
invariant means},
Mathematics of the USSR-Izvestiya, {\bf25}(2) (1985) 259.

\bibitem{Ka08}
T. Katsura,
{\em A construction of actions on {K}irchberg algebras which induce given
actions on their {K}-groups},
J. reine angew. Math {\bf 617} (2008), 27--65.

\bibitem{KhoshSkand}
M. Khoshkam and G. Skandalis,
\emph{Regular representations of groupoid $\cs$-algebras and applications to inverse semigroups},
J. reine angew. Math \textbf{546} (2002), 47--72.

\bibitem{LRRW14}
M. Laca, I. Raeburn, J. Ramagge, and M.~F. Whittaker,
  \emph{Equilibrium states on the Cuntz–Pimsner algebras of self-similar
  actions}, J. Funct. Anal. \textbf{266} (2014), 6619 -- 6661.

\bibitem{Nek09}
V. Nekrashevych, \emph{C*-algebras and self-similar groups}, J. reine
  angew. Math \textbf{630} (2009), 59--123.

\bibitem{Nek16}
V. Nekrashevych,
\newblock {\em Growth of \'etale groupoids and simple algebras},
\newblock Internat. J. Algebra Comput. \textbf{26}(2) (2016), 375--397.

\bibitem{Phillips} N.C. Phillips, \emph{Crossed products of the Cantor set by free
    minimal actions of $\Z^d$}, Comm. Math. Phys. \textbf{256} (2005), 1--42.

\bibitem{Raeburn} I. Raeburn, ``Graph Algebras'', CBMS Reg. Conf. Ser. Math., vol. 103, Amer. Math.
Soc., Providence, RI, 2005.

\bibitem{Ren} J. Renault, ``A Groupoid Approach to {$C^*$}-Algebras'',
    Springer, Berlin, 1980.

\bibitem{Ren2} J. Renault, \emph{Cartan subalgebras in $\cs$-algebras}, Irish Math. Soc. Bulletin
\textbf{61} (2008), 29--63.

\bibitem{Sta15}
C. Starling,
  \emph{Boundary quotients of {C}*-algebras of right {LCM} semigroups}, J. Funct. Anal. \textbf{268} (2015), 3326--3356.

\bibitem{St2} B. Steinberg, \emph{Simplicity, primitivity and semiprimitivity of \'etale groupoid algebras with
applications to inverse semigroup algebras}, J. Pure Appl. Algebra \textbf{220} (2016), 1035--1054.

\bibitem{St}  B. Steinberg, {\em A groupoid approach to inverse semigroup algebras}, Adv. Math. {\bf 223} (2010), 689--727.



\end{thebibliography}
\end{document}